\numberwithin{equation}{section}
\numberwithin{figure}{section}
\theoremstyle{plain}
\newtheorem{thm}{\protect\theoremname}
  \theoremstyle{definition}
  \newtheorem{defn}[thm]{\protect\definitionname}
  \theoremstyle{plain}
  \newtheorem{prop}[thm]{\protect\propositionname}
  \theoremstyle{remark}
  \newtheorem{rem}[thm]{\protect\remarkname}
  \theoremstyle{plain}
  \newtheorem{lem}[thm]{\protect\lemmaname}
  \theoremstyle{definition}
  \theoremstyle{plain}
  \newtheorem{cor}[thm]{\protect\corollaryname}
  \theoremstyle{plain}
  \newtheorem{fact}[thm]{Fact}
  \theoremstyle{plain}
\newcommand\Conv{\mathbf{Conv}}
\newcommand\Cv[1]{{#1}^{\mathrm{conv}}}
\newcommand\sCv[1]{{#1}^{\mathrm{sconv}}}
\newcommand\Cvcl[1]{{#1}^{\mathrm{conv{*}}}}
\newcommand\sCvcl[1]{{#1}^{\mathrm{sconv{*}}}}
\newcommand\Limc{\mathbf{Lim}}
\newcommand\Lim[1]{{#1}^{\mathrm{lim}}}
\newcommand\Limcl[1]{{#1}^{\mathrm{lim{*}}}}
\newcommand\sLim[1]{{#1}^{\mathrm{slim}}}
\newcommand\sLimcl[1]{{#1}^{\mathrm{slim{*}}}}
\newcommand\PreTop{\mathbf{PreTop}}
\newcommand\preTop[1]{{#1}^{\mathrm{pretop}}}
\newcommand\preTopcl[1]{{#1}^{\mathrm{pretop{*}}}}
\newcommand\Adhc{\mathbf{Adh}}
\newcommand\adhname{\mathrm{adh}}
\newcommand\Adh[1]{{#1}^{\adhname}}
\newcommand\Topc{\mathbf{Top}}
\newcommand\Top[1]{{#1}^{\mathrm{top}}}
\newcommand\Frm{\mathbf{Frm}}
\newcommand\coFrm{\mathbf{CF}}
\newcommand{\Filt}{\mathbb{F}}
\newcommand\Idl{\mathbf{I}}
\newcommand{\pow}{\mathbb{P}}
\newcommand{\kow}[1]{{#1}^\circ}
\newcommand{\limp}{\mathrel{\Rightarrow}}
\newcommand{\dc}{\mathop{\downarrow}}
\newcommand{\upc}{\mathop{\uparrow}}
\newcommand{\Sl}{{S\ell}}
\newcommand{\clos}{\mathfrak{c}}
\newcommand{\ouv}{\mathfrak{o}}
\newcommand{\Sierp}{\mathbb{S}}
\newcommand\identity[1]{\mathrm{id}_{#1}}
\newcommand\C{\mathop{\mathcal C}}
\newcommand\nat{\mathbb{N}}
  \providecommand{\corollaryname}{Corollary}
  \providecommand{\definitionname}{Definition}
  \providecommand{\examplename}{Example}
  \providecommand{\lemmaname}{Lemma}
  \providecommand{\propositionname}{Proposition}
  \providecommand{\remarkname}{Remark}
\providecommand{\theoremname}{Theorem}
\begin{document}
\global\long\def\G{\mathcal{G}}
 \global\long\def\F{\mathcal{F}}
 \global\long\def\H{\mathcal{H}}
\global\long\def\Z{\mathcal{Z}}
 \global\long\def\L{\mathcal{L}}
\global\long\def\U{\mathcal{U}}
\global\long\def\W{\mathcal{W}}
 \global\long\def\E{\mathcal{E}}
\global\long\def\B{\mathcal{B}}
 \global\long\def\A{\mathcal{A}}
\global\long\def\D{\mathcal{D}}
\global\long\def\O{\mathcal{O}}
 \global\long\def\N{\mathcal{N}}
 \global\long\def\X{\mathcal{X}}
 \global\long\def\lm{\lim\nolimits}
 \global\long\def\then{\Longrightarrow}

\global\long\def\V{\mathcal{V}}
\global\long\def\C{\mathcal{C}}
\global\long\def\adh{\operatorname{adh}\nolimits}
\global\long\def\adhr{\adh^0}
\global\long\def\Seq{\operatorname{Seq}\nolimits}
\global\long\def\intr{\operatorname{int}\nolimits}
\global\long\def\cl{\operatorname{cl}\nolimits}
\global\long\def\inh{\operatorname{inh}\nolimits}
\global\long\def\diam{\operatorname{diam}\nolimits\ }
\global\long\def\card{\operatorname{card}}
\global\long\def\T{\operatorname{T}}
\global\long\def\S{\operatorname{S}}

\global\long\def\fix{\operatorname{fix}\nolimits}
\global\long\def\Epi{\operatorname{Epi}\nolimits}
\global\long\def\op#1{\operatorname#1}
\global\long\def\pt{\operatorname{pt}\nolimits}
\global\long\def\ptAdh{\operatorname{pt}\nolimits^{\mathrm{a}}} %{\adhname}}
\global\long\def\ptS{\operatorname{pt}\nolimits^{\mathrm{s}}}

\newcommand\catc{\mathbf{C}}
\newcommand\catd{\mathbf{D}}
\newcommand\cate{\mathbf{E}}

\title{Convergence without Points}
\author{Jean Goubault-Larrecq and Frédéric Mynard}
\thanks{The second author acknowledges the generous support of the NJCU Separately Budgeted Research program.}
\begin{abstract}
  We introduce a pointfree theory of convergence on lattices and
  coframes.  A convergence lattice is a lattice $L$ with a monotonic
  map $\lm_L$ from the lattice of filters on $L$ to $L$, meant to be
  an abstract version of the map sending every filter of subsets to
  its set of limits.  This construction exhibits the category of
  convergence spaces as a coreflective subcategory of the opposite of
  the category of convergence lattices.  We extend this construction
  to coreflections between limit spaces and the opposite of so-called
  limit lattices and limit coframes, between pretopological
  convergence spaces and the opposite of so-called pretopological
  convergence coframes, between adherence spaces and the opposite of
  so-called adherence coframes, between topological spaces and the
  opposite of so-called topological coframes.  All of our pointfree
  categories are complete and cocomplete, and topological over the
  category of coframes.  Our final pointfree category, that of
  topological coframes, shares with the category of frames the
  property of being in a dual adjunction with the category of
  topological spaces.  We show that the latter arises as a retract of
  the former, and that this retraction restricts to a reflection
  between frames and so-called strong topological coframes.
% We introduce a pointfree theory of convergence on coframes and establish
% a natural adjunction between the category of convergence spaces and
% the (opposite of the) new category of convergence frames. We investigate
% generalizations to this setting of various classical convergence notions
% including adherence, closedness, openness, interior and closure, pseudotopologicity,
% pretopologicity and topologicity. In particular, we consider the behavior
% of these notions through our adjunction. Finally, we indicate why
% it seems unlikely to  relate this version of our theory of pointfree
% convergence to classical pointfree topology in a direct fashion.
\end{abstract}

\maketitle

\section{Introduction}
\label{sec:introduction}

Locales are pointfree analogues of topological spaces.  This is by now
a well-known concept \cite{Johnstone:Stone,framesandlocales}.  Our aim
is to propose a pointfree analogue of convergence spaces.  That may
seem curious at best, since convergence is a relation between
sequences, or better, filters, and \emph{points}.  Nevertheless, as we
shall see, there is a natural way of doing so, generalizing the
classical Stone duality between topological spaces and locales to one
between the category $\Conv$ of convergence spaces and the opposite
category of a category $\Cv\coFrm$ of coframes with additional
structure representing an abstract form of convergence.  We extend
those dualities to limit spaces, %pseudotopological spaces,
pretopological spaces, and topological spaces.

Our developments do not require the Axiom of Choice.
% , except for the
% case of pseudotopological spaces.

We introduce our basic notion of convergence $\catc$-object, where
$\catc$ is a category of inf-semilattices, in
Section~\ref{sec:convergence-lattices}.  When $\catc$ is a category of
lattices, we obtain a coreflective adjunction $\pow\dashv\pt$ between
$\Conv$ and the category $\Cv\catc$ of convergence $\catc$-objects.
When $\catc$ is a category of coframes, we show that $\Cv\catc$ is
complete and cocomplete in Section~\ref{sec:compl-cocompl}, by showing
that it is topological over the category $\coFrm$ of coframes.

We then proceed to similar constructions for limit spaces,
pretopological and topological spaces.  We organize our main results
in the following commutative diagram of adjunctions.  The bottom row
consists of familiar categories of convergence spaces, limit spaces,
pretopological convergence spaces and adherence spaces, and
topological spaces.  The top row are the matching pointfree
situations.
\begin{equation}
  \label{eq:adj}
  \xymatrix@C=10pt{
    &
    {(\Cv\coFrm)}^{op}
    \ar@<1ex>[r]\ar@{}[r]|{\perp}
    \ar@<1ex>[d]^{\pt}\ar@{}[d]|{\dashv}
    &
    {(\Lim\coFrm)}^{op}
    \ar@<1ex>[l]
    \ar@<1ex>[r]\ar@{}[r]|{\perp}
    \ar@<1ex>[d]^{\pt}\ar@{}[d]|{\dashv}
    &
    {(\preTop\coFrm)}^{op}
    \ar@<1ex>[l]
    \ar@<1ex>[r]\ar@{}[r]|{\perp}
    \ar@<1ex>[d]^{\pt}\ar@{}[d]|{\dashv}
    &
    {(\Adh\coFrm)}^{op}
    \ar@<1ex>[l]
    \ar@<1ex>[r]\ar@{}[r]|{\perp}
    \ar@<1ex>[d]^{\pt}\ar@{}[d]|{\dashv}
    &
    {(\Top\coFrm)}^{op}
    \ar@<1ex>[l]
    \ar@<1ex>[d]^{\pt}\ar@{}[d]|{\dashv}
    \\
    % \ar@{}[ru]|{\text{(Sec.~\ref{sec:convergence-lattices})}}
    &
    \Conv
    \ar@<1ex>[u]^{\pow}
    \ar@<1ex>[r]^{M_{\mathrm{lim}}} \ar@{}[r]|{\perp}
    \ar@{}[ru]|{\text{(Sec.~\ref{sec:limit-spaces})}}
    &
    \Limc
    \ar@<1ex>[l]^{\supseteq}
    \ar@<1ex>[u]^{\pow}
    \ar@<1ex>[r]^{M_{\mathrm{pretop}}}\ar@{}[r]|{\perp}
    \ar@{}[ru]|{\text{(Sec.~\ref{sec:pret-cofr})}}
    &
    \PreTop
    \ar@<1ex>[l]^{\supseteq}
    \ar@<1ex>[u]^{\pow}
    \ar@{=}[r]
    \ar@{}[ru]|{\text{(Sec.~\ref{sec:adherence-coframes})}}
    &
    \Adhc
    \ar@<1ex>[u]^{\pow}
    \ar@<1ex>[r]^{M_{\mathrm{top}}} \ar@{}[r]|{\perp}
    \ar@{}[ru]|{\text{(Sec.~\ref{sec:topological-coframes})}}
    &
    \Topc
    \ar@<1ex>[u]^{\pow}
    \ar@<1ex>[l]^{\supseteq}
  }
\end{equation}
In addition, Section~\ref{sec:sierp-conv-cofr} introduces the notions
of open and closed elements in a convergence $\catc$-object, and
adherence operators associated with a convergence.  This is notably
required in the study of the connection between pretopological
$\catc$-objects and adherence $\catc$-objects of
Section~\ref{sec:adherence-coframes}.

% We have not depicted the situation with pseudotopological spaces in
% this diagram.  This is dealt with in Section~\ref{sec:pseud-cofr}.  As
% we have said, this is the only part of the paper that requires the
% Axiom of Choice.  That would add another vertical adjunction between
% $\Conv$ and $\PreTop$ in (\ref{eq:adj}).

\section{Convergence Lattices}
\label{sec:convergence-lattices}

We use \cite{DM:convergence} as a modern reference on convergence
spaces.

An \emph{inf-semilattice} is a poset with all finite infima.  Some
authors call that a bounded inf-semilattice, and reserve the term
``inf-semilattice'' for posets with binary infima, possibly missing
the top element $\top$.  Given a poset $L$, $L^{op}$ is its opposite
poset, whose ordering is reversed.  A \emph{sup-semilattice} $L$ is
such that $L^{op}$ is an inf-semilattice.  A \emph{lattice} has all
finite infima and all finite suprema.

Given an inf-semilattice $L$, a \emph{filter} on $L$ is a non-empty
upwards-closed subset of $L$ that is closed under binary meets. We do
allow $L$ itself as a filter, contrarily to, say,
\cite[Section~II.2]{DM:convergence}, although that is not
essential. All others are called \emph{proper}.  In general, an
upwards-closed subset is proper, i.e., different from $L$, if and only
if it does not contain the least element $\bot$ of $L$.

We write $\mathbb{F}L$ for the set of filters on $L$. For $L=\pow X$,
the lattice of all subsets of a set $X$, a filter on $L$ is called a
\emph{filter of subsets} of $X$. A maximal proper filter of subsets of
$X$ is an \emph{ultrafilter} on $X$.

A \emph{convergence space} is a pair $(X,\to)$ of a set $X$ and
a binary relation $\to$ between filters $\F$ of subsets
of $X$ and points $x$ of $X$ (in notation, $\F\to x$,
meaning ``$\F$ converges to $x$'', or ``$x$ is a limit
of $\F$'') such that: 
\begin{itemize}
\item (Point Axiom.) $\dot{x}\to x$, where
  $\dot{x}=\{S\subseteq X\mid x\in S\}$ is the \emph{principal
    ultrafilter} at $x$;
\item (Monotonicity Axiom.) If $\F\to x$ and $\F\subseteq\G$ then
  $\G\to x$.
\end{itemize}
We shall write $X$ for $(X,\to)$, leaving the notion of convergence
$\to$ implicit.
%  If the relation $\to$ only fulfills the Monotonicity
% Axiom but not necessarily the Point Axiom, we say that $(X,\to)$
% is a \emph{preconvergence space}. 

A standard example of a convergence space is given by topological
spaces.  The \emph{standard convergence} on a topological space $X$ is
given by $\F \to x$ if and only if every open neighborhood of $x$
belongs to $\F$.  There are many examples of convergence spaces that
do not arise from a topological space this way, see
\cite[Section~III.1]{DM:convergence}.

Convergence spaces form a category $\Conv$.
% , and preconvergence spaces
% form a larger category $\PreConv$.
A morphism $f\colon X\to Y$ is a \emph{continuous map}, meaning one
that preserves convergence: if $\F\to x$ then $f [\F]\to f(x)$, where
the \emph{image filter} $f[\F]$ is
$\{B\subseteq Y\mid f^{-1}(B)\in\F\}$.  A map $f$ is
\emph{initial} (in the categorical sense \cite[Definition 8.6]{AHS:joycats}) if and only if that implication is an equivalence. The
injective initial maps are exactly the embeddings of %(pre)
convergence spaces.

% For more information about convergence spaces, see the recent book
% \cite{DM:convergence}.

\subsection{The pointfree analogue of convergence spaces}
\label{sec:conv-catc-objects}

We obtain an analogous theory of \emph{pointfree} convergence by
abstracting away from the lattice $\pow(X)$, and replacing it by an
inf-semilattice $L$, possibly with extra structure.
%   The initial constructions,
% e.g., the notion of filter, would work with structures as general as
% inf-semilattices, and as constrained as, say, complete Boolean
% algebras.  However, we shall find it convenient to require $L$ to be a
% \emph{coframe}.  This is where the whole theory works best.
Interesting inf-semilattices with extra structure are complete Boolean
algebras, such as $\pow (X)$, and also frames and coframes.  Let us
recall that a \emph{frame} is a complete lattice in which arbitrary
suprema distribute over binary infima.  A \emph{coframe} is a poset
$L$ whose opposite $L^{op}$ is a frame.  In other words, a coframe is
a complete lattice in which arbitrary infima distribute over binary
suprema.  Morphisms of coframes are required to preserve all infima,
and finite suprema.

% We shall develop the theory for general lattices $L$ in \cite{convlattice}
% in order to better clarify the relationship with classical pointfree
% topology, but the simplified setting of frames is more relevant for
% our purpose in the present paper. 

In order to justify our definitions, we shall often start with the
standard definition, using points, and massage it into a definition
that no longer mentions points, and recast it in an arbitrary poset,
lattice, or coframe, depending on what we require.

Honor to whom honor is due, let us start with the notion of
\emph{limit}.  Writing $\lm_{\pow(X)}\F$ for the set of limits of a
filter of subsets $\F$, and ordering both filters and $\pow(X)$ by
inclusion, the Monotonicity Axiom states that $\lm_{\pow(X)}$ is
monotonic.  The Point Axiom is meaningless in a pointfree setting, and
accordingly we ignore it for the time being.  It will resurface
naturally later.
\begin{defn}
  \label{defn:convposet}
  A \emph{convergence semilattice} is an inf-semilattice $L$
  together with a monotonic map $\lm_{L}\colon\Filt (L)\to L$.
\end{defn}

\begin{rem}
  \label{rem:ideal}
  There is another view on convergence semilattices, obtained as
  follows.  A filter on $L$ is an \emph{ideal} of $L^{op}$.  Then
  $\Filt L$ is equal to $\Idl (L^{op})$, where $\Idl$ denotes ideal
  completion (the poset of all ideals, ordered by inclusion).  A
  convergence semilattice $L$ is then exactly the same thing as a
  sup-semilattice $\Omega := L^{op}$, together with an \emph{antitone}
  map $\lm \colon \Idl (\Omega) \to \Omega$.
\end{rem}

By extension, a convergence lattice is a lattice $L$ with a monotonic
map, and similarly for convergence Boolean algebras, convergence
frames, convergence coframes.  By varying the underlying category of
inf-semilattices, we obtain:
%  We will see what kind
% of pointfree analogue can be considered later on.
\begin{defn}[Convergence $\catc$-object]
  \label{defn:cvlat}
  % A \emph{convergence semilattice} is an inf-semilattice $L$ together
  % with a monotonic map $\lm_{L}\colon\Filt (L)\to L$.  A
  % \emph{convergence coframe} is a convergence semilattice that is also
  % a coframe.
  Let $\catc$ be category of inf-semilattices.  A \emph{convergence
    $\catc$-object} is an object $L$ of $\catc$ together with a
   map ${\lm_{L}\colon\Filt (L)\to L}$ that is monotonic.

  The category $\Cv \catc$ has as objects the convergence
  $\catc$-objects, and as morphisms $\varphi \colon L \to L'$ the
  morphisms from $L$ to $L'$ in $\catc$ that are \emph{continuous}, in
  the sense that for every $\F\in\Filt (L')$,
  \begin{equation}
    \lm_{L'}\F\leq\varphi(\lm_{L}\varphi^{-1}(\F)).
    \label{eq:morphism}
  \end{equation}

  The morphism $\varphi$ is \emph{final} if and only if equality
  holds in (\ref{eq:morphism}).
\end{defn}
We shall often write $\lm\F$ instead of $\lm_{L}\F$, when the
ambient inf-semilattice $L$ is clear from context.  We shall also
write $L$ instead of the convergence $\catc$-object $(L, \lm_L)$, when
$\lm_L$ is unambiguous.

The morphism condition (\ref{eq:morphism}) is obtained in such a way
that for every continuous map $f$ between %(pre)
convergence spaces, the inverse image map is a morphism of convergence
posets.  Let us do the exercise.  A map $f \colon X \to Y$
between %preconvergence
convergence spaces is continuous if and only if, for every filter $\F$
of subsets of $X$, every point $x$ such that $\F \to x$, namely every
point $x$ of $\lm_{\pow (X)} \F$, is such that
$f (x) \in \lm_{\pow (Y)} f [\F]$: in other words, the continuity
condition for $f$ reads
$\lm_{\pow (X)} \F \subseteq f^{-1} (\lm_{\pow (Y)} f [\F])$, and that
no longer mentions points.  Write $\varphi$ for
$f^{-1} \colon \pow (Y) \to \pow (X)$.  Then
$f [\F] = \{B \subseteq Y \mid \varphi (B) \in \F\} = \varphi^{-1}
(\F)$, and the condition
$\lm_{\pow (X)} \F \subseteq f^{-1} (\lm_{\pow (Y)} f [\F])$ is then
exactly (\ref{eq:morphism}).

\begin{rem}
  \label{rem:final:conv}
  Our notion of final morphism agrees with the usual categorical
  notion \cite[Definition 8.10]{AHS:joycats}, and is a pointfree
  analogue of \emph{initial} morphisms in $\Conv$.  Categorically, a
  morphism $\varphi \colon L \to L'$ in $\Cv \catc$ is final if and
  only if for every object $L''$ in $\catc$, for every morphism
  $\psi \colon L' \to L''$ in $\catc$ such that $\psi \circ \varphi$
  is continuous, $\psi$ is continuous.  If $\varphi$ is final in this
  sense, then take $L''=L'$, $\psi = \identity {L'}$, and
  $\lm_{L''} \F = \varphi(\lm_{L}\varphi^{-1}(\F))$: then $\varphi$ is
  final in the sense of Definition~\ref{defn:cvlat}.  The converse
  implication is immediate.  Here $\lm_{L''}$ is an example of a final
  convergence structure on $L'$.  We shall generalize this argument in
  Corollary \ref{corl:coarsest}.
\end{rem}

%Convergence coframes and their morphisms form a category $\CCF$.

% Let $\Foc=\CCF^{op}$ denote the opposite category, and call it the
% category of \emph{focales}. The name is meant to remind you of
% locales, with a smell of convergence.
Our constructions will work on various categories of lattices, but not
all.  We shall use the following, non-minimal but convenient set of
assumptions.
\begin{defn}
  \label{defn:adm}
  A category $\catc$ of inf-semilattices is \emph{admissible} if and
  only if:
  \begin{itemize}
  \item for every set $X$, $\pow (X)$ is an object of $\catc$;
  \item there are two classes of index sets $\mathcal I$ and
    $\mathcal J$ such that, for all objects $L$ and $L'$ of $\catc$,
    the morphisms from $L$ to $L'$ are exactly the monotonic maps that
    preserve all $I$-indexed infima that exist in $L$,
    $I \in \mathcal I$, and all $J$-indexed suprema that exist in
    $L$, $J \in \mathcal J$.
  \end{itemize}
\end{defn}
For example, the categories of complete Boolean algebras, of frames,
of coframes, are admissible.  In the case of frames, $\mathcal I$ can
be taken to be the class of finite sets and $\mathcal J$ can be taken
to be the class of all sets.

\begin{prop}
  \label{prop:Conv->Foc}
  Let $\catc$ be an admissible category of inf-semilattices.  There
  is a functor $\pow\colon\Conv %\PreConv
  \to{(\Cv \catc)}^{op}$ defined:
  \begin{itemize}
  \item on objects by: $\pow (X)$ is the powerset of $X$, with
    inclusion ordering $\subseteq$, and with
    $\lm_{\pow (X)}\F=\{x\in X\mid\F\to x\}$;
  \item on morphisms by $\pow (f) (S)=f^{-1}(S)$, for every continuous
    map $f\colon X\to Y$ and every $S\in\pow(Y)$.
  \end{itemize}
\end{prop}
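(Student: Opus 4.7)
The plan is to verify three things: that $\pow(X)$ is a well-defined object of $\Cv\catc$, that $\pow(f)=f^{-1}$ is a well-defined morphism of $\Cv\catc$ in the opposite direction, and that $\pow$ preserves identities and composition. Most of the work is already implicit in the discussion preceding the statement; what remains is bookkeeping.

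First, for a convergence space $X$, I would check that $(\pow(X),\lm_{\pow(X)})$ is a convergence $\catc$-object. By admissibility of $\catc$, $\pow(X)$ itself lies in $\catc$. The map $\lm_{\pow(X)}\colon \Filt(\pow(X))\to\pow(X)$ is monotonic by the Monotonicity Axiom: if $\F\subseteq\G$ and $x\in\lm_{\pow(X)}\F$, i.e.\ $\F\to x$, then $\G\to x$, i.e.\ $x\in\lm_{\pow(X)}\G$.

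Second, for a continuous map $f\colon X\to Y$ in $\Conv$, I would show $\varphi:=f^{-1}\colon\pow(Y)\to\pow(X)$ is a morphism $\pow(Y)\to\pow(X)$ in $\Cv\catc$. As $f^{-1}$ preserves arbitrary unions and intersections (and hence all suprema and infima that exist), it is automatically a morphism in any admissible category $\catc$, using any choice of classes $\mathcal I$ and $\mathcal J$ of index sets. The continuity condition (\ref{eq:morphism}) for $\varphi=f^{-1}$ reads $\lm_{\pow(X)}\F\subseteq f^{-1}(\lm_{\pow(Y)}\varphi^{-1}(\F))$ for every $\F\in\Filt(\pow(X))$; but $\varphi^{-1}(\F)=\{B\subseteq Y\mid f^{-1}(B)\in\F\}=f[\F]$, so this is exactly the reformulation of continuity of $f$ that was carried out in the paragraph preceding the statement.

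Third, functoriality is essentially the standard fact that $(-)^{-1}$ is contravariantly functorial on $\mathbf{Set}$: $\pow(\identity X)=(\identity X)^{-1}=\identity{\pow(X)}$, and for $f\colon X\to Y$, $g\colon Y\to Z$, $\pow(g\circ f)=(g\circ f)^{-1}=f^{-1}\circ g^{-1}=\pow(f)\circ\pow(g)$, which is the correct composition once one reverses arrows to pass to $(\Cv\catc)^{op}$.

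I do not anticipate a real obstacle; the only point that requires a moment's thought is the precise identification $\varphi^{-1}(\F)=f[\F]$ when $\varphi=f^{-1}$, which is what reconciles the pointfree continuity inequality with the pointwise one, and this has already been spelled out in the surrounding text.
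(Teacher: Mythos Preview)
Your proposal is correct and follows essentially the same approach as the paper's proof, which is very brief: it simply notes that $\pow(f)=f^{-1}$ preserves all infima and suprema (hence is a $\catc$-morphism by admissibility) and refers back to the ``preliminary exercise'' preceding the statement for the continuity inequality. You additionally spell out that $(\pow(X),\lm_{\pow(X)})$ is a convergence $\catc$-object and verify functoriality explicitly, which the paper leaves implicit; this is all routine and your argument is sound.
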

\begin{proof}
  The main point consists in checking that, for a continuous map
  $f\colon X\to Y$, $\pow (f)$ is a morphism of convergence
  $\catc$-objects from $\pow (Y)$ to $\pow (X)$.  $\pow (f)$ preserves
  all infima and all suprema, hence is certainly a morphism in $\catc$
  by admissibility.  Checking (\ref{eq:morphism}) was the purpose of
  our preliminary exercise.
  % For every $\F\in\Filt (\pow X)$,
  % we need to check that $\lm\F\subseteq\pow f(\lm{(\pow f)}^{-1}(\F))$.
  % Note that ${(\pow f)}^{-1}(\F)=\{B\subseteq Y\mid\pow f(B)\in\F\}=\{B\subseteq Y\mid f^{-1}(B)\in\F\}=f[\F]$,
  % so that what we have to check is $\lm\F\subseteq f^{-1}(\lm f[\F])$.
  % Since every limit of $\F$ is mapped by $f$ to a limit of
  % $f[\F]$, the result follows. 
\end{proof}
% The functor $\pow$ naturally restricts to one from the smaller
% category $\Conv$ to $\Cv \catc$.

\subsection{The point functor}
\label{sec:point-functor}

We now go the other way around, and define a so-called point functor
$\pt\colon\Cv\catc\to\Conv$.

To do so, we shall need to replace $\catc$ by an (admissible) category
\emph{of lattices}.  By that we mean that all objects of $\catc$
should be lattices, but also that all morphisms should be lattice
morphisms, that is, they should preserve all finite suprema and finite
infima.  A similar convention will apply later, when we further
restrict to categories of frames, or of coframes.  We will never need
to restrict to categories of complete Boolean algebras (despite the
temptation provided by $\pow (X)$), and coframes will turn out to be
the right notion, in particular to define the functor $\Sl$ in the
final part of this paper (Lemma~\ref{lemma:Sl}).

There is a general category-theoretic notion of point of an object $L$
in a category.  A point is a morphism from a terminal object $1$ to
$L$.  In $\Conv$, the terminal object is the one-element set
$1 = \{*\}$, with the only possible notion of convergence on it.
$\pow (1)$, where $\pow$ is the functor given in
Proposition~\ref{prop:Conv->Foc}, is the two-element coframe
$\{\emptyset, 1\}$, with $\lm_{\pow (1)}$ mapping every filter to $1$.
(We must have $\lm_{\pow (1)} \{1\} = \{*\}=1$, since
$\{1\} = \dot *$, and by monotonicity
$\lm_{\pow (1)} \{\emptyset, 1\} = 1$.  This completes the argument,
since $\{1\}$ and $\{\emptyset, 1\}$ are the only two filters of
$\pow (1)$.)
\begin{lem}
  \label{lemma:1}
  Let $\catc$ be an admissible category of lattices.  $\pow (1)$ is a
  terminal object in $(\Cv \catc)^{op}$.
\end{lem}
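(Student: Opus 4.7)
The plan is to show that $\pow(1)$ is initial in $\Cv\catc$, i.e., that for every convergence $\catc$-object $L$, there is a unique morphism $\varphi\colon \pow(1) \to L$ in $\Cv\catc$. Since $\pow(1) = \{\emptyset, 1\}$ is the two-element lattice $\{\bot, \top\}$, and $\catc$ is an admissible category of lattices, every $\catc$-morphism out of $\pow(1)$ must preserve finite infima and finite suprema, including the empty ones. This forces $\varphi(\bot) = \bot_L$ and $\varphi(\top) = \top_L$, which entirely determines $\varphi$ and gives uniqueness.

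For existence, I would take this unique lattice map $\varphi$ and verify that it is continuous in the sense of (\ref{eq:morphism}). Given any $\F \in \Filt(L)$, the set $\varphi^{-1}(\F)$ is a filter on $\pow(1)$, and the only filters of $\pow(1)$ are $\{\top\}$ and $\{\bot, \top\}$. As computed just before the lemma, $\lm_{\pow(1)}$ sends both of these to $\top = 1$, so
\[
  \varphi\bigl(\lm_{\pow(1)} \varphi^{-1}(\F)\bigr) = \varphi(\top) = \top_L,
\]
and $\lm_L \F \leq \top_L$ holds trivially. Thus (\ref{eq:morphism}) is satisfied and $\varphi$ is a morphism in $\Cv\catc$.

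There is no real obstacle here; the only subtlety is to notice that admissibility of $\catc$ (as a category of lattices) forces morphisms out of $\pow(1)$ to preserve both the bottom and top elements, which is what pins down the unique candidate $\varphi$. Once that is observed, the continuity condition collapses to the trivial inequality $\lm_L \F \leq \top_L$.
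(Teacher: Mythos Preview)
Your proposal is correct and follows essentially the same route as the paper: reduce to showing $\pow(1)$ is initial in $\Cv\catc$, use the lattice-morphism constraint to force $\varphi(\bot)=\bot_L$ and $\varphi(\top)=\top_L$ for uniqueness, and then observe that $\lm_{\pow(1)}$ always returns $1=\top$, so the continuity inequality (\ref{eq:morphism}) collapses to $\lm_L\F\leq\top_L$. The paper's proof is the same argument, just slightly more terse.
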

\begin{proof}
  In other words, we claim that $\pow (1)$ is an initial object in
  $\Cv \catc$.  It is an object at all, by admissibility.  Let $L$ be
  an arbitrary convergence $\catc$-object.  Any morphism
  $\varphi \colon \pow (1) \to L$, being a lattice morphism, must map
  $1$ to the top element $\top$ of $L$, and $\emptyset$ to the bottom
  element $\bot$ of $L$.  (That would not work with a mere
  inf-semilattice morphism, hence our assumption that $\catc$ is a
  category of lattices.)  Now let $\varphi$ be that map.  For every
  $\F \in \Filt (L)$,
  $\varphi (\lm_{\pow (1)} \varphi^{-1} (\F)) = \varphi (1) = \top
  \geq \lm_L \F$, establishing (\ref{eq:morphism}).
\end{proof}
Accordingly:
\begin{defn}[Point]
  \label{defn:point}
  Let $\catc$ be an admissible category of lattices.  For every
  convergence $\catc$-object $L$, the \emph{points} of $L$ are the
  morphisms from $L$ to $\pow (1)$ in $\Cv \catc$.  We write $\pt L$
  for the set of points of $L$.
\end{defn}
The notion of point therefore depends on the chosen category $\catc$.
We explore the notion, in a few selected cases, in the following
remarks.

\begin{rem}
  \label{rem:point}
  In all cases, it will be profitable to note that
  $\varphi \colon L \to \pow (1)$ is continuous, in the sense that it
  satisfies (\ref{eq:morphism}), if and only if
  $\varphi (\lm_L \varphi^{-1} (\G))=1$ for every filter $\G$ on
  $\pow (1)$ (since $\lm_{\pow (1)} \G=1$), if and only if
  $\varphi (\lm_L \varphi^{-1} (\{1\}))=1$ (since all involved maps
  are monotonic, and $\{1\}$ is the smallest filter on $\pow (1)$), if
  and only if $\lm_L\F \in \F$, where $\F$ is the filter
  $\varphi^{-1} (\{1\})$.  Hence it is fair to equate the points of
  $L$ in $\Cv\catc$ with certain filters $\F$ such that
  $\lm_L \F \in \F$.
\end{rem}

\begin{rem}[Points of convergence lattices]
  \label{rem:latt:point}
  Let $\catc$ be the category of lattices.  A map
  $\varphi \colon L \to \pow (1)$ is a 
   if and only if
  $\F = \varphi^{-1} (\{1\})$ is a \emph{prime} filter, namely a
  proper filter that does not contain $\ell_1 \vee \ell_2$ unless it
  already contains $\ell_1$ or $\ell_2$.  Points can then be equated
  with prime filters $\F$ such that $\lm_L \F \in \F$.
\end{rem}

\begin{rem}[Points of convergence frames]
  \label{rem:cf:point}
  Let $\catc$ be the category of frames, or more generally any
  admissible category of frames.  A map
  $\varphi \colon L \to \pow (1)$ is a frame morphism if and only if
  $\F = \varphi^{-1} (\{1\})$ is a \emph{completely prime} filter, in
  the sense that $\bigvee_{i \in I} \ell_i \in \F$ implies
  $\ell_i \in \F$ for some $i \in I$.  The space of completely prime
  elements of the frame $L$ will be written $\ptS L$, and is one half
  of the famous Stone adjunction between topological spaces and
  frames.  Hence points of convergence frames can be equated with
  those completely prime filters $\F$ such that $\lm_L \F \in \F$.

  In turn, the completely prime filters $\F$ are exactly the families
  $\complement {\dc \ell}$, where $\ell$ is a meet-prime element, the
  largest element of $L$ that is not in $\F$.  (We write $\complement$
  for complement.)  An element $\ell$ of $L$ is \emph{meet-prime} if
  and only if $\bigwedge_{i=1}^n \ell_i \leq \ell$ implies
  $\ell_i \leq \ell$ for some $i \in I$.  We can then equate points of
  convergence frames with meet-primes $\ell$ such that
  $\lm_L {\complement {\dc \ell}} \not\leq \ell$.
\end{rem}

\begin{rem}
  \label{rem:cf:point:s}
  Following up on Remark~\ref{rem:cf:point}, $\pt L$ is included in
  $\ptS L$ for every frame $L$, but one should stress that the
  inclusion is in general proper.  Consider for example the
  \emph{discrete} convergence defined by $\lm_L \F = \bot$, for every
  filter $\F$ on $L$.  In that case, $\pt L$ is empty, although $\ptS
  L$ can be arbitrary large.
\end{rem}

\begin{rem}[Points of convergence coframes]
  \label{rem:ccf:point}
  Let $\catc$ be the category of coframes, or more generally any
  admissible category of coframes.  A map
  $\varphi \colon L \to \pow (1)$ is a coframe morphism if and only if
  $\F = \varphi^{-1} (\{1\})$ is a filter that is closed under
  arbitrary infima.  Such filters are exactly those of the form
  $\upc \ell$, where $\ell$ is a \emph{join-prime} of $L$, namely: for any
  finite family of elements $\ell_{1},\cdots,\ell_{n}$ of $L$ such
  that $\ell \leq \bigvee_{i=1}^n \ell_i$, there is an index $i$ such
  that $\ell \leq \ell_i$ already.  Equivalently, a join-prime is an
  element different from $\bot$ such that $\ell \leq \ell_1 \vee \ell_2$
  implies $\ell \leq \ell_1$ or $\ell \leq \ell_2$.

  % A point of a convergence coframe $L$ is therefore a coframe morphism
  % $\varphi \colon L \to \pow (1)$ such that, for every
  % $\F \in \Filt (\pow (1))$,
  % $\lm_{\pow (1)} \F \subseteq \varphi (\lm_L \varphi^{-1} (\F))$.
  % Since $\lm_{\pow (1)} \F=1$, the latter condition reduces to
  % $\varphi (\lm_L \varphi^{-1} (\F))=1$ for every
  % $\F \in \Filt (\pow (1))$, and since there are only two filters on
  % $\pow (1)$, one being smaller than the other, this further reduces
  % to $\varphi (\lm_L \varphi^{-1} (\{1\}))=1$.

  % The coframe morphism $\varphi$ is entirely characterized by the
  % subset $\varphi^{-1} (\{1\})$ of $L$, and because $\varphi$
  % preserves the top element and all infima, there is a least element
  % in $\varphi^{-1} (\{1\})$, call it $x$.  Then
  % $\varphi^{-1} (\{1\}) = \upc x$, the upward closure of $x$ in $L$.

  % Since $\varphi$ also preserves finite suprema, $x$ is a
  % \emph{join-prime} of $L$, namely:

  % Conversely, any join-prime $x \in L$ gives rise to a coframe
  % morphism $\varphi \colon L \to \pow (1)$ by letting $\varphi$ map
  % every element of $\upc x$ to $1$, and all other elements to
  % $\emptyset$.

  It is therefore natural to (re)define the points of a convergence
  coframe $L$ as the join-primes $\ell$ of $L$ such that
  $\ell \leq \lm_L {\upc \ell}$.
\end{rem}

In general, given an admissible category $\catc$ of lattices, it is
worthwhile to see what the points of the convergence $\catc$-object
$\pow (X)$ are, for a given %(pre)
convergence space $X$.
We shall see the Point Axiom emerge naturally here.
\begin{rem}
  \label{rem:ptpow}
  Let $\catc$ be an admissible category of lattices, and $X$ be a
  convergence space.  For every $x \in X$, consider the constant map
  $\overline x \colon * \mapsto x$ from $1$ to $X$.  By
  Proposition~\ref{prop:Conv->Foc}, $\pow (\overline x)$ is a morphism
  from $\pow (X)$ to $\pow (1)$ in $\catc$.  By
  Remark~\ref{rem:point}, this can be equated with the filter
  $\pow (\overline x)^{-1} (\{1\}) = \{S \in \pow (X) \mid {\overline
    x}^{-1} (S) = 1\} = \{S \in \pow (X) \mid x \in S\} = \dot x$.
  That always defines a point, in $\pt L$: the condition of being a
  point reads $\lm_{\pow (X)} \dot x \in \dot x$, and is exactly what
  the Point Axiom states.  Hence $\pt {\pow (X)}$ always contains the
  points $\pow (\overline x) \cong \dot x$, for every $x \in X$.
\end{rem}

\begin{rem}[Points of $\pow (X)$ qua convergence lattice]
  \label{rem:latt:ptpow}
  Let $\catc$ be the category of all lattices.  The prime filters of
  $\pow (X)$ are exactly the ultrafilters on $X$.  Hence the points of
  $\pow (X)$, qua convergence lattice, are the ultrafilters $\U$ such
  that $\lm_{\pow (X)} \U \in \U$.  Those are exactly the
  \emph{compact} ultrafilters on $X$.  (We let the reader check that
  this notion agrees with the usual notion of compactness for filters,
  once specialized to ultrafilters
  \cite[Definition~IX.7.1]{DM:convergence}.)  Among all compact
  ultrafilters, one finds the principal ultrafilters $\dot x$, if only
  because of Remark~\ref{rem:ptpow}, but there are others in general.
  Look indeed at the special case where $X$ is topological: then every
  compact ultrafilter is principal if and only if $X$ is Noetherian;
  hence any non-Noetherian topological space will have
  non-principal compact ultrafilters.
\end{rem}

\begin{rem}[Points of $\pow (X)$ qua convergence frame]
  \label{rem:cf:ptpow}
  Let $\catc$ be the category of frames, or more generally any
  admissible category of frames.  The situation is much simpler here.
  We rely on Remark~\ref{rem:cf:point}.  The meet-primes $\ell$ of
  $\pow (X)$ are exactly the complements of one-element sets $\{x\}$,
  $x \in X$.  The condition
  $\lm_L {\complement {\dc \ell}} \not\leq \ell$ rewrites as
  $x \in \lm_{\pow (X)} {\dot x}$.  The latter is just the Point Axiom
  $\dot x \to x$.  Hence $\pt {\pow (X)}$ can be equated with $X$
  itself.
\end{rem}

\begin{rem}[Points of $\pow (X)$ qua convergence coframe]
  \label{rem:ccf:ptpow}
  Let $\catc$ be the category of coframes, or more generally any
  admissible category of coframes.  The situation is equally simple.
  We rely on Remark~\ref{rem:ccf:point}.  The join-primes $\ell$ of
  $\pow (X)$ are exactly the one-element sets $\{x\}$, $x \in X$.  The
  condition $\ell \leq \lm_L {\upc \ell}$ rewrites as
  $x \in \lm_{\pow (X)} {\dot x}$.  As for frames, this is the Point
  Axiom.  Hence, again, $\pt {\pow (X)}$ can be equated with $X$
  itself.
\end{rem}

In order to define a $\lm$ operator on $\pt L$, we introduce the
following.  For a filter $\F$ of subsets of $\pt L$, $\kow \F$ will be
a corresponding filter on $L$.
\begin{defn}
  \label{defn:kow}
  Let $\catc$ be a category of lattices, and $L$ be a convergence
  $\catc$-object.  For every $\ell \in L$, let $\ell^\bullet$ be the
  set of points $\varphi \in \pt L$ such that $\varphi (\ell)=1$.

  For every $\F \in \Filt \pow (\pt L)$, let:
  \[
    \kow{\F} := \{\ell \in L \mid \ell^\bullet \in \F\}.
  \]
\end{defn}

\begin{lem}
  \label{lemma:kow:basic}
  Let $\catc$ be an admissible category of lattices, and $L$ be a
  convergence $\catc$-object.  The following hold:
  \begin{enumerate}
  \item For all $\ell, \ell'$ in $L$, if $\ell \leq \ell'$ then
    $\ell^\bullet \subseteq {\ell'}^\bullet$.
  \item For all $\F, \G \in \Filt\pow (\pt L)$, if $\F\subseteq\G$
    then $\kow{\F}\subseteq\kow{\G}$.
  \item For all $\ell_{1},\ell_{2},\cdots,\ell_{n}\in L$ ($n\in\nat$),
    $\bigcup_{i=1}^{n}\ell_{i}^{\bullet}={(\bigvee_{i=1}^{n}\ell_{i})}^{\bullet}$.
  \item For all $\ell_{1},\ell_{2},\cdots,\ell_{n}\in L$ ($n\in\nat$),
    $\bigcap_{i=1}^{n}\ell_{i}^{\bullet}={(\bigwedge_{i=1}^{n}\ell_{i})}^{\bullet}$.
  \item For every $\F\in\Filt \pow(\pt L)$, $\kow{\F}$ is a filter on
    $L$, i.e., $\mathcal{\kow{F}}\in\Filt L$.
  \item For every $\varphi \in \pt L$,
    $\kow {\dot \varphi}=\varphi^{-1} (\{1\})$.
  \item For every $x \in \pt L$, $x$ is in
    $(\lm_L \kow {\dot x})^\bullet$.
  \end{enumerate}
\end{lem}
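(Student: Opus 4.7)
The plan is to verify each of the seven items more or less in order, noting that (1)--(4) are direct consequences of the fact that every point $\varphi\colon L\to \pow(1)$ is, by definition, a morphism in $\catc$, hence (since $\catc$ is an admissible category of lattices) a lattice morphism, and in particular monotonic and a preserver of finite meets and joins. For (1), if $\ell \leq \ell'$ and $\varphi \in \ell^\bullet$, i.e., $\varphi(\ell)=1$, then monotonicity of $\varphi$ together with the fact that $\pow(1)$ has only two elements forces $\varphi(\ell')=1$, so $\varphi \in {\ell'}^\bullet$. Item (2) is immediate from the definition of $\kow\F$. For (3) and (4), I would unfold the definitions: $\varphi \in {(\bigvee_i \ell_i)}^\bullet$ iff $\varphi(\bigvee_i \ell_i)=1$, iff (since $\varphi$ preserves finite joins in the two-element lattice $\pow(1)$) some $\varphi(\ell_i)=1$, iff $\varphi \in \bigcup_i \ell_i^\bullet$; the argument for (4) is dual, using preservation of finite meets.

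Item (5) combines the preceding pieces. Non-emptiness of $\kow\F$ follows from the fact that $\top^\bullet = \pt L$ (every $\varphi$ is a lattice morphism and so sends $\top$ to $1$), hence $\top^\bullet \in \F$ and $\top \in \kow\F$. Upward-closure of $\kow\F$ under the order of $L$ follows from (1) (together with $\F$ being upward-closed in $\pow(\pt L)$) and (2). Closure under binary meets uses (4): if $\ell_1, \ell_2 \in \kow\F$, then $\ell_1^\bullet, \ell_2^\bullet \in \F$, so $\ell_1^\bullet \cap \ell_2^\bullet = (\ell_1 \wedge \ell_2)^\bullet \in \F$, so $\ell_1 \wedge \ell_2 \in \kow\F$.

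Item (6) is a pure unfolding: $\ell \in \kow{\dot\varphi}$ iff $\ell^\bullet \in \dot\varphi$ iff $\varphi \in \ell^\bullet$ iff $\varphi(\ell) = 1$ iff $\ell \in \varphi^{-1}(\{1\})$.

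The one step that genuinely uses the convergence structure is (7), and it is the item I would single out as the main (though still modest) obstacle. Given $x \in \pt L$, I would apply the continuity condition (\ref{eq:morphism}) for the morphism $x\colon L \to \pow(1)$ to the filter $\F = \{1\}$ on $\pow(1)$. By (6), $x^{-1}(\{1\}) = \kow{\dot x}$, so (\ref{eq:morphism}) specializes to
\[
  \lm_{\pow(1)}\{1\} \;\leq\; x\bigl(\lm_L \kow{\dot x}\bigr).
\]
Since $\lm_{\pow(1)}\{1\} = 1$, we conclude $x(\lm_L \kow{\dot x}) = 1$, i.e., $x \in (\lm_L \kow{\dot x})^\bullet$, as required.
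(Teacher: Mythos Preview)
Your proof is correct and follows essentially the same approach as the paper: items (1)--(6) are unfolded exactly as in the paper's argument, and for (7) you apply the continuity condition for $x\colon L\to\pow(1)$ directly to the filter $\{1\}$, which is precisely the content of the paper's Remark~\ref{rem:point} that the paper invokes at this step. The only very minor remark is that in your discussion of (5), the reference to (2) is superfluous---upward-closure of $\kow\F$ uses only (1) and the fact that $\F$ is upward-closed.
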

\begin{proof}
  (1) and (2) are obvious.  (3) For every point $\varphi$, $\varphi$
  is in ${(\bigvee_{i=1}^{n}\ell_{i})}^{\bullet}$ if and only if
  $\varphi (\bigvee_{i=1}^n \ell_i)=1$, if and only if
  $\bigvee_{i=1}^n \varphi (\ell_i)=1$ (since $\varphi$ is a lattice
  morphism), if and only if $\varphi (\ell_i)=1$ for some $i$, if and
  only if $\varphi$ is in some $\ell_i^\bullet$.  (4) is proved
  similarly.

  (5) We first check that $\F$ is upwards-closed.  Let
  $\ell\in\kow{\F}$ and $\ell\leq\ell'$.  By definition,
  $\ell^{\bullet}$ is in $\F$.  Using (1), ${\ell'}^\bullet$ is also
  in $\F$, so $\ell'$ is in $\kow\F$.  $\kow\F$ is non-empty: since
  $\top^\bullet = \pt L$ is in $\F$, $\top$ is in $\kow\F$.  Finally,
  for any two elements $\ell$, $\ell'$ of $\kow\F$, $\ell^\bullet$ and
  ${\ell'}^\bullet$ are in $\F$, so
  $\ell^\bullet \cap {\ell'}^\bullet$ is in $\F$.  Using (4),
  ${(\ell \wedge \ell')}^\bullet$ is in $\F$, so $\ell \wedge \ell'$
  is in $\kow\F$.

  (6) The set $\kow {\dot\varphi}$ is the set of elements $\ell \in L$ such
  that $\ell^\bullet \in \dot\varphi$, equivalently such that
  $\varphi \in \ell^\bullet$, equivalently such that
  $\varphi (\ell)=1$.

  (7) For every $\varphi \in \pt L$, by Remark~\ref{rem:point},
  $\varphi (\lm_L \varphi^{-1} (\{1\}))=1$.  Using (6), this means
  that $\varphi (\lm_L \kow {\dot\varphi})=1$, and by definition this
  is equivalent to $\varphi \in (\lm_L \kow {\dot\varphi})^\bullet$.
\end{proof}
This allows us to define:
\begin{defn}
  \label{defn:pt:lim}
  Let $\catc$ be a category of lattices.  For every convergence
  $\catc$-object $L$, define $\lm_{\pow (\pt L)} \F$ as
  $(\lm_L \kow\F)^\bullet$.  In other words, $\F \to x$ in $\pt L$ if
  and only if $x \in (\lm_L \kow\F)^\bullet$.
\end{defn}

\begin{lem}
  \label{lemma:pt:lim:conv}
  Let $\catc$ be a category of lattices.  For every convergence
  $\catc$-object $L$, $(\pt L, \to)$ as defined in
  Definition~\ref{defn:pt:lim} is a convergence space.
\end{lem}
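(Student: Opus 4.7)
The plan is to verify directly the two axioms in the definition of a convergence space, both of which follow immediately from the preparatory Lemma~\ref{lemma:kow:basic}. There is no real obstacle here: the technical content was absorbed into that lemma, and what remains is bookkeeping.

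First I would check the Point Axiom, namely that $\dot x \to x$ for every $x \in \pt L$. Unwrapping Definition~\ref{defn:pt:lim}, this amounts to $x \in (\lm_L \kow{\dot x})^\bullet$, which is exactly the content of Lemma~\ref{lemma:kow:basic}(7). So this is a one-line verification.

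Next I would check the Monotonicity Axiom. Suppose $\F \to x$ and $\F \subseteq \G$, where $\F,\G \in \Filt\pow(\pt L)$. By Lemma~\ref{lemma:kow:basic}(2), $\kow\F \subseteq \kow\G$ in $\Filt L$. Since $\lm_L$ is monotonic (by definition of a convergence $\catc$-object), $\lm_L \kow\F \leq \lm_L \kow\G$ in $L$. Then Lemma~\ref{lemma:kow:basic}(1) yields $(\lm_L \kow\F)^\bullet \subseteq (\lm_L \kow\G)^\bullet$ in $\pow(\pt L)$. Hence $x \in (\lm_L \kow\F)^\bullet$ implies $x \in (\lm_L \kow\G)^\bullet$, i.e., $\G \to x$.

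Since both axioms hold, $(\pt L, \to)$ is a convergence space. The only subtlety worth flagging explicitly is that the Point Axiom is precisely the reason why item (7) of the preparatory lemma was phrased the way it was, and that this in turn traces back to the characterization in Remark~\ref{rem:point} of points as filters satisfying $\lm_L \F \in \F$; so the Point Axiom is built into the very notion of point. If I had to identify a "hard part," it would be this conceptual observation rather than any calculation.
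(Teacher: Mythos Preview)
Your proof is correct and follows exactly the same approach as the paper: the Monotonicity Axiom from items (1) and (2) of Lemma~\ref{lemma:kow:basic} together with the monotonicity of $\lm_L$, and the Point Axiom from item (7). The paper's proof is just a more compressed version of what you wrote.
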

\begin{proof}
  By (1) and (2) of Lemma~\ref{lemma:kow:basic}, and since $\lm_L$ is
  monotonic, $\lm_{\pow (\pt L)}$ is monotonic as well.  The Point
  Axiom is item (7) of the same Lemma.
\end{proof}
In the following, $\pt L$ will always define the convergence space
obtained with that notion of convergence.

% \begin{rem}
%   \label{rem:pt:lim:conv}
%   Since points of $L$ are certain (compact ultra)filters $\U$, it is
%   useful to translate the above constructions in terms of filters.
%   For every $\ell \in L$, $\ell^\bullet$ is the set of points
%   $\U \in \pt L$ such that $\ell \in \U$.  For a filter $\F$ of
%   subsets of $\pt L$,
%   $\kow \F = \{\ell \in L \mid \ell^\bullet \in \F\}$
% \end{rem}

\subsection{The $\pow\dashv\pt$ adjunction}
\label{sec:powd-adjunct}

The $\pt$ construction satisfies the following universal property.
\begin{prop}
  \label{prop:pt:univ}
  Let $\catc$ be an admissible category of lattices, $X$ be a
  convergence space, and $L$ be a convergence $\catc$-object.  For
  every morphism $\varphi \colon L \to \pow (X)$ in $\Cv\catc$, there
  is a unique map $\varphi^\dagger \colon X \to \pt L$ such that, for
  every $\ell \in L$,
  $\pow (\varphi^\dagger) (\ell^\bullet) = \varphi (\ell)$, and it is
  continuous.
\end{prop}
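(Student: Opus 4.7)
The plan is to unfold the required equation $\pow(\varphi^\dagger)(\ell^\bullet) = \varphi(\ell)$ into a pointwise condition that forces the definition of $\varphi^\dagger$, and then to verify in turn (i) well-definedness (each $\varphi^\dagger(x)$ is a point), (ii) uniqueness (immediate from the forcing), and (iii) continuity of $\varphi^\dagger$.

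First I would observe that $\pow(\varphi^\dagger)(\ell^\bullet) = (\varphi^\dagger)^{-1}(\ell^\bullet) = \{x \in X \mid \varphi^\dagger(x)(\ell) = 1\}$, so the prescribed equation is equivalent to: for every $x \in X$ and every $\ell \in L$, $\varphi^\dagger(x)(\ell) = 1$ iff $x \in \varphi(\ell)$. This pins down $\varphi^\dagger(x) \colon L \to \pow(1)$ uniquely as the evaluation-at-$x$ morphism $\op{ev}_x \circ \varphi$, where $\op{ev}_x \colon \pow(X) \to \pow(1)$ sends $S$ to $1$ iff $x \in S$. That takes care of uniqueness.

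To show $\varphi^\dagger(x) \in \pt L$, note that $\op{ev}_x$ preserves all infima and all suprema, hence is a morphism in $\catc$ by admissibility, so the composite $\op{ev}_x \circ \varphi$ is a morphism in $\catc$. For the continuity condition (\ref{eq:morphism}) I would use the reformulation of Remark~\ref{rem:point}: $\varphi^\dagger(x)$ is a point iff $\lm_L \F \in \F$, where $\F = \varphi^\dagger(x)^{-1}(\{1\})$. A direct computation gives $\F = \varphi^{-1}(\dot x)$, so the condition becomes $x \in \varphi(\lm_L \varphi^{-1}(\dot x))$. This follows by combining the continuity of $\varphi$, which gives $\lm_{\pow(X)} \dot x \subseteq \varphi(\lm_L \varphi^{-1}(\dot x))$, with the Point Axiom $x \in \lm_{\pow(X)} \dot x$.

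The main step, and I expect the only slightly technical one, is continuity of $\varphi^\dagger \colon X \to \pt L$. Suppose $\F \to x$ in $X$; I must check $\varphi^\dagger[\F] \to \varphi^\dagger(x)$ in $\pt L$, which by Definition~\ref{defn:pt:lim} means $\varphi^\dagger(x) \in (\lm_L \kow{\varphi^\dagger[\F]})^\bullet$, i.e.\ $x \in \varphi(\lm_L \kow{\varphi^\dagger[\F]})$. The key computation is
\[
  \kow{\varphi^\dagger[\F]} = \{\ell \in L \mid (\varphi^\dagger)^{-1}(\ell^\bullet) \in \F\} = \{\ell \in L \mid \varphi(\ell) \in \F\} = \varphi^{-1}(\F),
\]
using the defining property of $\varphi^\dagger$. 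Then (\ref{eq:morphism}) applied to $\varphi$ gives $\lm_{\pow(X)} \F \subseteq \varphi(\lm_L \varphi^{-1}(\F))$, and since $x \in \lm_{\pow(X)} \F$ by assumption, the required membership follows. This completes the proof.
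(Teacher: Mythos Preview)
Your proof is correct and follows essentially the same approach as the paper's: both identify $\varphi^\dagger(x)$ with the composite $\pow(\overline x)\circ\varphi$ (your $\op{ev}_x$ is exactly $\pow(\overline x)$), and both prove continuity via the key computation $\kow{\varphi^\dagger[\F]}=\varphi^{-1}(\F)$. The only cosmetic difference is that the paper shows $\varphi^\dagger(x)\in\pt L$ by invoking Remark~\ref{rem:ptpow} (so $\pow(\overline x)$ is already a $\Cv\catc$-morphism, and composing with $\varphi$ stays in $\Cv\catc$), whereas you verify the point condition of Remark~\ref{rem:point} directly using continuity of $\varphi$ and the Point Axiom; both arguments amount to the same thing.
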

\begin{proof}
  If $\varphi^\dagger$ exists, then for every $\ell \in L$,
  \begin{eqnarray}\label{eq:varphidagg}
    \pow (\varphi^\dagger) (\ell^\bullet)
    & = & {\varphi^\dagger}^{-1} (\ell^\bullet)\nonumber \\
    & = & \{x \in X \mid \varphi^\dagger (x) \in \ell^\bullet\} \\
    & = & \{x \in X \mid \varphi^\dagger (x) (\ell)=1\}\nonumber 
  \end{eqnarray}
  must be equal to $\varphi (\ell)$.  This forces
  $\varphi^\dagger (x) (\ell)$ to be equal to $1$ if $x \in \varphi
  (\ell)$, and to $\emptyset$ otherwise, establishing uniqueness.

  Now define $\varphi^\dagger$ as mapping each $x \in X$ to the map:
  \begin{equation}
    \label{eq:dagger}
    \ell \mapsto \left\{
      \begin{array}{ll}
        1 & \text{if }x \in \varphi (\ell) \\
        \emptyset & \text{otherwise}.
      \end{array}
    \right.
  \end{equation}
  We show that $\varphi^\dagger (x)$ is a point of $L$, that is, an
  element of $\pt L$, by the following argument.  Recall from
  Remark~\ref{rem:ptpow} that, for every $x \in X$,
  $\pow (\overline x)$ is a point of $\pow (X)$, i.e., a morphism from
  $\pow (X)$ to $\pow (1)$ in $\Cv\catc$.  Now we check that
  $\varphi^\dagger (x) = \pow (\overline x) \circ \varphi$, and is
  therefore a morphism from $L$ to $\pow (1)$ in $\Cv\catc$, that is,
  a point of $L$.

  For every $\ell \in L$, in view of \eqref{eq:varphidagg},
  \begin{equation*}
    \pow (\varphi^\dagger) (\ell^\bullet)= \{x \in X \mid \varphi^\dagger (x) (\ell)=1\}= \varphi (\ell),
  \end{equation*}
  by \eqref{eq:dagger}.
  It remains to show that $\varphi^\dagger$ is continuous.  For every
  filter $\F$ of subsets of $X$, the image filter
  $\varphi^\dagger [\F]$ is the set of subsets $B$ of $\pt L$ such
  that ${\varphi^\dagger}^{-1} (B) \in \F$.  In particular,
  $\kow {\varphi^\dagger [\F]}$ is the set of elements $\ell \in L$
  such that $\ell^\bullet \in \varphi^\dagger [\F]$, namely such that
  ${\varphi^\dagger}^{-1} (\ell^\bullet) \in \F$.  We have just seen
  that ${\varphi^\dagger}^{-1} (\ell^\bullet) = \varphi (\ell)$.
  Therefore $\kow {\varphi^\dagger [\F]} = \varphi^{-1} (\F)$.  The
  continuity condition (\ref{eq:morphism}) reads
  $\lm_{\pow (X)} \F \subseteq \varphi (\lm_L \varphi^{-1} (\F))$.  In
  particular, if $\F$ converges to $x$ in $X$, $x$ must belong to
  $\varphi (\lm_L \varphi^{-1} (\F)) = \varphi (\lm_L \kow
  {\varphi^\dagger [\F]})$.  Recall from (\ref{eq:dagger}) that
  $x \in \varphi (\ell)$ if and only if
  $\varphi^\dagger (x) (\ell)=1$, for every $\ell \in L$.  Hence
  $\varphi^\dagger (x) (\ell)=1$ where
  $\ell = \lm_L \kow {\varphi^\dagger [\F]}$.  By definition, this
  means that $\varphi^\dagger (x)$ is in $\ell^\bullet$.  However,
  $\ell^\bullet = {(\lm_L \kow {\varphi^\dagger [\F]})}^\bullet =
  \lm_{\pow (\pt L)} \varphi^\dagger [\F]$ by
  Definition~\ref{defn:pt:lim}.  Therefore $\varphi^\dagger (x)$ is in
  $\lm_{\pow (\pt L)} \varphi^\dagger [\F]$, and this completes our
  continuity argument.
\end{proof}

\begin{lem}
  \label{lemma:epsilon}
  Let $\catc$ be an admissible category of lattices, and $L$ be a
  convergence $\catc$-object.  The map
  $\epsilon_L \colon L \to  \pow (\pt L)$ defined by $\epsilon_L(\ell)=\ell^\bullet$  is a morphism in $\Cv\catc$, and is final.
\end{lem}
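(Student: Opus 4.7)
The approach is to verify the two claims in turn, and to observe that the continuity inequality \eqref{eq:morphism} actually holds with equality, so that finality comes for free.

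\textbf{Step 1 (morphism in $\catc$).} By admissibility, I must check that $\epsilon_L$ preserves all $\mathcal I$-indexed infima and all $\mathcal J$-indexed suprema that exist in $L$ (monotonicity being Lemma~\ref{lemma:kow:basic}(1)). Suppose $\bigwedge_{i \in I} \ell_i$ exists in $L$ for some $I \in \mathcal I$. For any $\varphi \in \pt L$,
\[
  \varphi \in {\bigl(\textstyle\bigwedge_{i \in I} \ell_i\bigr)}^\bullet
  \iff \varphi\bigl(\textstyle\bigwedge_{i \in I} \ell_i\bigr) = 1
  \iff \textstyle\bigwedge_{i \in I} \varphi(\ell_i) = 1,
\]
where the second equivalence uses that each point $\varphi$ is itself a morphism in $\catc$ and so preserves $\mathcal I$-indexed infima. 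Since infima in $\pow(1)$ are intersections and the top is $1$, this last condition is equivalent to $\varphi(\ell_i) = 1$ for every $i$, i.e.\ $\varphi \in \bigcap_{i \in I} \ell_i^\bullet$. Hence $\epsilon_L\bigl(\bigwedge_{i \in I} \ell_i\bigr) = \bigcap_{i \in I} \epsilon_L(\ell_i)$, which is the infimum in $\pow(\pt L)$. The analogous computation, with suprema in place of infima and unions in place of intersections (again leveraging that points preserve $\mathcal J$-indexed suprema), handles the $\mathcal J$-indexed suprema. The finite cases of these are already covered by Lemma~\ref{lemma:kow:basic}(3,4).

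\textbf{Step 2 (continuity, in fact with equality).} Given $\F \in \Filt \pow(\pt L)$, unwinding the definition of preimage filter gives
\[
  \epsilon_L^{-1}(\F) = \{\ell \in L \mid \epsilon_L(\ell) \in \F\}
  = \{\ell \in L \mid \ell^\bullet \in \F\}
  = \kow{\F}.
\]
Then by Definition~\ref{defn:pt:lim} and the definition of $\epsilon_L$,
\[
  \lm_{\pow(\pt L)} \F = (\lm_L \kow{\F})^\bullet
  = \epsilon_L(\lm_L \kow{\F})
  = \epsilon_L\bigl(\lm_L \epsilon_L^{-1}(\F)\bigr).
\]
Thus \eqref{eq:morphism} holds with equality, which simultaneously establishes that $\epsilon_L$ is a morphism in $\Cv\catc$ and that it is final.

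The only step with any real content is Step~1, and even there the difficulty evaporates: preservation of arbitrary infima/suprema by $\epsilon_L$ is nothing more than the statement that each point $\varphi \in \pt L$ preserves those operations, which is built into the definition of a point as a morphism in $\catc$. Step~2 is then a direct computation that not only yields continuity but upgrades it to equality, hence finality, without extra work.
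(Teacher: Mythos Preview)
Your proof is correct and follows essentially the same approach as the paper's own proof: both verify that $\epsilon_L$ is a $\catc$-morphism by using admissibility and the fact that points preserve the relevant infima and suprema, and both establish finality by observing that $\epsilon_L^{-1}(\F)=\kow\F$ and hence $\lm_{\pow(\pt L)}\F=\epsilon_L(\lm_L\epsilon_L^{-1}(\F))$ directly from Definition~\ref{defn:pt:lim}.
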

\begin{proof}
  Consider the classes $\mathcal I$ and $\mathcal J$ posited in
  Definition~\ref{defn:adm}.  By the same argument as for
  Lemma~\ref{lemma:kow:basic}~(3) and~(4), we show that $\epsilon_L$
  preserves $I$-indexed infima, $I \in \mathcal I$, and $J$-indexed
  suprema, $J \in \mathcal J$.  Explicitly,
  ${(\bigwedge_{i \in I} \ell_i)}^\bullet$ is the set of points
  $\varphi$ such that
  $\varphi (\bigwedge_{i \in I} \ell_i) = \bigwedge_{i \in I} \varphi
  (\ell_i)$ is equal to $1$ (since $\varphi$ preserves $I$-indexed
  infima), namely such that $\varphi (\ell_i)=1$ for every $i \in I$,
  and this is the set $\bigcap_{i \in I} \ell_i^\bullet$.  As far as
  suprema are concerned, the argument is similar, and uses the fact
  that a supremum of elements in the two-element lattice $\pow (1)$ is
  equal to $1$ if and only if one of those elements is equal to $1$.
  Therefore $\epsilon_L$ is a morphism in $\catc$.

  As far as continuity is concerned, let $\F$ be a filter of subsets
  of $\pt L$.  Noticing that $\kow\F = \epsilon_L^{-1} (\F)$, we have:
  \[
    \lm_{\pow (\pt L)} \F = {(\lm_L \kow \F)}^\bullet
    = \epsilon_L (\lm_L \epsilon_L^{-1} (\F))
  \]
  which shows that $\epsilon_L$ is not just continuous but final.
  % on veut que \ell \mapsto \{\varphi \mid \varphi (\ell)=1\} soit un morphisme
\end{proof}

By general categorical arguments (e.g., \cite[Chap. IV, Sec. 1, Theorem 2]{mac1971category}), one can define an adjoint pair of
functors $F \dashv U$ by specifying a functor $U$, a family of
morphisms $\epsilon_A \colon A \to UF (A)$ for each object $A$, in
such a way that for every morphism $f \colon A \to U (B)$, there is a
unique morphism $f^\dagger \colon F (A) \to B$ such that
$U (f^\dagger) \circ \epsilon_A = f$.  Here $U = \pow$, $F = \pt$,
$\epsilon_L$ maps $\ell$ to $\ell^\bullet$ and is a morphism by
Lemma~\ref{lemma:epsilon}, so that Proposition~\ref{prop:pt:univ}
defines an adjoint pair of functors $\pt \dashv \pow$ between
$\Conv^{op}$ and $\Cv\catc$.

We take the opposite categories, and obtain a dual adjunction where
$\pt$ and $\pow$ are exchanged.
\begin{thm}
  \label{thm:A-|pt}
  Let $\catc$ be an admissible category of lattices.  Then
  $\pow\dashv\pt$ is an adjunction between $\Conv$ and
  ${(\Cv\catc)}^{op}$.
\end{thm}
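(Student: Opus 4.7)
The plan is to package together Proposition~\ref{prop:pt:univ} and Lemma~\ref{lemma:epsilon} via the universal-arrow characterization of adjunctions (Mac~Lane, Chap.~IV, \S1, Theorem~2), as the paragraph immediately preceding the theorem already suggests. Essentially all the substantive work has been done in those two results, and the theorem is really a formal consequence; I would therefore write the proof as a verification that the hypotheses of that criterion are met, followed by a passage to opposite categories.

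First, I would regard $\pow$ from Proposition~\ref{prop:Conv->Foc} as a functor $\Conv^{op} \to \Cv\catc$ (a mere relabelling). For each convergence $\catc$-object $L$, Lemma~\ref{lemma:epsilon} supplies a $\Cv\catc$-morphism $\epsilon_L \colon L \to \pow(\pt L)$ sending $\ell$ to $\ell^\bullet$, which I would put forward as the candidate unit.

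Second, I would verify the universal property using Proposition~\ref{prop:pt:univ}: given any $\Cv\catc$-morphism $\varphi \colon L \to \pow(X)$, the continuous map $\varphi^\dagger \colon X \to \pt L$ it produces is the unique map satisfying $\pow(\varphi^\dagger)(\ell^\bullet) = \varphi(\ell)$ for all $\ell \in L$, a relation I would rewrite as $\pow(\varphi^\dagger) \circ \epsilon_L = \varphi$. This is precisely the universal-arrow condition. Mac~Lane's theorem then automatically produces a left adjoint $\pt \colon \Cv\catc \to \Conv^{op}$ to $\pow$, with $\epsilon$ as its unit, and simultaneously promotes $\pt$ to a functor; no separate functoriality check is needed.

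Third and last, I would take opposite categories on both sides: an adjunction $\pt \dashv \pow$ between $\Cv\catc$ and $\Conv^{op}$ is the same data as an adjunction $\pow \dashv \pt$ between $\Conv$ and $(\Cv\catc)^{op}$, which is the statement of the theorem. The only thing to keep an eye on is that the direction-reversals are consistent: $\varphi \colon L \to \pow(X)$ lives in $\Cv\catc$ while $\varphi^\dagger \colon X \to \pt L$ lives in $\Conv$, and upon passing to $\Conv^{op}$ the latter becomes a morphism $\pt L \to X$, which is exactly what the universal-arrow criterion demands. This variance bookkeeping is the only thing that could plausibly trip one up, but it is purely formal rather than a real obstacle.
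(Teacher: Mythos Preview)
Your proposal is correct and follows exactly the approach the paper takes: it invokes Mac~Lane's universal-arrow criterion with $U=\pow\colon\Conv^{op}\to\Cv\catc$, uses $\epsilon_L$ from Lemma~\ref{lemma:epsilon} as the unit, appeals to Proposition~\ref{prop:pt:univ} for the universal property, and then passes to opposite categories. The paper's ``proof'' is in fact the paragraph immediately preceding the theorem, which you have faithfully unpacked.
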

Referring again to Proposition~\ref{prop:pt:univ}, the unit
$\epsilon_L \colon  L \to  \pow (\pt L)$ (defined by $\epsilon_L(\ell)=\ell^\bullet$)  of
the dual adjunction $\pt\dashv\pow$ in the opposite categories is the
\emph{counit} of the adjunction $\pow\dashv\pt$.

In our recollection of adjoint pairs $F \dashv U$, we required $U$ to
be a functor, but not $F$.  It is a fact that $F$ automatically gives
rise to a functor in the converse direction, whose action on morphisms
$\varphi \colon A \to A'$ is given by
$F (\varphi) = {(\epsilon_{A'} \circ \varphi)}^\dagger$.   As a result:
\begin{lem}
 Let $\catc$ be an admissible category of lattices, and let $\varphi \colon L \to L'$ be a morphism in $\Cv\catc$. Then	
 \[\pt\varphi = \varphi^{-1}.\]
\end{lem}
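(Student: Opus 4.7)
The plan is to unravel the definitions directly, using the universal property of Proposition~\ref{prop:pt:univ} together with the general adjunction recipe recalled just before the lemma.

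First, I would recall that the adjunction recipe forces $\pt\varphi = (\epsilon_{L'} \circ \varphi)^\dagger$, where $\epsilon_{L'} \colon L' \to \pow(\pt L')$ sends $\ell'$ to ${\ell'}^\bullet$. By Proposition~\ref{prop:pt:univ} applied to $\epsilon_{L'} \circ \varphi \colon L \to \pow(\pt L')$, $\pt\varphi \colon \pt L' \to \pt L$ is the unique continuous map such that, for every $\ell \in L$,
\[
\pow(\pt\varphi)(\ell^\bullet) \;=\; (\epsilon_{L'} \circ \varphi)(\ell) \;=\; \varphi(\ell)^\bullet.
\]

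Next, I would evaluate both sides pointwise. For a point $\theta \in \pt L'$, we have $\theta \in \pow(\pt\varphi)(\ell^\bullet) = (\pt\varphi)^{-1}(\ell^\bullet)$ iff $(\pt\varphi)(\theta)(\ell) = 1$, and $\theta \in \varphi(\ell)^\bullet$ iff $\theta(\varphi(\ell)) = 1$. Since this holds for every $\ell \in L$, it follows that $(\pt\varphi)(\theta) = \theta \circ \varphi$ as maps $L \to \pow(1)$.

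Finally, I would translate through Remark~\ref{rem:point}, which identifies a point $\theta$ with the filter $\theta^{-1}(\{1\})$. Under this identification, the filter attached to $(\pt\varphi)(\theta) = \theta \circ \varphi$ is
\[
(\theta \circ \varphi)^{-1}(\{1\}) \;=\; \varphi^{-1}\bigl(\theta^{-1}(\{1\})\bigr),
\]
which is exactly $\varphi^{-1}$ applied to the filter attached to $\theta$. Hence $\pt\varphi = \varphi^{-1}$ in the sense of Remark~\ref{rem:point}, as claimed.

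There is no real obstacle here; the only care required is to keep the contravariance of $\pt$ straight and to make the identification of points with filters on the nose before asserting the equality.
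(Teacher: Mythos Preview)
Your proof is correct and follows essentially the same route as the paper: both start from $\pt\varphi = (\epsilon_{L'}\circ\varphi)^\dagger$, unfold to obtain $(\pt\varphi)(\theta) = \theta\circ\varphi$, and then translate through the identification of points with filters from Remark~\ref{rem:point}. The only cosmetic difference is that the paper invokes the explicit formula~(\ref{eq:dagger}) for $\varphi^\dagger$ directly, while you go through the characterizing equation $\pow(\pt\varphi)(\ell^\bullet)=\varphi(\ell)^\bullet$ from Proposition~\ref{prop:pt:univ}; these amount to the same computation.
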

\begin{proof}
In view of the discussion above,  $\pt \varphi \colon \pt L' \to \pt L$ is equal to
${(\epsilon_{L'} \circ \varphi)}^\dagger$. By (\ref{eq:dagger}), for
every $\psi \in \pt {L'}$ and for every $\ell \in L$,
$\pt \varphi (\psi) (\ell) = 1 $ if and only if
$\psi \in \epsilon_{L'} (\varphi (\ell)) = \varphi (\ell)^\bullet$, if
and only if $\psi (\varphi (\ell))=1$, from which we deduce that
$\pt \varphi (\psi) = \psi \circ \varphi$.  If we equate points
$\psi \in \pt {L'}$ with filters $\F := \psi^{-1} (\{1\})$ as in
Remark~\ref{rem:point}, we obtain that
$\pt\varphi (\F) = \varphi^{-1} (\F)$, that is,
$\pt\varphi = \varphi^{-1}$.
\end{proof}

The \emph{unit} $\eta_X \colon X \to \pt\pow(X)$ of the adjunction is
$\identity {\pow (X)}^\dagger$.  Recall that
$\varphi^\dagger (x) = \pow (\overline x) \circ \varphi$ for any
$\varphi$, so $\eta_X$ maps each $x \in X$ to
$\pow (\overline x) \in \pt\pow (X)$.  Equating points with certain
filters as in Remark~\ref{rem:ptpow}, $\eta_X$ simply maps $x$ to the
principal filter $\dot x$.

The map $\eta_X$ has additional properties.
\begin{lem}
  \label{lemma:eta}
  Let $\catc$ be an admissible category of lattices.  The map
  $\eta_X \colon X \to \pt (\pow X)$ is injective and initial.  It is
  an isomorphism if $\catc$ is an admissible category of frames or of
  coframes.
\end{lem}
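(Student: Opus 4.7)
The plan is to verify the three claims in turn, using that $\eta_X$ sends $x$ to the point of $\pow(X)$ corresponding (in the sense of Remark~\ref{rem:point}) to the filter $\dot x$, i.e.\ to the map $S \mapsto 1$ if $x \in S$, $\emptyset$ otherwise.

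\emph{Injectivity.} Given distinct points $x, y \in X$, take $S = \{x\} \in \pow(X)$. Then $\eta_X(x)(S) = 1$ but $\eta_X(y)(S) = \emptyset$, so $\eta_X(x) \neq \eta_X(y)$. (Nothing about $\catc$ is used here.)

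\emph{Initiality.} Since $\eta_X$ is continuous by Proposition~\ref{prop:pt:univ}, it suffices to show the reverse implication $\eta_X[\F] \to \eta_X(x) \Rightarrow \F \to x$. The key computation is $\kow{\eta_X[\F]} = \F$. Indeed, for $S \in \pow(X)$, the set $S^{\bullet} \subseteq \pt\pow(X)$ consists of those $\varphi$ with $\varphi(S)=1$; so $\eta_X^{-1}(S^\bullet) = \{x \in X \mid \eta_X(x)(S)=1\} = S$. Hence $S \in \kow{\eta_X[\F]}$ iff $S^\bullet \in \eta_X[\F]$ iff $\eta_X^{-1}(S^\bullet) = S \in \F$. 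By Definition~\ref{defn:pt:lim}, $\eta_X[\F] \to \eta_X(x)$ means $\eta_X(x) \in (\lm_{\pow(X)} \kow{\eta_X[\F]})^\bullet = (\lm_{\pow(X)} \F)^\bullet$, which by the defining formula for $\eta_X(x)$ is equivalent to $x \in \lm_{\pow(X)} \F$, i.e.\ $\F \to x$.

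\emph{Isomorphism for frames or coframes.} It remains to prove $\eta_X$ surjective; combined with bijectivity and initiality, this gives an isomorphism in $\Conv$. In the coframe case, Remark~\ref{rem:ccf:ptpow} identifies $\pt\pow(X)$ with the set of join-primes $\{x\}$ of $\pow(X)$ satisfying the Point Axiom, which are in natural bijection with $X$; under the identification of points with filters via Remark~\ref{rem:point}, the join-prime $\{x\}$ corresponds to the filter $\upc\{x\} = \dot x$, i.e.\ exactly $\eta_X(x)$. In the frame case, Remark~\ref{rem:cf:ptpow} identifies $\pt\pow(X)$ with the meet-primes $\complement\{x\}$ satisfying the Point Axiom; the associated completely prime filter is $\complement(\dc \complement\{x\}) = \{S \subseteq X \mid x \in S\} = \dot x$, which again is $\eta_X(x)$. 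Either way every point of $\pow(X)$ lies in the image of $\eta_X$.

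The only substantial step is the identity $\kow{\eta_X[\F]} = \F$ used for initiality; once that is in hand, the rest is a matter of unwinding Remarks~\ref{rem:cf:ptpow} and~\ref{rem:ccf:ptpow} to check that the bijection they provide is precisely $\eta_X$.
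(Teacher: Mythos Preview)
Your proof is correct and follows essentially the same route as the paper: both hinge on the computation $\eta_X^{-1}(S^\bullet)=S$ to obtain $\kow{\eta_X[\F]}=\F$, and both invoke Remarks~\ref{rem:cf:ptpow} and~\ref{rem:ccf:ptpow} to deduce surjectivity in the frame and coframe cases. The only cosmetic difference is that the paper phrases initiality directly as the biconditional $\eta_X[\F]\to\eta_X(x)\iff \F\to x$, whereas you split off the forward implication as continuity via Proposition~\ref{prop:pt:univ}.
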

\begin{proof}
  We equate points of $\pow (X)$ with certain filters $\U$, such that
  $\lm_{\pow (X)} \U \in \U$, following Remark~\ref{rem:point}.  Then
  $\eta_X (x) = \dot x$, and $\ell^\bullet = \{\U \in \pt \pow (X)
  \mid \ell \in \U\}$ for every $\ell \in \pow (X)$.  That $\eta_X$ is
  injective is clear.
  % We know that $\eta_X$
  % is continuous, being a morphism in $\Conv$.
  To establish initiality, let $\F \in \Filt\pow (X)$ be arbitrary,
  and compute $\kow {(\eta_X [\F])}$: this is the set of elements
  $\ell \in \pow (X)$ such that $\ell^\bullet \in \eta_X [\F]$, i.e.,
  such that $\eta_X^{-1} (\ell^\bullet) \in \F$.  However,
  \begin{eqnarray}
    \nonumber
    \eta_X^{-1} (\ell^\bullet)
    & = & \{x \in X \mid \dot x \in \ell^\bullet\} \\    
    & = & \{x \in X \mid \ell \in \dot x\} \\
    & = & \{x \in X \mid x \in \ell\} = \ell.\nonumber
    \label{eq:eta:bullet}
  \end{eqnarray}
  Therefore $\kow {(\eta_X [\F])}$ is just $\F$.  It follows that
  \[\lm_{\pow (\pt\pow (X))} \eta_X [\F] = {(\lm_{\pow (X)} \kow
    {(\eta_X [\F])})}^\bullet = {(\lm_{\pow (X)} \F)}^\bullet.\]  In
  particular, for every $x \in X$, $\eta_X [\F]$ converges to
  $\eta_X (x)$ if and only if
  $\eta_X (x) \in {(\lm_{\pow (X)} \F)}^\bullet$, if and only if
  $x \in \lm_{\pow (X)} \F$ (using (\ref{eq:eta:bullet})), if and only
  if $\F$ converges to $x$ in $X$.

  If $\catc$ is an admissible category of frames, by
  Remark~\ref{rem:cf:point} and Remark~\ref{rem:cf:ptpow}, the only
  points of $\pt (\pow X)$ are the principal filters $\dot x$, so that
  $\eta_X$ is surjective, hence an isomorphism, in this case.  In the
  case of coframes, we use Remark~\ref{rem:ccf:point} and
  Remark~\ref{rem:ccf:ptpow} instead.
\end{proof}

A functor that has a fully faithful left adjoint is called a
coreflection.  It is well-known that a coreflection is also a right
adjoint functor whose unit is an isomorphism: see
\cite[Proposition~1.3]{GZ:fractions}, where this is stated for the
dual case of reflections.  We note that the proof does not make use of
the Axiom of Choice.

\begin{cor}
  \label{corl:corefl}
  Let $\catc$ be an admissible category of frames, or of coframes.
  $\Conv$ is a coreflective subcategory of ${(\Cv\catc)}^{op}$,
  through the coreflection $\pt$.
\end{cor}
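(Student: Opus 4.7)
The plan is short: combine Theorem \ref{thm:A-|pt}, Lemma \ref{lemma:eta}, and the general categorical fact recalled just before the statement. Theorem \ref{thm:A-|pt} already supplies the adjunction $\pow \dashv \pt$ between $\Conv$ and ${(\Cv\catc)}^{op}$, valid for any admissible category $\catc$ of lattices. So the remaining content of the corollary is purely the identification of this adjunction as a coreflective situation under the stronger hypothesis on $\catc$.

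First I would specialise to $\catc$ an admissible category of frames or of coframes, and invoke Lemma \ref{lemma:eta} to upgrade the unit $\eta_X \colon X \to \pt\pow(X)$ from a merely injective and initial map (the general case) to an isomorphism of convergence spaces, for every object $X$ of $\Conv$. Since $\eta$ was already established to be a natural transformation as part of the proof of Theorem \ref{thm:A-|pt}, this gives a natural isomorphism $\identity{\Conv} \cong \pt \circ \pow$.

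Then I would apply the fact recalled in the paragraph preceding the statement: whenever the unit of an adjunction $F \dashv U$ is a natural isomorphism, $F$ is fully faithful and $U$ is, by definition, a coreflection onto the essential image of $F$. Applied with $F = \pow$ and $U = \pt$, this immediately yields that $\pow$ is fully faithful, hence embeds $\Conv$ as a full subcategory of ${(\Cv\catc)}^{op}$, with coreflector $\pt$. I do not anticipate any real obstacle; the one piece of bookkeeping worth flagging is the choice-free character of the argument, which is inherited from \cite[Proposition~1.3]{GZ:fractions} and from the fact that every ingredient used above (Theorem \ref{thm:A-|pt}, Lemma \ref{lemma:eta}, and the relevant remarks on frames and coframes) has been obtained without appeal to the Axiom of Choice.
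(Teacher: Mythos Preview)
Your proposal is correct and follows exactly the paper's own route: the corollary is obtained by combining Theorem~\ref{thm:A-|pt} with Lemma~\ref{lemma:eta} (which makes the unit $\eta_X$ an isomorphism in the frame/coframe case) and the general categorical fact from \cite[Proposition~1.3]{GZ:fractions} recalled just before the statement. There is nothing to add.
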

We have just observed that $\pow \colon \Conv \to {(\Cv\catc)}^{op}$
is fully faithful in that case, allowing us to equate convergence
spaces $X$ with the convergence (co)frames $\pow (X)$.

\subsection{Classical convergence lattices}
\label{sec:class-conv-struct}

We shall progressively discover the importance of complemented
elements of lattices $L$.  Recall that an element $\ell$ of $L$ is
\emph{complemented} if and only if it has a \emph{complement}
$\overline\ell$, namely an element such that
$\ell \vee \overline\ell = \top$ and
$\ell \wedge \overline\ell = \bot$.  If $L$ is distributive, the
complement is unique if it exists, and
$\overline{\overline\ell} = \ell$. Let us denote by $\C_L$ the set of complemented elements of a lattice $L$.

\begin{defn}[Classical]
  \label{defn:classical}
  Let $\catc$ be a category of lattices.  A convergence $\catc$-object
  $L$ is \emph{classical} if and only if, for  two filters  $L$ that contain the same complemented elements have the same limit, that is, for every two filters $\F$ and $\G$ on $L$,
  \[\F\cap \C_L=\G\cap\C_L\then\lm_L \F = \lm_L \G.\]

  We write $\Cvcl\catc$ for the full subcategory of classical
  convergence $\catc$-objects in $\Cv\catc$.
\end{defn}
In other words, $\lm_L$ is classical if and only if $\lm_L \F$ only
depends on the complemented elements of $\F$.

For every upwards-closed subset $\A$ of $L$, there is a smallest
upwards-closed set $(\A)_c$ that contains the same complemented
elements as $\A$, namely:
\begin{equation}
  \label{eq:c}
  (\A)_c := \upc (\A\cap\C_L).
\end{equation}
If $\A$ is a filter $\F$, then $(\F)_c$ is a filter again, and $(\F)_c\subset \F$, hence $\lm_L (\F)_c\leq \lm_L\F$ if $L$ is a convergence $\catc$-object. Classical convergent $\catc$-objects are those for which the reverse inequality is true:
\begin{lem}
  \label{lemma:class}
  Let $\catc$ be a category of lattices.  A convergence $\catc$-object
  $L$ is classical if and only if, for every filter $\F$ on $L$,
  $\lm_L \F = \lm_L (\F)_c$ (equivalently,
  $\lm_L \F \leq \lm_L (\F)_c$).  \qed
\end{lem}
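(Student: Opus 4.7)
The plan is to reduce the whole statement to one small identity,
\[
(\F)_c \cap \C_L \;=\; \F \cap \C_L,
\]
valid for every filter $\F$ on $L$. The inclusion $\supseteq$ is immediate: any complemented element of $\F$ belongs to $\F \cap \C_L$, hence to $\upc(\F\cap\C_L) = (\F)_c$, and is still complemented. For $\subseteq$, an element $\ell \in (\F)_c \cap \C_L$ lies above some $\ell' \in \F \cap \C_L$; since $\F$ is upwards-closed, $\ell \in \F$, and $\ell$ is complemented by assumption, so $\ell \in \F \cap \C_L$.

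Before invoking the identity, I would dispose of the parenthetical ``equivalently'' clause. Since $\F \cap \C_L \subseteq \F$ and $\F$ is upwards-closed, $(\F)_c = \upc(\F\cap\C_L) \subseteq \F$. Monotonicity of $\lm_L$ then gives $\lm_L (\F)_c \leq \lm_L \F$ for free, so the equality $\lm_L \F = \lm_L (\F)_c$ is equivalent to the one-sided inequality $\lm_L \F \leq \lm_L (\F)_c$, as claimed.

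For the main biconditional, I would then argue as follows. For the forward direction, assume $L$ is classical. Applying the key identity, the filters $\F$ and $(\F)_c$ contain exactly the same complemented elements, so by classicality $\lm_L \F = \lm_L (\F)_c$. For the backward direction, assume $\lm_L \G = \lm_L (\G)_c$ for every filter $\G$, and let $\F,\G$ be filters with $\F \cap \C_L = \G \cap \C_L$. Directly from the definition \eqref{eq:c}, $(\F)_c = \upc(\F\cap\C_L) = \upc(\G\cap\C_L) = (\G)_c$, so the assumption yields
\[
\lm_L \F \;=\; \lm_L (\F)_c \;=\; \lm_L (\G)_c \;=\; \lm_L \G.
\]

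There is no real obstacle here; the proof is essentially formal, and the only bookkeeping is the key identity, which is itself a two-line verification. The fact that $(\F)_c$ is actually a filter (so that $\lm_L (\F)_c$ is defined at all) has already been recorded by the authors in the paragraph preceding the lemma, so no separate argument is needed.
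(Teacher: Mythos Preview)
Your proof is correct and is exactly the natural argument the paper has in mind; the paper itself omits the proof entirely (note the \qed immediately after the statement), having already observed just before the lemma that $(\F)_c\subseteq\F$ and hence $\lm_L(\F)_c\leq\lm_L\F$. Your key identity $(\F)_c\cap\C_L=\F\cap\C_L$ and the ensuing two-line biconditional are precisely what the authors deem self-evident.
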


If $L$ is a Boolean algebra, that is, if every element is
complemented, then every convergence structure on $L$ is classical.
This is the case in particular of the convergence structure on
$\pow (X)$, for every convergence space $X$.  Using
Theorem~\ref{thm:A-|pt} and Lemma~\ref{lemma:eta}, it follows that:
\begin{prop}
  \label{prop:A-|pt:c}
  Let $\catc$ be an admissible category of lattices.  The adjunction
  $\pow\dashv\pt$ restricts to an adjunction between $\Conv$ and
  ${(\Cvcl\catc)}^{op}$.  This is a coreflection if $\catc$ is an
  admissible category of frames or of coframes.  \qed
\end{prop}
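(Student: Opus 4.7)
The plan is to view this as a routine restriction of an adjunction to a full subcategory along the left adjoint. I would proceed in three short steps.

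First, I would observe that for every convergence space $X$, the convergence $\catc$-object $\pow(X)$ is classical: this is immediate from the sentence preceding the proposition, since $\pow(X)$ is a Boolean algebra so $\C_{\pow(X)}=\pow(X)$, and the classicality condition becomes vacuous. Hence the functor $\pow\colon\Conv\to(\Cv\catc)^{op}$ factors through the full subcategory $(\Cvcl\catc)^{op}$.

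Second, I would invoke the general fact that if $F\dashv G$ is an adjunction $\mathbf A\to\mathbf B$ and $\mathbf B'$ is a full subcategory of $\mathbf B$ through which $F$ factors, then $F'\dashv G\!\restriction_{\mathbf B'}$ is an adjunction between $\mathbf A$ and $\mathbf B'$, with the same unit and counit. Concretely: for every $X\in\Conv$ and every classical convergence $\catc$-object $L$, the natural bijection
\[
\Conv(X,\pt L)\;\cong\;(\Cv\catc)^{op}(\pow(X),L)
\]
from Theorem~\ref{thm:A-|pt} has its right-hand side equal to $(\Cvcl\catc)^{op}(\pow(X),L)$ by fullness. This yields the claimed restricted adjunction $\pow\dashv\pt$ between $\Conv$ and $(\Cvcl\catc)^{op}$.

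Finally, for the coreflection claim when $\catc$ is an admissible category of frames or of coframes, I would note that a left adjoint is fully faithful precisely when its unit is an isomorphism. The unit of the restricted adjunction is still $\eta_X\colon X\to\pt\pow(X)$, which by Lemma~\ref{lemma:eta} is an isomorphism under the stated hypothesis. Therefore $\pow\colon\Conv\to(\Cvcl\catc)^{op}$ is fully faithful, so $\pt$ is a coreflection.

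There is no genuine obstacle here; the only point worth a moment's attention is verifying that $\pow(X)$ is classical, which is the whole reason the restriction makes sense at all.
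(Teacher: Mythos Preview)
Your proof is correct and takes essentially the same approach as the paper: the paper's argument is the terse sentence preceding the proposition together with the $\qed$, namely that $\pow(X)$ is always classical (being Boolean) and hence Theorem~\ref{thm:A-|pt} and Lemma~\ref{lemma:eta} immediately yield the result. You have simply spelled out the standard restriction-of-adjunction argument that the paper leaves implicit.
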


\section{Completeness, Cocompleteness, and More}
\label{sec:compl-cocompl}

The category $\Frm$ of frames is complete and cocomplete
\cite[Section~IV.3]{framesandlocales}, and therefore so is the
category $\coFrm$ of coframes.  In order to extend this to $\Cv\Frm$
or $\Cv\coFrm$, or more generally to categories of the form
$\Cv\catc$, a swift approach is to show that the forgetful functor
$| \_ | \colon \Cv \catc \to \catc$, which maps any convergence
$\catc$-object $(L, \lm_L)$ to the underlying $\catc$-object $L$, is
topological (see \cite{AHS:joycats}, Chapter~VI, Section~21; we shall
recall the necessary elements as we progress along).  For that, we
shall see that $\catc$ had better be a category of coframes.

Given a $\catc$-object $L$, its \emph{fiber} along $|\_|$ is the
collection of convergence $\catc$-objects whose image by $|\_|$ is
equal to $L$.  Hence we can equate the fiber of $L$ with the set of
monotonic maps $\lm_L \colon L \to L$.  We call those maps the
\emph{convergence structures} on $L$.

Note that $|\_|$ is \emph{fiber-small}, in the sense that every fiber
is a set, not a proper class.  That set of convergence structures on
$L$ is ordered by the pointwise ordering: $\lm_L \leq \lm'_L$ if and
only if $\lm_L \F \leq \lm_{L'} \F$ for every $\F \in \Filt L$.  In
that case, we say that $\lm_L$ is \emph{finer than} $\lm_{L'}$, or
that $\lm_{L'}$ is \emph{coarser than} $\lm_L$
\cite[Definition~III.1.7]{DM:convergence}.  With that ordering, the
fiber of $L$ is a complete lattice.
% topology discrete (la plus fine): F -> x ssi {x} in F
% topology grossiere (la moins fine): F -> x ssi true (lm = top)

In the theory of topological functors, it is common usage to order the
fibers by $L \sqsubseteq L'$ if and only if the identity morphism on
$|L| = |L'|$ lifts to a morphism from $L$ to $L'$.  In view of
(\ref{eq:morphism}), $\sqsubseteq$ is the opposite of $\leq$.

The essence of topologicity is the following simple observation.  The
collection of indices $I$ is allowed to be any class, including a
proper class.
\begin{prop}
  \label{prop:coarsest}
  Let $\catc$ be a category of coframes, and $L$ be a $\catc$-object.
  Let also $\varphi_i \colon |L_i| \to L$ be morphisms of $\catc$,
  where each $L_i$ is a convergence $\catc$-object, $i \in I$.  There
  is a coarsest convergence structure $\lm_L$ on $L$ such that
  $\varphi_i$ is continuous for each $i \in I$.
\end{prop}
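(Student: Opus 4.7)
The plan is to define $\lm_L$ by pointwise infimum, namely
\[
\lm_L \F \;:=\; \bigwedge_{i \in I} \varphi_i\bigl(\lm_{L_i} \varphi_i^{-1}(\F)\bigr) \qquad (\F \in \Filt L),
\]
and to verify that this is a well-defined monotonic map, that every $\varphi_i$ is continuous with respect to it, and that it is the coarsest such. Since ``coarsest'' corresponds to largest in the pointwise order (the opposite of the $\sqsubseteq$ order used for topologicity), and the continuity condition \eqref{eq:morphism} for $\varphi_i\colon L_i\to L$ reads $\lm_L\F \leq \varphi_i(\lm_{L_i}\varphi_i^{-1}(\F))$, it is clear \emph{a priori} that the pointwise infimum of the available upper bounds is the right candidate, so the only question is whether that candidate is admissible.

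First, I would check that the construction makes sense. Because each $\varphi_i$ is a coframe morphism, it is monotonic, preserves finite meets, and preserves the top element; a direct check shows that $\varphi_i^{-1}(\F)$ is then a filter on $L_i$, so $\lm_{L_i}\varphi_i^{-1}(\F)$ is defined. The infimum on the right-hand side is an infimum of elements of $L$, hence of a set (even if $I$ is a proper class), and it exists since $L$ is a coframe and therefore complete. Monotonicity of $\lm_L$ is immediate: if $\F \subseteq \G$, then $\varphi_i^{-1}(\F) \subseteq \varphi_i^{-1}(\G)$ for each $i$, and monotonicity of $\lm_{L_i}$ together with monotonicity of $\varphi_i$ passes the inequality through $\varphi_i(\lm_{L_i}(\cdot))$; the infimum over $i$ then respects $\leq$.

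Second, each $\varphi_j$ is continuous essentially by construction: the bound $\lm_L \F \leq \varphi_j(\lm_{L_j}\varphi_j^{-1}(\F))$ holds because the right-hand side is one of the terms of the infimum defining $\lm_L\F$. Finally, for the maximality (coarsest) claim, suppose $\lm'_L$ is any convergence structure on $L$ making every $\varphi_i$ continuous. Then for each $\F \in \Filt L$ and each $i\in I$ we have $\lm'_L \F \leq \varphi_i(\lm_{L_i}\varphi_i^{-1}(\F))$; taking the infimum over $i$ gives $\lm'_L \F \leq \lm_L \F$, so $\lm'_L \leq \lm_L$ pointwise, as required.

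There is no real obstacle here; the point is to identify the one ingredient that forces the hypothesis on $\catc$. Arbitrary infima of the required family must exist in $L$, and inverse images under $\varphi_i$ must land in $\Filt L_i$, both of which use that $\catc$ is a category of coframes (arbitrary infima exist and are preserved, finite suprema are preserved, so top is preserved and binary meets interact correctly with $\varphi_i^{-1}$). The argument would not go through if one only had, say, a category of lattices without arbitrary infima.
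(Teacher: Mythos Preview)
Your proposal is correct and follows exactly the same approach as the paper: define $\lm_L \F := \bigwedge_{i \in I} \varphi_i(\lm_{L_i}\varphi_i^{-1}(\F))$ and observe that the infimum exists because $L$ is a coframe. The paper's proof is in fact much terser than yours---it essentially states the formula and notes the existence of infima---so your version supplies details (well-definedness of $\varphi_i^{-1}(\F)$ as a filter, monotonicity, the set-versus-class issue for the infimum) that the paper leaves implicit.
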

\begin{proof}
  By definition, $\varphi_i$ is continuous if and only if $\lm_L \F
  \leq \varphi_i (\lm_{L_i} \varphi_i^{-1} (\F))$ for every $\F \in
  \Filt L$.  The required map is then defined by:
  \begin{equation}
    \label{eq:coarsest}
    \lm_L \F := \bigwedge_{i \in I} \varphi_i (\lm_{L_i} \varphi_i^{-1} (\F)).
  \end{equation}
  Note that arbitrary infima exist because $L$ is a coframe.
\end{proof}

\begin{prop}
  \label{prop:coarsest:c}
  Under the same assumptions as Proposition~\ref{prop:coarsest}, if
  each $L_i$ is classical, then $(L, \lm_L)$ is classical.
\end{prop}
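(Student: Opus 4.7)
My plan is to reduce the claim to showing, for any two filters $\F, \G \in \Filt L$ with $\F \cap \C_L = \G \cap \C_L$, that the pullbacks $\varphi_i^{-1}(\F)$ and $\varphi_i^{-1}(\G)$ share the same complemented elements in each $L_i$. Once this is done, the classicality of every $L_i$ gives $\lm_{L_i}\varphi_i^{-1}(\F) = \lm_{L_i}\varphi_i^{-1}(\G)$, and the defining formula $\lm_L \F = \bigwedge_{i \in I} \varphi_i (\lm_{L_i} \varphi_i^{-1} (\F))$ from (\ref{eq:coarsest}) immediately yields $\lm_L \F = \lm_L \G$, which is what is required.

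The key observation that makes the pullback argument go through is that each morphism $\varphi_i \colon L_i \to L$ of coframes preserves finite suprema and all infima, in particular binary meets and joins as well as $\top$ (the empty infimum) and $\bot$ (the empty supremum). Therefore, if $\ell \in \C_{L_i}$ has complement $\overline\ell$, then
\[
\varphi_i(\ell) \vee \varphi_i(\overline\ell) = \varphi_i(\top) = \top, \qquad \varphi_i(\ell) \wedge \varphi_i(\overline\ell) = \varphi_i(\bot) = \bot,
\]
so $\varphi_i(\ell) \in \C_L$. Consequently, for any $\ell \in \C_{L_i}$, the membership $\ell \in \varphi_i^{-1}(\F)$ is equivalent to $\varphi_i(\ell) \in \F \cap \C_L$, and similarly for $\G$. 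From $\F \cap \C_L = \G \cap \C_L$ we thus deduce $\varphi_i^{-1}(\F) \cap \C_{L_i} = \varphi_i^{-1}(\G) \cap \C_{L_i}$ for every $i \in I$.

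Concretely, the steps are: (i) recall from Lemma~\ref{lemma:class} that classicality is exactly the implication $\F \cap \C_L = \G \cap \C_L \Rightarrow \lm_L \F = \lm_L \G$; (ii) note that $\varphi_i$ maps $\C_{L_i}$ into $\C_L$ by the coframe-morphism properties above; (iii) use this to conclude that $\varphi_i^{-1}$ sends filters with the same complemented content to filters with the same complemented content; (iv) apply the hypothesis that each $L_i$ is classical to conclude the limits coincide downstairs; (v) push forward through $\varphi_i$ and take the infimum over $i$ using (\ref{eq:coarsest}).

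I do not foresee any serious obstacle: the only substantive input is that coframe morphisms preserve complemented elements, which is a direct consequence of their preservation of $\top$, $\bot$, $\wedge$, and $\vee$. Everything else is a bookkeeping exercise that slots neatly into the formula defining the coarsest convergence structure.
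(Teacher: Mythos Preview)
Your proposal is correct and follows essentially the same argument as the paper: both show that $\varphi_i$ maps complemented elements of $L_i$ to complemented elements of $L$ (using preservation of $\top$, $\bot$, $\wedge$, $\vee$), deduce that $\varphi_i^{-1}(\F)$ and $\varphi_i^{-1}(\G)$ share the same complemented elements whenever $\F$ and $\G$ do, apply classicality of each $L_i$, and conclude via formula~(\ref{eq:coarsest}). One tiny remark: the implication you need in step~(i) is already the \emph{definition} of classicality (Definition~\ref{defn:classical}), so you need not invoke Lemma~\ref{lemma:class}.
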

\begin{proof}
  Assume $\F$ and $\G$ contain the same complemented objects.  We note
  that, for every complemented element $\ell$ of $L_i$, $i \in I$, we
  have
  $\varphi_i (\ell) \vee \varphi_i (\overline\ell) = \varphi_i (\ell
  \vee \overline\ell) = \varphi_i (\top) = \top$, and similarly
  $\varphi_i (\ell) \wedge \varphi_i (\overline\ell) = \bot$, so
  $\varphi_i (\ell)$ is complemented.  For each $i \in I$,
  $\varphi_i^{-1} (\F)$ and $\varphi_i^{-1} (\G)$ then have the same
  complemented elements: if $\ell$ is complemented in
  $\varphi_i^{-1} (\F)$, then $\varphi_i (\ell)$ is complemented and
  in $\F$, hence in $\G$, so $\ell$ is in $\varphi_i^{-1} (\G)$.  It
  follows that
  $\lm_{L_i} \varphi_i^{-1} (\F) = \lm_{L_i} \varphi_i^{-1} (\G)$,
  since $L_i$ is classical.  From (\ref{eq:coarsest}),
  $\lm_L \F = \lm_L \G$.
\end{proof}

Categorically, a family ${(\varphi_i \colon |L_i| \to L)}_{i \in I}$
of morphisms with a fixed $L$ is a \emph{sink}.  A \emph{lift} of that
sink if an $I$-indexed family of morphisms whose images by $|\_|$
coincide with $\varphi_i$.  In our case, this is the same thing as a
convergence structure $\lm_L$ on $L$ making every $\varphi_i$
continuous.  Such a lift is \emph{final} if and only if, for every
$\catc$-morphism $\psi \colon L \to |L'|$, $\psi$ is continuous from
$(L, \lm_L)$ to $L'$ if and only if $\psi \circ \varphi_i$ is
continuous for every $i \in I$.

\begin{cor}
  \label{corl:coarsest}
  Let $\catc$ be a category of coframes.  Then
  $|\_| \colon \Cv\catc \to \catc$ is topological: every sink
  ${(\varphi_i \colon |L_i| \to L)}_{i \in I}$ has a unique final
  lift, and this is $\lm_L$, as given in
  Proposition~\ref{prop:coarsest}.  Similarly for the restriction of
  $|\_|$ to $\Cvcl\catc$.
\end{cor}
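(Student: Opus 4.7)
The plan is to verify the three ingredients of topologicity: that $\lm_L$ from Proposition~\ref{prop:coarsest} is a lift of the sink, that it is final, and that it is the unique such final lift. The first is already contained in Proposition~\ref{prop:coarsest}, which tells us that $\lm_L$ defined by (\ref{eq:coarsest}) is the coarsest convergence structure on $L$ for which each $\varphi_i$ is continuous, so in particular all $\varphi_i$ lift to morphisms in $\Cv\catc$.

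For finality, I would show that for any $\catc$-morphism $\psi \colon L \to |L'|$, continuity of $\psi$ is equivalent to continuity of every $\psi \circ \varphi_i$. The forward implication is immediate since continuity is preserved under composition (and $\varphi_i$ is continuous for $\lm_L$). For the converse, the key computation is to exploit that $\psi$, being a morphism in $\catc$ (a category of coframes), preserves all infima. Then for any $\F \in \Filt L'$,
\[
\psi(\lm_L \psi^{-1}(\F)) = \psi\Bigl(\bigwedge_{i \in I} \varphi_i(\lm_{L_i} \varphi_i^{-1}(\psi^{-1}(\F)))\Bigr) = \bigwedge_{i \in I} (\psi \circ \varphi_i)(\lm_{L_i} (\psi \circ \varphi_i)^{-1}(\F)),
\]
and by continuity of each $\psi \circ \varphi_i$, every term on the right majorizes $\lm_{L'} \F$, hence so does their infimum. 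This establishes (\ref{eq:morphism}) for $\psi$.

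For uniqueness, suppose $\lm_L$ and $\lm'_L$ are both final lifts of the sink. Apply the finality of $\lm_L$ to $\psi = \identity L \colon (L, \lm_L) \to (L, \lm'_L)$: since each $\varphi_i$ is continuous as a map into $(L, \lm'_L)$, $\psi \circ \varphi_i = \varphi_i$ is continuous, and so $\psi$ is continuous. Unfolding (\ref{eq:morphism}) for $\psi$ gives $\lm'_L \F \leq \lm_L \F$ for every $\F$. By symmetry we obtain equality, hence uniqueness.

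For the classical case, the same argument works verbatim: the coarsest lift is classical by Proposition~\ref{prop:coarsest:c}, and the identity-morphism uniqueness argument stays within $\Cvcl\catc$ since both candidate lifts lie there. The main conceptual point throughout is that the definition (\ref{eq:coarsest}) of $\lm_L$ as an infimum interacts cleanly with arbitrary-infimum-preserving morphisms out of $L$; this is exactly why restricting to coframes (as opposed to arbitrary lattices) is both natural and essential. I expect no serious obstacle beyond this observation.
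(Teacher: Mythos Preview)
Your proposal is correct and follows essentially the same approach as the paper: the key finality computation (pushing $\psi$ through the infimum in (\ref{eq:coarsest}) using that coframe morphisms preserve arbitrary infima) is identical. The only minor difference is that the paper handles uniqueness by invoking an external reference (the dual of \cite[Proposition~10.43]{AHS:joycats}, to the effect that any final lift must coincide with the coarsest structure), whereas you give the standard direct symmetry argument with $\psi = \identity L$; both are fine.
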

\begin{proof}
  In view of  (the dual of) \cite[Proposition 10.43]{AHS:joycats},  if it has a final lift then it is
    the coarsest convergence structure $\lm_L$ of
  Proposition~\ref{prop:coarsest}, and is therefore unique.

  To show existence, we check that
  $(L, \lm_L)$ is a final lift of
  ${(\varphi_i \colon |L_i| \to L)}_{i \in I}$.  Let
  $\psi \colon L \to |L'|$ be such that $\psi \circ \varphi_i$ is
  continuous for every $i \in I$.  We aim to show that $\psi$ is
  continuous from $(L, \lm_L)$ to $L'$, and for that we consider an
  arbitrary filter $\F$ on $L'$, and show that
  $\lm_{L'} \F \leq \psi (\lm_L (\psi^{-1} (\F)))$.  Indeed:
  \begin{eqnarray*}
    \psi (\lm_L (\psi^{-1} (\F)))
    & = & \psi (\bigwedge_{i \in I} \varphi_i (\lm_{L_i}
          \varphi_i^{-1} (\F)))
          \quad\text{by (\ref{eq:coarsest})} \\
    & = & \bigwedge_{i \in I} \psi (\varphi_i (\lm_{L_i}
          \varphi_i^{-1} (\F)))
          \quad\text{since $\psi$ is a morphism of coframes} \\
    & \geq & \lm_{L'} \F
  \end{eqnarray*}
  since each $\psi \circ \varphi_i$ is continuous.
\end{proof}

\begin{rem}
  \label{rem:topological}
  The usual definition of a topological functor is the dual property
  that every source has a unique initial lift (dual in the sense that
  a source is a sink in the opposite category, and that a lift is
  initial if and only if it is final in the opposite category).  Being
  pedantic, what we have shown in Corollary~\ref{corl:coarsest} is
  that $|\_| \colon {(\Cv\catc)}^{op} \to \catc^{op}$ is topological.
  However, that is equivalent to $|\_| \colon \Cv\catc \to \catc$
  being topological, by the Topological Duality Theorem
  \cite[Theorem~21.9]{AHS:joycats}.
\end{rem}

% We rephrase the fact that every source has a unique initial lift as follows.
% \begin{prop}
%   \label{prop:coarsest}
%   Let $\catc$ be a category of coframes, and $L$ be a $\catc$-object.
%   Let also $\varphi_i \colon L \to |L_i|$ be morphisms of $\catc$,
%   where each $L_i$ is a convergence $\catc$-object, $i \in I$.  There
%   is a finest convergence structure $\lm_L$ on $L$ such that
%   $\varphi_i$ is continuous for each $i \in I$.  For every 
% \end{prop}

Given a topological functor between two categories $\catc$ and
$\catd$, $\catc$ is (co)complete if and only if $\catd$ is
\cite[Theorem~21.16~(1)]{AHS:joycats}.  Hence, writing $\coFrm$ for
the category of coframes:
\begin{fact}
  \label{fact:cocompl}
  The categories $\Cv\coFrm$ and $\Cvcl\coFrm$ are complete and
  cocomplete.
\end{fact}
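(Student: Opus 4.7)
The plan is to read off this fact directly from the topological-functor machinery already set up, together with the well-known (co)completeness of $\coFrm$. Concretely, I would argue as follows.

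First I would recall the ambient fact stated at the opening of Section~\ref{sec:compl-cocompl}: since $\Frm$ is complete and cocomplete (by \cite[Section~IV.3]{framesandlocales}), so is its opposite-based sibling $\coFrm$. Thus the base category of the forgetful functor $|\_| \colon \Cv\coFrm \to \coFrm$ is (co)complete. Next, by Corollary~\ref{corl:coarsest}, $|\_|$ is topological: every sink has a final lift given explicitly by the formula~(\ref{eq:coarsest}). The standard transfer theorem for topological functors \cite[Theorem~21.16(1)]{AHS:joycats} then states that the domain of a topological functor is complete (resp.\ cocomplete) if and only if the codomain is. Applying this to $|\_|\colon \Cv\coFrm \to \coFrm$ gives that $\Cv\coFrm$ is complete and cocomplete.

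For the classical variant $\Cvcl\coFrm$, the same strategy applies, but one must check that the restriction $|\_|\colon \Cvcl\coFrm \to \coFrm$ is still topological. The key observation is that the final lift produced by Proposition~\ref{prop:coarsest} stays inside $\Cvcl\coFrm$ whenever all $L_i$ do: this is exactly the content of Proposition~\ref{prop:coarsest:c}. Hence the final lift in $\Cv\coFrm$ of a sink coming from $\Cvcl\coFrm$ already lives in $\Cvcl\coFrm$, so the restricted forgetful functor inherits topologicity from the unrestricted one. Corollary~\ref{corl:coarsest} itself records this, so invoking \cite[Theorem~21.16(1)]{AHS:joycats} once more yields that $\Cvcl\coFrm$ is complete and cocomplete.

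The argument is essentially a bookkeeping application of the results already in place; there is no real obstacle. The only point deserving a moment of care is the pedantic one flagged in Remark~\ref{rem:topological}: Corollary~\ref{corl:coarsest} provides unique \emph{final} lifts of sinks, whereas the standard definition of a topological functor asks for initial lifts of sources. This is reconciled by the Topological Duality Theorem \cite[Theorem~21.9]{AHS:joycats}, which allows us to pass between the two formulations freely, and hence to apply \cite[Theorem~21.16(1)]{AHS:joycats} without modification.
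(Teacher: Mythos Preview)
Your proposal is correct and follows essentially the same approach as the paper: the paper derives the fact directly from the transfer theorem \cite[Theorem~21.16(1)]{AHS:joycats} applied to the topological functor $|\_|$ (Corollary~\ref{corl:coarsest}) over the (co)complete base $\coFrm$, and your write-up simply makes this explicit, including the use of Proposition~\ref{prop:coarsest:c} for the classical case and the duality remark.
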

Topologicity has many other consequences.  For example, the category
of frames is not co-wellpowered
\cite[Section~IV.6.6]{framesandlocales}, hence neither is $\coFrm$.
Given a fiber-small topological functor between two categories $\catc$
and $\catd$, $\catc$ is (co-)wellpowered if and only if $\catd$ is
\cite[Theorem~21.16~(2)]{AHS:joycats}, so:
\begin{fact}
  \label{fact:cowell}
  The categories $\Cv\coFrm$ and $\Cvcl\coFrm$ are not co-wellpowered.
\end{fact}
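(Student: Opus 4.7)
The statement is a direct consequence of two ingredients already assembled in the excerpt, so my plan is essentially a citation chase rather than a new argument. First, I would record that $\Frm$ fails to be well-powered, as cited from \cite[Section~IV.6.6]{framesandlocales}; taking opposites, this means $\coFrm$ is not co-well-powered. Second, I would invoke Corollary~\ref{corl:coarsest}, which established that the forgetful functor $|\_|\colon\Cv\coFrm\to\coFrm$ is topological, together with the fact that it is fiber-small (noted explicitly in the paragraph after Proposition~\ref{prop:coarsest}, since each fiber is the set of monotone maps $\Filt L\to L$).

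With these in hand, the conclusion for $\Cv\coFrm$ follows at once from \cite[Theorem~21.16~(2)]{AHS:joycats}: for a fiber-small topological functor $U\colon\catc\to\catd$, $\catc$ is co-well-powered if and only if $\catd$ is. Since $\coFrm$ is not co-well-powered, neither is $\Cv\coFrm$. For the classical variant $\Cvcl\coFrm$, I would repeat the same step, using the second assertion of Corollary~\ref{corl:coarsest}, which gives topologicity of the restricted forgetful functor $|\_|\colon\Cvcl\coFrm\to\coFrm$; fiber-smallness is automatic since the fiber over $L$ in $\Cvcl\coFrm$ is a subset of the fiber in $\Cv\coFrm$.

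There is no real obstacle: the only thing worth pausing on is to make sure the direction of the duality is right, that is, that ``$\Frm$ is not well-powered'' translates to ``$\coFrm$ is not co-well-powered'' (a quotient in $\coFrm$ is a subobject in $\Frm$), and that the Topological Duality Theorem (cf.\ Remark~\ref{rem:topological}) lets us apply the result in \cite{AHS:joycats} without worrying about the $op$ in Corollary~\ref{corl:coarsest}. Once that is noted, the proof is essentially one sentence per category.
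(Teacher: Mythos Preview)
Your overall strategy matches the paper's exactly: deduce that $\coFrm$ is not co-wellpowered from the cited fact about $\Frm$, then transfer this upward along the fiber-small topological functor $|\_|$ using \cite[Theorem~21.16~(2)]{AHS:joycats}, for both $\Cv\coFrm$ and $\Cvcl\coFrm$. That part is fine.

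However, your first step is confused. The reference \cite[Section~IV.6.6]{framesandlocales} says that $\Frm$ is not \emph{co}-wellpowered (there is a frame with a proper class of non-isomorphic epimorphic extensions), not that it fails to be wellpowered; in fact $\Frm$ is wellpowered, since its monomorphisms are injective. More importantly, $\coFrm$ is not the opposite of $\Frm$: the functor $L\mapsto L^{op}$, $\varphi\mapsto\varphi$ is a \emph{covariant} isomorphism $\coFrm\to\Frm$, because a map preserving arbitrary infima and finite suprema in $L$ is the same thing as a map preserving arbitrary suprema and finite infima in $L^{op}$. So your parenthetical ``a quotient in $\coFrm$ is a subobject in $\Frm$'' is wrong---quotients correspond to quotients. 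The correct deduction is simply: $\Frm$ is not co-wellpowered, $\coFrm\cong\Frm$, hence $\coFrm$ is not co-wellpowered. This is what the paper does in one clause. Once you fix this, your argument is identical to the paper's.
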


\section{Limit Lattices and Variants}
\label{sec:limit-spaces}

A \emph{limit space}, also known as a \emph{finitely deep} convergence
space \cite[Section~III.1]{DM:convergence}, is a convergence space $X$
where, for any two filters $\F$ and $\G$ of subsets of $X$ that
converge to the same point $x$, $\F \cap \G$ also converges to $x$.
It is equivalent to require that
$\lm_{\pow (X)} (\F \cap \G) = \lm_{\pow (X)} \F \cap \lm_{\pow (X)}
\G$.  Those form a full subcategory $\Limc$ of $\Conv$.

Accordingly, call \emph{limit $\catc$-object} any convergence
$\catc$-object $L$ such that
$\lm_L (\F \cap \G) = \lm_L \F \wedge \lm_L \G$ for all
$\F, \G \in \Filt L$, that is, such that $\lm_L$ preserves binary
infima.  Those form a full subcategory $\Lim\catc$ of $\Cv\catc$.  The
classical limit $\catc$-objects form another full subcategory
$\Limcl\catc$.
\begin{prop}
  \label{prop:A-|pt:lim}
  Let $\catc$ be an admissible category of lattices.  The adjunction
  $\pow\dashv\pt$ restricts to an adjunction between $\Limc$ and
  ${(\Lim\catc)}^{op}$, resp.\ between $\Limc$ and
  ${(\Limcl\catc)}^{op}$.
\end{prop}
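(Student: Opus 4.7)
The plan is to establish both restrictions simultaneously by showing that $\pow$ and $\pt$ each cut down to functors between the claimed full subcategories; since $\Limc \hookrightarrow \Conv$, $\Lim\catc \hookrightarrow \Cv\catc$, and $\Limcl\catc \hookrightarrow \Cv\catc$ are full subcategory inclusions, the adjunction from Theorem~\ref{thm:A-|pt} will then restrict automatically, with the unit and counit inherited from the ambient adjunction.

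First, I would check that $\pow$ lands inside $\Lim\catc$ (resp.\ $\Limcl\catc$) when applied to $X \in \Limc$. Binary meets in $\pow (X)$ and in $\Filt (\pow (X))$ (both ordered by inclusion) are literally given by intersection, so the limit-space axiom $\lm_{\pow (X)} (\F \cap \G) = \lm_{\pow (X)} \F \cap \lm_{\pow (X)} \G$ is exactly the assertion that $\lm_{\pow (X)}$ preserves binary infima on the filter lattice, putting $\pow (X)$ in $\Lim\catc$. The classical refinement is free: $\pow (X)$ is a Boolean algebra, so every element is complemented, and (as already observed after Lemma~\ref{lemma:class}) every convergence structure on a Boolean algebra is classical; hence $\pow (X) \in \Limcl\catc$.

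Next, I would show that $\pt L \in \Limc$ whenever $L \in \Lim\catc$. The preliminary observation is that $\kow$ is multiplicative on binary intersections: for $\F, \G \in \Filt \pow (\pt L)$, one has $\ell \in \kow(\F \cap \G)$ iff $\ell^\bullet \in \F$ and $\ell^\bullet \in \G$, iff $\ell \in \kow\F \cap \kow\G$. Combining this with Definition~\ref{defn:pt:lim}, the hypothesis that $\lm_L$ preserves binary infima, and Lemma~\ref{lemma:kow:basic}(4), I obtain
\[
\lm_{\pow(\pt L)}(\F \cap \G) = (\lm_L (\kow\F \cap \kow\G))^\bullet = (\lm_L \kow\F \wedge \lm_L \kow\G)^\bullet = (\lm_L \kow\F)^\bullet \cap (\lm_L \kow\G)^\bullet = \lm_{\pow(\pt L)} \F \cap \lm_{\pow(\pt L)} \G,
\]
which is the limit-space axiom on $\pt L$. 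Since $\Limcl\catc \subseteq \Lim\catc$, the same computation handles the classical variant; there is nothing extra to verify on the $\pt$ side because $\Limc$ has no classicality counterpart.

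With $\pow$ and $\pt$ restricted, fullness of the subcategory inclusions delivers the restricted adjunctions $\pow \dashv \pt$ between $\Limc$ and $(\Lim\catc)^{op}$, respectively between $\Limc$ and $(\Limcl\catc)^{op}$. The only step that does any real work is the preservation computation in the $\pt$ direction, and even there the main content has already been packaged into Lemma~\ref{lemma:kow:basic}(4) together with the routine fact that $\kow$ commutes with intersection; so I expect no genuine obstacle, just a bookkeeping verification.
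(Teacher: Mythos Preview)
Your proposal is correct and follows essentially the same approach as the paper: verify that $\pow$ sends limit spaces to (classical) limit $\catc$-objects, then show $\pt L$ is a limit space via the computation $\lm_{\pow(\pt L)}(\F\cap\G) = (\lm_L(\kow\F\cap\kow\G))^\bullet = (\lm_L\kow\F \wedge \lm_L\kow\G)^\bullet = \lm_{\pow(\pt L)}\F \cap \lm_{\pow(\pt L)}\G$ using Lemma~\ref{lemma:kow:basic}(4). Your explicit justification that $\kow$ commutes with binary intersection and your remark about full subcategories forcing the adjunction to restrict are spelled out in more detail than in the paper, but the argument is the same.
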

\begin{proof}
  Given a limit space $X$, $\pow (X)$ is a (classical) limit
  $\catc$-object by construction.  Conversely, let $L$ be a limit
  $\catc$-object, and consider two filters $\F$ and $\G$ of subsets of
  $\pt L$.  Then:
  \begin{eqnarray*}
    \lm_{\pow (\pt L)} (\F \cap \G)
    & = & (\lm_L \kow {(\F \cap \G)})^\bullet \\
    & = & (\lm_L (\kow\F \cap \kow\G))^\bullet \quad\text{by
          definition}\\
    & = & (\lm_L \kow\F \wedge \lm_L \kow\G)^\bullet \quad\text{since
          $\lm_L$ preserves binary infima} \\
    & = & (\lm_L \kow\F)^\bullet \cap (\lm_L \kow\G)^\bullet
          \quad\text{by Lemma~\ref{lemma:kow:basic}~(4)} \\
    & = & \lm_{\pow (\pt L)} \F \cap \lm_{\pow (\pt L)} \G.
  \end{eqnarray*}
\end{proof}

We recall that the traditional definitions of convergence and limit
spaces only consider convergence sets of \emph{proper} filters.  We
have chosen to also include the non-proper filter on $L$: this is the
top element of $\Filt L$, namely $L$ itself.  This raises the question
whether $\lm_L$ preserves top elements, namely whether
$\lm_L L = \top$.
%   Accordingly, we shall say that a convergence space
% $X$ is \emph{strict} if and only if $\lm_{\pow (X)} \pow (X) = X$, if
% and only if the non-proper filter $\pow (X)$ converges to every point.
A convergence $\catc$-object $L$ is \emph{strict} if and only if
$\lm_L L = \top$.  Those form the category $\sCv\catc$,
resp.\ $\sCvcl\catc$ for the classical variant.

There is no need to define strict convergence spaces: for every
convergence space $X$, $\lm_{\pow (X)}$ is strict, since for every $x
\in X$, $\lm_{\pow (X)} \pow (X) \supseteq \lm_{\pow (X)} \dot x
\supseteq \{x\}$, by the Point Axiom.
\begin{fact}
  \label{fact:A-|pt:strict}
  Let $\catc$ be an admissible category of lattices.  The adjunction
  $\pow\dashv\pt$ restricts to an adjunction between $\Conv$ and
  ${(\sCv\catc)}^{op}$, resp.\ between $\Conv$ and
  ${(\sCvcl\catc)}^{op}$.  \qed
\end{fact}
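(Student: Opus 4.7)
The plan is to verify that the adjunction $\pow \dashv \pt$ from Theorem~\ref{thm:A-|pt} factors through each subcategory in question, using a routine full-subcategory argument.

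First I would observe that $\pow$ already lands in $\sCv\catc$. Indeed, the paragraph immediately preceding the Fact verifies that for every convergence space $X$, $\lm_{\pow(X)} \pow(X) \supseteq \{x\}$ for every $x \in X$ by the Point Axiom, hence $\lm_{\pow(X)} \pow(X) = X = \top$; so $\pow(X)$ is strict. Since $\sCv\catc$ is defined as a full subcategory of $\Cv\catc$ (the strictness condition constrains only objects, not morphisms), $\pow$ restricts to a functor $\pow \colon \Conv \to (\sCv\catc)^{op}$.

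Next I would invoke the standard categorical fact that if an adjunction $F \dashv G$ between $\catc$ and $\catd$ is such that $F$ factors through a \emph{full} subcategory $\catd'$ of $\catd$, then $F \dashv G|_{\catd'}$ is an adjunction between $\catc$ and $\catd'$, with the same unit and counit. Concretely, for $X \in \Conv$ and $L \in \sCv\catc$, the hom-set bijection
\[
\mathrm{Hom}_{(\Cv\catc)^{op}}(\pow X, L) \;\cong\; \mathrm{Hom}_{\Conv}(X, \pt L)
\]
provided by Theorem~\ref{thm:A-|pt} is automatically a bijection
\[
\mathrm{Hom}_{(\sCv\catc)^{op}}(\pow X, L) \;\cong\; \mathrm{Hom}_{\Conv}(X, \pt L),
\]
because fullness of $(\sCv\catc)^{op}$ in $(\Cv\catc)^{op}$ makes the left-hand hom-sets coincide. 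Naturality in $X$ and $L$ is inherited.

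For the classical variant, I would simply intersect with Proposition~\ref{prop:A-|pt:c}: since $\pow(X)$ is simultaneously strict (as above) and classical (because $\pow(X)$ is a Boolean algebra), $\pow$ factors through the full subcategory $(\sCvcl\catc)^{op} = (\sCv\catc \cap \Cvcl\catc)^{op}$, and the same full-subcategory argument yields the restricted adjunction. There is really no obstacle here; the only point requiring attention is making sure the observation about the Point Axiom forcing strictness is stated, and that strictness is a property of objects only so that full-subcategory inclusion suffices.
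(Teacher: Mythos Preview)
Your proposal is correct and takes essentially the same approach as the paper: the paper marks this Fact with an immediate \qed after the observation that $\pow(X)$ is always strict by the Point Axiom, leaving the full-subcategory restriction argument implicit. You have simply spelled out that implicit step.
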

Finally, let %$\sLimc$ be the category of strict limit spaces, and
$\sLim\catc$ be the category of strict limit $\catc$-objects, i.e.,
convergence $\catc$-objects $L$ such that $\lm_L$ preserves finite
infima.  Let $\sLimcl\catc$ be the full subcategory of classical
strict limit $\catc$-objects.
\begin{fact}
  \label{fact:A-|pt:strict}
  Let $\catc$ be an admissible category of lattices.  The adjunction
  $\pow\dashv\pt$ restricts to an adjunction between $\Limc$ and
  ${(\sLim\catc)}^{op}$, resp.\ between $\Limc$ and
  ${(\sLimcl\catc)}^{op}$.  \qed
\end{fact}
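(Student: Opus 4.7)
The plan is to combine Proposition~\ref{prop:A-|pt:lim} with the immediately preceding Fact~\ref{fact:A-|pt:strict} on strictness. By definition, $\sLim\catc$ is the intersection $\Lim\catc \cap \sCv\catc$ inside $\Cv\catc$: an object of $\sLim\catc$ is a convergence $\catc$-object $L$ such that $\lm_L$ preserves binary infima (limit condition) and also the top filter $L$ itself (strictness). Since both $\Lim\catc$ and $\sCv\catc$ are full subcategories of $\Cv\catc$, so is $\sLim\catc$. Similarly, $\Limc$ is a full subcategory of $\Conv$. Thus it suffices to check that the two functors $\pow$ and $\pt$ restrict correctly.

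First I would check that $\pow(X) \in \sLim\catc$ whenever $X$ is a limit space. Proposition~\ref{prop:A-|pt:lim} provides that $\pow(X)$ is a limit $\catc$-object, and the preceding strictness discussion (already noted in the excerpt) shows that $\pow(X)$ is always strict: the Point Axiom gives $x \in \lm_{\pow(X)}\dot{x} \subseteq \lm_{\pow(X)}\pow(X)$ for every $x \in X$, so $\lm_{\pow(X)}\pow(X) = X = \top$. Hence $\pow$ lands in $\sLim\catc$.

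Conversely, I would verify that $\pt L \in \Limc$ whenever $L \in \sLim\catc$. Since $\sLim\catc \subseteq \Lim\catc$, this is immediate from Proposition~\ref{prop:A-|pt:lim}, which already shows $\pt$ sends limit $\catc$-objects to limit spaces.

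Given these two closure properties and fullness of the inclusions, the hom-set bijection of the ambient adjunction $\pow \dashv \pt$ between $\Conv$ and $(\Cv\catc)^{op}$ restricts automatically to one between $\Limc$ and $(\sLim\catc)^{op}$; the unit and counit are already components over the restricted objects. The classical variant follows by the same argument, combining also Proposition~\ref{prop:A-|pt:c} (noting that $\pow(X)$ is classical because every element of $\pow(X)$ is complemented, while classicality of $L$ is inherited by $\pt L$ through the limit structure). No step presents a genuine obstacle, since all substantive work has already been carried out in Proposition~\ref{prop:A-|pt:lim} and the strictness fact; the present statement is essentially a bookkeeping intersection of those two restrictions.
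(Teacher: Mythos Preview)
Your proposal is correct and matches the paper's intent: the statement is marked with a bare \qed\ there, as it follows immediately by intersecting Proposition~\ref{prop:A-|pt:lim} with the preceding strictness fact, exactly as you outline. One small phrasing quibble in your last sentence: in the classical variant, $\pt L$ is a convergence space rather than a $\catc$-object, so there is no ``classicality'' for it to inherit --- all that is needed is that $\pow(X)\in\sLimcl\catc$ (which you correctly observe) and that $\pt L\in\Limc$ (which follows from $L\in\Lim\catc$ alone, irrespective of classicality).
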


All the categories considered in this section are complete and
cocomplete, assuming that $\catc$ is a category of coframes.  We show
this by appealing to the following result.  Let $|\_|$ be a
fiber-small topological functor from a category $\catd$ to a category
$\catc$.  A \emph{deflationary functor} on $\catd$ is a functor
$S^1 \colon \catd \to \catd$ such that $|S^1|$ is the identity functor
and such that $S^1 (C) \sqsubseteq C$ for every object $C$ of $\catd$
(i.e., the identity map on $|C|$ lifts to a morphism from $S^1 (C)$ to
$C$).  Let $S^\infty$ map each object $C$ of $\catd$ to the
$\sqsubseteq$-largest fixed point of $S^1$ below $C$ in the fiber of
$|C|$, and let $Fix (S^1)$ denote the full subcategory of $\catd$
whose objects are the fixed points of $S^1$.  Then $S^\infty$ extends
to a functor from $\catd$ to $\catc$ such that $|S^\infty|$ is the
identity functor, and $Fix (S^1)$ is a coreflective subcategory of
$\catd$, with coreflector $S^\infty$ \cite[Proposition~10]{GL-acs13}.
Moreover, the restriction of $|\_|$ to $Fix (S^1)$ is topological
again (Lemma~15, loc.\ cit.)

In our case, where $\catd = \Cv\catc$ or $\catd = \Cvcl\catc$, and
$|\_|$ maps every convergence $\catc$-object to the underlying
$\catc$-object, a deflationary functor $S^1$ on $\Cv\catc$ is given by
maps $S^1$ from convergence structures to convergence structures such
that $S^1 (\lm) \geq \lm$, and which act functorially, in the sense
that for every morphism $\varphi \colon L \to L'$ in $\Cv\catc$,
$\varphi$ should again be continuous from $S^1 (L) = (L, S^1 (\lm_L))$
to $S^1 (L') = (L', S^1 (\lm_{L'}))$.  We may define functors $S^1$ on
$\Cv\catc$, resp.\ $\Cvcl\catc$, by one of the following formulae:
\begin{eqnarray}
  \label{eq:S1:lim}
  S^1 (\lm_L)
  & \colon & \F \mapsto
             \bigvee_{\substack{\F_1, \F_2 \in \Filt L\\
  \F \supseteq \F_1 \cap \F_2}} \lm_L \F_1 \wedge \lm_L \F_2 \\
  \label{eq:S1:strict}
  S^1 (\lm_L)
  & \colon & \F \mapsto \left\{
             \begin{array}{ll}
               \lm_L \F & \text{if }\F\text{ is proper} \\
               \top & \text{otherwise}
             \end{array}
                      \right. \\
  \label{eq:S1:slim}
  S^1 (\lm_L)
  & \colon & \F \mapsto
             \bigvee_{\substack{  n \in \nat\\
  \F_1, \ldots, \F_n \in \Filt L \\
  \F \supseteq \bigcap_{i=1}^n \F_i}} \bigwedge_{i=1}^n \lm_L \F_i.
\end{eqnarray}
\begin{prop}
Each of the formulas \eqref{eq:S1:lim}, \eqref{eq:S1:strict}, and \eqref{eq:S1:slim} defines a deflationary functor, that is, $S^1(\lm_L
)\geq \lim_L$.
\end{prop}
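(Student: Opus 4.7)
The plan is, for each of the three formulas, to verify three things: (i) that $S^1(\lm_L)$ is a monotonic map $\Filt L \to L$, so that it qualifies as a convergence structure on $L$; (ii) the inequality $S^1(\lm_L) \F \geq \lm_L \F$ for every $\F \in \Filt L$, the deflationary property highlighted in the statement; and (iii) that if $\varphi \colon L \to L'$ is continuous, then $\varphi$ remains continuous from $(L, S^1(\lm_L))$ to $(L', S^1(\lm_{L'}))$, so that $S^1$ is genuinely a functor on $\Cv\catc$.

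For (ii), I would exhibit in each supremum a specific summand already equal to $\lm_L \F$. In \eqref{eq:S1:lim}, the choice $\F_1 = \F_2 = \F$ satisfies $\F \supseteq \F \cap \F$ and contributes $\lm_L \F \wedge \lm_L \F = \lm_L \F$; in \eqref{eq:S1:slim}, the choice $n = 1$, $\F_1 = \F$ does the same job; in \eqref{eq:S1:strict}, the inequality is immediate by case analysis on whether $\F$ is proper. The monotonicity (i) is equally routine: enlarging $\F$ only enlarges the set of admissible index tuples in \eqref{eq:S1:lim} and \eqref{eq:S1:slim}, and for \eqref{eq:S1:strict} it follows from monotonicity of $\lm_L$, together with the observation that a filter containing a non-proper filter is itself non-proper.

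The main work is the functoriality check (iii). Given a continuous $\varphi \colon L \to L'$ in $\Cv\catc$ and a filter $\F$ on $L'$, I must show $S^1(\lm_{L'}) \F \leq \varphi (S^1(\lm_L) (\varphi^{-1}(\F)))$. For \eqref{eq:S1:lim}, I would argue term by term: for any pair $(\F_1, \F_2)$ with $\F \supseteq \F_1 \cap \F_2$, continuity of $\varphi$ gives $\lm_{L'} \F_i \leq \varphi (\lm_L \varphi^{-1}(\F_i))$, and since $\varphi$ preserves binary infima (as a coframe morphism), one obtains $\lm_{L'} \F_1 \wedge \lm_{L'} \F_2 \leq \varphi (\lm_L \varphi^{-1}(\F_1) \wedge \lm_L \varphi^{-1}(\F_2))$. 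A direct check shows $\varphi^{-1}(\F) \supseteq \varphi^{-1}(\F_1) \cap \varphi^{-1}(\F_2)$, so the pair $(\varphi^{-1}(\F_1), \varphi^{-1}(\F_2))$ is admissible in the supremum defining $S^1(\lm_L)(\varphi^{-1}(\F))$; the right-hand side is therefore bounded above by $\varphi (S^1(\lm_L)(\varphi^{-1}(\F)))$. Taking suprema over all admissible $(\F_1, \F_2)$ yields the inequality. The argument for \eqref{eq:S1:slim} is identical in spirit, indexed by finite tuples $(\F_1, \ldots, \F_n)$. For \eqref{eq:S1:strict}, the key observation is that $\varphi^{-1}$ preserves properness (since $\varphi$ sends $\bot$ to $\bot$): if $\F$ is proper then so is $\varphi^{-1}(\F)$, and both sides reduce to the original continuity inequality; if $\F = L'$ then $\varphi^{-1}(\F) = L$, and both sides collapse to $\top$. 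The only point at which admissibility is genuinely used is in (iii), via the preservation of binary meets and of $\bot$ by coframe morphisms; everything else is bookkeeping on suprema over indexing sets of filters.
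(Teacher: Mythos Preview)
Your proposal is correct and follows essentially the same approach as the paper: the functoriality check for \eqref{eq:S1:lim} is identical (pull back the pair $(\F_1,\F_2)$ along $\varphi^{-1}$, use continuity of $\varphi$ and preservation of finite meets, then take suprema), and you are right that the other two cases are minor variations. You are in fact slightly more explicit than the paper, which does not spell out the monotonicity of $S^1(\lm_L)$ or the deflationary inequality $S^1(\lm_L)\geq\lm_L$; conversely, the paper's proof also verifies that $S^1$ preserves classicality (needed for the $\Cvcl\catc$ variant), which lies outside the proposition as stated and which you reasonably omit. One terminological nit: the ambient $\catc$ here is merely a category of lattices, not coframes, so the properties you invoke (preservation of binary meets, of $\bot$, of $\top$) come from $\varphi$ being a lattice morphism rather than from admissibility or the coframe structure.
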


\begin{proof} We check that $S^1$ defines a
functor in the case of (\ref{eq:S1:lim}), the other cases being
similar.  Let $\varphi$ be a morphism in $\Cv\catc$.  In view of
showing the continuity condition
$S^1 (\lm_{L'}) (\F) \leq \varphi (S^1 (\lm_L) (\varphi^{-1} (\F)))$,
we pick two arbitrary filters $\F_1$ and $\F_2$ such that
$\F \supseteq \F_1 \cap \F_2$.  Then
$\varphi^{-1} (\F) \supseteq \varphi^{-1} (\F_1) \cap \varphi^{-1}
(\F_2)$, and that implies that
$\lm_L \varphi^{-1} (\F_1) \wedge \lm_L \varphi^{-1} (\F_2) \leq S^1
(\lm_L) (\varphi^{-1} (\F))$ by definition of $S^1 (\lm_L)$.  We apply
$\varphi$ on both sides (recall that $\varphi$ commutes with finite
infima, and that
$\lm_{L'} \F_i \leq \varphi (\lm_L (\varphi^{-1} (\F_i)))$ for each
$i$ by the continuity of $\varphi$) so as to obtain
$\lm_{L'} \F_1 \wedge \lm_{L'} \F_2 \leq \varphi (S^1 (\lm_L)
(\varphi^{-1} (\F)))$.  We now take suprema over all $\F_1$, $\F_2$
such that $\F \supseteq \F_1 \cap \F_2$ and conclude.

It is also the case that $S^1 (\lm_L)$ is classical whenever $L$ is
classical.  We show this as follows, again in the case of
(\ref{eq:S1:lim}) only.  Given a filter $\F$ on $L$, and two filters
$\F_1$ and $\F_2$ such that $\F \supseteq \F_1 \cap \F_2$, we have
$(\F)_c \supseteq (\F_1)_c \cap (\F_2)_c$, so
$\lm_L \F_1 \wedge \lm_L \F_2 = \lm_L (\F_1)_c \wedge \lm_L (\F_2)_c
\leq S^1 (\lm_L) ((\F)_c)$ since $L$ is classical.  Taking suprema over $\F_1$, $\F_2$,
$S^1 (\lm_L) (\F) \leq S^1 (\lm_L) ((\F)_c)$, and we conclude by
Lemma~\ref{lemma:class}.
\end{proof}

The pointwise supremum of any family of classical convergence
structures is again classical.  It follows that $S^\infty (\lm_L)$,
which is obtained as a transfinite supremum of iterates of $S^1$, is
classical whenever $L$ is.

The convergence $\catc$-objects that are fixed points of $S^1$ are
exactly the limit $\catc$-objects, the strict convergence
$\catc$-objects, and the strict limit $\catc$-objects, respectively
(and similarly for the classical variants, as we have just argued).
This allows us to conclude:
\begin{prop}
  \label{prop:cocompl:lim}
  Let $\catc$ be a category of lattices.  The functor $|\_|$ restricts
  to a topological functor from $\Lim\catc$, resp.\ $\sCv\catc$,
  resp.\ $\sLim\catc$ to $\catc$.  Those categories are reflective
  subcategories of $\Cv\catc$.

  The functor $|\_|$ restricts to a topological functor from
  $\Limcl\catc$, resp.\ $\sCvcl\catc$, resp.\ $\sLimcl\catc$ to
  $\catc$.  Those categories are reflective subcategories of
  $\Cvcl\catc$.

  $\Lim\coFrm$, $\sCv\coFrm$, $\sLim\coFrm$, and their classical
  variants $\Limcl\coFrm$, $\sCvcl\coFrm$ and $\sLimcl\coFrm$ are
  complete and cocomplete.  \qed
\end{prop}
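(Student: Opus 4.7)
The plan is to apply the topological functor machinery of \cite{GL-acs13} to each of the three deflationary functors $S^1$ given by (\ref{eq:S1:lim}), (\ref{eq:S1:strict}), and (\ref{eq:S1:slim}), which the preceding proposition has already shown to be endofunctors of $\Cv\catc$ preserving the classical property. The key remaining step is to identify the subcategory of fixed points of each $S^1$ with the intended full subcategory $\Lim\catc$, $\sCv\catc$, or $\sLim\catc$ (and similarly for the classical variants).

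For (\ref{eq:S1:lim}), I would show that $S^1(\lm_L) = \lm_L$ if and only if $\lm_L$ preserves binary infima of filters.  The ``$\Leftarrow$'' direction: if $\F \supseteq \F_1 \cap \F_2$ then $\lm_L \F_1 \wedge \lm_L \F_2 = \lm_L(\F_1 \cap \F_2) \leq \lm_L \F$ by preservation of binary meets and monotonicity, so every term in the defining supremum is bounded by $\lm_L \F$; combined with $S^1 (\lm_L) \geq \lm_L$, we get equality.  The ``$\Rightarrow$'' direction: substitute $\F := \F_1 \cap \F_2$ in the supremum to get $\lm_L \F_1 \wedge \lm_L \F_2 \leq \lm_L (\F_1 \cap \F_2)$, the reverse inequality being monotonicity.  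For (\ref{eq:S1:strict}), $S^1 (\lm_L)$ agrees with $\lm_L$ on every proper filter and equals $\top$ on the non-proper filter $L$, so fixed points are exactly the strict convergence $\catc$-objects.  For (\ref{eq:S1:slim}), the degenerate case $n = 0$ (the empty intersection of filters being $L$, and the empty meet in $L$ being $\top$) forces $\lm_L L = \top$, while $n = 2$ yields preservation of binary meets as in the limit case; together these say $\lm_L$ preserves finite infima, so fixed points are exactly the strict limit $\catc$-objects.

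Once the fixed-point identifications are in hand, I invoke \cite[Proposition~10]{GL-acs13} and Lemma~15 loc.\ cit.: the full subcategory of fixed points of $S^1$ is reflective in $\Cv\catc$ via $S^\infty$, and the restriction of $|\_|$ to this subcategory remains topological over $\catc$.  The same argument runs inside $\Cvcl\catc$, because pointwise suprema of classical convergence structures are classical (as noted before the proposition), hence the transfinite iteration defining $S^\infty$ preserves classicality.  Finally, (co)completeness for $\catc = \coFrm$ follows from the (co)completeness of $\coFrm$ \cite[Section~IV.3]{framesandlocales} together with \cite[Theorem~21.16(1)]{AHS:joycats}, which transfers (co)completeness along topological functors.

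The one delicate verification is the identification of $\mathrm{Fix}(S^1)$ in the strict limit case (\ref{eq:S1:slim}): one has to treat the $n = 0$ degenerate case (which yields strictness) and the $n = 2$ case (which yields preservation of binary meets) simultaneously and check that together they are equivalent to preservation of all finite infima.  The other two formulae are essentially bookkeeping, and once the fixed-point characterization is done the conclusions about topologicity, reflectiveness and (co)completeness are immediate from the cited external results.
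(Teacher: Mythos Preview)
Your proposal is correct and follows essentially the same route as the paper: the paper's ``proof'' is just the \qed\ after the statement, relying on the sentence immediately preceding it (``The convergence $\catc$-objects that are fixed points of $S^1$ are exactly the limit $\catc$-objects, \ldots'') together with the recalled machinery from \cite{GL-acs13}. You supply the explicit verification of those fixed-point identifications that the paper merely asserts, but the overall strategy---deflationary endofunctors, $\mathrm{Fix}(S^1)$ via \cite[Proposition~10, Lemma~15]{GL-acs13}, and transfer of (co)completeness along topological functors via \cite[Theorem~21.16(1)]{AHS:joycats}---is identical.
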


There is a well-known adjunction between $\Conv$ and $\Limc$.  The
\emph{finitely deep modification} $M_{\mathrm{lim}} X$ of a
convergence space $X$ is $X$ together with the finest finitely deep
convergence $\lm$ coarser than $\lm_{\pow (X)}$
\cite[Definition~III.5.3]{DM:convergence}; namely
$\lm = S^\infty (\lm_{\pow (X)})$ where $S^\infty$ is the least fixed
point functor derived from the $S^1$ functor of (\ref{eq:S1:lim}).
This is left-adjoint to the forgetful functor from $\Limc$ to $\Conv$,
and by definition the left-adjoints commute in the following diagram:
\[
  \xymatrix{
    {(\Cv\catc)}^{op}
    \ar@<1ex>[r]^{S^\infty} \ar@{}[r]|{\perp}
    \ar@<1ex>[d]^{\pt}\ar@{}[d]|{\dashv}
    &
    {(\Lim\catc)}^{op}
    \ar@<1ex>[l]^{\supseteq}
    \ar@<1ex>[d]^{\pt}\ar@{}[d]|{\dashv}
    \\
    \Conv
    \ar@<1ex>[u]^{\pow}
    \ar@<1ex>[r]^{M_{\mathrm{lim}}} \ar@{}[r]|{\perp}
    &
    \Limc
    \ar@<1ex>[l]^{\supseteq}
    \ar@<1ex>[u]^{\pow}
  }
\]
It follows that the right-adjoints commute, too, since they are
uniquely determined by the left-adjoints.  A similar diagram is
obtained with $\sLim\catc$ in place of $\Lim\catc$, or with
$\Cvcl\catc$ instead of $\Cv\catc$ and $\Limcl\catc$ (or
$\sLimcl\catc$) instead of $\Lim\catc$.

The latter yields the leftmost square of (\ref{eq:adj}) in the case
$\catc = \coFrm$.

\section{Pretopological Coframes}
\label{sec:pret-cofr}

A \emph{pretopological space} is a convergence space $X$ in which
$\lm_{\pow (X)}$ commutes with arbitrary intersections
\cite[Proposition~V.1.4]{DM:convergence}.  Those form a full
subcategory $\PreTop$ of $\Conv$.

Similarly:
\begin{defn}[Pretopological]
  \label{defn:pretop}
  Given a category $\catc$ of coframes, a $\catc$-object $L$ is
  \emph{pretopological} if and only if
  $\lm_L {\bigcap_{i \in I} \F_i} = \bigwedge_{i \in I} \lm_L \F_i$,
  for every family ${(\F_i)}_{i \in I}$ of filters on $L$.  They
  define a full subcategory $\preTop\catc$ of $\Cv\catc$, and a
  further full subcategory $\preTopcl\catc$ of classical pretopological
  $\catc$-objects.
\end{defn}

\begin{lem}
  \label{lemma:kow:basic:cof}
  Let $\catc$ be a category of coframes, and $L$ be a convergence
  $\catc$-object.  The following hold:
  \begin{enumerate}
  \item For every family ${(\ell_i)}_{i \in I}$ of elements of $L$,
    $\bigcap_{i \in I} \ell_i^\bullet = {(\bigwedge_{i \in I}
      \ell_i)}^\bullet$.
  \end{enumerate}
\end{lem}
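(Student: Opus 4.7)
The plan is to mimic exactly the argument used to prove item~(4) of Lemma~\ref{lemma:kow:basic}, only upgrading ``finite'' to ``arbitrary''. What makes this upgrade possible is the fact, spelled out immediately after the definition of coframe earlier in the paper, that morphisms of coframes preserve \emph{all} infima, not merely finite ones.

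Concretely, I would start by fixing a point $\varphi \in \pt L$, that is, a morphism $\varphi \colon L \to \pow(1)$ in $\Cv\catc$. Because $\catc$ is a category of coframes, $\varphi$ is in particular a coframe morphism, so
\[
  \varphi\Bigl(\bigwedge_{i \in I} \ell_i\Bigr) = \bigwedge_{i \in I} \varphi(\ell_i).
\]
Then I would chase the defining equivalences, just as in the finite case: $\varphi \in {(\bigwedge_{i \in I} \ell_i)}^\bullet$ iff $\varphi(\bigwedge_{i \in I} \ell_i) = 1$, iff $\bigwedge_{i \in I} \varphi(\ell_i) = 1$ in $\pow(1) = \{\emptyset,1\}$. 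Since in $\pow(1)$ an arbitrary infimum equals $1$ if and only if every term equals $1$, this is equivalent to $\varphi(\ell_i)=1$ for every $i \in I$, i.e.\ $\varphi \in \ell_i^\bullet$ for every $i \in I$, which is exactly $\varphi \in \bigcap_{i \in I} \ell_i^\bullet$.

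There is essentially no obstacle; the only thing to highlight is the reason the coframe hypothesis is used (preservation of arbitrary infima by points), and the elementary observation about infima in the two-element lattice $\pow(1)$. The finite case was already proved as Lemma~\ref{lemma:kow:basic}~(4) and no new ingredient beyond swapping ``finite'' for ``arbitrary'' is needed.
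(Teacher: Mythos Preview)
Your proof is correct and is essentially the same argument as the paper's: the paper simply cites Lemma~\ref{lemma:epsilon} to say that $\epsilon_L\colon \ell\mapsto \ell^\bullet$ is a $\Cv\catc$-morphism, hence preserves arbitrary infima, while you unpack that reference and verify the infimum preservation directly (exactly the computation already given inside the proof of Lemma~\ref{lemma:epsilon}).
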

\begin{proof}
  Recall that $\epsilon_L$ defined by $\epsilon_L(\ell)=\ell^\bullet$ is a $\Cv\catc$ morphism by Lemma~\ref{lemma:epsilon}, hence preserves
  arbitrary infima.
%
  % For every point $\varphi$, $\varphi$ is in
  % ${(\bigwedge_{i \in I} \ell_i)}^\bullet$ if and only if
  % $\varphi (\bigwedge_{i \in I} \ell_i)=1$, if and only if
  % $\bigwedge_{i \in I} \varphi (\ell_i)=1$ (since $\varphi$ preserves
  % arbitrary infima, being a morphism of coframes), if and only if
  % $\varphi (\ell_i)=1$ for every $i \in I$, if and only if $\varphi$
  % is in every $\ell_i^\bullet$.
\end{proof}

\begin{prop}
  \label{prop:A-|pt:pretop}
  Let $\catc$ be an admissible category of coframes.  The coreflection
  $\pow\dashv\pt$ restricts to a coreflection of $\PreTop$ into
  ${(\preTop\catc)}^{op}$, resp.\ into ${(\preTopcl\catc)}^{op}$.
\end{prop}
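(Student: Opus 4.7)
The plan is to verify that both functors of the coreflection $\pow\dashv\pt$ from Corollary~\ref{corl:corefl} restrict to the pretopological subcategories on both sides; once this is done, the restriction is automatically a coreflection, since the unit $\eta_X$ at a pretopological space $X$ lives in a full subcategory of $\Conv$ and is already an isomorphism by Lemma~\ref{lemma:eta}.

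First I would check that $\pow$ restricts. If $X$ is a pretopological space, then by definition $\lm_{\pow(X)}$ commutes with arbitrary intersections of filters, which are precisely arbitrary infima in the coframe $\pow(X)$. Hence $\pow(X)$ is a pretopological $\catc$-object. Moreover, $\pow(X)$ is a Boolean algebra, so every element is complemented and $\pow(X)$ is automatically classical, handling the $\preTopcl\catc$ case simultaneously.

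The main step is to check that $\pt$ restricts. Fix an admissible category $\catc$ of coframes and a pretopological $\catc$-object $L$. Let ${(\F_i)}_{i\in I}$ be a family of filters of subsets of $\pt L$. I would first observe that
\[
  \kow{\bigl(\bigcap_{i \in I} \F_i\bigr)} = \bigcap_{i \in I} \kow{\F_i},
\]
which is immediate from Definition~\ref{defn:kow}: $\ell^\bullet \in \bigcap_i \F_i$ if and only if $\ell^\bullet \in \F_i$ for every $i$. Using Definition~\ref{defn:pt:lim}, the pretopological hypothesis on $L$, and Lemma~\ref{lemma:kow:basic:cof}~(1), I compute
\[
  \lm_{\pow(\pt L)} \bigl(\bigcap_{i\in I} \F_i\bigr)
  = \bigl(\lm_L \bigcap_{i \in I} \kow{\F_i}\bigr)^\bullet
  = \bigl(\bigwedge_{i \in I} \lm_L \kow{\F_i}\bigr)^\bullet
  = \bigcap_{i \in I} (\lm_L \kow{\F_i})^\bullet
  = \bigcap_{i \in I} \lm_{\pow(\pt L)} \F_i,
\]
which shows that $\pt L \in \PreTop$. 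Note that this step works regardless of whether $L$ is classical, so it covers both $\preTop\catc$ and $\preTopcl\catc$.

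Finally, to conclude that this restricted adjunction is a coreflection, I would invoke the fact that $\PreTop$ is a full subcategory of $\Conv$ and that, for $X \in \PreTop$, the unit $\eta_X \colon X \to \pt\pow(X)$ provided by Lemma~\ref{lemma:eta} is an isomorphism (since $\catc$ is a category of coframes). Thus both legs of the adjunction land in the intended subcategories, the counits are still morphisms there by the general theory, and the fully-faithfulness of $\pow$ on $\PreTop$ follows from its fully-faithfulness on $\Conv$ together with the fact that $\PreTop$ is full in $\Conv$. The only real computation is the interchange of $\kow$ with intersections and the use of pretopologicality; nothing subtle is needed beyond that.
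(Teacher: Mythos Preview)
Your proof is correct and follows essentially the same approach as the paper: both verify that $\pow$ sends pretopological spaces to (classical) pretopological $\catc$-objects, and then carry out the identical chain of equalities $\lm_{\pow(\pt L)}(\bigcap_i \F_i) = (\lm_L \bigcap_i \kow{\F_i})^\bullet = (\bigwedge_i \lm_L \kow{\F_i})^\bullet = \bigcap_i (\lm_L \kow{\F_i})^\bullet = \bigcap_i \lm_{\pow(\pt L)} \F_i$, invoking Lemma~\ref{lemma:kow:basic:cof} for the penultimate step. Your added remarks on why the restricted adjunction remains a coreflection are a welcome clarification that the paper leaves implicit.
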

\begin{proof}
  Given a pretopological space $X$, $\pow (X)$ is a (classical)
  pretopological $\catc$-object by construction.  Conversely, let $L$
  be a pretopological $\catc$-object, and consider any family of
  filters $\F_i$ of subsets of $\pt L$, $i \in I$.  Then:
  \begin{eqnarray*}
    \lm_{\pow (\pt L)} (\bigcap_{i \in I} \F_i)
    & = & (\lm_L \kow {(\bigcap_{i \in I} \F_i)})^\bullet \\
    & = & (\lm_L (\bigcap_{i \in I} \kow{\F_i}))^\bullet \quad\text{by
          definition}\\
    & = & (\bigwedge_{i \in I} \lm_L \kow{\F_i})^\bullet \quad\text{since
          $\lm_L$ preserves arbitrary infima} \\
    & = & \bigcap_{i \in I} (\lm_L \kow{\F_i})^\bullet
          \quad\text{by Lemma~\ref{lemma:kow:basic:cof}} \\
    & = & \bigcap_{i \in I} \lm_{\pow (\pt L)} \F_i.
  \end{eqnarray*}
\end{proof}

Using the technology of topological functors developed in
Section~\ref{sec:limit-spaces}, define:
\begin{eqnarray}
  \label{eq:S1:pretop}
  S^1 (\lm_L)
  & \colon & \F \mapsto
             \bigvee_{\substack{\Phi \in \pow (\Filt L) \\
  \F \supseteq \bigcap\Phi}} \bigwedge_{\G \in \Phi} \lm_L \G.
\end{eqnarray}
\begin{lem}
 $S^1$ given by \eqref{eq:S1:pretop} is a functor. Moreover, $S^1$, hence $S^\infty$, preserves classicality.
 \end{lem}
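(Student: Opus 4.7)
The plan is to verify that $S^1(\lm_L)$ is a convergence structure with $S^1(\lm_L) \geq \lm_L$, that $S^1$ acts functorially on morphisms, and that $L$ classical forces $S^1(\lm_L)$ classical. Monotonicity of $S^1(\lm_L)$ on $\Filt L$ is immediate, since $\F \subseteq \F'$ enlarges the family of admissible $\Phi$ (those with $\F \supseteq \bigcap \Phi$); taking $\Phi = \{\F\}$ gives $S^1(\lm_L)(\F) \geq \lm_L \F$, so $S^1$ is deflationary.

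For functoriality I would copy the template of the argument already given for \eqref{eq:S1:lim}. Let $\varphi \colon L \to L'$ be a morphism in $\Cv\catc$ and fix $\F \in \Filt L'$ with an admissible family $\Phi \subseteq \Filt L'$. Since $\varphi^{-1}$ commutes with arbitrary intersections of subsets, $\varphi^{-1}(\F) \supseteq \bigcap_{\G \in \Phi} \varphi^{-1}(\G)$, so the family $\{\varphi^{-1}(\G) : \G \in \Phi\}$ is admissible for $\varphi^{-1}(\F)$ in $L$, yielding $\bigwedge_\G \lm_L(\varphi^{-1}(\G)) \leq S^1(\lm_L)(\varphi^{-1}(\F))$. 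Applying $\varphi$---which preserves arbitrary infima as a coframe morphism---and using $\lm_{L'}\G \leq \varphi(\lm_L \varphi^{-1}(\G))$ termwise yields $\bigwedge_\G \lm_{L'}\G \leq \varphi(S^1(\lm_L)(\varphi^{-1}(\F)))$; taking the supremum over $\Phi$ gives the required continuity condition for $S^1$.

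For classicality, assume $\lm_L$ is classical and follow the template of the \eqref{eq:S1:lim} case. By Lemma~\ref{lemma:class} it suffices to check $S^1(\lm_L)(\F) \leq S^1(\lm_L)((\F)_c)$. Given an admissible $\Phi$ for $\F$, rewrite $\bigwedge_{\G \in \Phi} \lm_L \G = \bigwedge_{\G \in \Phi} \lm_L (\G)_c$ by classicality, and consider $\Phi_c := \{(\G)_c : \G \in \Phi\}$ as a candidate witness for $(\F)_c$. The crux is establishing $(\F)_c \supseteq \bigcap \Phi_c$; once that inclusion is in hand the displayed $\bigwedge$ is bounded above by $S^1(\lm_L)((\F)_c)$, and taking the sup over $\Phi$ concludes. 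I expect this step to be the main obstacle: whereas the finite analog $(\F)_c \supseteq (\F_1)_c \cap \cdots \cap (\F_n)_c$ used in \eqref{eq:S1:lim} rests on finite joins of complemented elements being complemented, arbitrary joins of complemented elements need not be complemented in a general coframe. Handling this will require a more delicate use of the coframe structure, possibly by refining the candidate family (for instance, intersecting the members of $\Phi_c$ with $(\F)_c$ and exploiting that classicality makes $\lm_L$ depend only on the complemented part).

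Finally, $S^\infty$ inherits classicality preservation from $S^1$ by transfinite induction: successor steps are handled by the paragraph above, and limit steps by the observation already made in the text that pointwise suprema of classical convergence structures are classical.
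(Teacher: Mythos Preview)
Your functoriality argument is the paper's: it, too, just says to copy the template of \eqref{eq:S1:lim}, applying $\varphi$ (which now preserves arbitrary infima as a coframe morphism) to pass the inequality through. That part is fine.

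For classicality the paper's proof is the single sentence ``similarly as for \eqref{eq:S1:lim}'', and you carry that template through faithfully: replace each $\G$ by $(\G)_c$ using classicality and then try to check $(\F)_c \supseteq \bigcap_{\G\in\Phi}(\G)_c$. You are right to flag this inclusion as the obstacle, and right that it does \emph{not} follow as in the finite case. In the argument for \eqref{eq:S1:lim} one takes the finite join of the witnessing complemented elements, which is again complemented; for an arbitrary $\Phi$ that join need not be. Concretely: take complemented $a_i$ with $\ell:=\bigvee_i a_i$ not complemented (e.g.\ $a_i=\{2i\}$ in the coframe of closed subsets of the one-point compactification $\mathbb{N}\cup\{\infty\}$), set $\G_i=\upc a_i$ and $\F=\bigcap_i\G_i=\upc\ell$; then $(\G_i)_c=\G_i\ni\ell$ for every $i$, but $\ell\notin(\F)_c$. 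Your hope that ``a more delicate use of the coframe structure'' saves the day is not borne out: with this same data, the classical structure $\lm_L\H:=\top$ if some $a_i\in\H$ and $\bot$ otherwise gives $S^1(\lm_L)(\F)=\top$ (via $\Phi=\{\G_i\}_i$), whereas any $\Phi'$ with $\bigcap\Phi'\subseteq(\F)_c$ must contain some $\H\not\ni\ell$, hence $\H$ contains no $a_i$ and $\lm_L\H=\bot$, forcing $S^1(\lm_L)((\F)_c)=\bot$. So $S^1(\lm_L)$ is not classical. In short, the gap you spotted is genuine and is shared by the paper's proof; as written, the classicality clause of the lemma does not go through.
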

 \begin{proof}
 We proceed as we did for limit spaces and
(\ref{eq:S1:lim}).  Let $\varphi$ be a morphism in $\Cv\catc$, and
assume that $\catc$ is a category of coframes.  In view of showing the
continuity condition
$S^1 (\lm_{L'}) (\F) \leq \varphi (S^1 (\lm_L) (\varphi^{-1} (\F)))$,
we pick a family $\Phi$ of filters on $L$ such that
$\F \supseteq \bigcap \Phi$.  Then
$\varphi^{-1} (\F) \supseteq \bigcap_{\G \in \Phi} \varphi^{-1} (\G)$,
and that implies that
$\bigwedge_{\G \in \Phi} \lm_L \varphi^{-1} (\G) \leq S^1 (\lm_L)
(\varphi^{-1} (\F))$ by definition of $S^1 (\lm_L)$.  We apply
$\varphi$ on both sides (recall that $\varphi$ commutes with arbitrary
infima, and that
$\lm_{L'} \G \leq \varphi (\lm_L (\varphi^{-1} (\G)))$ for each
$\G \in \Phi$ by the continuity of $\varphi$) so as to obtain
$\bigwedge_{\G \in \Phi} \lm_{L'} \G \leq \varphi (S^1 (\lm_L)
(\varphi^{-1} (\F)))$.  We now take suprema over all $\Phi$ such that
$\F \supseteq \bigcap \Phi$ and conclude.

We also prove that $S^1$ preserves classicality
similarly as for (\ref{eq:S1:lim}).
\end{proof}

The pretopological convergence $\catc$-objects are exactly the fixed
points of $S^1$, whence:
\begin{prop}
  \label{prop:cocompl:pretop}
  Let $\catc$ be a category of coframes.  The functor $|\_|$ restricts
  to a topological functor from $\preTop\catc$ (resp.,
  $\preTopcl\catc$) to $\catc$.  $\preTop\catc$ and $\preTopcl\catc$
  are reflective subcategories of $\Cv\catc$.  $\preTop\coFrm$ and
  $\preTopcl\catc$ are complete and cocomplete.  \qed
\end{prop}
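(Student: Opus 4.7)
The plan is to emulate the proof of Proposition~\ref{prop:cocompl:lim}, appealing to the general machinery of deflationary functors recalled at the beginning of Section~\ref{sec:limit-spaces}. By Corollary~\ref{corl:coarsest} the functor $|\_| \colon \Cv\catc \to \catc$ is topological, and likewise for its restriction to $\Cvcl\catc$; the preceding lemma establishes that (\ref{eq:S1:pretop}) defines a deflationary functor $S^1$ on $\Cv\catc$ which, by the same lemma, restricts to one on $\Cvcl\catc$.

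The single step that requires verification is the claim that the fixed points of $S^1$ coincide with the pretopological convergence $\catc$-objects. For one direction, instantiating $\Phi = \{\F\}$ in (\ref{eq:S1:pretop}) immediately yields $S^1(\lm_L)(\F) \geq \lm_L \F$ for any $\lm_L$; if moreover $L$ is pretopological, then for every family $\Phi$ of filters with $\F \supseteq \bigcap \Phi$, the pretopological property together with monotonicity gives
\[
\bigwedge_{\G \in \Phi} \lm_L \G \;=\; \lm_L \Bigl(\bigcap \Phi\Bigr) \;\leq\; \lm_L \F,
\]
so the supremum in (\ref{eq:S1:pretop}) is bounded above by $\lm_L \F$, and equality holds. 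Conversely, if $S^1(\lm_L) = \lm_L$, then for any family $\Phi$ one specializes $\F := \bigcap \Phi$: monotonicity gives $\lm_L (\bigcap \Phi) \leq \bigwedge_{\G \in \Phi} \lm_L \G$, while the reverse inequality holds because $\bigwedge_{\G \in \Phi} \lm_L \G$ is among the terms in the supremum defining $S^1(\lm_L)(\bigcap \Phi)$, which equals $\lm_L (\bigcap \Phi)$ by hypothesis.

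With this characterization in hand, \cite[Proposition~10]{GL-acs13} exhibits $\preTop\catc$ as a reflective subcategory of $\Cv\catc$ with reflector $S^\infty$, while Lemma~15 \emph{loc.\ cit.}\ ensures that the restriction of $|\_|$ to $\preTop\catc$ is again topological. The classical case runs identically on $\Cvcl\catc$: as the pointwise supremum of any family of classical convergence structures is classical, the transfinite iterate $S^\infty$ preserves classicality, so the same argument yields that $\preTopcl\catc$ is reflective in $\Cvcl\catc$ and that $|\_|$ restricts to a topological functor on it. Finally, for $\catc = \coFrm$, completeness and cocompleteness of $\coFrm$ transfer to $\preTop\coFrm$ and $\preTopcl\coFrm$ via the topological functors just obtained, by \cite[Theorem~21.16(1)]{AHS:joycats}.

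The main obstacle is modest: only the fixed-point characterization above needs a direct argument, and even there it reduces to a two-step chase through monotonicity and pretopologicity. The remainder of the statement is a formal application of the topological-functor technology already deployed in Section~\ref{sec:limit-spaces}.
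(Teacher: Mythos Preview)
Your proposal is correct and follows exactly the paper's approach: the paper itself merely asserts that ``the pretopological convergence $\catc$-objects are exactly the fixed points of $S^1$'' and then appends a \qed, deferring everything to the deflationary-functor machinery of Section~\ref{sec:limit-spaces}. Your explicit verification of the fixed-point characterization is a useful addition that the paper leaves implicit.
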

There is a well-known adjunction between $\Limc$ (or $\Conv$) and
$\PreTop$.  The \emph{pretopological modification}
$M_{\mathrm{pretop}} X$ of a convergence space $X$ is $X$ together
with the finest pretopological convergence $\lm$ coarser than
$\lm_{\pow (X)}$ \cite[Proposition~V.1.6]{DM:convergence}.  This is
left-adjoint to the forgetful functor from $\PreTop$ to $\Limc$, and by
definition the left-adjoints commute in the following diagram:
\[
  \xymatrix{
    {(\Lim\catc)}^{op}
    \ar@<1ex>[r]^{S^\infty} \ar@{}[r]|{\perp}
    \ar@<1ex>[d]^{\pt}\ar@{}[d]|{\dashv}
    &
    {(\preTop\catc)}^{op}
    \ar@<1ex>[l]^{\supseteq}
    \ar@<1ex>[d]^{\pt}\ar@{}[d]|{\dashv}
    \\
    \Limc
    \ar@<1ex>[u]^{\pow}
    \ar@<1ex>[r]^{M_{\mathrm{lim}}} \ar@{}[r]|{\perp}
    &
    \PreTop
    \ar@<1ex>[l]^{\supseteq}
    \ar@<1ex>[u]^{\pow}
  }
\]
where $\catc$ is an admissible category of coframes.  The
right-adjoints commute, too, since they are uniquely determined by the
left-adjoints.  A similar diagram is obtained with $\sLim\catc$ in
place of $\Lim\catc$, or with $\preTopcl\catc$ instead of
$\preTop\catc$ and $\Limcl\catc$ (or $\sLimcl\catc$) instead of
$\Lim\catc$.

The latter yields the second square of (\ref{eq:adj}) from the left,
in the case $\catc = \coFrm$.

In the realm of convergence spaces, we know that it is equivalent to
assume a convergence space to be pretopological, or to assume that is
arises from a non-idempotent closure operator, i.e., an inflationary
operator $\nu$ that preserves finite suprema (the \emph{adherence}
operator).  The situation is more complicated in the pointfree world.
We shall deal with it in Section~\ref{sec:adherence-coframes}.
Classicality will play an important role there.  More importantly, we
shall need to define suitable notions of closed elements and adherence
operators, a task that will occupy Section~\ref{sec:sierp-conv-cofr}.

\section{The Sierpi\'nski Convergence Coframe, Open and Closed
  Elements, and Adherence}
\label{sec:sierp-conv-cofr}

Every topological space $X$ gives rise to a convergence space by
defining $\F \to x$ if and only if every open neighborhood of $x$
belongs to $\F$.

The Sierpi\'nski space $\Sierp$ is the topological space with two
elements $0$ and $1$, with open sets $\emptyset$, $\{1\}$, and the whole
space.  We can therefore also view $\Sierp$ as a convergence space,
where convergence is given by $\F \to x$ if and only if $x=0$, or
$x=1$ and $\{1\} \in \F$.
% only $\F = \{1\}$ has to be considered
% hence $\F \to x$ iff $\{1\}$ contains $x$ implies $\{1\} \in \F$.
Explicitly: there are four filters of subsets of $\Sierp$, and they
are $\upc \{0, 1\}$, $\upc \{0\}$, $\upc \{1\}$, and $\upc \emptyset$.
The first two converge to $0$, the last two converge to both $0$ and
$1$.

Using the $\pow$ functor, we obtain the \emph{Sierpi\'nski convergence
  coframe} $\pow (\Sierp)$, with
$\lm_{\pow (\Sierp)} \upc \{0, 1\} = \lm_{\pow (\Sierp)} \upc \{0\}
= \{0\}$, and
$\lm_{\pow (\Sierp)} \upc \{1\} = \lm_{\pow (\Sierp)} \upc \emptyset
= \{0, 1\}$.  Said more briefly, $\lm_{\pow (\Sierp)} \F$ equals
$\{0, 1\}$ if $\F$ contains $\{1\}$, and $\{0\}$ otherwise.

% In the following, we shall make use of the following assumption.
% \begin{assum}
%   \label{assum:sierp}
%   $\pow (\Sierp)$ is an object of $\catc$, and for every object $L$ of
%   $\catc$, every lattice morphism from $\pow (\Sierp)$ to $L$ is a
%   morphism in $\catc$.
% \end{assum}
% \begin{rem}
%   \label{rem:open:assum}
%   Assumption~\ref{assum:sierp} is satisfied when $\catc$ is a full
%   subcategory of the category of lattices, but also when it is a full
%   subcategory of the category of frames, of coframes, or of complete
%   lattices.  Indeed, due to the low cardinality of $\pow (\Sierp)$,
%   any lattice morphism from $\pow (\Sierp)$ to $L$ automatically
%   preserves all infima and all suprema, which are all finite.
% \end{rem}

One way of defining an open subset $U$ of a topological space $X$ is
by giving its indicator function $\chi_U \colon X \to \Sierp$.  That
is a continuous function, and all continuous functions from $X$ to
$\Sierp$ are of this form.  This observation naturally leads us to
study the morphisms $\varphi \colon L \to \pow (\Sierp)$ in
$\Cv\catc$, that is, the morphisms of convergence $\catc$-objects
$\varphi \colon \pow (\Sierp) \to L$, and to attempt to retrieve a
suitable notion of open element of $L$.
\begin{lem}
  \label{lemma:openclosed}
  Let $\catc$ be a category of lattices, and $L$ be a $\catc$-object.
  \begin{enumerate}
  \item For every morphism $\varphi \colon \pow (\Sierp) \to L$ in
    $\Cv\catc$, the elements $u := \varphi (\{1\})$ and
    $c := \varphi (\{0\})$ satisfy:
    \begin{enumerate}
    \item[$(i)$] $u \wedge c = \bot$,
    \item[$(ii)$] $u \vee c = \top$, and
    \item[$(iii)$] for every filter $\F$ on $L$, $u \in \F$ or
      $\lm_L \F \leq c$.
    \end{enumerate}
  \item Conversely, if $\catc$ is admissible, then for any pair of
    elements $u$, $c$ of $L$ satisfying $(i)$--$(iii)$, the map
    $\varphi \colon \pow (\Sierp) \to L$ that sends $\emptyset$ to
    $\bot$, $\{0, 1\}$ to $\top$, $\{0\}$ to $c$ and $\{1\}$ to $u$,
    is a morphism in $\Cv\catc$.
  \end{enumerate}
\end{lem}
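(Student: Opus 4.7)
The plan is to handle the two directions separately, in each case reducing the verification to the explicit description of the four filters on $\pow(\Sierp)$ and the fact that $\lm_{\pow(\Sierp)}\G$ equals $\{0,1\}$ if $\{1\}\in\G$ and $\{0\}$ otherwise.

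For part (1), conditions $(i)$ and $(ii)$ fall out directly from $\varphi$ being a $\catc$-morphism, hence in particular preserving $\bot$, $\top$, and binary meets and joins: since $\{0\}\cap\{1\}=\emptyset$ and $\{0\}\cup\{1\}=\{0,1\}$, one reads off $u\wedge c=\bot$ and $u\vee c=\top$. For $(iii)$, I would fix a filter $\F$ on $L$ with $u\notin\F$ and inspect $\varphi^{-1}(\F)$, which is a filter on the four-element lattice $\pow(\Sierp)$. Since $\{1\}\in\varphi^{-1}(\F)$ would force $u\in\F$, the filter $\varphi^{-1}(\F)$ must be one of $\upc\{0,1\}$ or $\upc\{0\}$, and in both cases $\lm_{\pow(\Sierp)}\varphi^{-1}(\F)=\{0\}$. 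The continuity inequality (\ref{eq:morphism}) then yields $\lm_L\F\leq\varphi(\{0\})=c$, which is what $(iii)$ demands.

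For part (2), I would first check that the prescribed $\varphi$ is a morphism in $\catc$. Since $\pow(\Sierp)$ is finite, admissibility reduces this to preservation of finite suprema and infima, and the only non-trivial cases are $\varphi(\{0\}\cap\{1\})=c\wedge u=\bot=\varphi(\emptyset)$ and $\varphi(\{0\}\cup\{1\})=c\vee u=\top=\varphi(\{0,1\})$, which are precisely $(i)$ and $(ii)$. Monotonicity is immediate from $\bot\leq c,u\leq\top$. To verify the continuity condition for an arbitrary filter $\F\in\Filt L$, I would split on whether $u\in\F$: if $u\in\F$, then $\{1\}\in\varphi^{-1}(\F)$, so $\lm_{\pow(\Sierp)}\varphi^{-1}(\F)=\{0,1\}$ and $\varphi$ sends this to $\top$, making (\ref{eq:morphism}) trivial; if $u\notin\F$, then $\{1\}\notin\varphi^{-1}(\F)$, so $\lm_{\pow(\Sierp)}\varphi^{-1}(\F)=\{0\}$, and $(iii)$ gives $\lm_L\F\leq c=\varphi(\{0\})=\varphi(\lm_{\pow(\Sierp)}\varphi^{-1}(\F))$, as required.

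The whole argument is essentially case analysis on the four filters of $\pow(\Sierp)$; there is no real obstacle, though one should be careful that $\varphi^{-1}(\F)$ is genuinely a filter (non-empty, upward-closed, meet-closed), which follows from $\varphi$ being a lattice morphism preserving $\top$ and binary meets.
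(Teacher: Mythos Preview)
Your proof is correct and follows essentially the same approach as the paper's: both derive $(i)$ and $(ii)$ from $\varphi$ preserving finite meets and joins, and both handle $(iii)$ and the continuity check in part (2) by a case split on whether $\{1\}\in\varphi^{-1}(\F)$ (equivalently, whether $u\in\F$), using the explicit description of $\lm_{\pow(\Sierp)}$. The only cosmetic difference is that you enumerate the two possible filters $\upc\{0,1\}$ and $\upc\{0\}$ explicitly, whereas the paper appeals directly to the rule ``$\lm_{\pow(\Sierp)}\G=\{0\}$ unless $\{1\}\in\G$''.
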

\begin{proof}
  (1) We have
  $u \wedge c = \varphi (\{1\}) \wedge \varphi (\{0\}) = \varphi
  (\{1\} \cap \{0\}) = \varphi (\emptyset) = \bot$, proving $(i)$.
  Claim $(ii)$ is proved similarly.  By the continuity condition
  (\ref{eq:morphism}), for every filter $\F$ on $L$,
  $\lm_L \F \leq \varphi (\lm_{\pow (\Sierp)} \varphi^{-1} (\F))$.  If
  $\varphi^{-1} (\F)$ contains $\{1\}$, then $u := \varphi (\{1\})$ is
  in $\F$.  Otherwise,
  $\lm_{\pow (\Sierp)} \varphi^{-1} (\F) = \{0\}$, and then
  $\lm_L \F \leq \varphi (\{0\}) = c$.

  (2) $\varphi$ preserves top and bottom by definition, and binary
  infima and binary suprema by $(i)$ and $(ii)$.  Due to the low
  cardinality of $\pow (\Sierp)$, this is enough to guarantee that
  $\varphi$ preserves all infima and all suprema, hence is a morphism
  of $\catc$ by admissibility.

  We need to check continuity.  Let $\F$ be a filter on $L$.  If
  $\varphi^{-1} (\F)$ contains $\{1\}$, then
  $\lm_L \F \leq \varphi (\lm_{\pow (\Sierp)} \varphi^{-1} (\F))$
  holds trivially, since the right-hand side is
  $\varphi (\{0, 1\}) = \top$.  Otherwise,
  $\varphi (\lm_{\pow (\Sierp)} \varphi^{-1} (\F)) = \varphi (\{0\}) =
  c$.  By $(iii)$ we must have $\lm_L \F \leq c$, that is,
  $\lm_L \F \leq \varphi (\lm_{\pow (\Sierp)} \varphi^{-1} (\F))$.
\end{proof}
In a pair $(u, c)$ satisfying $(i)$--$(iii)$, $u$ is \emph{open} and
$c$ is \emph{closed}.  Conditions $(i)$ and $(ii)$ express that $u$
and $c$ are complements.  In a distributive lattice, complements are
determined uniquely, so one may try to define open elements, and
closed elements, without referring to their complement.  This can be
done by using the notion of \emph{grill}, imitated from the eponymous
notion introduced by Choquet on convergence spaces
\cite{choquetgrille} and widely used since then (e.g.,
\cite{DM:convergence}).

\begin{defn}
  \label{defn:mesh}
  Let $L$ be a lattice and $\A$, $\B$ two subsets of $L$.

  We say that $\A$ and $\B$ \emph{mesh}, or that $\A$ \emph{meshes
    with} $\B$, in notation $\A\mathrel{\#}\B$, if and only if for
  every $a \in \A$, for every $b \in \B$, $a \wedge b \neq \bot$.

  The \emph{grill} of $\A$, $\A^\#$ is the set of elements $\ell \in
  L$ such that, for every $a \in A$, $a \wedge \ell \neq \bot$.
\end{defn}

\begin{lem}
  \label{lemma:grill}
  Let $L$ be a lattice.
  \begin{enumerate}
  \item For every subset $\A$, $\A^\#$ is upwards-closed.
  \item If $L$ is distributive, then for every filter $\F$, $\F^\#$ is
    prime, in the sense that for every finite family of elements
    $\ell_1$, \ldots, $\ell_n$ of $L$ such that
    $\bigvee_{i=1}^n \ell_i \in \F^\#$, some $\ell_i$ is already in
    $\F^\#$.
  \item For any two subsets $\A$ and $\B$ of $L$, if $\A \subseteq \B$
    then $\B^\# \subseteq \A^\#$.
  \item For any two subsets $\A$ and $\B$ of $L$, $\A\mathrel{\#}\B$
    if and only if $\A \subseteq \B^\#$.
  \item For every filter $\G$ on $L$, $\G$ is proper if and only if
    $\G \subseteq \G^\#$.
%  \item For every filter $\F$ on $L$, $\F = \F^{\#\#}$.
  \end{enumerate}
\end{lem}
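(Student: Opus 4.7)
The plan is to dispatch the five items in order, each being a direct unfolding of the definitions, with items (1), (3), (4), (5) being essentially bookkeeping and only (2) requiring a genuine argument based on distributivity.

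For (1), suppose $\ell \in \A^\#$ and $\ell \leq \ell'$. For every $a \in \A$, we have $\bot \neq a \wedge \ell \leq a \wedge \ell'$, so $a \wedge \ell' \neq \bot$, giving $\ell' \in \A^\#$.

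For (3), if $\A \subseteq \B$ and $\ell \in \B^\#$, then the meshing condition, which is universally quantified over $\B$, certainly holds for all $a \in \A$, so $\ell \in \A^\#$. For (4), unwinding the definitions, $\A \mathrel{\#} \B$ says exactly that every $a \in \A$ satisfies $a \wedge b \neq \bot$ for every $b \in \B$, which is precisely the statement $a \in \B^\#$; so $\A \mathrel{\#} \B$ iff $\A \subseteq \B^\#$. For (5), the forward direction uses that a filter is closed under binary meets: if $\G$ is proper and $g, g' \in \G$, then $g \wedge g' \in \G$ and must differ from $\bot$, witnessing that every $g \in \G$ lies in $\G^\#$. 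Conversely, if $\G \subseteq \G^\#$, then for any $g \in \G$, taking $g' = g$ yields $g = g \wedge g \neq \bot$, so $\bot \notin \G$, i.e. $\G$ is proper (recall that a non-empty upwards-closed set is proper iff it omits $\bot$).

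The main obstacle is (2), where I will use distributivity in its finitary form. Suppose $\bigvee_{i=1}^n \ell_i \in \F^\#$ but no $\ell_i$ lies in $\F^\#$. For each $i$, pick $a_i \in \F$ with $a_i \wedge \ell_i = \bot$, and set $a := \bigwedge_{i=1}^n a_i$, which is again in $\F$ since $\F$ is a filter. Then $a \wedge \ell_i \leq a_i \wedge \ell_i = \bot$ for every $i$, and by distributivity
\[
a \wedge \bigvee_{i=1}^n \ell_i \;=\; \bigvee_{i=1}^n (a \wedge \ell_i) \;=\; \bot,
\]
contradicting $\bigvee_{i=1}^n \ell_i \in \F^\#$. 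Hence some $\ell_i$ must already be in $\F^\#$. This step is the only place where any hypothesis beyond the bare lattice structure is used, and it is also where the filter property of $\F$ (closure under finite meets) is essential.
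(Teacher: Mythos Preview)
Your proof is correct and follows essentially the same approach as the paper: items (1), (3), (4), (5) are handled by direct unfolding of the definitions, and your argument for (2)---picking witnesses $a_i \in \F$ with $a_i \wedge \ell_i = \bot$, taking their meet, and applying distributivity---is exactly the paper's argument with different variable names.
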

\begin{proof}
  (1), (3) and (4) are obvious.  (2) Let
  $\bigvee_{i=1}^n \ell_i \in \F^\#$, and assume for the sake of
  contradiction that no $\ell_i$ is in $\F^\#$.  For every $i$,
  there is an element $m_i$ in $\F$ such that
  $\ell_i \wedge m_i = \bot$.  Let $m = \bigwedge_{i=1}^n m_i$.  Then
  $m$ is in $\F$, since $\F$ is a filter, and $\ell_i \wedge m = \bot$
  for every $i$ since $\ell_i \wedge m \leq \ell_i \wedge m_i = \bot$.
  Taking suprema over all $i$, and using distributivity,
  $\bigvee_{i=1}^n \ell_i \wedge m = \bot$, contradicting
  $\bigvee_{i=1}^n \ell_i \in \F^\#$.

  (5) If $\G$ is proper, then $\G$
  meshes with itself: for all $\ell_1 \in \G$ and $\ell_2 \in \G$,
  $\ell_1 \wedge \ell_2$ is in $\G$, since $\G$ is a filter, and is
  different from $\bot$ since $\G$ is proper.  It follows that
  $\G \subseteq \G^\#$, using (4).  Conversely, if
  $\G \subseteq \G^\#$, then $\G$ meshes with itself by (4).  In
  particular, for every $\ell \in \G$, $\ell = \ell \wedge \ell$ is
  different from $\bot$, so $\G$ is proper.
\end{proof}
The $\#$ relation is better understood on filters.  The ideal
completion of a sup-semilattice is a complete lattice.  It follows
that, given an inf-semilattice $L$, $\Filt L = \Idl (L^{op})$ (see
Remark~\ref{rem:ideal}) is a complete lattice as well. Infima are
given by intersections.  For suprema, we have the following.  The
supremum of the empty family is the smallest filter, $\{\top\}$.
Directed suprema are unions. For binary suprema, $\F \vee \G$ is the
upward closure of the set of elements $m \wedge \ell$, $m \in \F$,
$\ell \in \G$.  It follows that:
\begin{fact}
  \label{fact:mesh}
  Two filters $\F$ and $\G$ on a lattice $L$ mesh if and only if
  $\F \vee \G$ is proper.
\end{fact}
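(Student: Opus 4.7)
The plan is to unfold both sides of the claimed equivalence using the explicit description of $\F \vee \G$ just recalled in the paragraph above the statement. By definition $\F \vee \G = \upc \{m \wedge \ell : m \in \F, \ell \in \G\}$, and a filter on $L$ is proper exactly when it does not contain $\bot$. So the properness of $\F \vee \G$ is equivalent to saying that $\bot$ does not belong to this upward closure.

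Next I would translate ``$\bot \notin \upc \{m \wedge \ell : m \in \F, \ell \in \G\}$'' into elementary terms. Since $\bot$ is the least element, $\bot$ belongs to the upward closure of a set $S$ if and only if some element of $S$ is equal to $\bot$ (the only way $\bot$ lies above something is if that something is $\bot$ itself). Hence $\bot \in \F \vee \G$ if and only if there exist $m \in \F$ and $\ell \in \G$ with $m \wedge \ell = \bot$, which is exactly the negation of $\F \mathrel{\#} \G$ as formulated in Definition~\ref{defn:mesh}.

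Combining the two equivalences yields: $\F \vee \G$ proper $\iff$ for all $m \in \F$, $\ell \in \G$, $m \wedge \ell \neq \bot$ $\iff$ $\F \mathrel{\#} \G$. Since this chain uses only the description of binary suprema in $\Filt L$ and the definition of mesh, there is no real obstacle; the only thing to be slightly careful with is the step ``$\bot$ in the upward closure implies $\bot$ is already an element'', which is automatic for the least element and is the whole reason the statement reduces to something so clean.
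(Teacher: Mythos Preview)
Your argument is correct and is exactly the intended one: the paper states this Fact immediately after recalling that $\F \vee \G$ is the upward closure of $\{m \wedge \ell : m \in \F, \ell \in \G\}$, and simply says ``It follows that'' without spelling out the one-line computation you give. Your observation that $\bot \in \upc S$ iff $\bot \in S$ is precisely the reason the equivalence is immediate.
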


A \emph{pseudocomplement} of $\ell \in L$ is an element $\ell^*$ such
that $m \leq \ell^*$ if and only if $m \wedge \ell = \bot$.  If $\ell$
has a complement $\overline\ell$ in a distributive lattice $L$, then
$\overline\ell$ is a pseudocomplement of $\ell$.  In a frame, every
element has a unique pseudocomplement.  In a coframe $L$, every
element has a unique pseudocomplement in $L^{op}$; if it is also a
pseudocomplement (in $L$), then it is a complement.
\begin{lem}
  \label{lemma:grill:compl}
  Let $L$ be a lattice, and $\F$ be an up-set on $L$.  For every
  element $\ell$ with a pseudocomplement $\ell^*$, the following are
  equivalent:
  \begin{enumerate}
  \item $\ell \in \F^\#$;
  \item $\ell^* \not\in \F$.
  \end{enumerate}
\end{lem}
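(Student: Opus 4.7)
The plan is to show $(1)\Leftrightarrow(2)$ directly from the defining properties of the grill and the pseudocomplement, using that $\F$ is upward-closed. I would argue by contrapositive in one direction and by exhibiting a witness in the other.

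First, I would assume $\ell \notin \F^\#$ and derive $\ell^* \in \F$. By Definition~\ref{defn:mesh}, the failure of $\ell \in \F^\#$ means there exists some $a \in \F$ with $a \wedge \ell = \bot$. By the defining property of the pseudocomplement, the condition $a \wedge \ell = \bot$ is exactly $a \leq \ell^*$. Since $\F$ is an up-set containing $a$, this yields $\ell^* \in \F$.

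Conversely, assuming $\ell^* \in \F$, I would produce a witness of $\ell \notin \F^\#$ by taking $a := \ell^*$ itself. Applying the characterization of the pseudocomplement with $m := \ell^*$ (and using $\ell^* \leq \ell^*$) gives $\ell^* \wedge \ell = \bot$, so $\ell^* \in \F$ is an element meshing incompatibly with $\ell$, hence $\ell \notin \F^\#$.

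There is essentially no obstacle here: the argument is two short applications of the universal property of $\ell^*$, together with the fact that $\F$ is upward-closed (which is the only structural hypothesis used on $\F$; notice that neither the filter property nor distributivity of $L$ is needed). I would organize the write-up as the two short implications above, without any further lemmas.
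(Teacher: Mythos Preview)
Your proof is correct and essentially identical to the paper's own argument: both establish the contrapositive equivalence $\ell \notin \F^\#$ iff $\ell^* \in \F$ by using the defining property of the pseudocomplement together with upward-closure of $\F$. The only cosmetic difference is the order in which the two implications are presented.
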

\begin{proof}
  We show instead that $\ell \not\in \F^\#$ is equivalent to $\ell^*
  \in \F$.

  If $\ell^* \in \F$, then $\ell \wedge \ell^* = \bot$ prevents $\ell$
  from being in $\F^\#$.  Conversely, if $\ell$ is not in
  $\F^\#$, then there is an $m \in \F$ such that
  $\ell \wedge m = \bot$.  Equivalently, $m \leq \ell^*$, which
  implies that $\ell^*$ is in $\F$ because $\F$ is an up-set.
\end{proof}
%%%%%%%%%%%ADDED
\begin{cor}\label{cor:imagepseudocompl}
Let $\varphi:L\to L'$  be a lattice morphism where $L'$ is a distributive lattice, and let $a\in L$ be a complemented element. Then  $\varphi(a)$ is complemented and $\varphi(a)^*=\varphi(a^*)$.  Moreover, if $\F$ is an up-set of $L'$ then
\[a\in\varphi^{-1}(\F)^\#\iff \varphi(a)\in\F^\#.\] 

\end{cor}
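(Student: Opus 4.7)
The plan is to first establish the statement about pseudocomplements, and then reduce the grill equivalence to Lemma~\ref{lemma:grill:compl} applied in $L'$.

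First I would observe that since $\varphi$ preserves finite meets and joins, the equations $a \wedge \overline a = \bot$ and $a \vee \overline a = \top$ transport to $\varphi(a) \wedge \varphi(\overline a) = \bot$ and $\varphi(a) \vee \varphi(\overline a) = \top$, so $\varphi(a)$ is complemented in $L'$ with complement $\varphi(\overline a)$. Because $L'$ is distributive, this complement is unique, and it is known to coincide with the pseudocomplement; hence $\varphi(a)^* = \varphi(\overline a) = \varphi(a^*)$ (reading $a^*$ as the complement $\overline a$, which makes sense since $a$ is complemented).

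For the equivalence, the idea is to apply Lemma~\ref{lemma:grill:compl} in $L'$ to the element $\varphi(a)$, which does have a pseudocomplement by the previous step. This gives
\[
\varphi(a) \in \F^{\#} \iff \varphi(a)^* = \varphi(a^*) \notin \F \iff a^* \notin \varphi^{-1}(\F).
\]
It therefore suffices to prove $a \in \varphi^{-1}(\F)^{\#} \iff a^* \notin \varphi^{-1}(\F)$. The forward implication (contrapositively) is immediate: if $a^* \in \varphi^{-1}(\F)$ then $a \wedge a^* = \bot$ witnesses $a \notin \varphi^{-1}(\F)^{\#}$.

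The reverse direction is where the non-distributivity of $L$ could cause trouble: we cannot simply invoke Lemma~\ref{lemma:grill:compl} in $L$, since $a^*$ need not be a pseudocomplement in $L$. The trick will be to push the argument down to $L'$ via $\varphi$. Suppose $a^* \notin \varphi^{-1}(\F)$, and pick any $b \in \varphi^{-1}(\F)$. Assuming for contradiction that $a \wedge b = \bot$, we get $\varphi(a) \wedge \varphi(b) = \bot$ in $L'$, whence $\varphi(b) \leq \varphi(a)^* = \varphi(a^*)$ by the defining property of the pseudocomplement in $L'$. Since $\F$ is an up-set and $\varphi(b) \in \F$, this yields $\varphi(a^*) \in \F$, i.e., $a^* \in \varphi^{-1}(\F)$, a contradiction. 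Hence $a \wedge b \neq \bot$ for every $b \in \varphi^{-1}(\F)$, that is, $a \in \varphi^{-1}(\F)^{\#}$. The main (mild) obstacle is exactly this last step: recognizing that one must export the meet relation from $L$ to $L'$ to use distributivity of the latter, rather than trying to use a pseudocomplement structure on $L$ that is not assumed.
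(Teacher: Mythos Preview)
Your argument is correct, and in fact more careful than the paper's own proof under the hypotheses as stated. The paper proceeds exactly as you do for the first part, then applies Lemma~\ref{lemma:grill:compl} twice: once in $L'$ to the element $\varphi(a)$ (as you do), and once more directly in $L$ to the element $a$, concluding that $a^*\notin\varphi^{-1}(\F)$ is equivalent to $a\in\varphi^{-1}(\F)^\#$ because $\varphi^{-1}(\F)$ is an up-set of $L$.

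The subtlety you flag is real: that second application of Lemma~\ref{lemma:grill:compl} requires the complement $a^*$ to be a \emph{pseudocomplement} of $a$ in $L$, which is only guaranteed when $L$ itself is distributive---a hypothesis the corollary does not state (though it holds everywhere the corollary is used in the paper, since there $L$ is a coframe). Your workaround, exporting the equation $a\wedge b=\bot$ to $L'$ via $\varphi$ and using the pseudocomplement property of $\varphi(a)$ there, sidesteps this cleanly. So: same overall shape, but your reverse direction is a genuine refinement that makes the proof match the stated hypotheses rather than the intended context.
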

\begin{proof}
Since $\varphi(a\wedge a^*)=\varphi(a)\wedge \varphi(a^*)$ and $\varphi(a\wedge a^*)=\varphi(\bot_L)=\bot_{L'}$, we have $\varphi(a)\wedge \varphi(a^*)=\bot_{L'}$. Similarly, $\varphi(a\vee a^*)=\varphi(a)\vee \varphi(a^*)$ and $\varphi(a\vee a^*)=\varphi(\top_L)=\top_{L'}$. Thus $\varphi(a^*)$ is the complement of $\varphi(a)$ in $L'$ (which is unique because $L'$ is distributive).

In view of Lemma \ref{lemma:grill:compl}, $\varphi(a)\in \F^\#$ if and only if $\varphi(a)^*\notin \F$, that is, $\varphi(a^*)\notin \F$, equivalently, $a^*\notin \varphi^{-1}(\F)$. By Lemma \ref{lemma:grill:compl}, this is equivalent to $a\in\varphi^{-1}(\F)^\#$ because $\varphi^{-1}(\F)$ is an up-set of $L$ if $\F$ is an up-set of $L'$. 
\end{proof}
%%%%%%%%%%%
\begin{cor}
  \label{corl:grill:compl:=}
  Let $L$ be a lattice, and $\F$, $\G$ be two filters on $L$.  
  \[\F\cap\C_L\subset\G\cap \C_L\then \G^\#\cap \C_L\subset \F^\#\cap\C_L.
  \]  In particular, if
  $\F\cap\C_L=\G\cap\C_L$  then $\F^\#\cap\C_L=\G^\#\cap \C_L$.
\end{cor}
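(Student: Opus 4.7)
The plan is to reduce membership in $\F^\#\cap\C_L$ and in $\G^\#\cap\C_L$ to statements about membership of complements, using Lemma~\ref{lemma:grill:compl}. This is natural because every complemented element $\ell$ is its own pseudocomplement's complement: if $\ell\in\C_L$ with complement $\overline\ell$, then $\overline\ell$ is a pseudocomplement of $\ell$ (and $\overline\ell$ is itself complemented, with complement $\ell$).

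First I would take an arbitrary $\ell\in\G^\#\cap\C_L$ and show that $\ell\in\F^\#\cap\C_L$. Since $\ell$ is complemented, Lemma~\ref{lemma:grill:compl} applied to the filter $\G$ with pseudocomplement $\overline\ell$ gives $\overline\ell\notin\G$. Since $\overline\ell\in\C_L$ as well, and the hypothesis is $\F\cap\C_L\subseteq\G\cap\C_L$, we cannot have $\overline\ell\in\F$: otherwise $\overline\ell\in\F\cap\C_L\subseteq\G\cap\C_L$, contradicting $\overline\ell\notin\G$. Hence $\overline\ell\notin\F$, and Lemma~\ref{lemma:grill:compl} applied to $\F$ yields $\ell\in\F^\#$. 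Since $\ell\in\C_L$, this gives $\ell\in\F^\#\cap\C_L$, as wanted.

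For the parenthetical ``in particular'' statement, I would simply apply the first implication in both directions: $\F\cap\C_L=\G\cap\C_L$ gives both $\F\cap\C_L\subseteq\G\cap\C_L$ and $\G\cap\C_L\subseteq\F\cap\C_L$, whence $\G^\#\cap\C_L\subseteq\F^\#\cap\C_L$ and $\F^\#\cap\C_L\subseteq\G^\#\cap\C_L$.

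There is no real obstacle: everything is a one-step application of Lemma~\ref{lemma:grill:compl}, once we observe that the complement of a complemented element is itself complemented, so that the hypothesis on complemented elements transfers from $\ell$ to $\overline\ell$. The proof is essentially contrapositive in the complement variable.
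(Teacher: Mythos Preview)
Your proof is correct and follows essentially the same argument as the paper: apply Lemma~\ref{lemma:grill:compl} to pass from $\ell\in\G^\#$ to $\overline\ell\notin\G$, use the hypothesis on complemented elements to get $\overline\ell\notin\F$, and apply Lemma~\ref{lemma:grill:compl} again to conclude $\ell\in\F^\#$. The paper's proof is identical in structure, only more terse.
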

\begin{proof}
  For every complemented element $a$ of $\G^\#$, $\overline a$ is not
  in $\G$ by Lemma~\ref{lemma:grill:compl}.  Then $\overline a$ cannot
  be in $\F$, since every complemented element of $\F$ is in $\G$.  By
  Lemma~\ref{lemma:grill:compl} again, it follows that $a$ is in
  $\F^\#$.
\end{proof}

Following Lemma~\ref{lemma:openclosed}, one is tempted to define
closed elements as those complemented elements $c$ such that every
filter $\F$ on $L$ that does not contain the complement of $c$
satisfies $\lm_L \F \leq c$.  In light of
Lemma~\ref{lemma:grill:compl}, we define:
\begin{defn}[Closed]
  \label{defn:closed}
  Let $\catc$ be a category of lattices, and $L$ be a convergence
  $\catc$-object.  An element $c$ of $L$ is \emph{quasi-closed} if and
  only if:
  \begin{quote}
    for every filter $\F$ on $L$ such that $c \in \F^\#$, $\lm_L
    \F \leq c$.
  \end{quote}
  A \emph{closed element} of $L$ is a complemented quasi-closed
  element of $L$.
\end{defn}

Summing up:
\begin{fact}
  \label{fact:closed=phi}
  Let $\catc$ be an admissible category of distributive lattices.  The
  morphisms $\varphi \colon \pow (\Sierp) \to L$ in $\Cv\catc$ are in
  one-to-one correspondence with the closed elements of $L$.
\end{fact}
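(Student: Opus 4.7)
The plan is to obtain the bijection by composing the correspondence already supplied by Lemma~\ref{lemma:openclosed} with the uniqueness of complements in a distributive lattice and with the grill characterization of pseudocomplementation in Lemma~\ref{lemma:grill:compl}.

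First I would invoke Lemma~\ref{lemma:openclosed}: since $\catc$ is admissible, the morphisms $\varphi\colon\pow(\Sierp)\to L$ in $\Cv\catc$ are in bijection with the pairs $(u,c)$ of elements of $L$ satisfying $(i)$ $u\wedge c=\bot$, $(ii)$ $u\vee c=\top$, and $(iii)$ for every filter $\F$ on $L$, $u\in\F$ or $\lm_L\F\leq c$. The bijection sends a pair to the map with $\varphi(\{1\})=u$, $\varphi(\{0\})=c$.

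Next I would use distributivity of $L$: conditions $(i)$ and $(ii)$ say exactly that $u$ is a complement of $c$, and in a distributive lattice a complement is unique when it exists. Hence the pair $(u,c)$ is completely determined by $c$ alone (with necessarily $u=\overline c$), and the existence of such a pair forces $c$ to be complemented.

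It then remains to show that, for a complemented $c$ with $u=\overline c$, condition $(iii)$ is equivalent to $c$ being quasi-closed. Since $c$ is complemented, $\overline c$ is in particular a pseudocomplement of $c$, so by Lemma~\ref{lemma:grill:compl} the condition $c\in\F^\#$ is equivalent to $\overline c\notin\F$. Therefore "for every filter $\F$ with $c\in\F^\#$, $\lm_L\F\leq c$" is the same as "for every filter $\F$, either $\overline c\in\F$ or $\lm_L\F\leq c$", which is exactly $(iii)$ with $u=\overline c$. Assembling the three steps yields a bijection between morphisms $\varphi\colon\pow(\Sierp)\to L$ in $\Cv\catc$ and complemented quasi-closed elements of $L$, i.e., the closed elements in the sense of Definition~\ref{defn:closed}.

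No step is really an obstacle here, as all the hard work has already been done in Lemma~\ref{lemma:openclosed} and Lemma~\ref{lemma:grill:compl}; the only care required is in noting that distributivity is precisely what makes $c\mapsto(\overline c,c)$ well-defined and injective, which is why the hypothesis that $\catc$ is a category of distributive lattices appears in the statement.
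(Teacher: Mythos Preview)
Your proposal is correct and follows exactly the approach the paper takes: the Fact is stated in the paper as a summary (``Summing up:'') of the preceding discussion, which combines Lemma~\ref{lemma:openclosed}, the uniqueness of complements in a distributive lattice, and Lemma~\ref{lemma:grill:compl} in precisely the way you describe.
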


\begin{rem}
  \label{rem:closed}
  Since every element is complemented in $\pow (X)$, its quasi-closed
  elements and its closed elements coincide.
  % As a consequence of the
  % forthcoming Proposition~\ref{prop:closed:subcoframe}, they will also
  % coincide with the closed elements.
  Explicitly, the closed elements are the subsets $C$ that contain all
  the limits of filters $\F$ such that $C \in \F^\#$.  This is the
  standard definition of a closed subset of a convergence space.
\end{rem}

Closed sets can instead be obtained as (pre-)fixed points of a
so-called adherence operator.  The following definition is the obvious
choice, but a better one will be given in Definition~\ref{defn:adh},
which takes complemented elements into account more finely.
\begin{defn}[Raw adherence]
  \label{defn:adh:raw}
  Let $\catc$ be a category of lattices, and $L$ be a convergence
  $\catc$-object.  For every $\ell \in L$, the \emph{raw adherence} of
  $\ell$ is defined by:
  \begin{equation}
    \label{eq:adh:raw}
    \adhr_L \ell := \bigvee_{\substack{\F \in \Filt L\\ \ell \in \F^\#}} \lm_L \F.
  \end{equation}
\end{defn}

\begin{fact}
  \label{fact:adhr:qclosed}
  The quasi-closed elements of $L$ are the pre-fixpoints of
  $\adhr_L \colon L \to L$, that is, the elements $\ell \in L$ such
  that $\adhr_L \ell \leq \ell$.
\end{fact}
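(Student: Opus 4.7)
The statement is essentially a direct unwinding of definitions, so my plan is short and I do not expect any real obstacle.

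The plan is to prove the two directions separately, both by directly comparing the defining condition of quasi-closedness with the formula for $\adhr_L$. First I would note that the filters $\F$ appearing on the right-hand side of \eqref{eq:adh:raw} with $\ell = c$ are \emph{exactly} the filters that appear in the defining condition of ``$c$ quasi-closed'' in Definition~\ref{defn:closed}, namely those filters $\F$ with $c \in \F^\#$.

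For the forward implication, I would assume $c$ is quasi-closed. Then for every filter $\F$ with $c \in \F^\#$ we have $\lm_L \F \leq c$ by definition. Taking the supremum over all such filters and using the definition of $\adhr_L c$ directly yields $\adhr_L c \leq c$.

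For the converse, I would assume $\adhr_L c \leq c$ and pick any filter $\F$ with $c \in \F^\#$. Since $\lm_L \F$ is one of the terms appearing in the supremum defining $\adhr_L c$, we get $\lm_L \F \leq \adhr_L c \leq c$, so $c$ is quasi-closed. The hardest step is really just being sure the indexing set in the supremum matches the quantifier in Definition~\ref{defn:closed}; no use of the lattice axioms beyond existence of the supremum (guaranteed because the terms are bounded above by $c$, or more generally because the supremum in \eqref{eq:adh:raw} is assumed to exist as part of the definition) is needed.
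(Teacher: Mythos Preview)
Your proposal is correct and is exactly the intended argument: the paper states this as a Fact without proof because it is an immediate unwinding of Definition~\ref{defn:closed} and equation~\eqref{eq:adh:raw}, precisely as you describe.
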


\begin{prop}
  \label{prop:adh:raw}
  Let $\catc$ be a category of lattices, and $L$ be a convergence
  $\catc$-object.  The operator $\adhr_L \colon L \to L$ preserves finite
  suprema, and is in particular monotonic.
\end{prop}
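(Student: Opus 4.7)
The plan is to verify three things: preservation of the empty supremum ($\adhr_L \bot = \bot$), monotonicity (which is implied by preservation of binary suprema but is easier to check directly), and preservation of binary suprema. I will use Lemma~\ref{lemma:grill} throughout.

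First, for $\adhr_L \bot = \bot$: I will observe that there is no filter $\F$ on $L$ with $\bot \in \F^\#$, because $a \wedge \bot = \bot$ for every $a$. Hence the join in (\ref{eq:adh:raw}) is empty and evaluates to $\bot$.

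For monotonicity (and hence one inequality in the binary case): by Lemma~\ref{lemma:grill}(1), $\F^\#$ is upwards-closed, so $\ell \leq \ell'$ and $\ell \in \F^\#$ together imply $\ell' \in \F^\#$. Thus $\{\F : \ell \in \F^\#\} \subseteq \{\F : \ell' \in \F^\#\}$, and taking suprema yields $\adhr_L \ell \leq \adhr_L \ell'$. In particular $\adhr_L \ell_1 \vee \adhr_L \ell_2 \leq \adhr_L (\ell_1 \vee \ell_2)$.

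For the reverse inequality $\adhr_L (\ell_1 \vee \ell_2) \leq \adhr_L \ell_1 \vee \adhr_L \ell_2$, I pick a filter $\F$ with $\ell_1 \vee \ell_2 \in \F^\#$ and aim to show $\lm_L \F \leq \adhr_L \ell_1 \vee \adhr_L \ell_2$. Here I invoke Lemma~\ref{lemma:grill}(2), which asserts that $\F^\#$ is prime when $L$ is distributive: from $\ell_1 \vee \ell_2 \in \F^\#$ I get $\ell_1 \in \F^\#$ or $\ell_2 \in \F^\#$, and in either case $\lm_L \F \leq \adhr_L \ell_i \leq \adhr_L \ell_1 \vee \adhr_L \ell_2$ by definition. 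Taking the supremum over all such $\F$ completes the proof.

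The only delicate point is that Lemma~\ref{lemma:grill}(2) requires $L$ to be distributive, while the proposition is phrased for an arbitrary category $\catc$ of lattices. This is not really an obstacle in context, since all our intended categories $\catc$ (frames, coframes, complete Boolean algebras) consist of distributive lattices; I will note this explicitly. If one wished to avoid any distributivity hypothesis, only the direction $\adhr_L(\ell_1\vee\ell_2)\leq\adhr_L\ell_1\vee\adhr_L\ell_2$ would become problematic, but the monotonicity claim and the preservation of $\bot$ survive unchanged.
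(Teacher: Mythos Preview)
Your argument is correct and follows essentially the same route as the paper: both proofs reduce preservation of finite suprema to the primality of $\F^\#$ (Lemma~\ref{lemma:grill}(2)), which yields the equivalence $\bigvee_i \ell_i \in \F^\#$ iff some $\ell_i \in \F^\#$, after which the computation is immediate. The paper handles all $n$ at once rather than separating the empty and binary cases, but the substance is identical; your remark about the implicit distributivity hypothesis in Lemma~\ref{lemma:grill}(2) is well taken and applies equally to the paper's own proof.
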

\begin{proof}
  Let $\ell_1$, \ldots, $\ell_n$ be finitely many elements of $L$.
  For every filter $\F$ on $L$, by Lemma~\ref{lemma:grill}~(1)
  and~(2), $\bigvee_{i=1}^n \ell_i \in \F^\#$ if and only if
  $\ell_i \in \F^\#$ for some $i$, $1\leq i\leq n$.  Hence:
  \begin{eqnarray*}
    \adhr_L (\bigvee_{i=1}^n \ell_i)
    & = & \bigvee_{\F \in \Filt L, \bigvee_{i=1}^n \ell_i \in
          \F^\#} \lm_L \F \\
    & = & \bigvee_{\F \in \Filt L, \ell_i \in
          \F^\#\text{ for some }i} \lm_L \F \\
    & = & \bigvee_{i=1}^n \bigvee_{\F \in \Filt L, \ell_i \in
          \F^\#} \lm_L \F \\
    & = & \bigvee_{i=1}^n \adhr_L {\ell_i}.
  \end{eqnarray*}
  % Monotonicity follows, since every binary-supremum-preserving
  % operator is monotonic.
\end{proof}

\begin{rem}
  \label{rem:adh:center}
  The adherence operator $\adhr_L$ is not idempotent in general, even
  on convergence $\catc$-objects of the form $\pow (X)$, see
  \cite[Example~V.4.6]{DM:convergence}.  More surprising, perhaps, is
  the failure of the law $\adhr_L \ell \geq \ell$, since
  Lemma~\ref{lemma:pow:centered} below, for one, will state that this
  law holds on every convergence $\catc$-object of the form
  $\pow (X)$.  However, it certainly fails on the \emph{bottom}
  convergence $\catc$-objects $L$, namely those such that
  $\lm_L \F = \bot$ for every $\F \in \mathbb F L$.
\end{rem}

\begin{defn}[Centered]
  \label{defn:centered}
  Let $\catc$ be a category of lattices.  The convergence
  $\catc$-object $L$ is \emph{centered} if and only if, for every
  $\ell \in L$, $\ell \leq \adh_L \ell$.
\end{defn}
\begin{rem}
A \emph{preconvergence space} is defined like a convergence space, but without requiring the Point Axiom \cite{DM:convergence}. It is easily seen that a preconvergence space $X$ is a convergence space if and only if its adherence operator $\adh:\mathbb P X\to \mathbb P X$ defined by $\adh A=\bigcup_{A\in\F\in\mathbb{FP}X}\lim\F$ satisfies $A\subset \adh A$ for all $A\in\mathbb P X$. Hence the condition of being centered is a pointfree analog of the Point Axiom!
\end{rem}
\begin{lem}
  \label{lemma:pow:centered}
  Every convergence $\catc$-object of the form $\pow (X)$, where $X$
  is a convergence space, is centered.
\end{lem}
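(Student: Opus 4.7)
The plan is to unfold the definition of $\adhr$ and exhibit, for each point $x \in A$, a single filter $\F$ such that $A \in \F^\#$ and $x \in \lm_{\pow(X)}\F$; the principal ultrafilter $\dot x$ will do the job, using only the Point Axiom.

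More precisely, fix $A \in \pow(X)$ and $x \in A$. First I would recall that $\dot x = \{S \subseteq X \mid x \in S\}$ is a filter on $\pow(X)$, and check that $A \in \dot x^\#$: for any $S \in \dot x$ we have $x \in S$ and $x \in A$, so $x \in A \cap S$, hence $A \wedge S = A \cap S \neq \emptyset = \bot$. Second, the Point Axiom gives $\dot x \to x$, i.e., $x \in \lm_{\pow(X)} \dot x$. Combining these, from the defining formula
\begin{equation*}
  \adhr_{\pow(X)} A = \bigcup_{\substack{\F \in \Filt \pow(X) \\ A \in \F^\#}} \lm_{\pow(X)} \F,
\end{equation*}
we obtain $x \in \lm_{\pow(X)} \dot x \subseteq \adhr_{\pow(X)} A$. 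Since $x \in A$ was arbitrary, $A \subseteq \adhr_{\pow(X)} A$, which is exactly the centeredness condition.

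There is no real obstacle here: the statement is essentially a pointfree repackaging of the standard fact that a set $A$ is always contained in its adherence, and the only ingredient that is actually used is the Point Axiom, which is precisely what Remark preceding the lemma flags as the classical counterpart of centeredness.
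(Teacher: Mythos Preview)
Your proof is correct and is essentially the same as the paper's: pick $x\in\ell$, take $\F=\dot x$, note $\ell\in\dot x^\#$, and use the Point Axiom to get $x\in\lm_{\pow(X)}\dot x\subseteq\adhr_{\pow(X)}\ell$. The only cosmetic point is that the definition of ``centered'' is phrased with $\adh_L$ rather than $\adhr_L$, but on $\pow(X)$ every element is complemented, so $\adh_{\pow(X)}=\adhr_{\pow(X)}$ and no adjustment is needed.
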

\begin{proof}
  Let $\ell \in \pow (X)$.  For every $x \in \ell$, $x$ is in
  $\lm_{\pow (X)} \F$ where $\F = \dot x$.  Clearly $\ell \in \F^\#$,
  so $x$ is in $\adh_{\pow (X)} \ell$.
\end{proof}

\begin{fact}
  \label{fact:adhr:qclosed:centered}
  The quasi-closed elements of a centered convergence $\catc$-object
  $L$ are the fixed points of $\adhr_L \colon L \to L$.  \qed
\end{fact}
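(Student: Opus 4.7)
The proof is essentially a direct combination of Fact~\ref{fact:adhr:qclosed} with the definition of centeredness, so there is no real obstacle, only a short two-direction argument to spell out. The plan is to observe that the condition $\adhr_L \ell = \ell$ (fixed point) is the conjunction of the two inequalities $\adhr_L \ell \leq \ell$ and $\ell \leq \adhr_L \ell$, and to identify each inequality with a hypothesis already at hand.

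First I would handle the forward direction. Suppose $\ell$ is quasi-closed. By Fact~\ref{fact:adhr:qclosed}, $\adhr_L \ell \leq \ell$. Since $L$ is centered, Definition~\ref{defn:centered} gives $\ell \leq \adhr_L \ell$. Combining the two inequalities yields $\adhr_L \ell = \ell$, so $\ell$ is a fixed point of $\adhr_L$.

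Next I would handle the converse. Suppose $\ell$ is a fixed point of $\adhr_L$, i.e., $\adhr_L \ell = \ell$. Then in particular $\adhr_L \ell \leq \ell$, so $\ell$ is a pre-fixed point of $\adhr_L$. By Fact~\ref{fact:adhr:qclosed}, $\ell$ is quasi-closed. Note that centeredness is not needed for this direction; only the forward direction uses the hypothesis, which fits the pattern already seen: pre-fixed points always characterize quasi-closedness, but upgrading to genuine fixed points requires the extra assumption $\ell \leq \adhr_L \ell$ supplied by centeredness.
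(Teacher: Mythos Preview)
Your proof is correct and is exactly the argument the paper has in mind: the paper marks the fact with a \qed\ and no proof, indicating it follows immediately from Fact~\ref{fact:adhr:qclosed} together with the centeredness inequality, which is precisely what you spell out.
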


Every morphism of coframes $\varphi \colon L \to L'$ is an
(order-theoretic) right adjoint, since it preserves all infima.  Write
its left adjoint as $\varphi_!$.  When $\varphi$ is the inverse image
map $f^{-1}$, where $f \colon X \to Y$, $\varphi_!$ is the
corresponding direct image map. 
\begin{lem}\label{lem:adjointofmapgrill}
If $\varphi:L\to L'$ is a morphism of coframes, $\varphi_!:L'\to L$ denotes its left adjoint, and $\F\in\mathbb F L'$, then 
\[\varphi_{!}\left(\F^\#\right)\subset \left(\varphi^{-1}(\F)\right)^\#.
\]
\end{lem}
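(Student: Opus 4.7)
The plan is to unfold the definition of the grill and work by contrapositive, exploiting the adjunction $\varphi_! \dashv \varphi$ together with the fact that coframe morphisms preserve binary infima and $\bot$.

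Explicitly, to show $\varphi_!(\F^\#) \subseteq (\varphi^{-1}(\F))^\#$, I would fix an arbitrary $a \in \F^\#$ and an arbitrary $m \in \varphi^{-1}(\F)$ (i.e., an element of $L$ with $\varphi(m) \in \F$), and aim to prove that $\varphi_!(a) \wedge m \neq \bot_L$. Assume for contradiction that $\varphi_!(a) \wedge m = \bot_L$. Applying $\varphi$, which preserves binary meets and the bottom element (as a morphism of coframes), yields
\[
\varphi(\varphi_!(a)) \wedge \varphi(m) \;=\; \varphi(\varphi_!(a) \wedge m) \;=\; \varphi(\bot_L) \;=\; \bot_{L'}.
\]

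Next I would bring in the unit of the adjunction $\varphi_! \dashv \varphi$: applying the defining equivalence $\varphi_!(a) \leq \varphi_!(a)$ gives $a \leq \varphi(\varphi_!(a))$. Combined with the display above, this forces
\[
a \wedge \varphi(m) \;\leq\; \varphi(\varphi_!(a)) \wedge \varphi(m) \;=\; \bot_{L'},
\]
so $a \wedge \varphi(m) = \bot_{L'}$. But $\varphi(m) \in \F$, which contradicts $a \in \F^\#$.

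There is no real obstacle here; everything reduces to the three formal facts used above (the unit inequality $a \leq \varphi(\varphi_!(a))$, the preservation of binary infima by $\varphi$, and the preservation of $\bot$, the latter being part of $\varphi$ being a coframe morphism in the sense of the paper). The only minor subtlety to flag is that the argument never requires $\varphi_!(a)$ to satisfy any extra structural property — only the abstract adjunction — so this lemma holds even without $\varphi$ being the inverse image of an actual map of convergence spaces. It is also worth noting that the reverse inclusion need not hold, so the statement is genuinely one-sided.
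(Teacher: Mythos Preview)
Your proof is correct and is essentially the same argument as the paper's: both use the unit inequality $a \leq \varphi(\varphi_!(a))$, the preservation of binary infima and of $\bot$ by $\varphi$, and the definition of the grill. The only cosmetic difference is that you organize it as an explicit contradiction from $\varphi_!(a)\wedge m=\bot$, whereas the paper proceeds directly and only invokes the contrapositive at the final step.
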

\begin{proof}
Let $\ell' \in {\F}^\#$.  
  Since $\varphi_!$ is left adjoint to $\varphi$,
  $\ell' \leq \varphi (\varphi_!  (\ell'))$.  Then, for every
  $\ell \in \varphi^{-1} (\F)$,
  $\varphi (\varphi_! (\ell') \wedge \ell) = \varphi (\varphi_!
  (\ell')) \wedge \varphi (\ell) \geq \ell' \wedge \varphi (\ell)$.
  Since $\ell'$ is in ${\F}^\#$ and $\varphi (\ell)$ is in $\F$,
  $\ell' \wedge \varphi (\ell)$ is different from $\bot$, which
  implies $\varphi (\varphi_! (\ell') \wedge \ell) \neq \bot$ (for otherwise $\bot=\varphi (\varphi_! (\ell') \wedge \ell)=\varphi (\varphi_! (\ell')) \wedge \varphi(\ell)\geq \ell' \wedge \varphi (\ell)$ because $\varphi$ is a coframe morphism). Thus $\varphi_! (\ell')$  and we
  conclude that $\varphi_! (\ell')$ is in $\left(\varphi^{-1} (\F)\right)^\#$.
\end{proof}
 The following is then the pointfree
analogue of the standard fact that the images of adherences are
contained in the adherences of the images, in convergence spaces.
\begin{prop}
  \label{prop:adh:raw:cont}
  Let $\catc$ be a category of coframes, and let $\varphi \colon L \to
  L'$ be a morphism in $\Cv\catc$.  Then, for every $\ell' \in L'$,
  \begin{eqnarray}
    \label{eqn:adh:raw:image}
    \varphi_! (\adhr_{L'} (\ell')) \leq \adhr_{L} (\varphi_! (\ell')),
    \\
    \label{eqn:adh:raw:img:inv}
    \adhr_{L'} (\ell') \leq \varphi (\adhr_L (\varphi_! (\ell'))).
  \end{eqnarray}
\end{prop}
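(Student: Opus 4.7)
The plan is to prove \eqref{eqn:adh:raw:image} directly from the definition of $\adhr$, using the continuity of $\varphi$, the adjunction $\varphi_! \dashv \varphi$, and Lemma~\ref{lem:adjointofmapgrill}; then deduce \eqref{eqn:adh:raw:img:inv} from \eqref{eqn:adh:raw:image} by applying $\varphi$ and invoking the unit of the adjunction.

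For \eqref{eqn:adh:raw:image}, I would expand $\adhr_{L'}(\ell') = \bigvee\{\lm_{L'}\F' \mid \F' \in \Filt L',\ \ell' \in \F'^\#\}$ and, using that $\varphi_!$ preserves suprema (being a left adjoint), reduce the problem to showing $\varphi_!(\lm_{L'}\F') \leq \adhr_L(\varphi_!(\ell'))$ for every such $\F'$. The continuity condition \eqref{eq:morphism} gives $\lm_{L'}\F' \leq \varphi(\lm_L \varphi^{-1}(\F'))$, and applying $\varphi_!$ followed by the counit $\varphi_! \circ \varphi \leq \mathrm{id}$ yields $\varphi_!(\lm_{L'}\F') \leq \lm_L \varphi^{-1}(\F')$. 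It then suffices to check that $\lm_L \varphi^{-1}(\F')$ appears as one of the joinands in the definition of $\adhr_L(\varphi_!(\ell'))$, namely that $\varphi_!(\ell') \in (\varphi^{-1}(\F'))^\#$; but this is exactly Lemma~\ref{lem:adjointofmapgrill} applied to $\ell' \in \F'^\#$. Taking suprema over $\F'$ concludes.

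For \eqref{eqn:adh:raw:img:inv}, I would apply $\varphi$ to both sides of \eqref{eqn:adh:raw:image} and use the unit $\mathrm{id} \leq \varphi \circ \varphi_!$ to obtain $\adhr_{L'}(\ell') \leq \varphi(\varphi_!(\adhr_{L'}(\ell'))) \leq \varphi(\adhr_L(\varphi_!(\ell')))$.

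I expect no serious obstacle: the nontrivial content is entirely packaged in Lemma~\ref{lem:adjointofmapgrill} (the grill-theoretic counterpart of the direct-image interaction with adherence), and the rest is a routine two-line chase through the $\varphi_! \dashv \varphi$ adjunction together with the continuity inequality \eqref{eq:morphism}. The only point requiring a bit of care is remembering that $\varphi_!$ is monotone and sup-preserving, so that distributing it over the join defining $\adhr_{L'}(\ell')$ is legitimate.
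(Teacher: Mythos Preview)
Your proposal is correct and uses the same ingredients as the paper: Lemma~\ref{lem:adjointofmapgrill}, the continuity inequality~\eqref{eq:morphism}, and the adjunction $\varphi_!\dashv\varphi$. The only difference is cosmetic: the paper proves \eqref{eqn:adh:raw:img:inv} first (bounding each $\lm_{L'}\F'$ by $\varphi(\adhr_L(\varphi_!(\ell')))$ and then taking the supremum) and deduces \eqref{eqn:adh:raw:image} by adjunction, whereas you prove \eqref{eqn:adh:raw:image} first (which costs you the extra observation that $\varphi_!$ preserves suprema) and deduce \eqref{eqn:adh:raw:img:inv}.
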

\begin{proof}
  It follows from Lemma \ref{lem:adjointofmapgrill} that, if $\ell' \in {\F'}^\#$, then
  $\lm_L \varphi^{-1} (\F') \leq \adhr_L (\varphi_! (\ell'))$.  Indeed,
  the left-hand side appears as one of the disjuncts in the definition
  of the right-hand side.  Since $\varphi$ is continuous,
  $\lm_{L'} \F' \leq \varphi (\lm_L \varphi^{-1} (\F'))$, so
  $\lm_{L'} \F' \leq \varphi (\adhr_L (\varphi_! (\ell')))$.  Taking
  suprema over all filters $\F'$ such that $\ell' \in {\F'}^\#$, we
  obtain
  $\adhr_{L'} (\ell') \leq \varphi (\adhr_L (\varphi_! (\ell')))$.  That
  is (\ref{eqn:adh:raw:img:inv}). The inequality (\ref{eqn:adh:raw:image}) is
  equivalent, by the definition of left adjoints.
\end{proof}

There is another natural notion of adherence on convergence coframes.
While it is less natural at first sight, this is the one we shall need
in the sequel.
\begin{defn}[Adherence]
  \label{defn:adh}
  Let $\catc$ be a category of coframes, and $L$ be a convergence
  $\catc$-object.  For every $\ell \in L$, the \emph{adherence} of
  $\ell$ is:
  \begin{equation}
    \label{eq:adh}
    \adh_{\lm_L}\ell := \adh_L \ell := \bigwedge_{a\in\C_L, a \geq \ell}
    \adhr_L a.
  \end{equation}
\end{defn}
\begin{prop}
  \label{prop:adh}
  Let $\catc$ be a category of coframes, and $L$ be a convergence
  $\catc$-object.  The operator $\adh_L \colon L \to L$:
  \begin{enumerate}
  \item satisfies $\adh_L \ell \geq \adhr_L \ell$ for every
    $\ell \in L$,
  \item coincides with $\adhr_L$ on complemented elements,
  \item is monotonic,
  \item preserves finite suprema of complemented elements,
  \item and satisfies
    $\adh_L \ell = \bigwedge_{a\in\C_L, a\geq \ell} \adh_L
    a$ for every $\ell \in L$.
  \end{enumerate}
\end{prop}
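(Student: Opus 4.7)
The plan is to prove the five items essentially in the order listed, noting that each relies on either the monotonicity of $\adhr_L$ (part of Proposition~\ref{prop:adh:raw}) or on the fact that the complemented elements $\C_L$ form a Boolean sublattice when $L$ is distributive (which coframes are).

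For (1), I would observe first that the index set is non-empty, since $\top \in \C_L$ and $\top \geq \ell$; so $\adh_L \ell$ is well-defined. Then, because $\adhr_L$ is monotonic by Proposition~\ref{prop:adh:raw}, every term $\adhr_L a$ with $a \in \C_L$ and $a \geq \ell$ satisfies $\adhr_L a \geq \adhr_L \ell$; taking the infimum over all such $a$ yields $\adh_L \ell \geq \adhr_L \ell$. For (2), if $\ell$ is itself complemented, then $\ell$ belongs to the index set, so $\adh_L \ell \leq \adhr_L \ell$, and (1) supplies the reverse inequality.

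Part (3) is immediate from the definition: if $\ell \leq \ell'$, then $\{a \in \C_L : a \geq \ell'\} \subseteq \{a \in \C_L : a \geq \ell\}$, so the infimum defining $\adh_L \ell$ is taken over a larger set and is therefore at most $\adh_L \ell'$. For (4), the key algebraic input is that in a coframe (which is distributive), the complemented elements are closed under finite suprema: the complement of $\ell_1 \vee \ell_2$ is $\overline{\ell_1} \wedge \overline{\ell_2}$, and $\bot$ is its own complement. So for complemented $\ell_1, \ldots, \ell_n$, the join $\bigvee_{i=1}^n \ell_i$ is complemented, and by (2) together with the fact that $\adhr_L$ preserves finite suprema (Proposition~\ref{prop:adh:raw}) we get
\[
  \adh_L \Bigl(\bigvee_{i=1}^n \ell_i\Bigr) = \adhr_L \Bigl(\bigvee_{i=1}^n \ell_i\Bigr) = \bigvee_{i=1}^n \adhr_L \ell_i = \bigvee_{i=1}^n \adh_L \ell_i,
\]
using (2) again in the last step. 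Finally, (5) is a direct rewrite: by (2), for each $a \in \C_L$ with $a \geq \ell$ we have $\adh_L a = \adhr_L a$, so $\bigwedge_{a \in \C_L,\, a \geq \ell} \adh_L a = \bigwedge_{a \in \C_L,\, a \geq \ell} \adhr_L a = \adh_L \ell$ by Definition~\ref{defn:adh}.

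There is no real obstacle here; the whole argument is a short unwinding of definitions resting on two facts proved earlier (monotonicity and finite-sup-preservation of $\adhr_L$) plus the standard closure of complemented elements under finite suprema in a distributive lattice. The only item that deserves a second look is (4), where one must explicitly verify that $\bigvee_{i=1}^n \ell_i$ remains complemented before applying (2).
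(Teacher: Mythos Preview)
Your proof is correct and follows essentially the same approach as the paper's; the paper's version is simply terser, omitting the explicit checks of non-emptiness in (1) and closure of $\C_L$ under finite suprema in (4) that you spell out. Both rely on exactly the same two ingredients from Proposition~\ref{prop:adh:raw} (monotonicity and finite-sup-preservation of $\adhr_L$) together with (2) to pass between $\adh_L$ and $\adhr_L$ on complemented elements.
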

\begin{proof}
  (1) For every complemented $a \geq \ell$,
  $\adhr_L a \geq \adhr_L \ell$ by monotonicity.  Taking infima over
  $a$ yields the result.  (2) If $\ell$ is complemented, then one can
  take $a=\ell$ in (\ref{eq:adh}), so that
  $\adh_L \ell \leq \adhr_L \ell$.  Equality follows from (1).  (3) is
  obvious. (4) On complemented elements, $\adh_L$ and $\adhr_L$
  coincide by (2), so the conclusion follows from
  Proposition~\ref{prop:adh:raw}.  (5) By (\ref{eq:adh}) and (2).
\end{proof}

Proposition~\ref{prop:adh} together with Fact~\ref{fact:adhr:qclosed}
together imply the following.
\begin{fact}
  \label{fact:adh:closed}
  The closed elements of $L$ are the complemented pre-fixpoints of
  $\adh_L \colon L \to L$, that is, the complemented elements
  $\ell \in L$ such that $\adh_L \ell \leq \ell$.
\end{fact}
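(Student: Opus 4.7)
The plan is to observe that this is essentially a one-line consequence of what has already been established, chaining together Definition~\ref{defn:closed}, Fact~\ref{fact:adhr:qclosed}, and Proposition~\ref{prop:adh}~(2). I would not expect a serious obstacle here; the task is simply to verify that the two characterizations of ``complemented quasi-closed'' match.

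Concretely, I would proceed as follows. Fix a complemented element $\ell \in L$. By Proposition~\ref{prop:adh}~(2), $\adh_L \ell = \adhr_L \ell$, hence the condition $\adh_L \ell \leq \ell$ is equivalent to $\adhr_L \ell \leq \ell$. By Fact~\ref{fact:adhr:qclosed}, this latter inequality is exactly the statement that $\ell$ is quasi-closed. Since, by Definition~\ref{defn:closed}, a closed element is precisely a complemented quasi-closed element, we conclude that among complemented elements, being closed is equivalent to being a pre-fixpoint of $\adh_L$.

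Conversely, if $\ell$ is a complemented pre-fixpoint of $\adh_L$, then by the same chain of equivalences it is quasi-closed, and being already complemented it is closed in the sense of Definition~\ref{defn:closed}. This gives the ``if and only if'' in both directions simultaneously.

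The only thing worth flagging is the role of complementedness: without it, one might be tempted to equate closed elements with pre-fixpoints of $\adh_L$ in general, but Proposition~\ref{prop:adh}~(1) shows that $\adh_L \geq \adhr_L$ on arbitrary elements, and the two can differ on non-complemented elements, so the statement is genuinely about complemented pre-fixpoints. Apart from this caveat, the proof reduces to citing the two named results. Since there is no real calculation to perform, I would simply write ``By Fact~\ref{fact:adhr:qclosed} and Proposition~\ref{prop:adh}~(2)'' as the body of the proof.
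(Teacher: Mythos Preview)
Your proposal is correct and matches the paper's own justification exactly: the paper simply states that the fact follows from Proposition~\ref{prop:adh} together with Fact~\ref{fact:adhr:qclosed}, which is precisely the chain of equivalences you spell out.
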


\begin{prop}
  \label{prop:adh:cont}
  Let $\catc$ be a category of coframes, and let $\varphi \colon L \to
  L'$ be a morphism in $\Cv\catc$.  Then, for every $\ell' \in L'$,
  \begin{eqnarray}
    \label{eqn:adh:image}
    \varphi_! (\adh_{L'} (\ell')) \leq \adh_{L} (\varphi_! (\ell')),
    \\
    \label{eqn:adh:img:inv}
    \adh_{L'} (\ell') \leq \varphi (\adh_L (\varphi_! (\ell'))).
  \end{eqnarray}
\end{prop}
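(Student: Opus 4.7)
The plan is to prove the two inequalities simultaneously, exploiting that they are in fact equivalent via the adjunction $\varphi_! \dashv \varphi$: setting $x := \adh_{L'}(\ell')$ and $y := \adh_L(\varphi_!(\ell'))$, the adjunction identity $\varphi_!(x) \leq y \iff x \leq \varphi(y)$ turns (\ref{eqn:adh:image}) and (\ref{eqn:adh:img:inv}) into the same statement. Hence it suffices to prove (\ref{eqn:adh:img:inv}).

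To establish (\ref{eqn:adh:img:inv}), I would unfold $\adh_L(\varphi_!(\ell')) = \bigwedge_{b\in\C_L,\, b\geq \varphi_!(\ell')} \adhr_L(b)$ and push $\varphi$ through this infimum, using that $\varphi$ is a coframe morphism and therefore preserves arbitrary infima. So it is enough to show, for every complemented $b \in L$ with $b\geq \varphi_!(\ell')$, that $\adh_{L'}(\ell') \leq \varphi(\adhr_L(b))$.

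Fix such a $b$. By Corollary~\ref{cor:imagepseudocompl}, $\varphi(b)$ is complemented in $L'$, and by the adjunction $\varphi_! \dashv \varphi$ the hypothesis $b \geq \varphi_!(\ell')$ is equivalent to $\varphi(b) \geq \ell'$. Thus $\varphi(b)$ is a complemented upper bound of $\ell'$, so by Definition~\ref{defn:adh} we have $\adh_{L'}(\ell') \leq \adhr_{L'}(\varphi(b))$. Now apply Proposition~\ref{prop:adh:raw:cont} at the element $\varphi(b) \in L'$ to get $\adhr_{L'}(\varphi(b)) \leq \varphi(\adhr_L(\varphi_!(\varphi(b))))$. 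The counit of the adjunction gives $\varphi_!(\varphi(b)) \leq b$, and monotonicity of $\adhr_L$ (Proposition~\ref{prop:adh:raw}) combined with monotonicity of $\varphi$ yields $\varphi(\adhr_L(\varphi_!(\varphi(b)))) \leq \varphi(\adhr_L(b))$, concluding this step.

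Taking infima over all complemented $b \geq \varphi_!(\ell')$ and using that $\varphi$ commutes with arbitrary infima produces
\[
  \adh_{L'}(\ell') \;\leq\; \bigwedge_{b} \varphi(\adhr_L(b)) \;=\; \varphi\Bigl(\bigwedge_{b} \adhr_L(b)\Bigr) \;=\; \varphi(\adh_L(\varphi_!(\ell'))),
\]
which is (\ref{eqn:adh:img:inv}), and then (\ref{eqn:adh:image}) follows by the adjunction argument above. There is no real obstacle here beyond keeping careful track of the direction of adjointness: in particular, one must resist the temptation to work directly with $\varphi_!(a')$ for complemented $a' \in L'$, since $\varphi_!$ need not preserve complements; it is $\varphi$ that does (by Corollary~\ref{cor:imagepseudocompl}), which is why the argument is parameterized over complemented elements of $L$ rather than of $L'$.
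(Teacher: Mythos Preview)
Your proof is correct and follows essentially the same route as the paper's: both arguments hinge on Proposition~\ref{prop:adh:raw:cont}, the fact that $\varphi$ sends complemented elements to complemented elements (Corollary~\ref{cor:imagepseudocompl}), the counit inequality $\varphi_!(\varphi(b))\leq b$, and the preservation of infima by $\varphi$. The only cosmetic difference is that the paper first unfolds $\adh_{L'}(\ell')$ as an infimum over complemented $a'\geq\ell'$ in $L'$ and then restricts to those of the form $\varphi(b)$, whereas you parametrize directly over complemented $b\geq\varphi_!(\ell')$ in $L$; the chains of inequalities are otherwise identical.
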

\begin{proof}
  Let $\ell' \in L'$.  By Proposition~\ref{eqn:adh:raw:image}, in
  particular by (\ref{eqn:adh:raw:img:inv}),
  $\adhr_{L'} (a') \leq \varphi (\adhr_L (\varphi_! (a')))$ for every
  complemented $a' \geq \ell'$.  Hence:
  \begin{eqnarray*}
    \adh_{L'} (\ell')
    & = & \bigwedge_{a'\in\C_{L'}, a'\geq \ell'} \adhr_{L'}
          (a') \\
    & \leq & \bigwedge_{a'\in\C_{L'}, a'\geq \ell'} \varphi
             (\adhr_L (\varphi_! (a'))) \\
    & = & \varphi
          (\bigwedge_{a'\in\C_{L'}, a'\geq \ell'} \adhr_L (\varphi_! (a')))
  \end{eqnarray*}
  since $\varphi$ preserves all infima.  Among the complemented
  elements $a'$, we find those of the form $\varphi (a)$ with $a$
  complemented in $L$, so:
  \begin{eqnarray*}
    \adh_{L'} (\ell')
    & \leq & \varphi (\bigwedge_{\substack{a \in\C_L\\\ell' \leq \varphi (a)}} \adhr_L
    (\varphi_! (\varphi (a)))) \\
    & \leq & \varphi (\bigwedge_{\substack{a \in\C_L\\\ell' \leq \varphi (a)}}  \adhr_L
    (a)) \quad \text{since $\varphi_! (\varphi (a)) \leq a$} \\
    & = & \varphi (\bigwedge_{\substack{a \in\C_L\\\varphi_! (\ell') \leq a}}  \adhr_L (a))
    = \varphi (\adh_L (\varphi_! (\ell'))).
  \end{eqnarray*}
  This shows (\ref{eqn:adh:img:inv}).  (\ref{eqn:adh:image}) is an
  equivalent formulation.
\end{proof}
%%ADDED
\begin{cor}\label{cor:imageofclosed}
Let $\catc$ be a category of coframes, and let $\varphi \colon L \to
  L'$ be a morphism in $\Cv\catc$. If $c\in L$ is closed then $\varphi(c)$ is closed in $L'$.
\end{cor}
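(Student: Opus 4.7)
The plan is to verify the two defining properties of a closed element for $\varphi(c)$: namely that $\varphi(c)$ is complemented in $L'$, and that $\adh_{L'}\varphi(c) \leq \varphi(c)$ (by Fact~\ref{fact:adh:closed}).

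For complementation, I would invoke Corollary~\ref{cor:imagepseudocompl}. Since $\varphi$ is a coframe morphism (in particular a lattice morphism), and $L'$ is distributive as a coframe, the image $\varphi(c)$ of a complemented element $c$ is again complemented, with $\overline{\varphi(c)} = \varphi(\overline c)$.

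For the adherence inequality, I would specialize Proposition~\ref{prop:adh:cont}, specifically~(\ref{eqn:adh:img:inv}), to $\ell' = \varphi(c)$, yielding
\[
  \adh_{L'}(\varphi(c)) \leq \varphi\bigl(\adh_L(\varphi_!(\varphi(c)))\bigr).
\]
The unit of the adjunction $\varphi_! \dashv \varphi$ gives $\varphi_!(\varphi(c)) \leq c$, and then monotonicity of $\adh_L$ (Proposition~\ref{prop:adh}~(3)) combined with the hypothesis that $c$ is closed (so $\adh_L c \leq c$) gives $\adh_L(\varphi_!(\varphi(c))) \leq c$. Applying the monotonic map $\varphi$ to both sides then yields $\adh_{L'}(\varphi(c)) \leq \varphi(c)$, which completes the argument.

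There is no real obstacle: both ingredients (preservation of complements and the transport inequality for $\adh$) have already been established, and the proof is essentially a one-line chain of inequalities feeding the hypothesis $\adh_L c \leq c$ through the transport inequality~(\ref{eqn:adh:img:inv}). The only thing to be mildly careful about is to use~(\ref{eqn:adh:img:inv}) rather than~(\ref{eqn:adh:image}), since it is the version involving $\varphi$ on the outside that directly produces an upper bound of the form $\varphi(\cdot)$ needed to match $\varphi(c)$.
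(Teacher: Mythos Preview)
Your proposal is correct and follows essentially the same route as the paper: both apply~(\ref{eqn:adh:img:inv}) with $\ell'=\varphi(c)$, use $\varphi_!(\varphi(c))\leq c$ from the adjunction, and then feed $\adh_L c\leq c$ through monotonicity to obtain $\adh_{L'}(\varphi(c))\leq\varphi(c)$. You are slightly more explicit than the paper in separately invoking Corollary~\ref{cor:imagepseudocompl} for the complementation of $\varphi(c)$, which the paper leaves implicit in its appeal to Fact~\ref{fact:adh:closed}.
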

\begin{proof}
We use Fact \ref{fact:adh:closed}. In view of \eqref{eqn:adh:img:inv} with $\ell'=\varphi(c)$,
\[
\adh_{L'}(\varphi(c))\leq \varphi (\adh_L(\varphi_!(\varphi(c))))\leq \varphi(\adh_L c)\leq \varphi(c),\]
where the second inequality follows from $\varphi_!(\varphi(c))\leq c$ because $\varphi_!$ is left-adjoint to $\varphi$ and the last inequality follows from Fact \ref{fact:adh:closed} because $c$ is closed in $L$. We conclude that $\varphi(c)$ is closed in $L'$.
\end{proof}
\section{Adherence coframes}
\label{sec:adherence-coframes}

We have already mentioned the fact that it is equivalent to assume a
convergence space $X$ to be pretopological, or to assume that is
arises from a non-idempotent closure operator, i.e., an inflationary
operator $\nu$ on $\pow (X)$ that preserves finite suprema.  This is
also known as a \emph{\v{C}ech closure operator}; inflationary means
that $\ell \leq \nu (\ell)$ for every $\ell$.

To make this formal, call \emph{adherence space} any space $X$ with a
\v{C}ech closure operator $\nu_{\pow (X)}$.  Adherence spaces form a
category $\Adhc$, whose morphisms $f \colon X \to X'$ are the
\emph{continuous} maps, namely those maps such that
$f (\nu_{\pow (X)} (A)) \subseteq \nu_{\pow (Y)} (f (A))$ for every
$A \in \pow (X)$.  The fact that pretopological convergence spaces can
be described equivalently as adherence spaces translates to the
fact that there is an equivalence between the categories $\PreTop$ and
$\Adhc$.

\subsection{Pretopological coframes and adherence coframes}
\label{sec:pret-cofr-adher}

That equivalence is lost in the pointfree case, in general.  What will
remain is an adjunction between $\preTop\catc$ (resp.,
$\preTopcl\catc$) with a new category $\Adh\catc$ of $\catc$-objects
$L$ with an adherence-like operator $\nu \colon L \to L$.  We shall
drop the inflationary requirement, since $\nu = \adh_L$ need not be
inflationary, in view of Remark~\ref{rem:adh:center}.  This is
analogous to our dropping the Point Axiom in the definition of
convergence lattices.

The axioms that $\nu$ should satisfy are obtained by looking at the
properties of $\adh_L$, summarized in Proposition~\ref{prop:adh} and
Proposition~\ref{prop:adh:cont}.
\begin{defn}[Adherence coframe]
  \label{defn:Adh}
  Let $\catc$ be a category of coframes.  An \emph{adherence
    $\catc$-object} is an object $L$ of $\catc$ together with an
  operator $\nu_L \colon L \to L$ that is monotonic, preserves finite
  suprema of complemented elements, and satisfies $\nu_L (\ell) =
  \bigwedge_{a\in\C_L, a\geq \ell} \nu_L (a)$ for every
  $\ell \in L$.

  The adherence $\catc$-objects form a category $\Adh\catc$, whose
  morphisms $\varphi \colon L \to L'$ are the $\catc$-morphisms
  that are \emph{continuous} in that they satisfy:
  \begin{equation}
    \label{eqn:nu:image}
    \nu_{L'} (\ell') \leq \varphi (\nu_{L} (\varphi_! (\ell')))
  \end{equation}
  for every $\ell' \in L'$.
  % If equality holds, 
  % $\varphi$ is \emph{final}. % (See Corollary \ref{corl:coarsest:adh}).
\end{defn}
We shall call \emph{adherence structure} on $L$ any operator
$\nu \colon L \to L$ that is monotonic, preserves finite suprema of
complemented elements, and satisfies
$\nu (\ell) = \bigwedge_{a\in\C_L, a\geq \ell} \nu (a)$
for every $\ell \in L$.  The adherence $\adh_{\lm}$ of a convergence
structure $\lm$ on $L$ is always an adherence structure, by
Proposition~\ref{prop:adh}.

% \begin{rem}
%   \label{rem:final:adh}
%   As with Remark~\ref{rem:final:conv}, our final morphisms
%   $\varphi \colon L \to L'$ are exactly the categorical notion.
%   Categorically, $\varphi$ is final if and only if for every object
%   $L''$ in $\catc$, for every morphism $\psi \colon L' \to L''$ in
%   $\catc$ such that $\psi \circ \varphi$ is continuous, $\psi$ is
%   continuous.  If $\varphi$ is final in this sense, then take
%   $L''=L'$, $\psi = \identity {L'}$, and
%   $\nu_{L''} (\ell') = \varphi (\nu_{L} (\varphi_! (\ell')))$: then
%   $\varphi$ is final in the sense of Definition~\ref{defn:Adh}.  The
%   converse implication is immediate.  Here $\nu_{L''}$ is an example
%   of a final adherence structure on $L'$.  We shall generalize this
%   argument in Corollary \ref{corl:coarsest}.
% \end{rem}

The following shows that adherence structures on coframes $L$ are,
equivalently, those finite supremum preserving self-maps
$\nu \colon L \to L$ such that
$\nu (\ell) = \bigwedge_{a\in\C_L, a \geq \ell} \nu (a)$ for every $\ell \in L$.
\begin{lem}
  \label{lemma:sup:compl}
  Let $L$ be a coframe.  Every adherence structure $\nu$ on $L$
  preserves all finite suprema.
\end{lem}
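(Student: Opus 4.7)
The plan is to reduce the statement to the following two ingredients: (a) in a distributive lattice, the set $\C_L$ of complemented elements is closed under binary suprema, and (b) in a coframe, arbitrary infima distribute over binary suprema. Since a coframe is distributive and $\nu$ is already assumed to preserve finite suprema of complemented elements, the empty-family case of that hypothesis yields $\nu (\bot) = \bot$, so the only remaining case is binary suprema.

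Fix $\ell_1, \ell_2 \in L$. The inequality $\nu (\ell_1) \vee \nu (\ell_2) \leq \nu (\ell_1 \vee \ell_2)$ follows from monotonicity, so the substantive direction is the reverse. For every pair $(a_1, a_2)$ of complemented elements with $a_i \geq \ell_i$, the element $a_1 \vee a_2$ is complemented (its complement is $\overline{a_1} \wedge \overline{a_2}$, by the standard distributive-lattice calculation) and satisfies $a_1 \vee a_2 \geq \ell_1 \vee \ell_2$. Hence by the defining formula
\[
  \nu (\ell_1 \vee \ell_2) = \bigwedge_{a \in \C_L,\, a \geq \ell_1 \vee \ell_2} \nu (a)
\]
and by preservation of finite suprema of complemented elements,
\[
  \nu (\ell_1 \vee \ell_2) \;\leq\; \nu (a_1 \vee a_2) \;=\; \nu (a_1) \vee \nu (a_2).
\]

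Taking the infimum over all such $(a_1, a_2)$ and applying coframe distributivity twice (first in $a_2$ with $\nu (a_1)$ fixed, then in $a_1$ with $\nu (\ell_2)$ fixed) gives
\[
  \bigwedge_{\substack{a_1, a_2 \in \C_L\\ a_i \geq \ell_i}} \bigl(\nu (a_1) \vee \nu (a_2)\bigr)
  \;=\; \bigwedge_{\substack{a_1 \in \C_L\\ a_1 \geq \ell_1}} \Bigl(\nu (a_1) \vee \bigwedge_{\substack{a_2 \in \C_L\\ a_2 \geq \ell_2}} \nu (a_2)\Bigr)
  \;=\; \nu (\ell_1) \vee \nu (\ell_2),
\]
where each equality uses the defining formula once more to recognize $\bigwedge_{a_i \in \C_L,\, a_i \geq \ell_i} \nu (a_i) = \nu (\ell_i)$. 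Combining this with the previous bound yields $\nu (\ell_1 \vee \ell_2) \leq \nu (\ell_1) \vee \nu (\ell_2)$, as required.

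The only real subtlety is making sure that the iterated infimum manipulation is legitimate, and this is precisely what coframe distributivity guarantees; no extra hypothesis on $\nu$ beyond those in Definition~\ref{defn:Adh} is needed. I do not anticipate any substantive obstacle: once one notices that $\C_L$ is a sublattice of $L$ (in particular closed under finite joins), the three clauses in the definition of an adherence structure conspire exactly to upgrade finite-join preservation from $\C_L$ to all of $L$.
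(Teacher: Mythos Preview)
Your proof is correct and follows essentially the same route as the paper: both use the defining formula $\nu(\ell)=\bigwedge_{a\in\C_L,\,a\geq\ell}\nu(a)$, the closure of $\C_L$ under finite joins, preservation of finite suprema on complemented elements, and coframe distributivity to pull the infimum through the join. The only cosmetic difference is that you reduce to the binary case (handling $\nu(\bot)=\bot$ separately), whereas the paper writes the $n$-ary case directly.
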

\begin{proof}
  Let $\ell_1$, \ldots, $\ell_n$ be finitely many elements.  The
  inequality $\nu (\ell_1 \vee \cdots \vee \ell_n) \geq \nu (\ell_1)
  \vee \cdots \vee \nu (\ell_n)$ follows from monotonicity.  In the
  other direction, and letting $a$, $a_1$, \ldots, $a_n$ range over
  complemented elements:
  \begin{eqnarray*}
    \nu (\ell_1 \vee \cdots \vee \ell_n)
    & = & \bigwedge_{a \geq \ell_1 \vee \cdots \vee \ell_n} \nu (a) \\
    & \leq & \bigwedge_{a_1 \geq \ell_1, \ldots, a_n \geq \ell_n} \nu (a_1
             \vee \cdots \vee a_n) \\
    && \text{since }a_1 \geq \ell_1, \ldots, a_n \geq \ell_n\text{ imply }
       a:=a_1\vee \cdots \vee a_n \geq \ell_1 \vee \cdots \vee \ell_n \\
    & = & \bigwedge_{a_1 \geq \ell_1, \ldots, a_n \geq \ell_n} (\nu (a_1)
             \vee \cdots \vee \nu (a_n)) \\
    & = & \left(\bigwedge_{a_1 \geq \ell_1} \nu (a_1)\right) \vee
          \cdots
          \vee \left(\bigwedge_{a_n \geq \ell_n} \nu (a_n)\right) \\
    && \text{by the coframe distributivity law} \\
    & = & \nu (\ell_1) \vee \cdots \vee \nu (\ell_n).
  \end{eqnarray*}
\end{proof}

\begin{lem}
  \label{lemma:adh:preconv}
  Let $\catc$ be a category of coframes, and $L$ be a $\catc$-object.
  Every adherence structure $\nu$ on $L$ defines a classical
  pretopological convergence structure $\lm_\nu$ by:
  \begin{equation}
    \label{eq:limnu}
    \lm_\nu \F := \bigwedge_{a\in\C_L, a\in \F^\#} \nu (a).
  \end{equation}
  Moreover:
  \begin{enumerate}
  \item The mapping $\nu \mapsto \lm_\nu$ is monotonic: if $\nu\leq \nu'$ then $\lim_\nu\leq \lim_{\nu'}$.
  \item The mapping $\lm \mapsto \adh_{\lm}$ that sends every
    convergence structure to its adherence is monotonic: if $\lim_1\leq\lim_2$ then $\adh_{\lim_1}\leq \adh_{\lim_2}$.
  \item For every convergence structure $\lm$ on $L$, for every filter
    $\F$ on $L$, $\lm \F \leq \lm_{\adh_{\lm}} \F$.
  \item For every adherence structure $\nu$ on $L$, for every $\ell
    \in L$, $\adh_{\lm_\nu} \ell \leq \nu (\ell)$.
  \end{enumerate}
\end{lem}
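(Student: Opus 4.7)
The plan is to verify each assertion by unwinding the defining formula for $\lm_\nu$ and leveraging the behaviour of the grill operation $\F \mapsto \F^\#$ on complemented elements, which has been developed in Section~\ref{sec:sierp-conv-cofr}. The central observation driving everything is that, for $a \in \C_L$, membership of $a$ in $\F^\#$ is controlled by membership of $\overline{a}$ in $\F$ via Lemma~\ref{lemma:grill:compl}.

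First I would check that $\lm_\nu$ is a convergence structure: if $\F \subseteq \G$ then $\G^\# \subseteq \F^\#$ by Lemma~\ref{lemma:grill}~(3), so the infimum in \eqref{eq:limnu} is taken over a smaller index set when passing from $\F$ to $\G$, giving $\lm_\nu \F \leq \lm_\nu \G$. Classicality then follows at once: if $\F \cap \C_L = \G \cap \C_L$, then Corollary~\ref{corl:grill:compl:=} gives $\F^\# \cap \C_L = \G^\# \cap \C_L$, and \eqref{eq:limnu} depends only on the latter. For the pretopological property, I would compute $(\bigcap_{i \in I} \F_i)^\# \cap \C_L$. For any $a \in \C_L$, Lemma~\ref{lemma:grill:compl} yields $a \in (\bigcap_i \F_i)^\#$ iff $\overline{a} \notin \bigcap_i \F_i$ iff $\overline{a} \notin \F_i$ for some $i$ iff $a \in \F_i^\#$ for some $i$. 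Hence the index set of \eqref{eq:limnu} for $\bigcap_i \F_i$ is exactly the union over $i$ of the index sets for each $\F_i$, so $\lm_\nu(\bigcap_i \F_i) = \bigwedge_i \lm_\nu \F_i$.

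The four auxiliary properties are then straightforward definition chases. Statement (1) is immediate from \eqref{eq:limnu}, since each $\nu(a)$ appearing in the infimum increases when $\nu$ is replaced by a larger $\nu'$. Statement (2) follows because $\adhr_L a = \bigvee_{a \in \F^\#} \lm_L \F$ is pointwise monotonic in $\lm_L$, and $\adh_L$ is built from $\adhr_L$ on complemented elements via Definition~\ref{defn:adh}. For (3), I use that $\adh_\lm$ and $\adhr_\lm$ agree on complemented elements (Proposition~\ref{prop:adh}~(2)), so
\[\lm_{\adh_\lm} \F = \bigwedge_{a \in \C_L,\, a \in \F^\#} \adhr_\lm(a),\]
and for each such $a$ the filter $\F$ itself witnesses $\adhr_\lm(a) \geq \lm \F$; taking the infimum yields $\lm_{\adh_\lm} \F \geq \lm \F$. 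For (4), starting from
\[\adh_{\lm_\nu}\ell = \bigwedge_{a \in \C_L,\, a \geq \ell} \bigvee_{\G \in \Filt L,\, a \in \G^\#} \lm_\nu \G = \bigwedge_{a \in \C_L,\, a \geq \ell} \bigvee_{\G,\, a \in \G^\#} \bigwedge_{b \in \C_L,\, b \in \G^\#} \nu(b),\]
observe that whenever $a \in \G^\#$ the element $a$ is itself an admissible choice of $b$, so the inner infimum is bounded above by $\nu(a)$; hence the supremum over $\G$ is also bounded by $\nu(a)$, and the outer infimum gives $\bigwedge_{a \in \C_L,\, a \geq \ell}\nu(a) = \nu(\ell)$ by the last clause of Definition~\ref{defn:Adh}.

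The main obstacle, as I see it, is the pretopological identity: one must be confident in the reduction of $(\bigcap_i \F_i)^\# \cap \C_L$ to $\bigcup_i (\F_i^\# \cap \C_L)$, which fails in general for non-complemented elements and is exactly the reason why restricting \eqref{eq:limnu} to $\C_L$ (rather than to all of $\F^\#$) is the right definition. Everything else is an essentially mechanical unfolding of the definitions of $\lm_\nu$, $\adhr$, and $\adh$, combined with the coframe distributivity already encoded in the grill lemmas.
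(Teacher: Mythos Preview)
Your proposal is correct and follows essentially the same approach as the paper: the monotonicity, classicality, and pretopologicality arguments are identical (using Lemma~\ref{lemma:grill}, Corollary~\ref{corl:grill:compl:=}, and Lemma~\ref{lemma:grill:compl} in the same way), and your treatments of (3) and (4) unfold the same definitions and make the same key substitutions ($\G=\F$ in~(3), $b=a$ in~(4)). If anything, your bound in~(4) is slightly tidier, since by arguing ``each term of the supremum is $\leq \nu(a)$'' you avoid the paper's separate check that $\bigvee_{\F:\,a\in\F^\#}\nu(a)=\nu(a)$ also holds in the degenerate case $a=\bot$.
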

In other words, the $\lm \mapsto \adh_{\lm}$ construction is left
adjoint to the $\nu \mapsto \lm_\nu$ construction.

\begin{proof}
  Clearly $\lm_\nu$ is monotonic: if $\F \subseteq \G$ then
  $\G^\# \subseteq \F^\#$ by Lemma~\ref{lemma:grill}~(3), so
  $\lm_\nu \F \leq \lm_\nu \G$.  Pretopologicity is the reason why we
  only quantify over complemented elements $a$ in (\ref{eq:limnu}).
  Indeed, let ${(\F_i)}_{i \in I}$ be a family of filters on $L$, and
  let $\F := \bigcap_{i \in I} \F_i$.  If $a$ is complemented then
  $a$ is in $\F^\#$ if and only if its complement $a^*$ is not in
  $\F$, by Lemma~\ref{lemma:grill:compl}, if and only if there is an
  $i \in I$ such that $a^*$ is not in $\F_i$, if and only if there is
  an $i \in I$ such that $a$ is in $\F_i^\#$.  It follows that
  \begin{eqnarray*}
    \lm_\nu \F
    & = & \bigwedge_{a\in\C_L, a \in
          \bigcup_{i\in I}\F_i^\#} \nu (a) \\
    & = & \bigwedge_{i \in I}\; \bigwedge_{a\in\C_L, a \in
          \F_i^\#} \nu (a) = \bigwedge_{i \in I} \lm_\nu \F_i.
  \end{eqnarray*}
  The fact that $\lm_\nu$ is classical follows from the definition of
  $\lm_\nu$ and the second part of Corollary~\ref{corl:grill:compl:=}.

  Claims (1) and (2) are immediate.  For (3),
  \begin{eqnarray*}
    \lm_{\adh_{\lm}} \F
    & = & \bigwedge_{a\in\C_L, a\in \F^\#}\;
          \bigwedge_{b\in\C_L, b\geq a}\; \bigvee_{\substack{\G \in
          \Filt L\\b \in \G^\#}} \lm \G \\
    & = & \bigwedge_{a\in\C_L, a\in \F^\#}\;
             \bigvee_{\substack{\G \in
             \Filt L\\a \in \G^\#}} \lm \G \\
    &&\quad\text{since $\F^\#$ is upwards-closed by
    Lemma~\ref{lemma:grill}~(1)}\\
    & \geq & \lm \F \quad\text{since one can take $\G=\F$ in the supremum.}
  \end{eqnarray*}

  For (4),
  \begin{eqnarray*}
    \adh_{\lm_\nu} \ell
    & = & \bigwedge_{a\in\C_L, a\geq \ell}\;
          \bigvee_{\substack{\F \in \Filt L\\a \in \F^\#}}\;
    \bigwedge_{b\in\C_L, b\in \F^\#} \nu (b) \\
    & \leq & \bigwedge_{a\in\C_L, a\geq \ell}\;
          \bigvee_{\substack{\F \in \Filt L\\a \in \F^\#}} \nu (a)
    %\quad\text{since $a$ is among the possible $b$s}
    \\
    & = & \bigwedge_{a\in\C_L, a\geq \ell} \nu (a) = \nu (\ell)
  \end{eqnarray*}
  where the last equality is part of the definition of an adherence
  structure.  Note that we have used the equality:
  $$\bigvee_{\substack{\F \in \Filt L\\a \in \F^\#}} \nu (a) = \nu
  (a),$$ which looks obvious but actually requires some care.  That
  equality holds unless there is no filter $\F$ such that
  $a \in \F^\#$ (in which case the left-hand side is $\bot$) and
  $\nu (a) \neq \bot$.  If $a \neq \bot$, there there is a filter $\F$
  such that $a \in \F^\#$, for example $\{\top\}$.  If $a = \bot$,
  then $\nu (a)=\bot$ since $\nu$ preserves finite suprema (hence
  empty suprema) of complemented elements, so the equality holds in
  all cases.
\end{proof}

\begin{prop}
  \label{prop:Adh:univ}
  Let $\catc$ be an admissible category of coframes.  There is an
  identity-on-morphisms functor $(L, \lm_L) \mapsto (L, \adh_L)$ from
  $\preTop\catc$ to $\Adh\catc$, which is right adjoint to the
  identity-on-morphisms functor $(L, \nu) \mapsto (L, \lm_\nu)$ from
  $\Adh\catc$ to $\preTop\catc$.  Similarly with $\preTopcl\catc$ in
  lieu of $\preTop\catc$.
\end{prop}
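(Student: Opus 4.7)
The plan is to exhibit the adjunction $F \dashv G$ via unit and counit that are both identities on the underlying $\catc$-objects, using the two ``pointwise inequalities'' (3) and (4) of Lemma~\ref{lemma:adh:preconv}. Because the two functors both act as the identity on the underlying $\catc$-morphism, naturality and the triangle identities collapse to trivialities; the real work lies in checking that each functor is well defined on objects and morphisms.

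For $G \colon (L, \lm) \mapsto (L, \adh_\lm)$, the object part is Proposition~\ref{prop:adh}: parts (3)--(5) show that $\adh_\lm$ is an adherence structure in the sense of Definition~\ref{defn:Adh}. The morphism part is exactly the inequality (\ref{eqn:adh:img:inv}) of Proposition~\ref{prop:adh:cont}, which says any $\varphi \in \Cv\catc(L, L')$, and hence any $\varphi \in \preTop\catc(L, L')$, satisfies the $\Adh\catc$-continuity condition~(\ref{eqn:nu:image}) for $\nu_L = \adh_L$, $\nu_{L'} = \adh_{L'}$. Functoriality (preservation of composition and identities) is immediate since $G$ is identity-on-morphisms.

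For $F \colon (L, \nu) \mapsto (L, \lm_\nu)$, the object part is Lemma~\ref{lemma:adh:preconv}: $\lm_\nu$ is a classical pretopological convergence structure. The morphism part is the main technical step. Given $\varphi \in \Adh\catc((L,\nu),(L',\nu'))$ and a filter $\F'$ on $L'$, expand
\[
\varphi(\lm_\nu \varphi^{-1}(\F')) = \bigwedge_{a \in \C_L,\, a \in \varphi^{-1}(\F')^\#} \varphi(\nu(a)),
\]
using that $\varphi$ is a coframe morphism and hence preserves arbitrary infima. For each such $a$, Corollary~\ref{cor:imagepseudocompl} gives that $\varphi(a)$ is complemented in $L'$ and lies in ${\F'}^\#$, whence
\[
\lm_{\nu'} \F' \leq \nu'(\varphi(a)) \leq \varphi(\nu(\varphi_!(\varphi(a)))) \leq \varphi(\nu(a)),
\]
where the middle inequality is $\Adh\catc$-continuity of $\varphi$ and the last uses $\varphi_!(\varphi(a)) \leq a$ together with monotonicity of $\nu$ and $\varphi$. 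Taking the infimum over $a$ yields $\lm_{\nu'} \F' \leq \varphi(\lm_\nu \varphi^{-1}(\F'))$, as needed.

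With both functors in place, take $\epsilon_{(L,\lm)} := \operatorname{id}_L$ and $\eta_{(L,\nu)} := \operatorname{id}_L$. By the paper's convention that $\sqsubseteq$ is the opposite of $\leq$, the identity on $L$ is a valid morphism $(L, \lm_{\adh_\lm}) \to (L, \lm)$ in $\preTop\catc$ precisely because $\lm \leq \lm_{\adh_\lm}$, which is Lemma~\ref{lemma:adh:preconv}~(3); dually, it is a valid morphism $(L, \nu) \to (L, \adh_{\lm_\nu})$ in $\Adh\catc$ because $\adh_{\lm_\nu} \leq \nu$, which is Lemma~\ref{lemma:adh:preconv}~(4). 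Naturality of $\eta$ and $\epsilon$ holds automatically, since $F$ and $G$ do nothing to the underlying morphism and the components are identities. Both triangle identities reduce to $\operatorname{id}_L \circ \operatorname{id}_L = \operatorname{id}_L$. Since $\lm_\nu$ is always classical, the identical argument works verbatim with $\preTopcl\catc$ replacing $\preTop\catc$. The main obstacle, if any, is the morphism part of $F$; it is not deep but requires combining the coframe inverse-image/direct-image adjunction with Corollary~\ref{cor:imagepseudocompl} to transport complementation and grill-membership along $\varphi$.
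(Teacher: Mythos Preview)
Your proposal is correct and follows essentially the same approach as the paper: both verify that $G$ (the paper's $U$) is a functor via Proposition~\ref{prop:adh:cont}, that $F$ is a functor via Corollary~\ref{cor:imagepseudocompl} together with the adjunction $\varphi_!\dashv\varphi$ and the $\Adh\catc$-continuity of $\varphi$, and then take identity maps as unit and counit using Lemma~\ref{lemma:adh:preconv}~(3) and~(4). The only cosmetic difference is that the paper re-indexes over $a' := \varphi(a) \in \C_{L'}\cap\F^\#$ before applying $\varphi$, whereas you bound $\lm_{\nu'}\F'$ directly by each $\varphi(\nu(a))$; both arguments are equivalent.
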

\begin{proof}
  Write $U$ for the first functor, and $F$ for the second.  By
  ``identity-on-morphisms'', we mean that $U (\varphi) = \varphi$ and
  $F (\varphi)=\varphi$ for every morphism $\varphi$.  If $\varphi$ is
  a morphism from $(L, \lm_L)$ to $(L', \lm_{L'})$, $U (\varphi)$ is a
  morphism from $U (L, \lm_L) = (L, \adh_L)$ to
  $U (L', \lm_{L'}) = (L', \adh_{L'})$ by
  Proposition~\ref{prop:adh:cont}.

  As far as $F$ is concerned, consider a morphism
  $\varphi \colon (L, \nu_L) \to (L', \nu_{L'})$ of adherence
  $\catc$-objects.  We must show that $\varphi$ is continuous from
  $(L, \lm_{\nu_L})$ to $(L', \lm_{\nu_{L'}})$.  Fix a filter $\F$ on
  $L'$.
  \begin{eqnarray*}
    \lm_{\nu_L} \varphi^{-1} (\F)
    & = &
          \bigwedge_{a\in\C_L\cap \varphi^{-1}
          (\F)^\#} \nu_L (a) \\
    & \geq & \bigwedge_{a\in\C_L\cap \varphi^{-1} (\F)^\#} \nu_L (\varphi_! (\varphi (a)))
             \quad \text{since $\varphi_!$ is left adjoint to
             $\varphi$} \\
    & \geq & \bigwedge_{a'\in\C_{L'}\cap \F^\#} \nu_L (\varphi_! (a')),
  \end{eqnarray*}
  since by Corollary \ref{cor:imagepseudocompl}, for every complement $a$ in
  $\varphi^{-1} (\F)^\#$, one can find a complemented element
  $a' := \varphi (a)$ in $\F^\#$.

  Applying $\varphi$ and using the fact that $\varphi$ preserves
  infima,
  \begin{eqnarray*}
    \varphi (\lm_{\nu_L} \varphi^{-1} (\F))
    & \geq & \bigwedge_{a'\in\C_{L'}
             \cap \F^\#} \varphi (\nu_L (\varphi_! (a'))) \\
    & \geq & \bigwedge_{a'\in\C_{L'}
             \cap \F^\#} \nu_{L'} (a') = \lm_{\nu_{L'}} \F.
  \end{eqnarray*}
  The second inequality stems from the continuity of $\varphi$ as a
  morphism of adherence $\catc$-objects, namely condition
  (\ref{eqn:nu:image}).  We recognize the formula for continuity as a
  morphism of convergence $\catc$-objects, namely (\ref{eq:morphism}).

  Showing $F \dashv U$ is now easy.  The unit
  $\eta_L \colon (L, \nu_L) \to UF (L, \nu_L) = (L,
  \adh_{\lm_{\nu_L}})$ is the identity map.  Continuity
  (\ref{eqn:nu:image}) boils down to checking that for every
  $\ell \in L$, $\adh_{\lm_{\nu_L}} (\ell) \leq \nu_L (\ell)$, and
  that is Lemma~\ref{lemma:adh:preconv}~(4).  The counit
  $\epsilon_L \colon FU (L, \lm_L) = (L, \lm_{\adh_L}) \to (L, \lm_L)$
  is also the identity map.  Continuity (\ref{eq:morphism}) boils down
  to checking that for every filter $\F$ on $L$,
  $\lm_L \F \leq \lm_{\adh_L} \F$, and that is
  Lemma~\ref{lemma:adh:preconv}~(3).  The fact that $\eta_L$ and
  $\epsilon_L$ are natural in $L$, and that $\epsilon_{F(L)} \circ F
  (\eta_L)$ and $U (\epsilon_L) \circ \eta_{U (L)}$ are identities are
  obvious since all concerned maps are identities.
\end{proof}

The counit of that adjunction is the identity map from $(L,
\lm_{\adh_L})$ to $(L, \lm_L)$, and the unit is the identity map from
$(L, \nu_L)$ to $(L, \adh_{\lm_{\nu_L}})$.
% epsilon : FU L -> L  (L lim)
% L lim --> U L adh
% L adh --> F L lim
% sur FU L, on a lim_{adh_lim} >= lim

\subsection{Complete distributivity and the equivalence between
  pretopological and adherence structures}
\label{sec:compl-distr-equiv}

We now investigate sufficient conditions for the inequalities in
Lemma~\ref{lemma:adh:preconv}~(3), (4) to be equalities.

A frame $\Omega$ is \emph{spatial} if and only if every element is an
infimum of meet-prime elements.  Under the Axiom of Choice, this is equivalent
to requiring that $\Omega$ be order-isomorphic to the open-set lattice
of some topological space, but we will not use that.

\begin{defn}\label{def:spatialcoframe}
We say that a coframe $L$ is \emph{spatial} if and only if its dual
frame $L^{op}$ is spatial.  In other words, $L$ is spatial if and only
if every element is a supremum of join-primes.
\end{defn}

On a complete lattice $L$, and following \cite{contlattices}, we say
that $\ell$ is \emph{way-way-below} $\ell'$, in notation
$\ell \lll \ell'$, if and only if for every subset $S$ of $L$ such
that $\ell' \leq \bigvee S$, some element of $S$ is larger than or
equal to $\ell$.  $L$ itself is \emph{prime-continuous} if and only if
every element $\ell$ of $L$ is the supremum of elements way-way-below
$\ell$.  The prime-continuous complete lattices are exactly the
completely distributive complete lattices, as first observed by Raney
\cite{Raney:compl:distr}, see \cite[Exercice~IV-3.31]{contlattices},
but that observation requires the Axiom of Choice.

Again using the Axiom of Choice, prime-continuity implies spatiality.
In fact, prime-continuity implies continuity and distributivity, and
the continuous distributive spatial lattices are exactly the
completely distributive lattices
\cite[Proposition~VII-2.10~(7)]{contlattices}.
The following is choice-free, as is everything else in this paper.
\begin{prop}
  \label{prop:adh:preconv:=}
  Let $\catc$ be a category of coframes, and $L$ be a $\catc$-object.
  \begin{enumerate}
  \item If $L$ is spatial, then $\nu = \adh_{\lm_\nu}$ for every
    adherence structure $\nu$ on $L$.
  \item If $L$ is prime-continuous, then $\lm = \lm_{\adh_{\lm}}$ for
    every classical pretopological convergence structure $\lm$ on $L$.
  \item If $L$ is both, then the maps $\nu \mapsto \lm_\nu$ and
    $\lm \mapsto \adh_{\lm}$ define an order isomorphism between
    adherence structures and classical pretopological structures on
    $L$.
  \end{enumerate}
\end{prop}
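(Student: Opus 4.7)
The plan is to prove (1) and (2) separately and then combine them with Lemma~\ref{lemma:adh:preconv}~(1)(2) to obtain (3).

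For (1), Lemma~\ref{lemma:adh:preconv}~(4) already yields $\adh_{\lm_\nu}(\ell) \leq \nu(\ell)$, so only the reverse inequality requires work. I would use spatiality to decompose $\nu(\ell)$ as a supremum of join-primes $p \leq \nu(\ell)$, reducing the task to showing $p \leq \adhr_{\lm_\nu}(a)$ for every such $p$ and every complemented $a \geq \ell$. My explicit witness will be $\F_p$, the filter generated by $\{\overline{b} : b \in \C_L,\ p \not\leq \nu(b)\}$. To check $a \in \F_p^\#$, I would argue by contradiction: if $a \wedge \overline{b_1} \wedge \cdots \wedge \overline{b_n} = \bot$ for some $b_i$ from the generating set, then De Morgan (all $b_i$ complemented) yields $a \leq b_1 \vee \cdots \vee b_n$, hence $\ell \leq b_1 \vee \cdots \vee b_n$, whence $p \leq \nu(\ell) \leq \nu(b_1) \vee \cdots \vee \nu(b_n)$ by Lemma~\ref{lemma:sup:compl}, and join-primality of $p$ contradicts $p \not\leq \nu(b_i)$ for all $i$. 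Conversely, any complemented $b \in \F_p^\#$ has $\overline{b} \notin \F_p$ by Lemma~\ref{lemma:grill:compl}, which by construction forces $p \leq \nu(b)$; therefore $\lm_\nu \F_p = \bigwedge_{b \in \C_L \cap \F_p^\#} \nu(b) \geq p$, giving $p \leq \adhr_{\lm_\nu}(a)$ as desired.

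For (2), Lemma~\ref{lemma:adh:preconv}~(3) already gives $\lm \F \leq \lm_{\adh_\lm} \F$. The reverse inequality on proper $\F$ will use prime-continuity to reduce the task to showing $m \leq \lm \F$ for every $m \lll \lm_{\adh_\lm} \F$. Fixing such an $m$: for each $a \in \C_L \cap \F^\#$ (nonempty since $\top$ lies in it), monotonicity of $\lll$ combined with $\lm_{\adh_\lm} \F \leq \adh_\lm(a) = \bigvee_{\G : a \in \G^\#} \lm \G$ yields $m \leq \lm \G_a$ for some $\G_a$ with $a \in \G_a^\#$. Pretopologicity of $\lm$ ensures that the family of filters $\G$ with $m \leq \lm \G$ is stable under arbitrary intersection, so this family has a least element $\V_m$, which still satisfies $m \leq \lm \V_m$. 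Since $\V_m \subseteq \G_a$ for every admissible $a$, I get $a \in \G_a^\# \subseteq \V_m^\#$, hence $\C_L \cap \F^\# \subseteq \V_m^\#$. Using Lemma~\ref{lemma:grill:compl} together with the bijection $b \mapsto \overline{b}$ on $\C_L$, this inclusion translates to $\V_m \cap \C_L \subseteq \F \cap \C_L$. Then $(\V_m)_c \subseteq (\F)_c$, so monotonicity of $\lm$ together with classicality (Lemma~\ref{lemma:class}) yields $\lm \V_m = \lm(\V_m)_c \leq \lm(\F)_c = \lm \F$, whence $m \leq \lm \V_m \leq \lm \F$.

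Part (3) is then formal: (1) and (2) say the composites $\nu \mapsto \lm_\nu \mapsto \adh_{\lm_\nu}$ and $\lm \mapsto \adh_\lm \mapsto \lm_{\adh_\lm}$ are identities, and both maps are monotonic by Lemma~\ref{lemma:adh:preconv}~(1)(2), so they constitute an order isomorphism. The main obstacles I anticipate are the explicit construction of the witness filters $\F_p$ in (1) and $\V_m$ in (2), and the delicate bookkeeping with grills and complements needed to convert an inclusion of grills into an inclusion of complemented elements, which is what lets classicality close the argument in (2); spatiality enters (1) through a join-prime decomposition of $\nu(\ell)$, while prime-continuity enters (2) through the way-way-below decomposition of $\lm_{\adh_\lm}\F$.
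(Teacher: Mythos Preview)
Your proposal is correct and follows essentially the same strategy as the paper's proof: in (1) both construct the same witness filter (your $\F_p$ is exactly the paper's $\F$, the upward closure of $\{\overline b : b\in\C_L,\ p\not\leq\nu(b)\}$), and in (2) both intersect a family of filters with limit above the chosen way-way-below element, then use the grill/complement bookkeeping together with classicality to compare with $\F$. Your choice of $\V_m$ as the intersection of \emph{all} $\G$ with $m\leq\lm\G$ is a mild simplification of the paper's $\bigcap\A$, which additionally requires $\G^\#$ to meet $\C_L\cap\F^\#$; the extra condition is not needed, and your version makes the argument slightly cleaner.
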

\begin{proof}
  (1) Let $\nu$ be an adherence structure on $L$.  In order to show
  that $\nu = \adh_{\lm_\nu}$, and considering
  Lemma~\ref{lemma:adh:preconv}~(4), it is enough to show that
  $\adh_{\lm_\nu} \ell \geq \nu (\ell)$ for every $\ell \in L$.  Since
  $\nu_L (\ell) = \bigwedge_{a\in\C_L, a \geq \ell} \nu_L
  (a)$ for every $\ell \in L$, and by the definition $\adh_{\lm_\nu}$,
  it is enough to show $\adhr_{\lm_\nu} a \geq \nu (a)$ for every
  complemented element $a$ of $L$.

  For that, we consider an arbitrary join-prime $\ell \leq \nu (a)$,
  and we claim that there is a filter $\F$ on $L$ such that
  $a \in \F^\#$ and, for every complemented element $b$ of $\F^\#$,
  $\ell \leq \nu (b)$.  We simply define $\F$ as the upwards-closure
  of the set of complements $\overline b$ of complemented elements $b$
  such that $\ell \not\leq \nu (b)$.  Since $\ell$ is join-prime,
  $\ell \not\leq \bot$.  Since $\nu$ preserves finite suprema of
  complemented elements, in particular the supremum $\bot$ of the
  empty family, $\ell \not\leq \nu (b)$ for $b = \bot$.  Hence $\F$
  contains the complement $\top$ of $\bot$, hence is not empty.  In
  order to show that $\F$ is a filter, we only have to show that for
  any two complements $\overline b_1$ and $\overline b_2$ such that
  $\ell \not\leq \nu (b_1), \nu (b_2)$, their infimum
  $\overline b_1 \wedge \overline b_2 = \overline{b_1 \vee b_2}$ is
  such that $\ell \not\leq \nu (b_1 \vee b_2)$.  This is obtained by
  contraposition: if $\ell \leq \nu (b_1 \vee b_2)$, then
  $\ell \leq \nu (b_1)$ or $\ell \leq \nu (b_2)$ because $\nu$
  preserves finite suprema of complemented elements and because $\ell$
  is join-prime.  Therefore $\F$ is a filter.  Since
  $\ell \leq \nu (a)$, the complement $\overline a$ of $a$ is not in $\F$,
  and by Lemma~\ref{lemma:grill:compl}, $a$ is in $\F^\#$.

  Having proved this, we obtain that for every join-prime
  $\ell \leq \nu (a)$,
  \[
    \ell \leq \bigvee_{\F \in \Filt L, a \in \F^\#} \bigwedge_{b\in\C_L \cap \F^\#} \nu (b) = \adhr_{\lm_\nu} (a).
  \]
  Because every element is a supremum of join-primes,
  $\nu (a) \leq \adhr_{\lm_\nu} (a)$, and this concludes our argument.

  % Taking infima over all
  % complemented elements $a \geq \ell$, we obtain
  % $\nu (\ell) \leq \adh_{\lm_\nu} (\ell)$.

  (2) Let $\lm$ be a classical pretopological convergence structure on
  $L$.  In order to show that $\lm = \lm_{\adh_{\lm}}$, and
  considering Lemma~\ref{lemma:adh:preconv}~(3), it is enough to show
  that $\lm \F \geq \lm_{\adh_{\lm}} \F$ for every filter $\F$ on $L$.
  Since $L$ is prime-continuous, it is enough to show that, for every
  $\ell \lll \lm_{\adh_{\lm}} \F$, $\ell \leq \lm \F$.  Recall that
  \[\lm_{\adh_{\lm}} \F = \bigwedge_{a\in\C_L \cap \F^\#}
  \adh_{\lm} (a) = \bigwedge_{a\in\C_L  \cap \F^\#}
  \adhr_{\lm} (a)\] by Proposition~\ref{prop:adh}~(2), so that 
  \[\lm_{\adh_{\lm}} \F= \bigwedge_{a\in\C_L  \cap \F^\#} \bigvee_{\G \in
    \Filt L, a \in \G^\#} \lm \G.\]

  Since $\ell \lll \lm_{\adh_{\lm}} \F$, for every complemented
  element $a$ in $\F^\#$, there is a filter $\G$ such that
  $a\in \G^\#$ and $\ell \leq \lm \G$.  Consider the family $\A$ of
  all filters $\G$ such that $\ell \leq \lm \G$ and such that $\G^\#$
  contains a complemented element of $\F^\#$, and look at the filter
  $\bigcap \A$.  Since $\lm$ is pretopological,
  $\lm \bigcap \A = \bigwedge_{\G \in \A} \lm \G \geq \ell$.

  For every complemented element $a$ of $\bigcap \A$, we claim that
  $a$ is in $\F$.  Otherwise, by Lemma~\ref{lemma:grill:compl} its
  complement $\overline a$ is in $\F^\#$, and in that case we have
  seen that there is a filter $\G$ such that $\overline a \in \G^\#$
  and $\ell \leq \lm \G$, namely, $\G$ is in $\A$.  By
  Lemma~\ref{lemma:grill:compl} again and since
  $\overline a \in \G^\#$, $a$ is not in $\G$, hence not in
  $\bigcap \A$.

  Therefore $(\bigcap \A)_c \subseteq \F$ (see (\ref{eq:c})).  Since
  $\lm$ is classical,
  $\lm \bigcap \A = \lim (\bigcap \A)_c \leq \lm \F$, and since
  $\lm \bigcap \A \geq \ell$, we conclude that $\ell \leq \lm \F$.

  (3) is immediate from (1) and (2).
%   % For a complemented element $a$, recall from
%   % Lemma~\ref{lemma:grill:compl} that $a \in \F^\#$ if and only if $a^*
%   % \not\in \F$.
% For a given complemented $a$ in $\F^\#$ such that $a \neq \bot$, the family of filters $\G$ such
% that $a \in \G^\#$ is directed.  It is
% non-empty because $\G = \{\top\}$ fits, since $a \neq \bot$.
% For two filters $\G_1$, $\G_2$ such that $a$ is both in $\G_1^\#$ and
% $\G_2^\#$, we claim that $a$ is in $(\G_1 \vee \G_2)^\#$.
% Otherwise, $a^*$ would be in $\G_1 \vee \G_2$
%
% Assume that were not the case.
% For every complemented element $a \in \F^\#$, 
\end{proof}

\begin{cor}
  \label{corl:Adh:adj}
  Let $\catc$ be an admissible category of coframes.  
  The adjunction of Proposition~\ref{prop:Adh:univ} restricts to:
  \begin{enumerate}
  \item a coreflection of the full subcategory of spatial objects of
    $\preTop\catc$ (resp., of $\preTopcl\catc$) into that of spatial
    objects of $\Adh\catc$;
  \item a reflection of the full subcategory of prime-continuous
    objects of $\preTopcl\catc$ into that of prime-continuous objects
    of $\Adh\catc$;
  \item an adjoint equivalence between the full subcategory of spatial
    prime-continuous objects of $\preTopcl\catc$ and that of spatial
    prime-continuous objects of $\Adh\catc$.
  \end{enumerate}
\end{cor}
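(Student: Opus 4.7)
The plan is to specialize the adjunction of Proposition~\ref{prop:Adh:univ} to the various full subcategories and directly invoke Proposition~\ref{prop:adh:preconv:=} to see that the unit, the counit, or both, become invertible on each.

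First I would observe that both functors $F \colon (L, \nu) \mapsto (L, \lm_\nu)$ and $U \colon (L, \lm_L) \mapsto (L, \adh_{\lm_L})$ act as the identity on the underlying coframe $L$, merely swapping the auxiliary structure. Consequently they preserve every intrinsic property of $L$, in particular spatiality (Definition~\ref{def:spatialcoframe}) and prime-continuity. Hence the adjunction automatically restricts to adjunctions between the corresponding full subcategories of spatial, of prime-continuous, and of both, objects on each side. (For the restriction to $\preTopcl$ one also uses that $\lm_\nu$ is always classical pretopological, as recorded in Lemma~\ref{lemma:adh:preconv}.)

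Next I would recall, from the last paragraph of the proof of Proposition~\ref{prop:Adh:univ}, that the unit $\eta_L \colon (L, \nu_L) \to (L, \adh_{\lm_{\nu_L}})$ and the counit $\epsilon_L \colon (L, \lm_{\adh_L}) \to (L, \lm_L)$ are both the identity map $\identity{L}$ at the level of coframes. Therefore $\eta_L$ is an isomorphism in $\Adh\catc$ exactly when $\nu_L = \adh_{\lm_{\nu_L}}$, and $\epsilon_L$ is an isomorphism in $\preTop\catc$ exactly when $\lm_L = \lm_{\adh_L}$. Now Proposition~\ref{prop:adh:preconv:=}(1) guarantees the first equality whenever $L$ is spatial, so on the spatial subcategories the unit is an iso, the left adjoint $F$ is fully faithful, and by the standard characterization of coreflections one obtains part~(1). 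Symmetrically, Proposition~\ref{prop:adh:preconv:=}(2) guarantees the second equality whenever $L$ is prime-continuous and $\lm_L$ is classical, which is precisely why the statement restricts to $\preTopcl$; there the counit is an iso, the right adjoint $U$ is fully faithful, and we get part~(2).

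Finally, part~(3) is obtained by combining these: on the intersection of the two subcategories, both $\eta$ and $\epsilon$ are identities (hence iso), so the restricted adjunction is an adjoint equivalence by definition. There is no real obstacle here beyond the book-keeping; all the substantive work is concentrated in Proposition~\ref{prop:adh:preconv:=}, and this corollary merely repackages its pointwise bijections into the categorical language of (co)reflections and equivalences.
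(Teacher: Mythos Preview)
Your argument is correct and follows exactly the same approach as the paper, which simply notes that the unit is an isomorphism in case~(1), the counit in case~(2), and both in case~(3), invoking Proposition~\ref{prop:adh:preconv:=}. You have merely made explicit the routine observations the paper leaves implicit (that $F$ and $U$ preserve spatiality and prime-continuity because they fix the underlying coframe, and the standard unit/counit criterion for (co)reflections).
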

\begin{proof}
  The unit is an isomorphism in the first case, the counit is an
  isomorphism in the second, and both are in the final case.
\end{proof}

\begin{rem}
  \label{rem:adh:preconv:=}
  Even without using the Axiom of Choice, one can check that
  $\pow (X)$ is spatial and prime-continuous, for every set $X$.  Its
  join-primes are the one-element sets $\{x\}$, and spatiality boils
  down to the fact that every subset $A$ of $X$ is the union of its
  one-element subsets.  The way-way-below relation is given by
  $A \lll B$ if and only if $A \subseteq \{x\}$ for some $x \in B$.
  Then, each subset $B$ is the union of its one-element subsets
  $\{x\}$, and for each $x \in B$, $\{x\} \lll B$; this shows
  prime-continuity.
%
  % Specializing Proposition~\ref{prop:adh:preconv:=}, we reobtain the
  % announced fact that a pretopological convergence space $X$ is the
  % same thing as \v{C}ech-closure space, namely a set $X$ together with
  % a map $\nu \colon \pow (X) \to \pow (X)$ that preserves finite
  % suprema (namely, an adherence structure on $\pow (X)$, simplifying
  % the definition by way of the fact that every element of $\pow (X)$
  % is complemented).
% No: centeredness is missing.
\end{rem}

\subsection{A square of adjunctions}
\label{sec:square-adjunctions}

In a world with points, Remark~\ref{rem:adh:preconv:=} and
Proposition~\ref{prop:adh:preconv:=}, together with the fact that
$\pow (X)$ is a Boolean algebra, allow us to recover the well-known
fact that $\lm \mapsto \adh_{\lm}$ and $\nu \mapsto \lm_\nu$
are inverse bijections between pretopological convergence structures
on $\pow (X)$ and \v{C}ech closure operators on $\pow (X)$.  We refine
this as follows.
\begin{lem}
  \label{lemma:PreTop=Adh}
  The functors $(X, \lm) \mapsto (X, \adh_{\lm})$ and
  $(X, \nu) \mapsto (X, \lm_\nu)$ define an adjoint equivalence of
  categories between $\PreTop$ and $\Adhc$.
\end{lem}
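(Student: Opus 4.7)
The plan is to reduce the lemma to the coframe-level adjunction of Proposition~\ref{prop:Adh:univ} via the embedding $\pow$, leveraging the fact that every coframe of the form $\pow(X)$ is spatial, prime-continuous and Boolean (Remark~\ref{rem:adh:preconv:=}).

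First I would verify that both assignments are well-defined on objects. For $(X,\lm)\in\PreTop$, every element of $\pow(X)$ is complemented, so $\adh_\lm=\adhr_\lm$ by Proposition~\ref{prop:adh}~(2); Proposition~\ref{prop:adh:raw} then supplies preservation of finite suprema, and Lemma~\ref{lemma:pow:centered} supplies inflationarity, making $\adh_\lm$ a \v{C}ech closure operator on $X$. For $(X,\nu)\in\Adhc$, Lemma~\ref{lemma:adh:preconv} shows that $\lm_\nu$ commutes with arbitrary intersections of filters; the Point Axiom reduces to $x\in\bigcap_{A\ni x}\nu(A)$ (using that $A\in\dot x^\#$ iff $x\in A$, since $\pow(X)$ is Boolean), and this holds because $\nu$ is inflationary.

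Second, I would check functoriality by transporting the continuity conditions across $\pow$. A continuous $f\colon X\to Y$ in $\PreTop$ yields $\pow(f)=f^{-1}\colon\pow(Y)\to\pow(X)$ in $\preTop\coFrm$ by Proposition~\ref{prop:Conv->Foc}, whose order-theoretic left adjoint is the direct image $f[\cdot]$. Specialising Proposition~\ref{prop:adh:cont} then gives $f[\adh_\lm A]\subseteq\adh_{\lm'}f[A]$ for all $A\in\pow(X)$, which is precisely continuity as an adherence-space map. Conversely, continuity of $f$ in $\Adhc$ rewrites, via $f[\cdot]\dashv f^{-1}$, as condition~(\ref{eqn:nu:image}) for $\pow(f)$; Proposition~\ref{prop:Adh:univ} then transports continuity to $\pow(f)$ in $\preTop\coFrm$, and hence to $f$ in $\PreTop$.

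Finally, for the equivalence I would invoke Proposition~\ref{prop:adh:preconv:=}~(3): since $\pow(X)$ is spatial, prime-continuous, and Boolean, every pretopological structure on it is automatically classical, so $\lm=\lm_{\adh_\lm}$ and $\nu=\adh_{\lm_\nu}$ hold identically on $\pow(X)$. Because both functors act as identity on underlying sets and on underlying maps, the composites $GF$ and $FG$ are literally the identity functors, and the unit and counit inherited from Proposition~\ref{prop:Adh:univ} restrict to identity natural transformations, yielding the claimed adjoint equivalence. The main bookkeeping obstacle I foresee is matching the pointed continuity condition $f[\nu A]\subseteq\nu'f[A]$ with the pointfree condition~(\ref{eqn:nu:image}); once the identification $\pow(f)_!=f[\cdot]$ is made explicit, the coframe-level machinery does all the remaining work.
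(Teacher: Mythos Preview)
Your proposal is correct and follows essentially the same approach as the paper: both arguments transport the functoriality checks through $\pow$, using Proposition~\ref{prop:Conv->Foc} and Proposition~\ref{prop:adh:cont} (with $\pow(f)_!=f[\cdot]$) in one direction and Proposition~\ref{prop:Adh:univ} in the other, and both appeal to Proposition~\ref{prop:adh:preconv:=} together with Remark~\ref{rem:adh:preconv:=} for the equivalence. You are slightly more explicit than the paper about the object-level checks (inflationarity of $\adh_\lm$, the Point Axiom for $\lm_\nu$), which the paper treats as already established in the discussion preceding the lemma.
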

\begin{proof}
  What remains to be proved is that those are actual functors.  Let
  $f$ be a continuous map from $(X, \lm_{\pow (X)})$ to
  $(Y, \lm_{\pow (Y)})$ in $\Conv$.  Then $\varphi := f^{-1}$ is
  continuous from $(\pow (Y), \lm_{\pow (Y)})$ to
  $(\pow (X), \lm_{\pow (X)})$ in $\Cv\coFrm$
  (Proposition~\ref{prop:Conv->Foc}).  By
  Proposition~\ref{prop:adh:cont}, $\varphi$ is also continuous from
  $(\pow (Y), \adh_{\lm_{\pow (Y)}})$ to
  $(\pow (X), \adh_{\lm_{\pow (X)}})$.  Now read
  (\ref{eqn:adh:image}), and recall that $\varphi_!$ is the direct
  $f$-image operator: this shows that
  $f (\adh_{\lm_{\pow (X)}} (A)) \subseteq \adh_{\lm_{\pow (Y)}} (f
  (A))$ for every subset $A$ of $X$, namely that $f$ is continuous in
  the sense of adherence spaces.

  Conversely, let
  $f \colon (X, \nu_{\pow (X)}) \to (X, \nu_{\pow (Y)})$ be a morphism
  of adherence spaces.  Similarly, $\varphi := f^{-1}$ is a morphism
  from $(\pow (Y), \nu_{\pow (Y)})$ to $(\pow (X), \nu_{\pow (X)})$ in
  $\Adh\catc$, hence one from $(\pow (Y), \lm_{\nu_{\pow (Y)}})$ to
  $(\pow (X), \lm_{\nu_{\pow (X)}})$ in $\preTopcl\catc$ by
  Proposition~\ref{prop:Adh:univ}.  Therefore, for every filter $\F$
  of subsets of $X$,
  $\lm_{\nu_{\pow (X)}} \F \subseteq \varphi (\lm_{\nu_{\pow (Y)}}
  \varphi^{-1} (\F))$.  In particular, for every point $x \in X$ that
  $\F$ converges to, $f (x)$ is in
  $\lm_{\nu_{\pow (Y)}} \varphi^{-1} (\F)$.  Since
  $\varphi^{-1} (\F) = f [\F]$, $f [\F]$ converges to $f (x)$ with
  respect to $\lm_{\nu_{\pow (Y)}}$.  
\end{proof}

Turning to the opposite categories, we obtain a functor
$(L, \lm_L) \mapsto (L, \adh_L)$ from ${(\preTop\catc)}^{op}$ (or
$(\preTopcl\catc)^{op}$) to ${(\Adh\catc)}^{op}$, left adjoint to the
functor $(L, \nu) \mapsto (L, \lm_\nu)$ from ${(\Adh\catc)}^{op}$ to
${(\preTop\catc)}^{op}$.  Composing that with the coreflection
$\pow\dashv\pt$ between $\PreTop$ and ${(\preTop\catc)}^{op}$, we
obtain an adjunction between $\PreTop$ and $\Adh\catc$, hence also
between $\Adhc$ and $\Adh\catc$.  This is the rightmost adjunction in
the following square, which is the third square from the left in
(\ref{eq:adj}) if we define $\catc$ as $\coFrm$.
\begin{equation}
  \label{eq:adj:adh}
  \xymatrix{
    {(\preTopcl\catc)}^{op}
    \ar@<1ex>[r]\ar@{}[r]|{\perp}
    \ar@<1ex>[d]^{\pt}\ar@{}[d]|{\dashv}
    &
    {(\Adh\catc)}^{op}
    \ar@<1ex>[l]
    \ar@<1ex>[d]^{\pt}\ar@{}[d]|{\dashv}
    \\
    \PreTop
    \ar@<1ex>[u]^{\pow}
    \ar@{=}[r]
    &
    \Adhc
    \ar@<1ex>[u]^{\pow}
    % \ar@{}[ru]|{\text{(Sec.~\ref{sec:adherence-coframes})}}
  }
\end{equation}
We again write $\pow\dashv\pt$ for that adjunction between $\Adhc$ and
${(\Adh\catc)}^{op}$, and our purpose is now to make it explicit.

The left adjoint $\pow$ maps every pretopological convergence space
$X$ to $(\pow (X), \adh_{\pow (X)})$ and every continuous map
$f \colon X \to Y$ to $\pow (f) = f^{-1}$.  Note that
$\adh_{\pow (X)}$ coincides with the raw adherence $\adhr_{\pow (X)}$
because every element of $\pow (X)$ is complemented, and that the raw
adherence is the usual notion of adherence on (pretopological)
convergence spaces.

Writing $U \dashv F$ for the adjunction between
${(\preTopcl\catc)}^{op}$ and ${(\Adh\catc)}^{op}$ on the top row of
(\ref{eq:adj:adh}), at least temporarily, the unit of the rightmost
$\pow\dashv\pt$ adjunction is:
\[
\xymatrix{
  X \ar[r]^{\eta_X} &
  \pt \pow (X) \ar[r]^{\pt u_{\pow (X)}} &
  \pt F U \pow (X)
}
\]
where $\eta_X \colon x \mapsto \dot x$ is the unit of the adjunction
$\pow\dashv\pt$ between $\Conv$ (or $\PreTop$) and ${(\Cv\catc)}^{op}$
(or ${(\preTopcl\catc)}^{op}$), and $u_L \colon FUL \to L$ is the unit
of the adjunction $U \dashv F$, written as a morphism in the opposite
category.  By Proposition~\ref{prop:adh:preconv:=} and
Remark~\ref{rem:adh:preconv:=}, $u_{\pow (X)}$ is an isomorphism, and
by Lemma~\ref{lemma:eta} $\eta_X$ is an isomorphism.  It follows that
the unit of the rightmost adjunction of (\ref{eq:adj:adh}) is an
isomorphism, whence:
\begin{prop}
  \label{prop:adh:corefl}
  Let $\catc$ be an admissible category of coframes.  $\Adhc$ is a
  coreflective subcategory of ${(\Adh\catc)}^{op}$.
\end{prop}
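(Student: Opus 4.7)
The plan is to invoke the general fact that a right adjoint is a coreflector exactly when the corresponding unit of the adjunction is a natural isomorphism, and then to show that the unit of the composite adjunction $\pow \dashv \pt$ displayed as the rightmost column of diagram~(\ref{eq:adj:adh}) is pointwise an isomorphism by decomposing it into two pieces, each of which has already been shown to be an isomorphism.

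More precisely, I would first recall, from the discussion immediately preceding the statement, that the composite $\pow \dashv \pt$ between $\Adhc$ and $(\Adh\catc)^{op}$ arises from stacking the coreflection $\pow \dashv \pt$ between $\PreTop$ and $(\preTopcl\catc)^{op}$ (from Proposition~\ref{prop:A-|pt:pretop}) on top of the adjunction $U \dashv F$ between $(\preTopcl\catc)^{op}$ and $(\Adh\catc)^{op}$ induced by Proposition~\ref{prop:Adh:univ} via passage to opposite categories. By the standard formula for the unit of a composite adjunction, the unit at an object $X \in \Adhc$ is the composite
\[
  X \xrightarrow{\ \eta_X\ } \pt\pow(X) \xrightarrow{\ \pt u_{\pow(X)}\ } \pt F U \pow(X),
\]
where $\eta_X$ is the unit of $\pow \dashv \pt$ at the $\PreTop$ level and $u_{\pow(X)}$ is the unit of $U \dashv F$ at $\pow(X)$, viewed as a morphism in the opposite category.

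Next I would verify that each of the two factors is an isomorphism. For $\eta_X$, Lemma~\ref{lemma:eta} gives that $\eta_X \colon X \to \pt\pow(X)$ is an isomorphism because $\catc$ is an admissible category of coframes. For $u_{\pow(X)}$, the key inputs are Remark~\ref{rem:adh:preconv:=}, which shows that $\pow(X)$ is both spatial and prime-continuous (with $\{x\}$ as the join-primes), and Proposition~\ref{prop:adh:preconv:=}~(3), which says that on any spatial prime-continuous coframe the assignments $\nu \mapsto \lm_\nu$ and $\lm \mapsto \adh_\lm$ are mutually inverse order isomorphisms between adherence structures and classical pretopological convergence structures; equivalently, the unit and counit of $U \dashv F$ at $\pow(X)$ are identity-on-morphisms isomorphisms, so in particular $u_{\pow(X)}$ is an isomorphism. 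Applying the functor $\pt$ then yields $\pt u_{\pow(X)}$ as an isomorphism as well.

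Composing these two isomorphisms shows that the unit of $\pow \dashv \pt$ is a natural isomorphism at every $X \in \Adhc$, hence $\pow$ is fully faithful. By the standard characterization recalled just before Corollary~\ref{corl:corefl} (a left adjoint is fully faithful iff its right adjoint is a coreflector, a fact whose proof is choice-free via \cite[Proposition~1.3]{GZ:fractions}), we conclude that $\Adhc$ is a coreflective subcategory of $(\Adh\catc)^{op}$ via $\pt$. The only potential obstacle is bookkeeping: one must be careful about which adjunction is taken in which direction (the $\pt$ here sits on the $\catc^{op}$ side, so ``coreflective'' refers to the opposite-category perspective) and verify that Proposition~\ref{prop:adh:preconv:=} really delivers an isomorphism on the nose rather than merely up to isomorphism, but both are handled cleanly because the unit and counit of Proposition~\ref{prop:Adh:univ} are literally identity maps on the underlying coframe.
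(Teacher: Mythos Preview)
Your proposal is correct and follows essentially the same approach as the paper: decompose the unit of the composite adjunction as $\pt u_{\pow(X)} \circ \eta_X$, invoke Lemma~\ref{lemma:eta} to see that $\eta_X$ is an isomorphism, and use Remark~\ref{rem:adh:preconv:=} together with Proposition~\ref{prop:adh:preconv:=} to see that $u_{\pow(X)}$ is an isomorphism. The paper's argument is laid out in the paragraph immediately preceding the proposition, and your write-up matches it point for point, with the only difference being that you spell out the categorical bookkeeping in slightly more detail.
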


% Conv : P -| pt : Foc, Foc : U -| F : Adhop donc Conv : U P -| pt F : Adhop
% unit : X -> pt F U P X
% = 
% ou eta : X -> pt P X

The right adjoint, which we again write as $\pt$, maps every adherence
$\catc$-object $(L, \nu_L)$ to $\pt (L, \lm_{\nu_L})$.  Let us spell
out what this means.
\begin{lem}
  \label{lemma:nu:point}
  Let $\catc$ be an admissible category of coframes, and $L$ be an
  adherence $\catc$-object.  The points of $(L, \lm_{\nu_L})$ are
  exactly the join-prime elements $x$ of $L$ such that $x \leq \nu_L (x)$.
\end{lem}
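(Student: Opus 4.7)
The plan is to combine the characterization of points of convergence coframes (Remark~\ref{rem:ccf:point}) with the explicit formula \eqref{eq:limnu} for $\lm_{\nu_L}$, and reduce the computation to the defining identity $\nu_L(\ell) = \bigwedge_{a \in \C_L,\, a \geq \ell} \nu_L(a)$ of an adherence structure.

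First, I would invoke Remark~\ref{rem:ccf:point}: the points of the convergence coframe $(L, \lm_{\nu_L})$ are exactly the join-primes $x \in L$ such that $x \leq \lm_{\nu_L}(\upc x)$. So the task reduces to showing that, for every join-prime $x$,
\[
\lm_{\nu_L}(\upc x) \;=\; \nu_L(x).
\]

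Next, I would compute $\lm_{\nu_L}(\upc x)$ using \eqref{eq:limnu}, which requires identifying the complemented elements of the grill $(\upc x)^\#$. By Lemma~\ref{lemma:grill:compl}, for a complemented $a \in L$,
\[
a \in (\upc x)^\# \iff \overline a \notin \upc x \iff x \not\leq \overline a.
\]
The key step is then to exploit the assumption that $x$ is join-prime: from $a \vee \overline a = \top \geq x$ and join-primeness, either $x \leq a$ or $x \leq \overline a$, and both cannot hold simultaneously (else $x \leq a \wedge \overline a = \bot$, which contradicts the join-primeness of $x$, since join-primes are different from $\bot$). Therefore, for a join-prime $x$ and a complemented $a$,
\[
a \in (\upc x)^\# \iff x \leq a.
\]

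Putting these ingredients together,
\[
\lm_{\nu_L}(\upc x) \;=\; \bigwedge_{a \in \C_L,\, a \in (\upc x)^\#} \nu_L(a) \;=\; \bigwedge_{a \in \C_L,\, x \leq a} \nu_L(a) \;=\; \nu_L(x),
\]
where the last equality is the defining property of adherence structures in Definition~\ref{defn:Adh}. The equivalence $x \leq \lm_{\nu_L}(\upc x) \iff x \leq \nu_L(x)$ follows immediately, finishing the proof. I do not anticipate any real obstacle here: the only step that needs care is the dichotomy for join-primes against complemented elements, which is a short argument using that $x \neq \bot$.
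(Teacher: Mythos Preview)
Your proposal is correct and follows essentially the same route as the paper's own proof: both invoke Remark~\ref{rem:ccf:point}, compute $\lm_{\nu_L}(\upc x)$ via \eqref{eq:limnu}, use the join-prime dichotomy against complemented elements to rewrite the grill condition as $x \leq a$, and finish with the defining identity of an adherence structure. The only cosmetic difference is that you cite Lemma~\ref{lemma:grill:compl} explicitly for the step $a \in (\upc x)^\# \iff \overline a \notin \upc x$, whereas the paper writes this reformulation directly.
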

\begin{proof}
  Following Remark~\ref{rem:ccf:point}, a point of $(L, \lm_{\nu_L})$
  is a join-prime $x$ of $L$ such that $x \leq \lm_{\nu_L} (\upc x)$.
  Now $\lm_{\nu_L} (\upc x)$ is equal to
  $$\bigwedge_{a \in (\upc x)^\#} \nu_L (a) = \bigwedge_{a\in\C_L:\overline a
    \not\geq x} \nu_L (a),$$ where  $\overline a$ is the complement of $a$.  Since $x$ is
  join-prime, and $\top = a \vee \overline a \geq x$, $a$ or
  $\overline a$ is larger than or equal to $x$.  Since $x$ is
  join-prime again, $x$ is different from $\bot$, so $a$ and
  $\overline a$ cannot be both larger than or equal to $x$.  It
  follows that the condition $\overline a \not\geq x$ is equivalent to
  $a \geq x$.  We obtain that $\lm_{\nu_L} (\upc x)$ is equal to
  $\bigwedge_{a \geq x} \nu_L (a)$, where $a$ ranges over the
  complemented elements of $L$.  That is equal to $\nu_L (x)$, whence
  the conclusion.
\end{proof}

We make that into a definition.
\begin{defn}[Point]
  \label{defn:nu:point}
  Let $\catc$ be an admissible category of coframes.  For every
  adherence $\catc$-object $L$, the \emph{points} of $L$ are the
  join-prime elements $x$ such that $x \leq \nu_L (x)$, namely the points of
  $(L, \lm_{\nu_L})$.  We write again $\pt L$ for the set of points of $L$.
\end{defn}
Define again $\ell^\bullet$ as $\{x \in \pt L \mid x \leq \ell\}$.
The only change compared to Definition~\ref{defn:kow} is that $\pt L$
now has to be understood per Definition~\ref{defn:nu:point}.
% The set $\pt L$ comes with a notion of convergence, given by
% $\lm_{\pow \pt L} \F = {(\lm_{\nu_L} \kow\F)}^\bullet$
% (Definition~\ref{defn:pt:lim}), where
% $\ell^\bullet = \{x \in \pt L \mid x \leq \ell\}$ and
% $\kow \F = \{\ell \mid \ell^\bullet \in \F\}$.
The following is an analogue of Lemma~\ref{lemma:kow:basic}~(1), (3),
(4) for adherence
coframes; item~(4) is new, and could have been observed back then,
too.
%; item~(6) is the analogue of Lemma~\ref{lemma:compl:bullet}.
\begin{lem}
  \label{lemma:kow:basic:adh}
  Let $\catc$ be an admissible category of coframes, and $L$ be an
  adherence $\catc$-object.  The following hold:
  \begin{enumerate}
  \item For all $\ell, \ell'$ in $L$, if $\ell \leq \ell'$ then
    $\ell^\bullet \subseteq {\ell'}^\bullet$.
  % \item For all $\F, \G \in \Filt\pow (\pt L)$, if $\F\subseteq\G$
  %   then $\kow{\F}\subseteq\kow{\G}$.
  \item For all $\ell_{1},\ell_{2},\cdots,\ell_{n}\in L$ ($n\in\nat$),
    $\bigcup_{i=1}^{n}\ell_{i}^{\bullet}={(\bigvee_{i=1}^{n}\ell_{i})}^{\bullet}$.
  \item For every family ${(\ell_i)}_{i \in I}$ in $L$,
    $\bigcap_{i\in I}\ell_{i}^{\bullet}={(\bigwedge_{i\in
        I}\ell_{i})}^{\bullet}$.
  % \item For every $\F\in\Filt \pow(\pt L)$, $\kow{\F}$ is a filter on
  %   $L$, i.e., $\mathcal{\kow{F}}\in\Filt L$.
  \item For every complemented element $a$ of $L$, the complement
    of $a^\bullet$ in $\pt L$ is ${(\overline a)}^\bullet$, where
    $\overline a$ is the complement of $a$ in $L$.
  \end{enumerate}
\end{lem}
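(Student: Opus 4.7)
The plan is to use the new definition $\ell^\bullet=\{x\in\pt L\mid x\leq \ell\}$ throughout (with $\pt L$ as in Definition~\ref{defn:nu:point}) and exploit the two key facts about join-primes: a join-prime $x$ is distinct from $\bot$, and $x\leq \ell_1\vee\ell_2$ forces $x\leq\ell_1$ or $x\leq\ell_2$. Items (1)--(3) are essentially formal; item (4) is an immediate corollary of (2) and (3).

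For (1), I would unfold the definition: if $x\in\ell^\bullet$ then $x\leq\ell\leq\ell'$, so $x\in{\ell'}^\bullet$. For (3), the argument is the universal property of infima: $x\leq \ell_i$ for every $i\in I$ if and only if $x\leq\bigwedge_{i\in I}\ell_i$, which translates directly into $x\in\bigcap_{i\in I}\ell_i^\bullet$ iff $x\in(\bigwedge_{i\in I}\ell_i)^\bullet$. For (2), the inclusion $\bigcup_i\ell_i^\bullet\subseteq(\bigvee_i\ell_i)^\bullet$ is just $n$ applications of~(1); the reverse uses join-primality: if $x\leq\bigvee_{i=1}^n\ell_i$ with $x$ a point, then by finite join-primality (applied inductively, using that $x\neq\bot$ rules out the empty case), $x\leq\ell_i$ for some $i$, so $x\in\ell_i^\bullet$.

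For (4), I would combine the previous items with the defining properties of a complement. By~(2), $a^\bullet\cup(\overline a)^\bullet=(a\vee\overline a)^\bullet=\top^\bullet=\pt L$, because every point $x\in\pt L$ satisfies $x\leq\top$. By~(3), $a^\bullet\cap(\overline a)^\bullet=(a\wedge\overline a)^\bullet=\bot^\bullet=\emptyset$, because no point $x$ satisfies $x\leq\bot$ (join-primes are nonzero by Remark~\ref{rem:ccf:point}). Together these show that $a^\bullet$ and $(\overline a)^\bullet$ partition $\pt L$, so $(\overline a)^\bullet$ is exactly the complement of $a^\bullet$ in $\pt L$.

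There is no substantial obstacle here; the only subtle point to mind is that in (2) one uses the case $n=0$ of join-primality (namely $x\not\leq\bot$) to ensure that the empty join does not trivialise the argument, and in~(4) this same nonzero condition is what makes the intersection empty. Everything else is direct manipulation of the definition of $\ell^\bullet$.
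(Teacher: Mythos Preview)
Your proof is correct and follows essentially the same approach as the paper: items (1) and (3) are declared obvious, (2) is derived from join-primality of points, and (4) is obtained exactly as you do by combining (2) and (3) with the complement identities $a\vee\overline a=\top$ and $a\wedge\overline a=\bot$.
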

\begin{proof}
  (1), (3) are obvious, and (2) follows from the fact that the
  elements of ${(\bigvee_{i=1}^{n}\ell_{i})}^{\bullet}$ are
  join-prime.  In order to show (4), we use (2) to obtain
  $a^\bullet \cup {(\overline a)}^\bullet = {(a \vee \overline
    a)}^\bullet = \top^\bullet = \pt L$, and (3) to obtain
  $a^\bullet \cap {(\overline a)}^\bullet = {(a \wedge \overline
    a)}^\bullet = \bot^\bullet = \emptyset$.
\end{proof}

The convergence space $\pt L$ is then pretopological.  Equivalently,
$\pt L$ is a space with a \v{C}ech closure operator
$\nu_{\pow \pt L} = \adh_{\lm_{\pow\pt L}}$, which we now describe
explicitly.
\begin{lem}
  \label{lemma:nu:adh}
  Let $\catc$ be an admissible category of coframes.  For every
  adherence $\catc$-object $L$, for every $S \in \pow \pt L$,
  $\nu_{\pow\pt L} (S) = (\nu_L (\bigvee S))^\bullet$.
\end{lem}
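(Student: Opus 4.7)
Since every element of $\pow\pt L$ is complemented, Proposition~\ref{prop:adh}~(2) gives $\nu_{\pow\pt L} = \adh_{\lm_{\pow\pt L}} = \adhr_{\lm_{\pow\pt L}}$, so by Definition~\ref{defn:adh:raw} and Definition~\ref{defn:pt:lim},
\[
  \nu_{\pow\pt L}(S) \;=\; \bigcup_{\F \in \Filt\pow\pt L,\, S \in \F^\#}
  (\lm_{\nu_L}\kow\F)^\bullet,
\]
where $\pt L$ carries the pretopological convergence $\lm_{\nu_L}$ associated to $\nu_L$ as in Lemma~\ref{lemma:adh:preconv} and Definition~\ref{defn:nu:point}. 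The plan is to prove the two inclusions of the claim by choosing appropriate witnessing filters on each side.

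For the inclusion $\nu_{\pow\pt L}(S) \subseteq (\nu_L(\bigvee S))^\bullet$, fix a filter $\F$ on $\pow\pt L$ with $S \in \F^\#$. The first step is to observe that $\bigvee S \in (\kow\F)^\#$: given $a \in \kow\F$, we have $a^\bullet \in \F$, so $a^\bullet$ meets $S$, producing a point $y \in S$ with $y \leq a$; then $a \wedge \bigvee S \geq y \neq \bot$ since points are join-primes and hence nonzero. By upwards-closure of $(\kow\F)^\#$, every complemented $b \geq \bigvee S$ lies in $(\kow\F)^\#$, whence $\lm_{\nu_L}\kow\F \leq \bigwedge_{b \in \C_L,\, b \geq \bigvee S} \nu_L(b) = \nu_L(\bigvee S)$. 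Applying the monotonic map $(\cdot)^\bullet$ and taking the union over $\F$ finishes this direction.

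The reverse inclusion is the main obstacle, and the key idea is to design an explicit witnessing filter. Given a point $x$ with $x \leq \nu_L(\bigvee S)$, set $\mathcal B := \{b \in \C_L : x \not\leq \nu_L(b)\}$ and let $\F_0$ be the filter on $\pow\pt L$ generated by $\{\overline b^\bullet : b \in \mathcal B\}$. The set $\mathcal B$ is closed under finite suprema: since $\nu_L$ preserves finite suprema of complemented elements (Lemma~\ref{lemma:sup:compl}) and $x$ is join-prime, $x \leq \nu_L(\bigvee_i b_i) = \bigvee_i \nu_L(b_i)$ would force $x \leq \nu_L(b_i)$ for some $i$. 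So by Lemma~\ref{lemma:kow:basic:adh}~(3), any finite intersection of generators of $\F_0$ is again of the form $\overline b^\bullet$ for a single $b \in \mathcal B$.

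Three verifications then close the argument. \emph{(i)~$\F_0$ is proper.} For $b \in \mathcal B$, join-primeness applied to $x \leq \top = b \vee \overline b$ yields $x \leq b$ or $x \leq \overline b$; but $x \leq b$ would give $x \leq \nu_L(x) \leq \nu_L(b)$, contradicting $b \in \mathcal B$, so $x \in \overline b^\bullet$. \emph{(ii)~$S \in \F_0^\#$.} By contraposition, if $\overline b^\bullet \cap S = \emptyset$ for some $b \in \mathcal B$, then every $y \in S$ satisfies $y \leq b$ (join-primeness again), so $\bigvee S \leq b$ and $x \leq \nu_L(\bigvee S) \leq \nu_L(b)$, contradicting $b \in \mathcal B$. \emph{(iii)~$x \leq \lm_{\nu_L}\kow\F_0$.} Any complemented $c \in (\kow\F_0)^\#$ must lie outside $\mathcal B$, for otherwise $\overline c \in \kow\F_0$ and the witness $c \wedge \overline c = \bot$ would contradict $c \in (\kow\F_0)^\#$; hence $x \leq \nu_L(c)$ for every such $c$, and taking the infimum gives $x \leq \lm_{\nu_L}\kow\F_0$. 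Combining (i)--(iii), $x \in (\lm_{\nu_L}\kow\F_0)^\bullet = \lm_{\pow\pt L}\F_0 \subseteq \nu_{\pow\pt L}(S)$, as required.
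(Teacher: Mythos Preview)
Your proof is correct, and it takes a genuinely different route from the paper's. The paper defines $\nu(S) := (\nu_L(\bigvee S))^\bullet$, first verifies that $\nu$ is a \v{C}ech closure operator on $\pow\pt L$, then establishes the equality $\lm_\nu = \lm_{\pow\pt L}$ (via auxiliary facts relating $\F^\#$ and $(\kow\F)^\#$), and finally invokes Proposition~\ref{prop:adh:preconv:=} together with the spatiality and prime-continuity of $\pow\pt L$ (Remark~\ref{rem:adh:preconv:=}) to conclude $\nu = \adh_{\lm_{\pow\pt L}} = \nu_{\pow\pt L}$. Your argument instead unfolds $\nu_{\pow\pt L}(S) = \adhr_{\lm_{\pow\pt L}}(S)$ directly and proves both inclusions by hand, the harder one via an explicit witnessing filter $\F_0$. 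This is more elementary: it avoids appealing to the abstract equivalence of Proposition~\ref{prop:adh:preconv:=}. That said, your filter $\F_0$ is essentially the same construction that appears inside the proof of Proposition~\ref{prop:adh:preconv:=}(1), so in effect you are inlining the relevant part of that machinery rather than citing it. The paper's approach has the virtue of making the structural reason for the equality transparent (both sides induce the same convergence, and on $\pow\pt L$ adherence and convergence determine each other); yours has the virtue of being self-contained and not relying on spatiality or prime-continuity as black boxes.
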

Here $S$ is a set of points, hence in particular a subset of $L$:
$\bigvee S$ is the supremum of that set in $L$.

\begin{proof}
  Define $\nu (S)$ as $(\nu_L (\bigvee S))^\bullet$.  The outline of
  the proof is as follows.  We check that $\nu$ is a \v{C}ech-closure
  operator, that its associated pretopological convergence $\lm_\nu$
  is exactly $\lm_{\pow\pt L}$, and we shall conclude that $\nu$ is
  equal to $\adh_{\lm_{\pow\pt L}}$ by
  Proposition~\ref{prop:adh:preconv:=}.

  Let us start by checking that $\nu$ is a \v{C}ech-closure operator,
  namely that $\nu$ commutes with finite suprema.  Since $\nu_L$ is
  monotonic and by Lemma~\ref{lemma:kow:basic:adh}~(1), $\nu$ is
  monotonic.  Let $S = S_1 \cup \cdots \cup S_n$.  Since $\nu$ is
  monotonic, $\bigcup_{i=1}^n \nu (S_i) \subseteq \nu (S)$.  To show
  the converse inequality, let $a_1$, \ldots, $a_n$ be arbitrary
  complemented elements such that $a_i \geq \bigvee S_i$,
  $1\leq i\leq n$.  Then
  $a := a_1 \vee \cdots \vee a_n \geq \bigvee S$, and
  $\nu_L (a) = \bigvee_{i=1}^n \nu_L (a_i)$.  Taking infima over
  $a_1$, \ldots, $a_n$, we obtain
  \[\nu (\bigvee S) \leq \bigwedge_{\substack{\forall i=1, \cdots, n
      .\\a_i \geq \bigvee S_i}} \bigvee_{i=1}^n \nu_L (a_i) =
  \bigvee_{i=1}^n \bigwedge_{a_i \geq \bigvee S_i} \nu_L (a_i)\] by
  the coframe distributivity law, and 
  $\bigvee_{i=1}^n \bigwedge_{a_i \geq \bigvee S_i} \nu_L (a_i)= \bigvee_{i=1}^n \nu_L (\bigvee S_i)$.  We now use
  Lemma~\ref{lemma:kow:basic:adh}~(1) and~(2) to obtain
  \[\nu (S) = {(\nu_L (\bigvee S))}^\bullet \subseteq {(\bigvee_{i=1}^n
    \nu_L (\bigvee S_i))}^\bullet = \bigcup_{i=1}^n \nu_L (\bigvee
  S_i)^\bullet = \bigcup_{i=1}^n \nu (S_i).\]

  Since $\pow\pt L$ is Boolean, $\nu$ is therefore a (centered)
  adherence structure on $\pow\pt L$.

  As a preparation to proving that $\lm_\nu = \lm_{\pow\pt L}$, we
  establish the following facts, where $\F$ is a filter of subsets of
  $\pt L$, and $S$ is a subset of $\pt L$:
  \begin{enumerate}
  \item[(A)] For every complemented element $a$ in
    ${(\kow\F)}^\#$, $a^\bullet$ is in $\F^\#$.  If
    $a^\bullet$ were not in $\F^\#$, then by
    Lemma~\ref{lemma:grill:compl} the complement of $a^\bullet$ would
    be in $\F$, that is, ${(\overline a)}^\bullet$ would be in $\F$,
    by Lemma~\ref{lemma:kow:basic:adh}~(4).  By definition, $\overline a$ would then be in $\kow\F$,
    contradicting the fact that $a \in {(\kow\F)}^\#$, in view of
    Lemma~\ref{lemma:grill:compl}.
  \item[(B)] If $S \in \F^\#$, then for every complemented element
    $a \geq \bigvee S$, $a$ is in ${(\kow\F)}^\#$.  Otherwise,
    $\overline a$ would be in $\kow \F$, namely
    ${(\overline a)}^\bullet$ would be in $\F$.  By
    Lemma~\ref{lemma:kow:basic:adh}~(4) again, the complement of
    $a^\bullet$ would be in $\F$, so $a^\bullet$ would not be in
    $\F^\#$.  We notice that $S$ is included in
    ${(\bigvee S)}^\bullet$, since every point of $S$ is by definition
    smaller than or equal to $\bigvee S$.  Using
    Lemma~\ref{lemma:kow:basic:adh}~(1),
    ${(\bigvee S)}^\bullet \subseteq a^\bullet$, so
    $S \subseteq a^\bullet$.  Since $\F^\#$ is upwards-closed
    (Lemma~\ref{lemma:grill}~(1)), it follows that $S$ cannot be in
    $\F^\#$, a contradiction.
  \end{enumerate}
  It follows that:
  \begin{enumerate}
  \item[(C)] For every filter $\F$ of subsets of $\pt L$,
    $\bigcap_{S \in \F^\#} \nu (S) = {(\bigwedge_{a \in {(\kow
          \F)}^\#} \nu_L (a))}^\bullet$.  Indeed, in one direction, for
    every $S \in \F^\#$,
    \[
    \nu_L (\bigvee S) = \bigwedge_{a\in\C_L, a \geq \bigvee S} \nu_L (a)\geq \bigwedge_{a \in {(\kow\F)}^\#} \nu_L (a)\] by (B).  Using
    Lemma~\ref{lemma:kow:basic:adh}~(1),
    $\nu (S) = \nu_L (\bigvee S)^\bullet \supseteq {(\bigwedge_{a \in
        {(\kow\F)}^\#} \nu_L (a))}^\bullet$.  In the other direction,
    for every complemented element $a \in {(\kow \F)}^\bullet$,
    $S := a^\bullet$ is in $\F^\#$ by (A).  Moreover,
    $\bigvee S \leq a$ since every point in $S$ is by definition less
    than or equal to $a$, so $\nu_L (\bigvee S) \leq \nu_L (a)$, and
    hence
    $\nu (S) = \nu_L (\bigvee S)^\bullet \subseteq \nu_L (a)^\bullet$
    by Lemma~\ref{lemma:kow:basic:adh}~(1).  It follows that
    $\nu_L (a)^\bullet \supseteq \bigcap_{S \in \F^\#} \nu (S)$.
    Taking infima over all complemented $a$, we obtain that:
    \[
    \bigcap_{S \in \F^\#} \nu (S) \subseteq \bigcap_{a \in {(\kow
        \F)}^\#} \nu_L (a)^\bullet = \left(\bigwedge_{a \in {(\kow
        \F)}^\#}
    \nu_L (a)\right)^\bullet,\] where the last equality is by
    Lemma~\ref{lemma:kow:basic:adh}~(3).
  \end{enumerate}
  For every filter $\F$ of subsets of $\pt L$,
  \[\lm_{\pow\pt L} \F = {(\lm_L {\kow \F})}^\bullet = {\left(\bigwedge_{a\in\C_L
      \cap {(\kow \F)}^\#} \nu_L (a)\right)}^\bullet.\]  By (C), this is equal to
  $\bigcap_{S \in \F^\#} \nu (S)$.  In other words, $\lm_{\pow\pt L}$
  is equal to $\lm_\nu$.

  Since $\pow\pt L$ is spatial and prime-continuous, by
  Remark~\ref{rem:adh:preconv:=}, we can apply
  Proposition~\ref{prop:adh:preconv:=}: $\nu$ is equal to
  $\adh_{\lm_{\pow\pt L}}$, which is by definition equal to
  $\nu_{\pow\pt L}$.
\end{proof}

\subsection{Completeness, cocompleteness}
\label{sec:compl-cocompl-1}

It is natural to compare adherence structures
$\nu, \nu' \colon L \to L$ on a $\catc$-object $L$ by saying that
$\nu$ is \emph{finer} than $\nu'$, in notation $\nu \leq \nu'$, or
that $\nu'$ is \emph{coarser} than $\nu$, if and only if
$\nu (\ell) \leq \nu' (\ell)$ for every $\ell \in L$.

The identity map from $(L, \nu)$ to $(L, \nu')$ is continuous (per
(\ref{eqn:nu:image})) if and only if $\nu' \leq \nu$, if and only if
$\nu$ is coarser than $\nu'$.  Let us write $|\_|$ for the forgetful
functor from $\Adh\catc$ to $\catc$ mapping $(L, \nu)$ to $L$.

\begin{prop}
  \label{prop:coarsest:adh}
  Let $\catc$ be a category of coframes, and $L$ be a $\catc$-object.
  Let also $\varphi_i \colon |L_i| \to L$ be morphisms of $\catc$,
  where each $L_i$ is an adherence $\catc$-object, $i \in I$.  There
  is a coarsest adherence structure $\nu_L$ on $L$ such that
  $\varphi_i$ is continuous for each $i \in I$.
\end{prop}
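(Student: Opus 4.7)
I will model the proof on Proposition~\ref{prop:coarsest}, whose natural analogue is the candidate
\[
\mu(\ell) := \bigwedge_{i \in I} \varphi_i(\nu_{L_i}(\varphi_{i,!}(\ell))).
\]
First I will verify two easy facts: $\mu$ is monotonic, and $\bigwedge_{a \in \C_L,\, a \geq \ell} \mu(a) = \mu(\ell)$ for every $\ell$, using that each $\varphi_i$ preserves arbitrary infima, axiom~(3) of Definition~\ref{defn:Adh} for $\nu_{L_i}$, and Corollary~\ref{cor:imagepseudocompl} to match complemented elements of $L$ with those of $L_i$ via $\varphi_i$. The hard part is that $\mu$ typically fails to preserve binary suprema of complemented elements: writing $\alpha_i := \varphi_i(\nu_{L_i}(\varphi_{i,!}(a_1)))$ and $\beta_i := \varphi_i(\nu_{L_i}(\varphi_{i,!}(a_2)))$, one has $\mu(a_1 \vee a_2) = \bigwedge_i (\alpha_i \vee \beta_i)$, whereas coframe distributivity yields $\mu(a_1) \vee \mu(a_2) = \bigwedge_{i, j} (\alpha_i \vee \beta_j)$. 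So $\mu$ cannot directly serve as $\nu_L$, and repairing this is the main obstacle.

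The fix is to pass to the largest monotone, join-preserving map dominated by $\mu$ on complemented elements, and then use axiom~(3) to extend back to all of $L$. Since $L$ is distributive, $\C_L$ is a Boolean sublattice. Let $P$ denote the set of maps $\sigma \colon \C_L \to L$ that are monotonic, satisfy $\sigma(\bot) = \bot$ and $\sigma(a_1 \vee a_2) = \sigma(a_1) \vee \sigma(a_2)$, and satisfy $\sigma(a) \leq \mu(a)$ for every $a \in \C_L$. The zero map lies in $P$, so $P$ is non-empty. Using only commutativity of suprema in $L$, I will check that $P$ is closed under arbitrary pointwise suprema: if $\sigma := \bigvee_\alpha \sigma_\alpha$ with each $\sigma_\alpha \in P$, then
\[
\sigma(a_1 \vee a_2) = \bigvee_\alpha \bigl(\sigma_\alpha(a_1) \vee \sigma_\alpha(a_2)\bigr) = \bigvee_\alpha \sigma_\alpha(a_1) \vee \bigvee_\alpha \sigma_\alpha(a_2) = \sigma(a_1) \vee \sigma(a_2),
\]
and the remaining properties are immediate. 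Hence $P$ has a greatest element $\sigma_{\max}$.

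I will then define $\nu_L(\ell) := \bigwedge_{a \in \C_L,\, a \geq \ell} \sigma_{\max}(a)$, noting that $\nu_L(a) = \sigma_{\max}(a)$ whenever $a \in \C_L$. The three axioms of Definition~\ref{defn:Adh} follow: monotonicity and axiom~(3) from the defining infimum, preservation of finite suprema of complemented elements from the same property for $\sigma_{\max}$. For continuity of $\varphi_i$, the inequality (\ref{eqn:nu:image}) on arbitrary $\ell$ reduces via axiom~(3) to its verification on $\C_L$ (by the same computation used for $\mu$), where $\nu_L(a) = \sigma_{\max}(a) \leq \mu(a) \leq \varphi_i(\nu_{L_i}(\varphi_{i,!}(a)))$. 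Finally, for coarsestness, I will observe that any competing $\nu'_L$ has restriction $\nu'_L \upharpoonright \C_L$ lying in $P$ (continuity yielding $\nu'_L(a) \leq \mu(a)$), whence $\nu'_L(a) \leq \sigma_{\max}(a) = \nu_L(a)$ for $a \in \C_L$, and axiom~(3) for $\nu'_L$ propagates this to $\nu'_L \leq \nu_L$ on all of $L$.
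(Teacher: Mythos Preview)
Your proof is correct, and it takes a genuinely different route from the paper's.

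The paper writes down an explicit formula,
\[
\nu_L(\ell)\;:=\;\bigwedge_{\substack{a_1,\ldots,a_n\in\C_L\\ \ell\leq\bigvee_{j=1}^n a_j}}\;\bigvee_{j=1}^n\;\bigwedge_{i\in I}\varphi_i\bigl(\nu_{L_i}(\varphi_{i!}(a_j))\bigr)
\;=\;\bigwedge_{\substack{a_1,\ldots,a_n\in\C_L\\ \ell\leq\bigvee_{j=1}^n a_j}}\;\bigvee_{j=1}^n\mu(a_j),
\]
and then verifies all the axioms, continuity, and maximality by direct computation with this formula (the binary-sup preservation step uses coframe distributivity explicitly). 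You instead characterize $\nu_L\restriction\C_L$ abstractly as the pointwise supremum $\sigma_{\max}$ of the set $P$ of finite-sup-preserving maps $\C_L\to L$ dominated by $\mu$, and then extend via axiom~(3). The two constructions agree: the paper's formula restricted to $\C_L$ visibly lies in your $P$, and conversely any $\sigma\in P$ satisfies $\sigma(a)\leq\sigma(\bigvee_j a_j)=\bigvee_j\sigma(a_j)\leq\bigvee_j\mu(a_j)$ whenever $a\leq\bigvee_j a_j$, so $\sigma$ is bounded above by the paper's formula. Your argument is more conceptual and avoids the somewhat lengthy verifications; the price is that the paper's explicit formula~(\ref{eq:coarsest:adh}) is unfolded again in the proof of Corollary~\ref{corl:coarsest:adh} (finality of the lift), so with your approach that corollary would need either to recover the formula from the characterization of $\sigma_{\max}$, or a separate argument.
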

\begin{proof}
  By definition $\varphi_i$ is continuous if and only if
  $\nu_L (\ell) \leq \varphi_i (\nu_{L_i} (\varphi_{i!} (\ell)))$ for
  every $\ell \in L$.  In particular, if $\varphi_i$ is continuous for
  every $i \in I$, then for every $\ell \in L$, and for every finite
  family $a_1$, \ldots, $a_n$ of complemented elements such that
  $\ell \leq \bigvee_{j=1}^n a_j$,
  $\nu_L (\ell) \leq \bigvee_{j=1}^n \nu_L (a_j) \leq \bigvee_{j=1}^n
  \bigwedge_{i \in I} \varphi_i (\nu_{L_i} (\varphi_{i!} (a_j)))$.
  That means that $\nu_L (\ell)$ is less than or equal to:
  \begin{equation}
    \label{eq:coarsest:adh}
    \bigwedge_{\substack{a_1, \ldots, a_n\in\C_L\\
        \ell \leq \bigvee_{j=1}^n a_j}}
    \bigvee_{j=1}^n
    \bigwedge_{i \in I} \varphi_i (\nu_{L_i} (\varphi_{i!} (a_j))).
  \end{equation}
  In particular, if (\ref{eq:coarsest:adh}) defines an adherence
  structure on $L$ that makes every $\varphi_i$ continuous, then this
  is the desired coarsest adherence structure.

  Define $\nu_L (\ell)$ as (\ref{eq:coarsest:adh}).  Then $\nu_L$ is
  monotonic.  We check that $\nu_L$ preserves finite suprema of
  complemented elements.  It is enough to check this for the $0$-ary
  supremum, $\bot$, and for binary suprema.  By taking $n=1$ and
  $a_1 = \bot$ in (\ref{eq:coarsest:adh}), we obtain that
  $\nu_L (\bot) \leq \bigwedge_{i \in I} \varphi_i (\nu_{L_i}
  (\varphi_{i!}  (\bot)))$.  Since $\varphi_{i!}$ is a left-adjoint,
  it preserves all suprema, in particular the least element $\bot$.
  We know that $\nu_{L_i}$ preserves finite suprema of complemented
  elements, hence $\bot$, and $\varphi_i$ is a coframe morphism, so
  $\varphi_i (\nu_{L_i} (\varphi_{i!}  (\bot))) = \bot$ for every
  $i \in I$.  It follows that $\nu_L (\bot) \leq \bot$.  In the case
  of binary suprema of complemented elements $a$ and $a'$, it suffices
  to show that $\nu_L (a) \vee \nu_L (a') \geq \nu_L (a\vee a')$,
  since the converse inequality is implied by monotonicity.  We have
  the following, where $a_1$, \ldots, $a_m$ and $a'_1$, \ldots, $a'_n$
  range over finite families of complemented elements:
  \begin{eqnarray*}
    \nu_L (a) \vee \nu_L (a')
    & =
    & \mskip-20mu
      \bigwedge_{\substack{a_1, \ldots, a_m\\a\leq \bigvee_{j=1}^m
    a_j}} \bigvee_{j=1}^m \bigwedge_{i \in I} \varphi_i (\nu_{L_i}
    (\varphi_{i!} (a_j)))
    \vee \mskip-20mu
    \bigwedge_{\substack{a'_1, \ldots, a'_n\\a\leq \bigvee_{k=1}^n
    a'_k}} \bigvee_{k=1}^n \bigwedge_{i \in I} \varphi_i (\nu_{L_i}
    (\varphi_{i!} (a'_k)))
    \\
    & = &\mskip-20mu
      \bigwedge_{\substack{a_1, \ldots, a_m\\a'_1, \ldots, a'_n \\a\leq \bigvee_{j=1}^m
    a_j\\a' \leq \bigvee_{k=1}^n a'_k}}\mskip-20mu
    \left(
    \bigvee_{j=1}^m \bigwedge_{i \in I} \varphi_i (\nu_{L_i}
    (\varphi_{i!} (a_j)))
    \vee \bigvee_{k=1}^n \bigwedge_{i \in I} \varphi_i (\nu_{L_i}
    (\varphi_{i!} (a'_k)))
    \right)
  \end{eqnarray*}
  by the coframe distributivity law.  The conditions
  $a\leq \bigvee_{j=1}^m a_j$ and $a' \leq \bigvee_{k=1}^n a'_k$ imply
  $a \vee a' \leq \bigvee_{j=1}^m a_j \vee \bigvee_{k=1}^n a'_k$, so:
  \begin{eqnarray*}
    \nu_L (a) \vee \nu_L (a')
    & \geq
    & \mskip-40mu
      \bigwedge_{\substack{a_1, \ldots, a_m\\a'_1, \ldots, a'_n \\a \vee a' \leq \bigvee_{j=1}^m
    a_j \vee \bigvee_{k=1}^n a'_k}}\mskip-40mu
    \left(
    \bigvee_{j=1}^m \bigwedge_{i \in I} \varphi_i (\nu_{L_i}
    (\varphi_{i!} (a_j)))
    \vee \bigvee_{k=1}^n \bigwedge_{i \in I} \varphi_i (\nu_{L_i}
    (\varphi_{i!} (a'_k)))
    \right)\\
    & = & \nu_L (a \vee a').
  \end{eqnarray*}
  Next, we verify that
  $\nu_L (\ell) \geq \bigwedge_{a \geq \ell} \nu_L (a)$, where $a$
  ranges over complemented elements.  Equality will follow by
  monotonicity.  We merely observe that for every finite family of
  complemented elements $a_1$, \ldots, $a_n$ such that $\ell \leq
  \bigvee_{j=1}^n a_j$, there is a complemented element $a$ such that
  $\ell \leq a$ and $a \leq \bigvee_{j=1}^n a_j$, namely $a :=
  \bigvee_{j=1}^n a_j$.  It follows:
  \begin{eqnarray*}
    \nu_L (\ell)
    & =
    & \bigwedge_{\substack{a_1, \ldots, a_n\\
    \ell \leq \bigvee_{j=1}^n a_j}}
    \bigvee_{j=1}^n
    \bigwedge_{i \in I} \varphi_i (\nu_{L_i} (\varphi_{i!} (a_j)))
    \\
    & \geq
    & \bigwedge_{\substack{a \geq \ell\\a_1, \ldots, a_n\\
    a \leq \bigvee_{j=1}^n a_j}}
    \bigvee_{j=1}^n
    \bigwedge_{i \in I} \varphi_i (\nu_{L_i} (\varphi_{i!} (a_j)))
    = \bigwedge_{a \geq \ell} \nu_L (a)
  \end{eqnarray*}
  where again $a$, $a_1$, \ldots, $a_n$ range over complemented
  elements.

  Therefore $\nu_L$ is an adherence structure on $L$.  In order to
  show that each $\varphi_i$ is continuous, namely that
  $\nu_L (\ell) $ is less than or equal to
  $\varphi_i (\nu_{L_i} (\varphi_{i!} (\ell))$ for every $\ell \in L$
  and every $i \in I$, we proceed as follows.  Fix $\ell \in L$ and
  $i \in I$.  For every complemented $b \geq \varphi_{i!} (\ell)$,
  $a := \varphi_i (b)$ is complemented in $L$, since $\varphi_i$
  preserves binary suprema and binary infima.  Moreover, $a \geq \ell$
  since $\varphi_{i!}$ is left-adjoint to $\varphi_i$.  Therefore,
  still understanding $a$ and $b$ as complemented elements,
  \begin{eqnarray*}
    \bigwedge_{a \geq \ell} \nu_{L_i} (\varphi_{i!} (a))
    & \leq
    & \bigwedge_{b \geq \varphi_{i!} (\ell)} \nu_{L_i} (\varphi_{i!}
      (\varphi_i (b)))
    \\
    & \leq & \bigwedge_{b \geq \varphi_{i!} (\ell)} \nu_{L_i} (b)
    \quad\text{since }\varphi_{i!} (\varphi_i (b)) \leq b \\
    & = & \nu_{L_i} (\varphi_{i!} (\ell)).
  \end{eqnarray*}
  Applying $\varphi_i$ on both sides, and remembering that $\varphi_i$
  preserves arbitrary infima,
  \begin{eqnarray*}
    \bigwedge_{a \geq \ell} \varphi_i (\nu_{L_i} (\varphi_{i!} (a)))
    & \leq & \varphi_i (\nu_{L_i} (\varphi_{i!} (\ell))).
  \end{eqnarray*}
  Taking $n=1$ (and $a_1=a$) in (\ref{eq:coarsest:adh}) shows that
  $\nu_L (\ell)$ is smaller than or equal to the left-hand side of the
  latter inequality, hence also to the right-hand side.
\end{proof}

\begin{cor}
  \label{corl:coarsest:adh}
  Let $\catc$ be a category of coframes.  Then
  $|\_| \colon \Adh\catc \to \catc$ is topological: every sink
  ${(\varphi_i \colon |L_i| \to L)}_{i \in I}$ has a unique final
  lift, and this is $\nu_L$, as given in
  Proposition~\ref{prop:coarsest:adh}.
\end{cor}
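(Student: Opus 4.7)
The proof follows the template of Corollary~\ref{corl:coarsest}. Uniqueness is categorical: by (the dual of) \cite[Proposition 10.43]{AHS:joycats}, any final lift of the sink must coincide with the coarsest adherence structure on $L$ making every $\varphi_i$ continuous, and Proposition~\ref{prop:coarsest:adh} establishes that this coarsest structure is precisely the $\nu_L$ given in \eqref{eq:coarsest:adh}. Existence therefore reduces to checking that $(L,\nu_L)$, already known to lift the sink, is actually \emph{final}: for every coframe morphism $\psi\colon L\to|L'|$ into an adherence $\catc$-object $L'$ such that every composite $\psi\circ\varphi_i$ is continuous, $\psi$ is continuous from $(L,\nu_L)$ to $L'$.

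Fix $\ell'\in L'$. The continuity condition \eqref{eqn:nu:image} to be verified reads
\[
  \nu_{L'}(\ell')\leq\psi(\nu_L(\psi_!(\ell'))).
\]
Because $\psi$ preserves arbitrary infima and finite suprema, and by the explicit formula~\eqref{eq:coarsest:adh}, the right-hand side equals the infimum, over all finite complemented tuples $a_1,\dots,a_n$ in $L$ with $\psi_!(\ell')\leq\bigvee_j a_j$, of $\bigvee_j\bigwedge_i\psi(\varphi_i(\nu_{L_i}(\varphi_{i!}(a_j))))$. So I plan to fix one such tuple and produce this bound from the hypotheses.

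The main maneuver is to transport the inequality $\psi_!(\ell')\leq\bigvee_j a_j$ across $\psi$: applying $\psi$ and using the adjunction unit $\ell'\leq\psi(\psi_!(\ell'))$ gives $\ell'\leq\bigvee_j\psi(a_j)$, with every $\psi(a_j)$ complemented in $L'$ (since $\psi$, being a coframe morphism, preserves both finite suprema and finite infima, hence complements). Lemma~\ref{lemma:sup:compl} then lets me split this supremum: $\nu_{L'}(\ell')\leq\bigvee_j\nu_{L'}(\psi(a_j))$. Finally, for each $j$, I apply the hypothesis that $\psi\circ\varphi_i$ is continuous at the element $\psi(a_j)$, using the identity $(\psi\circ\varphi_i)_!=\varphi_{i!}\circ\psi_!$ (left adjoints compose in the reverse order), the counit $\psi_!(\psi(a_j))\leq a_j$, and the monotonicity of $\nu_{L_i}$; this yields $\nu_{L'}(\psi(a_j))\leq\psi(\varphi_i(\nu_{L_i}(\varphi_{i!}(a_j))))$ for every $i\in I$, and taking infima over $i$ closes the estimate.

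The main obstacle is bookkeeping rather than any single hard step: one must carefully juggle the two left adjoints $\psi_!$ and $\varphi_{i!}$, remember that $\psi$ ports complemented elements to complemented elements, and invoke Lemma~\ref{lemma:sup:compl} at precisely the moment when $\nu_{L'}(\ell')$ needs to be distributed over the finite supremum $\bigvee_j\psi(a_j)$---this upgrade from ``preserves finite suprema of complemented elements'' to ``preserves all finite suprema'' is what makes the whole argument go through. Once this strategy is in place, no genuinely new ingredients beyond those already used in Proposition~\ref{prop:coarsest:adh} are required.
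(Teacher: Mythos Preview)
Your proposal is correct and follows essentially the same route as the paper's proof: uniqueness via the dual of \cite[Proposition~10.43]{AHS:joycats}, then finality by fixing $\ell'\in L'$, expanding $\psi(\nu_L(\psi_!(\ell')))$ via~\eqref{eq:coarsest:adh}, transporting $\psi_!(\ell')\leq\bigvee_j a_j$ to $\ell'\leq\bigvee_j\psi(a_j)$, and applying the continuity of each $\psi\circ\varphi_i$ together with $(\psi\circ\varphi_i)_!=\varphi_{i!}\circ\psi_!$ and the counit inequality. One small remark: your appeal to Lemma~\ref{lemma:sup:compl} is harmless but not strictly necessary, since the $\psi(a_j)$ are already complemented and the defining property of an adherence structure suffices to split $\nu_{L'}(\bigvee_j\psi(a_j))$.
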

\begin{proof}
  As noticed in the proof of Corollary \ref{corl:coarsest}, uniqueness follows from the dual of \cite[Proposition 10.43]{AHS:joycats}.
    To show existence, we check that
  $(L, \nu_L)$ of Proposition \ref{prop:coarsest} is a final lift of
  ${(\varphi_i \colon |L_i| \to L)}_{i \in I}$.  Let
  $\psi \colon L \to |L'|$ be such that $\psi \circ \varphi_i$ is
  continuous for every $i \in I$.  We aim to show that $\psi$ is
  continuous from $(L, \nu_L)$ to $L'$, and for that we consider an
  arbitrary element $\ell'$ of $L'$, and show that
  $\nu_{L'} (\ell') \leq \psi (\nu_L (\psi_! (\ell')))$.  Indeed,
  writing again $a_1$, \ldots, $a_n$ for complemented elements:
  \begin{eqnarray*}
    \psi (\nu_L (\psi_! (\ell')))
    & = & \psi \left(\bigwedge_{\substack{a_1, \ldots, a_n\\
    \psi_! (\ell') \leq \bigvee_{j=1}^n a_j}}
    \bigvee_{j=1}^n
    \bigwedge_{i \in I} \varphi_i (\nu_{L_i} (\varphi_{i!}
    (a_j)))\right)
    \quad\text{by (\ref{eq:coarsest:adh})} \\
    & = & \bigwedge_{\substack{a_1, \ldots, a_n\\
    \psi_! (\ell') \leq \bigvee_{j=1}^n a_j}}
    \bigvee_{j=1}^n
    \bigwedge_{i \in I} \psi (\varphi_i (\nu_{L_i} (\varphi_{i!}
    (a_j)))) \\
    && \quad\text{since $\psi$ is a morphism of coframes} \\
    & \geq & \bigwedge_{\substack{a_1, \ldots, a_n\\
    \psi_! (\ell') \leq \bigvee_{j=1}^n a_j}}
    \bigvee_{j=1}^n
    \bigwedge_{i \in I} \psi (\varphi_i (\nu_{L_i} (\varphi_{i!}
    (\psi_! (\psi (a_j)))))) \\
    && \quad\text{since $\psi_! (\psi (a_j)) \leq a_j$ by definition
       of left-adjoints} \\
    & \geq & \bigwedge_{\substack{a_1, \ldots, a_n\\
    \psi_! (\ell') \leq \bigvee_{j=1}^n a_j}}
    \bigvee_{j=1}^n
    \nu_{L'} (\psi (a_j)) \\
    && \quad\text{since $\psi \circ \varphi_i$ is continuous, and
       $\varphi_{i!} \circ \psi_! = (\psi \circ \varphi_i)_!$} \\
    & = & \bigwedge_{\substack{a_1, \ldots, a_n\\
    \ell' \leq \bigvee_{j=1}^n \psi (a_j)}}
    \bigvee_{j=1}^n
    \nu_{L'} (\psi (a_j))
  \end{eqnarray*}
  since $\psi_! (\ell') \leq \bigvee_{j=1}^n a_j$ is equivalent to
  $\ell' \leq \bigvee_{j=1}^n \psi (a_j)$, by the definition of a
  left-adjoint, and since $\psi$ preserves finite suprema.  The
  elements $b_j := \psi (a_j)$ are all complemented, so the latter
  infimum is
  larger than or equal to $\bigwedge_{\substack{b_1, \ldots, b_n\\
      \ell' \leq \bigvee_{j=1}^n b_j}} \bigvee_{j=1}^n
  \nu_{L'} (b_j)$.  That, in turn, is equal to $\bigwedge_{\substack{b_1, \ldots, b_n\\
      \ell' \leq \bigvee_{j=1}^n b_j}} \nu_{L'} (\bigvee_{j=1}^n
  b_j)$, which is larger than or equal to $\nu_{L'} (\ell')$.  Hence
  $\nu_{L'} (\ell') \leq \psi (\nu_L (\psi_! (\ell')))$, showing that
  $\psi$ is continuous.
\end{proof}

The usual consequences follow.
\begin{fact}
  \label{fact:cocompl:adh}
  The category $\Adh\coFrm$ is complete and cocomplete.
\end{fact}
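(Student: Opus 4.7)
The plan is to argue in essentially the same way as for Fact~\ref{fact:cocompl}. Completeness and cocompleteness of $\Adh\coFrm$ will follow by transport along the topological functor $|\_| \colon \Adh\coFrm \to \coFrm$.

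First, I would observe that Corollary~\ref{corl:coarsest:adh}, specialized to $\catc = \coFrm$, gives exactly that $|\_| \colon \Adh\coFrm \to \coFrm$ is topological: every sink has a unique final lift, namely the adherence structure $\nu_L$ produced in Proposition~\ref{prop:coarsest:adh}. This functor is also fiber-small, since for any coframe $L$ the collection of adherence structures on $L$ is a set of self-maps of $L$.

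Next, I would note that the base category $\coFrm$ is complete and cocomplete: $\Frm$ is complete and cocomplete by \cite[Section~IV.3]{framesandlocales}, and the duality $L \mapsto L^{op}$ between $\Frm$ and $\coFrm$ transfers all (co)limits. Appealing to \cite[Theorem~21.16~(1)]{AHS:joycats}, which states that for a topological functor $U \colon \catd \to \catc$ the category $\catd$ is complete (resp.\ cocomplete) if and only if $\catc$ is, the completeness and cocompleteness of $\coFrm$ transfers to $\Adh\coFrm$. Concretely, limits and colimits in $\Adh\coFrm$ are obtained by forming the underlying (co)limit in $\coFrm$ and then endowing it with the unique initial (resp.\ final) lift of the induced source (resp.\ sink), the latter provided by Proposition~\ref{prop:coarsest:adh} together with the dual statement for sources obtained via the Topological Duality Theorem \cite[Theorem~21.9]{AHS:joycats}, exactly as in Remark~\ref{rem:topological}.

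There is no real obstacle: the only work that has been done lies in Proposition~\ref{prop:coarsest:adh} and Corollary~\ref{corl:coarsest:adh}, where the delicate definition of the coarsest adherence structure~\eqref{eq:coarsest:adh} was verified. The conclusion of Fact~\ref{fact:cocompl:adh} is then a purely formal consequence of topologicity plus the (co)completeness of $\coFrm$, mirroring verbatim the derivation of Fact~\ref{fact:cocompl} from Corollary~\ref{corl:coarsest}.
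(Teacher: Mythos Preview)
Your proposal is correct and follows exactly the paper's approach: the paper states ``The usual consequences follow'' immediately after Corollary~\ref{corl:coarsest:adh}, invoking the same transfer of (co)completeness along a topological functor via \cite[Theorem~21.16~(1)]{AHS:joycats} that was already spelled out before Fact~\ref{fact:cocompl}.
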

\begin{fact}
  \label{fact:cowell:adh}
  The category $\Adh\coFrm$ is not co-wellpowered.
\end{fact}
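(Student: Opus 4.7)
The plan is to mimic, essentially verbatim, the proof of Fact~\ref{fact:cowell} from Section~\ref{sec:compl-cocompl}. That proof had two ingredients: first, the category of frames (and hence $\coFrm$) is not co-wellpowered, as shown in \cite[Section~IV.6.6]{framesandlocales}; second, given a fiber-small topological functor between two categories $\catc$ and $\catd$, $\catc$ is co-wellpowered if and only if $\catd$ is, by \cite[Theorem~21.16~(2)]{AHS:joycats}.

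Concretely, I would first invoke Corollary~\ref{corl:coarsest:adh} to note that the forgetful functor $|\_| \colon \Adh\coFrm \to \coFrm$ is topological. I would then observe that this functor is fiber-small: an adherence structure on a coframe $L$ is by definition a self-map $L \to L$ satisfying certain axioms, and hence the fiber over $L$ is a (small) subset of the set $L^L$, not a proper class. Now apply \cite[Theorem~21.16~(2)]{AHS:joycats} in the contrapositive form: if $\Adh\coFrm$ were co-wellpowered, then so would be $\coFrm$, contradicting the fact that the latter is not.

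There is no real obstacle here; the work has already been done in Corollary~\ref{corl:coarsest:adh}, which provides the topologicity of the forgetful functor, and in the known failure of co-wellpoweredness for $\coFrm$. The proof will be essentially a one-sentence invocation of these two facts, entirely parallel to the proof of Fact~\ref{fact:cowell}.
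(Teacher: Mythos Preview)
Your proposal is correct and matches the paper's approach exactly: the paper simply states Fact~\ref{fact:cowell:adh} under the header ``The usual consequences follow,'' relying on Corollary~\ref{corl:coarsest:adh} (topologicity of $|\_|$), fiber-smallness, and \cite[Theorem~21.16~(2)]{AHS:joycats}, in direct parallel to Fact~\ref{fact:cowell}.
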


\subsection{Closed elements}
\label{sec:clos-strongly-clos}

Fact~\ref{fact:adh:closed} shows that the notion of a closed element
of a convergence $\catc$-object only depends on the adherence
$\adh_L$.  Hence:
\begin{defn}[Closed]\label{defn:closed}
  Let $\catc$ be a category of coframes, and $L$ be an adherence
  $\catc$-object.  An element $c$ of $L$ is \emph{quasi-closed} if and
  only if $\nu_L (c) \leq c$.  A \emph{closed} element of $L$ is a
  complemented quasi-closed element of $L$.
\end{defn}

\begin{rem}
  \label{rem:openclosed}
  Let $L$ be a $\catc$-object, where $\catc$ is an admissible category
  of coframes.  The closed elements of $L$ can also be reconstructed
  as morphisms from $\pow (\Sierp)$ to $L$ in $\Adh\catc$, as we did
  for convergence lattices (Lemma~\ref{lemma:openclosed}).  For that,
  we equip $\pow (\Sierp)$ with the adherence operator
  $\nu_{\pow (\Sierp)} (A) := \dc A$, where $\dc$ is downward closure
  with respect to the ordering $0 \leq 1$.  One checks easily that the
  corresponding convergence structure $\lm_{\nu_{\pow (\Sierp)}}$
  coincides with the operator $\lm_{\pow (\Sierp)}$ of
  Section~\ref{sec:sierp-conv-cofr}.

  Given a morphism $\varphi \colon \pow (\Sierp) \to L$ in
  $\Adh\catc$, the elements $u := \varphi (\{1\})$ and
  $c := \varphi (\{0\})$ satisfy $u \wedge c = \bot$,
  $u \vee c = \top$, and $\nu_L (c) \leq c$.  The latter follows from
  the continuity condition (\ref{eqn:nu:image}):
  $\nu_L (c) \leq \varphi (\nu_{\pow (\Sierp)} (\varphi_! (c))$.
  Since $\varphi_! (c) = \varphi_! (\varphi (\{0\})) \subseteq \{0\}$
  by adjointness,
  $\nu_L (c) \leq \varphi (\dc \{0\}) = \varphi (\{0\}) = c$.

  Conversely, if $c$ is complemented and $\nu_L (c) \leq c$, let $u$
  be the complement of $c$, $\varphi$ be the unique lattice (hence
  coframe) morphism mapping $\{0\}$ to $c$ and $\{1\}$ to $u$.  We
  check that $\varphi$ is continuous, namely that for every
  $\ell \in L$,
  $\nu_L (\ell) \subseteq \varphi (\dc \varphi_! (\ell))$, as follows.
  If $\varphi_! (\ell) = \{0\}$, then
  $\varphi_! (\ell) \subseteq \{0\}$ hence
  $\ell \leq \varphi (\{0\}) = c$ by adjointness; in that case,
  $\nu_L (\ell) \leq \nu_L (c) \leq c = \varphi (\{0\}) = \varphi (\dc
  \varphi_! (\ell))$.  Otherwise,
  $\varphi (\dc \varphi_! (\ell)) = \varphi (\{0, 1\}) = \top \geq
  \nu_L (\ell)$.
\end{rem}

The inverse image of closed subsets by continuous maps are closed.
Analogously:
\begin{prop}
  \label{prop:closed:image}
  Let $\catc$ be a category of coframes.  For every morphism
  $\varphi \colon L \to L'$ in $\Adh\catc$, $\varphi$ maps
  quasi-closed elements of $L$ to quasi-closed elements of $L'$, and
  closed elements of $L$ to closed elements of $L'$.
\end{prop}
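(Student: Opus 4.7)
The plan is to prove the quasi-closed case by a short chain of inequalities that combines the continuity condition (\ref{eqn:nu:image}), the adjunction $\varphi_! \dashv \varphi$, and monotonicity of $\nu_L$; the closed case then follows by observing separately that $\varphi$ preserves complementation.

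Concretely, let $c \in L$ be quasi-closed, so that $\nu_L(c) \leq c$. I would start from the continuity hypothesis applied at $\ell' := \varphi(c)$, giving $\nu_{L'}(\varphi(c)) \leq \varphi(\nu_L(\varphi_!(\varphi(c))))$. Since $\varphi_!$ is left adjoint to $\varphi$, the counit inequality yields $\varphi_!(\varphi(c)) \leq c$. By monotonicity of $\nu_L$ (part of Definition~\ref{defn:Adh}) and the assumption $\nu_L(c) \leq c$, we obtain $\nu_L(\varphi_!(\varphi(c))) \leq \nu_L(c) \leq c$. Applying the monotonic map $\varphi$ and chaining, $\nu_{L'}(\varphi(c)) \leq \varphi(c)$, so $\varphi(c)$ is quasi-closed.

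For the closed case, I would additionally invoke Corollary~\ref{cor:imagepseudocompl} (or argue directly from the facts that $\varphi$ is a lattice morphism and $L'$ is distributive as a coframe): if $c$ has complement $\overline c$ in $L$, then $\varphi(c) \vee \varphi(\overline c) = \varphi(\top) = \top$ and $\varphi(c) \wedge \varphi(\overline c) = \varphi(\bot) = \bot$, so $\varphi(c)$ is complemented in $L'$. Combined with the first paragraph, $\varphi(c)$ is a complemented quasi-closed element, hence closed.

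There is no real obstacle here: the continuity condition (\ref{eqn:nu:image}) was engineered precisely so that this proof goes through. The only subtlety worth flagging is that we use the adjunction $\varphi_! \dashv \varphi$ in the form $\varphi_!(\varphi(c)) \leq c$, which is exactly the counit of that order-theoretic adjunction. No use of classicality, spatiality, or prime-continuity is required.
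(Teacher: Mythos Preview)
Your proof is correct and essentially identical to the paper's own argument: both apply the continuity inequality (\ref{eqn:nu:image}) at $\ell'=\varphi(c)$, use the counit inequality $\varphi_!(\varphi(c))\leq c$, then monotonicity of $\nu_L$ and of $\varphi$ to conclude $\nu_{L'}(\varphi(c))\leq\varphi(c)$, and finish the closed case by noting that lattice morphisms preserve complements.
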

\begin{proof}
  Let $c$ be quasi-closed in $L$.  By continuity,
  $\nu_{L'} (\varphi (c)) \leq \varphi (\nu_L (\varphi_! (\varphi
  (c))))$.  By adjointness, $\varphi_! (\varphi (c)) \leq c$, so
  $\nu_{L'} (\varphi (c)) \leq \varphi (\nu_L (c))$, and that is less
  than or equal to $\varphi (c)$ since $c$ is quasi-closed.  If
  additionally $c$ is complemented, then $\varphi (c)$ is
  complemented, hence closed.
%
  % For every filter $\F$ on $L'$ such that $\varphi (c) \in \F^\#$, we
  % have that $c \in \varphi^{-1} (\F)^\#$: for every
  % $\ell \in \varphi^{-1} (\F)$,
  % $\varphi (c \wedge \ell) = \varphi (c) \wedge \varphi (\ell)$ is
  % different from $\bot$, since $\varphi (c) \in \F^\#$ and
  % $\varphi (\ell) \in \F$, and that implies that $c \wedge \ell$
  % cannot be equal to $\bot$.  Since $c$ is quasi-closed,
  % $\lm_L \varphi^{-1} (\F) \leq c$.  Using the continuity of
  % $\varphi$,
  % $\lm_{L'} \F \leq \varphi (\lm_L \varphi^{-1} (\F)) \leq \varphi
  % (c)$.  This shows that $\varphi (c)$ is quasi-closed.
%
  % If $u$ and $c$ are complements, then $\varphi (u)$ and $\varphi (c)$
  % are complements, too, since
  % $\varphi (u) \wedge \varphi (c) = \varphi (u \wedge c) = \varphi
  % (\bot) = \bot$, and similarly with $\vee$ and $\top$.  Therefore the
  % image of a closed element is closed.
%
  % Finally, if $\catc$ is a category of coframes, then $\varphi$
  % preserves infima.  Given a closed elements
  % $c = \bigvee_{i \in I} c_i$ where each $c_i$ is strongly closed,
  % $\varphi (c) = \bigvee_{i \in I} \varphi (c_i)$ is closed in $L'$.
\end{proof}

\begin{rem}
  \label{rem:closed:image}
  That the images of closed elements are closed is obvious (assuming
  $\catc$ admissible): in the light of Remark~\ref{rem:openclosed},
  that boils down to the fact that given a morphism
  $\varphi \colon L \to L'$ and a morphism
  $\psi \colon \pow (\mathbb S) \to L$ in $\Adh\catc$, $\varphi \circ \psi$ is
  a morphism from $\pow (\mathbb S)$ to $L'$.
\end{rem}

\begin{lem}
  \label{lemma:closed:pt}
  Let $\catc$ be a category of coframes, and $L$ be an adherence
  $\catc$-object.  For every quasi-closed element $c$ of $L$,
  $c^\bullet = \{x \in \pt L \mid x\leq c\}$ is a closed subset of
  $\pt L$.
\end{lem}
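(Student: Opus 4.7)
The plan is to unpack what ``closed in $\pt L$'' means in terms of the operator $\nu_{\pow\pt L}$ constructed in Lemma~\ref{lemma:nu:adh}, and then verify the defining inequality directly. Since $\pt L$ is a pretopological convergence space and $\pow \pt L$ is a Boolean coframe (so every element is complemented), showing that $c^\bullet$ is closed in $\pt L$ reduces, via Fact~\ref{fact:adh:closed} (equivalently Definition~\ref{defn:closed} applied to $\pow\pt L$), to showing that $\nu_{\pow\pt L}(c^\bullet) \subseteq c^\bullet$.

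First, I would apply Lemma~\ref{lemma:nu:adh} to rewrite
\[
\nu_{\pow\pt L}(c^\bullet) = \bigl(\nu_L(\textstyle\bigvee c^\bullet)\bigr)^\bullet,
\]
where $\bigvee c^\bullet$ is the supremum, taken in $L$, of all those points (i.e., join-primes $x$ with $x \leq \nu_L(x)$) satisfying $x \leq c$. By the very definition of $c^\bullet$, every such $x$ lies below $c$, so $\bigvee c^\bullet \leq c$.

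Next, I would invoke monotonicity of $\nu_L$ (part of the definition of an adherence structure, cf. Definition~\ref{defn:Adh}) together with the hypothesis that $c$ is quasi-closed (Definition~\ref{defn:closed} for adherence coframes, i.e., $\nu_L(c) \leq c$) to conclude
\[
\nu_L(\textstyle\bigvee c^\bullet) \leq \nu_L(c) \leq c.
\]
Then Lemma~\ref{lemma:kow:basic:adh}(1) gives $\bigl(\nu_L(\bigvee c^\bullet)\bigr)^\bullet \subseteq c^\bullet$, completing the argument.

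There is no real obstacle here; the proof is essentially two applications of monotonicity sandwiched around the quasi-closedness hypothesis, once Lemma~\ref{lemma:nu:adh} is used to identify $\nu_{\pow\pt L}$ with $(\nu_L(\bigvee\cdot))^\bullet$. The only subtle point worth double-checking is that ``closed in $\pt L$'' indeed coincides with quasi-closed in $\pow\pt L$ as an adherence coframe, which holds because $\pow\pt L$ is Boolean so all elements are complemented; and that the calculation does not require $c$ itself to be complemented in $L$, only quasi-closed, which is consistent with the statement of the lemma.
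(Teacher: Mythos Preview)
Your proof is correct and follows essentially the same route as the paper: apply Lemma~\ref{lemma:nu:adh} to write $\nu_{\pow\pt L}(c^\bullet)=(\nu_L(\bigvee c^\bullet))^\bullet$, use $\bigvee c^\bullet\leq c$ together with monotonicity of $\nu_L$ and quasi-closedness of $c$ to get $\nu_L(\bigvee c^\bullet)\leq c$, and conclude by monotonicity of $(\cdot)^\bullet$ plus the fact that every element of $\pow\pt L$ is complemented. The paper's proof is the same argument, just stated more tersely.
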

\begin{proof}
  We use Lemma~\ref{lemma:nu:adh}:
  $\nu_{\pow\pt L} (c^\bullet) = (\nu_L (\bigvee c^\bullet))^\bullet$.
  Since every element of $c^\bullet$ is less than or equal to $c$,
  $\bigvee c^\bullet \leq c$, so
  $\nu_L (\bigvee c^\bullet) \leq \nu_L (c) \leq c$.  It follows that
  $\nu_{\pow\pt L} (c^\bullet) \subseteq c^\bullet$, so that $c^\bullet$ is
  quasi-closed.  It is closed because every element of $\pow (\pt L)$
  is complemented.
\end{proof}

% \begin{rem}
%   \label{rem:closed:pt}
% We obtain a similar result on convergence $\catc$-objects $L$, under the
% weaker assumption that $\catc$ is a category of lattices.  Recall that
% in that case $c^\bullet = \{\varphi \in \pt L \mid \varphi (c)=1\}$.
% For every quasi-closed element $c$ of $L$,
%   $c^\bullet$ is a strongly closed subset of $\pt L$.

%   % Since $c$ is quasi-closed, $\lm_L {\kow\F} \leq c$, and therefore
%   % $\lm_{\pow (\pt L)} \F = {(\lm_L {\kow\F})}^\bullet \leq c^\bullet$
%   % by Lemma~\ref{lemma:kow:basic}~(1).
%   By Proposition~\ref{prop:closed:image}, since
%   $\epsilon_L \colon \ell \in L \mapsto \ell^\bullet$ is a morphism of
%   $\Cv\catc$ (Lemma~\ref{lemma:epsilon}).  Finally, every quasi-closed
%   element of $\pow (\pt L)$ is strongly closed, by Remark~\ref{rem:closed}.
% \end{proof}

A \emph{subcoframe} of a coframe $L$ is a subset of $L$ that is closed
under finite suprema and arbitrary infima.  A \emph{sublattice} is
only closed under finite suprema and finite infima.
\begin{prop}
  \label{prop:closed:subcoframe}
  Let $\catc$ be a category of coframes, and $L$ be an adherence
  $\catc$-object.
  \begin{enumerate}
  \item The quasi-closed elements form a subcoframe of $L$.
  \item The closed elements of $L$ form a sublattice of $L$.
  \end{enumerate}
\end{prop}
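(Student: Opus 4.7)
For part (1), the plan is to handle infima and suprema separately. Arbitrary infima are immediate from monotonicity: if $(c_i)_{i \in I}$ is a family of quasi-closed elements, then for each $j \in I$, $\nu_L(\bigwedge_{i \in I} c_i) \leq \nu_L(c_j) \leq c_j$, so $\nu_L(\bigwedge_{i \in I} c_i) \leq \bigwedge_{i \in I} c_i$. For finite suprema, I would invoke Lemma~\ref{lemma:sup:compl}, which says that every adherence structure on a coframe preserves all finite suprema (not just those of complemented elements). Hence for $c_1, \ldots, c_n$ quasi-closed, $\nu_L(c_1 \vee \cdots \vee c_n) = \nu_L(c_1) \vee \cdots \vee \nu_L(c_n) \leq c_1 \vee \cdots \vee c_n$, giving quasi-closedness of the join. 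Thus the quasi-closed elements are closed under finite suprema and arbitrary infima, hence form a subcoframe.

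For part (2), I would combine part~(1) with the observation that the set $\C_L$ of complemented elements is itself a sublattice of $L$. In any distributive lattice, if $a, b$ are complemented with complements $\overline a, \overline b$, then $\overline a \wedge \overline b$ is a complement of $a \vee b$ and $\overline a \vee \overline b$ is a complement of $a \wedge b$, using distributivity to check that these satisfy the two complement identities. Coframes are distributive, so $\C_L$ is closed under finite suprema and finite infima. Intersecting this with the subcoframe of quasi-closed elements from (1) yields that the closed elements are closed under finite suprema and finite infima, i.e., form a sublattice.

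The only subtle point, and the reason one cannot upgrade (2) to a subcoframe result, is that $\C_L$ need not be closed under arbitrary infima in a coframe: a general infimum of complemented elements need not have a complement. So even though quasi-closedness is preserved under arbitrary infima by~(1), closedness is not, and the sublattice conclusion in (2) is the best one can expect without further assumptions on $L$. No step here looks like a genuine obstacle; the main content is simply locating Lemma~\ref{lemma:sup:compl} and noticing that distributivity suffices to make $\C_L$ a sublattice.
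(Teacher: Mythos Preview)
Your proof is correct and follows essentially the same approach as the paper: part~(1) is handled via monotonicity for infima and Lemma~\ref{lemma:sup:compl} for finite suprema, and part~(2) is deduced from~(1) together with the fact that complemented elements in a distributive lattice form a sublattice. Your additional commentary on why~(2) cannot be upgraded to a subcoframe statement is also accurate.
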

\begin{proof}
  (1) Let $c_i$ be quasi-closed, $i \in I$.  Then
  $\nu_L (\bigwedge_{i \in I} c_i) \leq \bigwedge_{i \in I} \nu_L
  (c_i) \leq \bigwedge_{i \in I} c_i$, where the first inequality is
  by the monotonicity of $\nu_L$, and the second one is because each
  $c_i$ is quasi-closed.

  Let now $c_1$, \ldots, $c_n$ be quasi-closed.  By
  Lemma~\ref{lemma:sup:compl}, $\nu (c_1 \vee \cdots \vee c_n) = \nu
  (c_1) \vee \cdots \vee \nu (c_n)$, and this is less than or equal to
  $c_1 \vee \cdots \vee c_n$ because each $c_i$ is quasi-closed.

  (2) follows from (1), using the fact that finite infima and finite
  suprema of complemented elements in a distributive lattice are
  complemented.
\end{proof}

\section{Topological Coframes}
\label{sec:topological-coframes}

The standard pointfree analogue of topological spaces are locales
\cite{Johnstone:Stone,framesandlocales}, i.e., the objects of the
opposite category of frames.  The idea is that all we need to know
about its topological spaces is its frame of open subsets, not its
points.  We might as well study coframes (of closed subsets).
Proposition~\ref{prop:closed:subcoframe} leads to another pointfree analogue
of topological spaces, where the closed elements are embedded in a
larger coframe, and only form a sublattice, not a subcoframe.

% A topological space can be defined, equivalently, as a set $X$
% together with a subframe of $\pow (X)$, consisting of the so-called
% open subsets of $X$, or as a \emph{closure space}, namely an adherence
% space whose adherence $\nu$ is idempotent.

% The standard pointless reading of the first view naturally leads to the
% fruitful theory of locales \cite{Johnstone:Stone,framesandlocales}.
% We may equivalently study the subcoframe of closed subsets.
% Our study of closed elements of adherence coframes naturally leads to
% the following different definition.
\begin{defn}[Topological coframe]
  \label{defn:Top}
  Let $\catc$ be a category of coframes.  A \emph{topological
    $\catc$-object} is an object $L$ of $\catc$ together with a
  sublattice $C(L)$ of complemented elements, called its \emph{closed
    elements}.

  The topological $\catc$-objects form a category $\Top\catc$, whose
  morphisms $\varphi \colon L \to L'$ are the $\catc$-morphisms that
  are \emph{continuous}, namely that map closed elements to closed
  elements: for every $c \in C (L)$, $\varphi (c) \in C (L')$.
%
  % The map $\varphi$ is \emph{final} if and only if every element
  % $c' \in C (L')$ is equal to $\varphi (c)$ for some $c \in C (L)$.
  % It is \emph{weakly final} if and only if $\varphi (\ell) \in C (L')$
  % implies that $\varphi (\ell) = \varphi (c)$ for some $c \in C (L)$.
\end{defn}
We shall call \emph{topological structure} on a coframe $L$ any
sublattice $C$ of complemented (in $L$) elements of $L$. 

% \begin{rem}
%   \label{rem:final:top}
%   Our notion of `final' is the categorical notion, as in
%   Remark~\ref{rem:final:conv}.  One can check this by hand, or refer
%   to Corollary \ref{corl:coarsest:top}.  Every final morphism is
%   weakly final, but the converse fails.  Weakly final morphisms will
%   be used in Lemma~\ref{lemma:top:epsilon}.  Note that every
%   surjective weakly final morphisms is final, so that the isomorphisms
%   of topological coframes are exactly the bijective, weakly final
%   morphisms.
% \end{rem}

\subsection{The adherence of a topological coframe}
\label{sec:adher-topol-cofr}

Lemma~\ref{prop:closed:subcoframe} shows that every adherence
$\catc$-object $(L, \nu_L)$ defines a topological $\catc$-object
$(L, C (L))$, where $C (L)$ is the sublattice of closed elements of
$(L, \nu_L)$.  Conversely:
\begin{lem}
  \label{lemma:top:nuC}
  Let $\catc$ be a category of coframes, and $L$ be a $\catc$-object.
  Every topological structure $C$ on $L$ defines an adherence
  structure $\nu_C$ on $L$ by:
  \begin{equation}
    \label{eq:nuC}
    \nu_C (\ell) = \bigwedge \{c \in C \mid c \geq \ell\}.
  \end{equation}
  Moreover:
  \begin{enumerate}
  \item $\nu_C$ is centered, that is, $\nu_C (\ell) \geq \ell$ for
    every $\ell \in L$;
  \item for every $\ell \in L$, for every $c \in C$, $c \geq \nu_C
    (\ell)$ if and only if $c \geq \ell$;
  \item for every $c \in C$, $\nu_C (c) = c$;
  \item $\nu_C$ is idempotent, that is, $\nu_C (\nu_C (\ell)) = \nu_C
    (\ell)$ for every $\ell \in L$.
  \end{enumerate}
\end{lem}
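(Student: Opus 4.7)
The plan is to dispatch the four numbered properties first, since they all flow directly from the definition of $\nu_C$ as the meet of $\{c \in C \mid c \geq \ell\}$, and then use them as stepping stones to verify the three axioms for an adherence structure (monotonicity, preservation of finite suprema of complemented elements, and the representation $\nu_C(\ell) = \bigwedge_{a \in \C_L, a \geq \ell} \nu_C(a)$).

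For the numbered properties: (1) is immediate because every $c$ ranging over the infimum satisfies $c \geq \ell$. For (2), the forward direction combines (1) with transitivity; the backward direction notes that $c \in C$ with $c \geq \ell$ lies in the indexing set, hence is $\geq \nu_C(\ell)$. Item (3) follows because $c$ itself is then a member of $\{c' \in C \mid c' \geq c\}$, giving $\nu_C(c) \leq c$, while (1) gives the reverse inequality. Idempotence (4) is a direct consequence of (2), since the indexing sets defining $\nu_C(\nu_C(\ell))$ and $\nu_C(\ell)$ coincide.

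Once this is in place, monotonicity of $\nu_C$ is obvious from the fact that enlarging $\ell$ only shrinks the indexing set. The infimum representation $\nu_C(\ell) = \bigwedge_{a \in \C_L, a \geq \ell} \nu_C(a)$ is proved by squeezing: monotonicity of $\nu_C$ yields $\leq$, while for $\geq$ one observes that any $c \in C$ with $c \geq \ell$ is itself complemented and satisfies $\nu_C(c) = c$ by (3), so such $c$'s appear on the right-hand side and force it to lie below $\nu_C(\ell)$. Preservation of the empty supremum reduces to $\nu_C(\bot) = \bot$, which holds because $C$, as a sublattice in the sense of the paper (finite joins, including the empty one), contains $\bot$.

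The main obstacle is verifying preservation of binary (hence all finite non-empty) suprema of complemented elements. Given complemented $a_1, a_2$, monotonicity immediately gives $\nu_C(a_1) \vee \nu_C(a_2) \leq \nu_C(a_1 \vee a_2)$. For the converse, I will invoke coframe distributivity to rewrite
\[
  \nu_C(a_1) \vee \nu_C(a_2) = \bigvee_{i=1}^2 \bigwedge_{c_i \in C, c_i \geq a_i} c_i = \bigwedge_{(c_1,c_2)} (c_1 \vee c_2),
\]
the meet ranging over pairs with $c_i \in C$ and $c_i \geq a_i$. Since $C$ is closed under finite joins, each $c_1 \vee c_2$ belongs to $C$, and evidently $c_1 \vee c_2 \geq a_1 \vee a_2$, so each such $c_1 \vee c_2$ majorizes $\nu_C(a_1 \vee a_2)$. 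Taking the infimum over all admissible pairs gives $\nu_C(a_1 \vee a_2) \leq \nu_C(a_1) \vee \nu_C(a_2)$, closing the loop.
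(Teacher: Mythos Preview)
Your proof is correct and follows essentially the same approach as the paper's: both use coframe distributivity to turn $\nu_C(a_1) \vee \nu_C(a_2)$ into $\bigwedge_{(c_1,c_2)} (c_1 \vee c_2)$ and then exploit closure of $C$ under finite joins, and both derive the representation $\nu_C(\ell) = \bigwedge_{a \in \C_L,\, a \geq \ell} \nu_C(a)$ from the observation that every $c \in C$ above $\ell$ is itself a complemented element with $\nu_C(c) \leq c$. The only difference is organizational: you prove (1)--(4) first and then invoke (3) as a helper for the adherence-structure axioms, whereas the paper establishes the axioms directly and records (1)--(4) afterward; and the paper happens to prove preservation of \emph{all} finite suprema (your argument does too, since you never use that $a_1, a_2$ are complemented), though only the complemented case is needed for the definition.
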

% idempotent?
\begin{proof}
  Clearly, $\nu_C$ is monotonic.  For finitely many elements $\ell_1$,
  \ldots, $\ell_n$,
  $\nu_C (\ell_1 \vee \cdots \vee \ell_n) \geq \nu_C (\ell_1) \vee
  \cdots \vee \nu_C (\ell_n)$ by monotonicity.  In the converse
  direction, and using the coframe distributivity law,
  $\nu_C (\ell_1) \vee \cdots \vee \nu_C (\ell_n)$ is the infimum of
  the elements of the form $c_1 \vee \cdots \vee c_n$, where
  $c_1, \ldots, c_n \in C$ and $c_1 \geq \ell_1$, \ldots,
  $c_n \geq \ell_n$.  For each such choice of elements $c_1$, \ldots,
  $c_n$, $c := c_1 \vee \cdots \vee c_n$ is again in $C$ and larger
  than or equal to $\ell_1 \vee \cdots \vee \ell_n$, so
  $\nu_C (\ell_1) \vee \cdots \vee \nu_C (\ell_n) \geq \bigwedge \{c
  \in C \mid c \geq \ell_1 \vee \cdots \vee \ell_n\} = \nu_C (\ell_1
  \vee \cdots \vee \ell_n)$.

  In order to show that $\nu_C$ is an adherence operator, it remains
  to show that $\nu_C (\ell) \geq \bigwedge_{a \geq \ell} \nu_C (a)$,
  where $a$ ranges over complemented elements.  This follows from the
  fact that for every $c \in C$ such that $c \geq \ell$, there is a
  complemented element $a$ such that $c \geq a$ and $a \geq \ell$,
  namely $c$ itself.

  (1) The fact that $\nu_C$ is centered is obvious.  (2) If
  $c \geq \nu_C (\ell)$, then $c \geq \ell$ by (1).  Conversely, if
  $c \in C$ is such that $c \geq \ell$, then by definition
  $c \geq \bigwedge \{c \in C \mid c \geq \ell\} = \nu_C (\ell)$.  (3)
  Taking $\ell:=c$ in (2), we obtain $c \geq \nu_C (c)$, and the
  converse inequality is by (1).  (4)
  $\nu_C (\nu_C (\ell)) = \bigwedge \{c \in C \mid c \geq \nu_C
  (\ell)\} = \bigwedge \{c \in C \mid c \geq \ell\}$ (by (2))
  $= \nu_C (\ell)$.
\end{proof}
\begin{cor}
The set $\bigwedge C$ of infima of elements of $C$ is the set of fixed points of $\nu_C$.
\end{cor}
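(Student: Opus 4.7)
My plan is a short two-direction argument using items (1) and (3) of Lemma~\ref{lemma:top:nuC} together with the definition \eqref{eq:nuC}.

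For the inclusion $\supseteq$, suppose $\ell$ is a fixed point of $\nu_C$, i.e.\ $\nu_C(\ell) = \ell$. Then directly from \eqref{eq:nuC},
\[
\ell \;=\; \nu_C(\ell) \;=\; \bigwedge\{c \in C \mid c \geq \ell\},
\]
which exhibits $\ell$ as an infimum of elements of $C$, so $\ell \in \bigwedge C$.

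For the inclusion $\subseteq$, suppose $\ell = \bigwedge S$ for some $S \subseteq C$. By item~(3) of Lemma~\ref{lemma:top:nuC}, $\nu_C(c) = c$ for every $c \in S$. Since $\ell \leq c$ and $\nu_C$ is monotonic, $\nu_C(\ell) \leq \nu_C(c) = c$ for each $c \in S$. Taking the infimum over $c \in S$ gives $\nu_C(\ell) \leq \bigwedge S = \ell$. Combined with item~(1), which says $\ell \leq \nu_C(\ell)$, we conclude $\nu_C(\ell) = \ell$.

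There is no real obstacle here; the only subtlety worth noting is the edge case $S = \emptyset$, where $\bigwedge S = \top$ and $\nu_C(\top) = \top$ follows from item~(1) and the fact that $\top$ is the maximum of $L$, so this case fits the general argument without special treatment. The proof could equivalently be phrased as observing that the image of $\nu_C$ (which coincides with its fixed-point set by idempotence, item~(4)) is precisely $\bigwedge C$, since \eqref{eq:nuC} describes $\nu_C$ as the closure map associated with the meet-closure of $C$.
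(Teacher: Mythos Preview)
Your proof is correct and follows essentially the same approach as the paper: one direction uses the definition \eqref{eq:nuC} directly, and the other combines monotonicity with item~(3) of Lemma~\ref{lemma:top:nuC} to get $\nu_C(\ell)\leq\ell$ and item~(1) for the reverse inequality. The extra remarks on the empty case and the closure-operator viewpoint are sound but not needed.
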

\begin{proof}
If  $\ell=\nu_C(\ell)$ then by definition $\ell$ is an infimum of elements of $C$. Conversely,  if $\ell=\bigwedge_{i\in I}c_i$  where each $c_i\in C$, then 
\[\nu_C(\ell)=\nu_C\left(\bigwedge_{i\in I}c_i\right)\leq \bigwedge_{i\in I}\nu_C(c_i)=\bigwedge_{i\in I}c_i=\ell, \]
because of Lemma \ref{lemma:top:nuC} (3). Since $\nu_C$ is centered (Lemma \ref{lemma:top:nuC} (1)), $\nu_C(\ell)=\ell$.
\end{proof}
We compare topological structures by inclusion $\supseteq$, and we say
that $C$ is \emph{finer} than $C'$ if and only if $C \supseteq C'$.
In that case, we also say that $C'$ is \emph{coarser} than $C$.
\begin{lem}
  \label{lemma:top:adh}
  Let $\catc$ be a category of coframes, and $L$ be a $\catc$-object.
  For an adherence structure $\nu$ on $L$, let $C_\nu$ denote its
  lattice of closed elements.  Then:
  \begin{enumerate}
  \item The mapping $\nu \mapsto C_\nu$ is monotonic: if $\nu\leq \nu'$ then $C_{\nu'}\subset C_\nu$.
  \item The mapping $C \mapsto \nu_C$ is monotonic: if $C'\subset C$ then $\nu_C\leq \nu_{C'}$.
  \item For every topological structure $C$, $C_{\nu_C} \supseteq C$.
  \item For every adherence structure $\nu$, $\nu \leq \nu_{C_\nu}$.
  \end{enumerate}
\end{lem}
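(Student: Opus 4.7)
The plan is to verify each of the four claims directly from the definitions, with the main tools being Lemma~\ref{lemma:top:nuC} and the characterization of (quasi-)closed elements in Definition~\ref{defn:closed}. None of the four items looks delicate; the only thing to keep track of is that closedness requires both $\nu_L(c)\leq c$ and complementedness.

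For (1), I would just unfold the definition of closed element. If $c\in C_{\nu'}$ then $c$ is complemented and $\nu'(c)\leq c$; monotonicity of $\nu\leq \nu'$ gives $\nu(c)\leq \nu'(c)\leq c$, so $c\in C_\nu$. For (2), given $\ell\in L$, one observes that $\{c\in C':c\geq \ell\}\subseteq\{c\in C:c\geq \ell\}$ since $C'\subseteq C$; hence the infimum over the larger set is smaller, i.e.\ $\nu_C(\ell)\leq\nu_{C'}(\ell)$.

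For (3), I would use Lemma~\ref{lemma:top:nuC}(3), which already gives $\nu_C(c)=c$ for every $c\in C$, so $c$ is quasi-closed for $\nu_C$; and since $C$ consists of complemented elements by the definition of a topological structure, every $c\in C$ is a closed element of $(L,\nu_C)$, yielding $C\subseteq C_{\nu_C}$. For (4), given $\ell\in L$, pick any $c\in C_\nu$ with $c\geq\ell$; by monotonicity of $\nu$ and the fact that $c$ is closed, $\nu(\ell)\leq\nu(c)\leq c$. Taking the infimum over all such $c$ gives $\nu(\ell)\leq \bigwedge\{c\in C_\nu:c\geq\ell\}=\nu_{C_\nu}(\ell)$.

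There is no real obstacle here; the only minor subtlety is that in (3) and (4) the infima defining $\nu_C$ and $\nu_{C_\nu}$ are well defined because $L$ is a coframe (infima always exist), and that $C_\nu$ and $C_{\nu_C}$ live in the lattice of complemented elements, matching the requirement in the definition of topological structure. Once these are noted, the four items follow by a short calculation each, so the full proof will be essentially three or four lines per item and can be given inline without invoking further machinery.
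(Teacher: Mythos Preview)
Your proposal is correct and follows essentially the same approach as the paper: each item is checked directly from the definitions, using Lemma~\ref{lemma:top:nuC}(3) for item~(3) and the monotonicity of $\nu$ together with $\nu(c)\leq c$ for closed $c$ in item~(4). The paper's proof is line-for-line the same argument.
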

\begin{proof}
  (1) if $\nu \leq \nu'$, then every complemented element $c$ such
  that $\nu' (c) \leq c$ is such that $\nu (c) \leq c$.  (2) if $C$
  and $C'$ are two lattices of complemented elements and $C$ contains
  $C'$, then certainly
  $\bigwedge \{c \in C \mid c \geq \ell\} \leq \bigwedge \{c \in C'
  \mid c \geq \ell\}$.  (3) Given a lattice $C$ of complemented
  elements, $C_{\nu_C}$ is the set of complemented elements $a$ such
  that $\nu_C (a) \leq a$.  Every element $c$ of $C$ is complemented
  by definition, and $\nu_C (c) = c$ by Lemma~\ref{lemma:top:nuC}~(3),
  hence is in $C_{\nu_C}$.  (4) For every $\ell \in L$,
  $\nu_{C_\nu} (\ell) = \bigwedge \{c \in C_\nu \mid c \geq \ell\} =
  \bigwedge \{c \text{ complemented} \mid \nu (c) \leq c, c \geq
  \ell\}$.  For each complemented $c$ such that $\nu (c) \leq c$ and
  $c \geq \ell$, $\nu (c) \geq \nu (\ell)$ since $\nu$ is monotonic,
  so $c \geq \nu (\ell)$.  It follows that
  $\nu_{C_\nu} (\ell) \geq \nu (\ell)$.
\end{proof}
It follows that the $\nu \mapsto C_\nu$ construction is
(order-theoretically) left-adjoint to the $C \mapsto \nu_C$
construction.

\begin{prop}
  \label{prop:Top:univ}
  Let $\catc$ be an admissible category of coframes.  There is an
  identity-on-morphisms functor $(L, \nu_L) \mapsto (L, C_{\nu_L})$
  from $\Adh\catc$ to $\Top\catc$, which is right adjoint to the
  identity-on-morphisms functor $(L, C) \mapsto (L, \nu_C)$.
\end{prop}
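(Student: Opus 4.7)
The proof breaks naturally into two pieces: verifying that the two constructions actually define functors between $\Adh\catc$ and $\Top\catc$, and then exhibiting the adjunction via (identity) unit and counit. Since both assignments are declared to be identity-on-morphisms, the adjunction part will be essentially formal once functoriality is established.

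For the functor $U \colon (L, \nu_L) \mapsto (L, C_{\nu_L})$: given a morphism $\varphi \colon (L, \nu_L) \to (L', \nu_{L'})$ in $\Adh\catc$, I need to check that $\varphi$ sends closed elements of $(L, C_{\nu_L})$ to closed elements of $(L', C_{\nu_{L'}})$. This is precisely Proposition~\ref{prop:closed:image}. For the functor $F \colon (L, C) \mapsto (L, \nu_C)$, given a morphism $\varphi \colon (L, C) \to (L', C')$ in $\Top\catc$, I need to verify the continuity condition (\ref{eqn:nu:image}), namely $\nu_{C'}(\ell') \leq \varphi(\nu_C(\varphi_!(\ell')))$ for every $\ell' \in L'$. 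Using (\ref{eq:nuC}) and the fact that $\varphi$ preserves arbitrary infima (as a coframe morphism),
\[
\varphi(\nu_C(\varphi_!(\ell'))) = \bigwedge\{\varphi(c) \mid c \in C,\ c \geq \varphi_!(\ell')\}.
\]
For each such $c$, the element $\varphi(c)$ lies in $C'$ because $\varphi$ is a $\Top\catc$-morphism, and by adjointness $c \geq \varphi_!(\ell')$ is equivalent to $\varphi(c) \geq \ell'$. Hence every $\varphi(c)$ appearing in this infimum is an element of $C'$ above $\ell'$, so each is larger than or equal to $\nu_{C'}(\ell') = \bigwedge\{c' \in C' \mid c' \geq \ell'\}$, giving the required inequality.

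For the adjunction $F \dashv U$, take the unit $\eta_{(L,C)} \colon (L, C) \to UF(L, C) = (L, C_{\nu_C})$ to be the identity map on $L$. It is continuous in $\Top\catc$ because $C \subseteq C_{\nu_C}$ by Lemma~\ref{lemma:top:adh}~(3). Take the counit $\varepsilon_{(L,\nu_L)} \colon FU(L, \nu_L) = (L, \nu_{C_{\nu_L}}) \to (L, \nu_L)$ to be the identity as well; it is continuous in $\Adh\catc$ because $\nu_L \leq \nu_{C_{\nu_L}}$ by Lemma~\ref{lemma:top:adh}~(4), which is exactly the inequality (\ref{eqn:nu:image}) for the identity map.

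Naturality of $\eta$ and $\varepsilon$, and the two triangle identities, all reduce to identities between identity maps since $F$ and $U$ act trivially on morphisms. I don't expect any real obstacle here: the substantive content sits in Proposition~\ref{prop:closed:image} (for functoriality of $U$) and in the routine infimum computation above (for functoriality of $F$), while the adjunction itself is essentially bookkeeping once Lemma~\ref{lemma:top:adh} is in hand. If anything, the only point demanding a moment of care is ensuring that the $\bigwedge$ defining $\varphi(\nu_C(\varphi_!(\ell')))$ ranges over a \emph{non-empty} set of elements above $\ell'$, which is immediate since $\top \in C$ (as $C$ is a sublattice containing $\top$, or else by monotonicity one uses the convention $\bigwedge \emptyset = \top \geq \nu_{C'}(\ell')$).
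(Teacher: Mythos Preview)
Your proof is correct and follows essentially the same route as the paper: functoriality of $U$ via Proposition~\ref{prop:closed:image}, functoriality of $F$ via the same infimum computation using preservation of infima and the adjointness $c \geq \varphi_!(\ell') \Leftrightarrow \varphi(c) \geq \ell'$, and the unit/counit as identity maps justified by Lemma~\ref{lemma:top:adh}~(3) and~(4). Your extra remark about non-emptiness of the defining infimum is a harmless sanity check that the paper omits.
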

\begin{proof}
  Write $U$ for the first functor, and $F$ for the second.  For every
  morphism $\varphi \colon L \to L'$ in $\Adh\catc$,
  $U (\varphi) = \varphi$ is a morphism in $\Top\catc$ by
  Proposition~\ref{prop:closed:image}.  Conversely, for every morphism
  $\varphi \colon L \to L'$ in $\Top\catc$, we check that
  $\nu_{C (L')} (\ell') \leq \varphi (\nu_{C (L)} (\varphi_!
  (\ell')))$ for every $\ell' \in L'$:
  \begin{eqnarray*}
    \varphi (\nu_{C (L)} (\varphi_! (\ell')))
    & = & \varphi \left(\bigwedge \{c \in C (L) \mid c \geq \varphi_!
          (\ell')\}\right) \\
    & = & \varphi \left(\bigwedge \{c \in C (L) \mid \varphi (c) \geq
          \ell'\}\right) \\
    & = & \bigwedge \{\varphi (c) \mid c \in C (L), \varphi (c) \geq \ell'\}
  \end{eqnarray*}
  since $\varphi$ preserves all infima.  Since $\varphi$ maps $C (L)$
  to $C (L')$, this is larger than or equal to $\bigwedge \{c' \in C
  (L') \mid c' \geq \ell'\} = \nu_{C (L')} (\ell')$.  Therefore both
  $U$ and $F$ are functors.

  By Lemma~\ref{lemma:top:adh}~(3), the identity map is continuous
  from $(L, C (L))$ to $(L, C_{\nu_{C (L)}}) = UF (L, C (L))$ in
  $\Top\catc$: this is the unit. By Lemma~\ref{lemma:top:adh}~(4), the
  identity map is continuous from
  $(L, \nu_{C_{\nu_L}}) = FU (L, \nu_L)$ to $(L, \nu_L)$: this is the
  counit.  The laws that they should satisfy hold because all
  considered maps are identities.
\end{proof}

\subsection{A square of adjunctions}
\label{sec:square-adjunctions-1}

The latter is the topmost adjunction in the following square, which is the
rightmost square in (\ref{eq:adj}):
\begin{equation}
  \label{eq:adj:top}
  \xymatrix{
    {(\Adh\catc)}^{op}
    \ar@<1ex>[r]\ar@{}[r]|{\perp}
    \ar@<1ex>[d]^{\pt}\ar@{}[d]|{\dashv}
    &
    {(\Top\catc)}^{op}
    \ar@<1ex>[l]
    \ar@<1ex>[d]^{\pt}\ar@{}[d]|{\dashv}
    \\
    \Adhc
    \ar@<1ex>[u]^{\pow}
    \ar@<1ex>[r]^{M_{\mathrm{top}}} \ar@{}[r]|{\perp}
    %\ar@{}[ru]|{\text{(Sec.~\ref{sec:adherence-coframes})}}
    &
    \Topc
    \ar@<1ex>[u]^{\pow}
    \ar@<1ex>[l]^{\supseteq}
  }
\end{equation}
The bottommost adjunction is formed as follows.  The inclusion functor
$\Topc \to \Adhc$ maps every topological space $X$ to the adherence
space $(X, cl)$, where $cl$ maps every subset of $X$ to its closure,
namely the smallest closed set containing it.  The \emph{topological
  modification} functor $M_{\mathrm{top}}$ maps every adherence space
$(X, \nu_{\pow (X)})$ to the topological space $X$, whose closed
subsets are defined as the fixed points of $\nu_{\pow (X)}$.

The rightmost adjunction, which we write $\pow\dashv\pt$ again, is
built as follows.  For every topological space $X$, $\pow (X)$ is the
coframe of all subsets of $X$, and $C (\pow (X))$ is the sublattice of
closed subsets of $X$.  For every continuous map $f \colon X \to Y$,
$\pow (f) = f^{-1}$.  In the converse direction, we define:
\begin{defn}[Point]
  \label{defn:top:point}
  Let $\catc$ be an admissible category of coframes.  For every
  topological $\catc$-object $L$, the \emph{points} of $L$ are its
  join-prime elements.  Let $\pt L$ be the set of points of $L$.

  For each $\ell \in L$, let
  $\ell^\bullet := \{x \in \pt L \mid x \leq \ell\}$, and let
  $C (\pow\pt L)$ be the set of elements of the form $c^\bullet$,
  where $c$ is an infimum of elements of $C (L)$.
\end{defn}
Note that $c$ is not taken directly from $C (L)$ in the definition of
the set $C (\pow \pt L)$ of closed subsets of $\pt L$.  Let us write
$\bigwedge C (L)$ for the set of infima of elements of $C (L)$.
\begin{lem}
  \label{lemma:infclosed}
    Let $\catc$ be an admissible category of coframes, and $L$ be a
    topological $\catc$-object.  The set $\bigwedge C (L)$ of infima
    of elements of $C (L)$ is a subcoframe of $L$.
\end{lem}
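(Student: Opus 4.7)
The plan is to verify the two defining closure properties of a subcoframe: closure under arbitrary infima, and closure under finite suprema (including the empty supremum $\bot$).

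Closure under arbitrary infima is immediate. Given a family ${(\ell_k)}_{k \in K}$ of elements of $\bigwedge C(L)$, each $\ell_k$ can be written as $\ell_k = \bigwedge_{i \in I_k} c_i^{(k)}$ with $c_i^{(k)} \in C(L)$. Then $\bigwedge_{k \in K} \ell_k = \bigwedge_{k \in K} \bigwedge_{i \in I_k} c_i^{(k)}$, which is again an infimum of elements of $C(L)$, hence lies in $\bigwedge C(L)$. For the empty infimum, we have $\top \in C(L) \subseteq \bigwedge C(L)$, since $C(L)$ is a (bounded) sublattice.

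The only substantive step is closure under binary suprema, and this is where the coframe distributivity law is crucial. Suppose $\ell_1 = \bigwedge_{i \in I} a_i$ and $\ell_2 = \bigwedge_{j \in J} b_j$ with $a_i, b_j \in C(L)$. Applying the fact that in a coframe, arbitrary infima distribute over binary suprema, twice in succession, yields
\[
  \ell_1 \vee \ell_2
  = \Bigl(\bigwedge_{i \in I} a_i\Bigr) \vee \Bigl(\bigwedge_{j \in J} b_j\Bigr)
  = \bigwedge_{(i,j) \in I \times J} (a_i \vee b_j).
\]
Since $C(L)$ is a sublattice of $L$, each $a_i \vee b_j$ is in $C(L)$, and therefore $\ell_1 \vee \ell_2$ is an infimum of elements of $C(L)$, i.e., an element of $\bigwedge C(L)$. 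For the empty supremum, $\bot$ lies in $C(L)$ (again because $C(L)$ is a bounded sublattice), hence in $\bigwedge C(L)$.

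Combining both closure properties, $\bigwedge C(L)$ is closed under arbitrary infima and finite suprema, which is exactly the definition of a subcoframe of $L$. The only potentially delicate point is the use of the general coframe distributivity law for binary suprema against arbitrary infima, but this is exactly the property we have assumed of coframes throughout the paper.
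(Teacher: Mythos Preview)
Your proof is correct and follows essentially the same approach as the paper: closure under arbitrary infima is immediate, $\bot \in C(L)$ handles the empty supremum, and the coframe distributivity law $\bigl(\bigwedge_i a_i\bigr) \vee \bigl(\bigwedge_j b_j\bigr) = \bigwedge_{i,j} (a_i \vee b_j)$ together with closure of $C(L)$ under binary suprema handles the binary case.
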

\begin{proof}
  Clearly $\bigwedge C (L)$ is closed under arbitrary infima.  Since
  $C (L) \subseteq \bigwedge C (L)$ and $\bot \in C (L)$, the 0-ary
  supremum $\bot$ is in $\bigwedge C (L)$.  It remains to check that
  given two elements $c := \bigwedge_{i \in I} c_i$ and
  $c' := \bigwedge_{j \in J} c'_j$ where $c_i, c'_j \in C (L)$,
  $c \vee c'$ is in $\bigwedge C (L)$.  This follows from the coframe
  distributivity law:
  $c \vee c' = \bigwedge_{i \in I, j \in J} (c_i \vee c'_j)$, noticing
  that each $c_i \vee c'_j$ is in $C (L)$.
\end{proof}

\begin{lem}
  \label{lemma:top:point}
  Let $\catc$ be an admissible category of coframes, and $L$ be a
  topological $\catc$-object $L$.  Then:
  \begin{enumerate}
  \item For every family ${(\ell_i)}_{i \in I}$ in $L$,
    $\bigcap_{i \in I} \ell_i^\bullet = {(\bigwedge_{i \in I}
      \ell_i)}^\bullet$.
  \item For every finite family ${(\ell_i)}_{i=1}^n$ in $L$,
    $\bigcup_{i \in I} \ell_i^\bullet = {(\bigvee_{i \in I}
      \ell_i)}^\bullet$.
  \item $C (\pow\pt L)$ is closed under finite unions and arbitrary
    intersections.
  \item The elements of $C (\pow\pt L)$ define the closed sets of a
    topology on $\pt L$.
  \end{enumerate}
\end{lem}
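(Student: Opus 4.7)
The plan is to treat the four parts in order, with (1) and (2) being essentially unfoldings of the definition of $\ell^\bullet$, while (3) follows from those two combined with Lemma \ref{lemma:infclosed}, and (4) is an immediate corollary of (3).

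For (1), I would argue directly from the universal property of infima: $x \in \bigcap_{i \in I} \ell_i^\bullet$ iff $x \leq \ell_i$ for every $i \in I$, iff $x \leq \bigwedge_{i \in I} \ell_i$, iff $x \in (\bigwedge_{i \in I} \ell_i)^\bullet$. No property of $x$ beyond being an element of $\pt L$ is used, and the argument covers arbitrary (including empty) index sets.

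For (2), one inclusion is immediate from monotonicity of $(\cdot)^\bullet$: any $x \leq \ell_i$ satisfies $x \leq \bigvee_{j=1}^n \ell_j$. The reverse inclusion is where the join-primality of points (Definition \ref{defn:top:point}) intervenes: if $x$ is join-prime and $x \leq \bigvee_{i=1}^n \ell_i$, then $x \leq \ell_i$ for some $i$, hence $x \in \ell_i^\bullet$. The edge case $n = 0$ requires the observation that join-primes are distinct from $\bot$ (Remark \ref{rem:ccf:point}), so $\bot^\bullet = \emptyset$, matching the empty union.

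For (3), elements of $C(\pow\pt L)$ have the form $c^\bullet$ with $c \in \bigwedge C(L)$. Given such elements $c_\alpha^\bullet$, (1) yields $\bigcap_\alpha c_\alpha^\bullet = (\bigwedge_\alpha c_\alpha)^\bullet$, and Lemma \ref{lemma:infclosed} ensures $\bigwedge_\alpha c_\alpha \in \bigwedge C(L)$. Similarly for finite unions, (2) gives $\bigcup_{i=1}^n c_i^\bullet = (\bigvee_{i=1}^n c_i)^\bullet$, and the fact that $\bigwedge C(L)$ is a subcoframe of $L$ (again Lemma \ref{lemma:infclosed}) gives $\bigvee_{i=1}^n c_i \in \bigwedge C(L)$. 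For (4), it then suffices to exhibit $\emptyset$ and $\pt L$ in $C(\pow\pt L)$: $\bot^\bullet = \emptyset$ because no join-prime is $\leq \bot$, and $\bot \in C(L) \subseteq \bigwedge C(L)$; while $\top^\bullet = \pt L$, and $\top = \bigwedge \emptyset \in \bigwedge C(L)$. Combined with the closure properties of (3), this shows that $C(\pow\pt L)$ is the family of closed sets of a topology on $\pt L$.

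No step is a real obstacle; the only subtle points worth flagging are the use of join-primality in (2) and the reason the definition of $C(\pow\pt L)$ takes $c$ from the enlargement $\bigwedge C(L)$ rather than $C(L)$ itself—without that enlargement, closure under arbitrary intersections would fail in general, since $C(L)$ is only required to be a sublattice.
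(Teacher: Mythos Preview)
Your proof is correct and follows essentially the same approach as the paper: the direct unfolding for (1), join-primality for (2), Lemma~\ref{lemma:infclosed} for (3), and (4) as an immediate consequence. Your explicit handling of the empty cases and your closing remark on why $\bigwedge C(L)$ is used rather than $C(L)$ are helpful elaborations not spelled out in the paper, but the underlying argument is the same.
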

\begin{proof}
  (1)
  $\bigcap_{i \in I} \ell_i^\bullet = \{x \in \pt L \mid \forall i \in
  I, x \leq \ell_i\} = \{x \in \pt L \mid x \leq \bigwedge_{i \in I}
  \ell_i\} = {(\bigwedge_{i \in I} \ell_i)}^\bullet$.  (2) The
  elements $x$ of ${(\bigvee_{i=1}^n \ell_i)}^\bullet$ are those
  join-primes such that $x \leq \bigvee_{i=1}^n \ell_i$, that is, such
  that $x \leq \ell_i$ for some $\ell_i$.  Those are exactly the
  elements of $\bigcup_{i=1}^n \ell_i^\bullet$.  (3) Let
  ${(c_i)}_{i \in I}$ be a family of elements of $\bigwedge C (L)$.
  We have
  $\bigcap_{i \in I} c_i^\bullet = {(\bigwedge_{i \in I}
    c_i)}^\bullet$ by (1), and clearly $\bigwedge_{i \in I} c_i$ is in
  $\bigwedge C (L)$.  Given finitely many elements $c_1$, \ldots,
  $c_n$ in $\bigwedge C (L)$,
  $\bigcup_{i=1}^n c_i^\bullet = {(\bigvee_{i=1}^n c_i)}^\bullet$ by
  (2) and $\bigvee_{i=1}^n c_i$ is in $\bigwedge C (L)$ by
  Lemma~\ref{lemma:infclosed}.  (4) follows directly from (3).
\end{proof}
Therefore $\pt L$ defines a topological space.  It satisfies the
following universal property.
\begin{prop}
  \label{prop:top:univ}
  Let $\catc$ be an admissible category of coframes.  For every
  topological space $X$, for every topological $\catc$-object $L$, and
  every morphism $\varphi \colon L \to \pow (X)$ in $\Top\catc$, there
  is a unique map $\varphi^\dagger \colon X \to \pt L$ such that, for
  every $\ell \in L$,
  $\pow (\varphi^\dagger) (\ell^\bullet) = \varphi (\ell)$, and it is
  continuous.
\end{prop}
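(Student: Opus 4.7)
The plan is to adapt the argument of Proposition~\ref{prop:pt:univ} to the topological setting, with the key change that $\pt L$ now collects the join-primes of $L$ (Definition~\ref{defn:top:point}) rather than certain filters, and that the natural candidate for $\varphi^\dagger(x)$ is $\varphi_!(\{x\})$, where $\varphi_! \colon \pow(X) \to L$ is the left adjoint of $\varphi$.  Such an adjoint exists because $\varphi$, being a coframe morphism, preserves all infima.

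For uniqueness, I would observe that the stated identity $\pow(\varphi^\dagger)(\ell^\bullet) = \varphi(\ell)$ rewrites, for each fixed $x \in X$, as
\[
  \varphi^\dagger(x) \leq \ell \;\iff\; x \in \varphi(\ell) \;\iff\; \{x\} \leq \varphi(\ell) \;\iff\; \varphi_!(\{x\}) \leq \ell,
\]
for every $\ell \in L$.  Hence $\varphi^\dagger(x)$ and $\varphi_!(\{x\})$ generate the same principal up-set in $L$, and are therefore equal.  For existence, I would set $\varphi^\dagger(x) := \varphi_!(\{x\})$ and check first that $\varphi^\dagger(x) \in \pt L$, i.e.\ is a join-prime of $L$.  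It is not $\bot$, for otherwise adjointness would give $\{x\} \leq \varphi(\bot) = \bot$.  And if $\varphi_!(\{x\}) \leq \ell_1 \vee \ell_2$, adjointness together with the preservation of binary suprema by $\varphi$ yields $x \in \varphi(\ell_1) \cup \varphi(\ell_2)$, so $\{x\} \leq \varphi(\ell_i)$, hence $\varphi_!(\{x\}) \leq \ell_i$, for some $i$.  The required identity $\pow(\varphi^\dagger)(\ell^\bullet) = \varphi(\ell)$ is then immediate from the chain of equivalences above.

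It remains to show that $\varphi^\dagger \colon X \to \pt L$ is continuous for the topology of Lemma~\ref{lemma:top:point}, whose closed sets are exactly the sets $c^\bullet$ with $c \in \bigwedge C(L)$.  Writing such a $c$ as $\bigwedge_{i \in I} c_i$ with each $c_i \in C(L)$, Lemma~\ref{lemma:top:point}(1) gives $c^\bullet = \bigcap_{i \in I} c_i^\bullet$, and the identity already verified turns this into
\[
  {\varphi^\dagger}^{-1}(c^\bullet) \;=\; \bigcap_{i \in I} {\varphi^\dagger}^{-1}(c_i^\bullet) \;=\; \bigcap_{i \in I} \varphi(c_i).
\]
Since $\varphi$ is a morphism in $\Top\catc$, each $\varphi(c_i)$ lies in $C(\pow(X))$, i.e.\ is a closed subset of $X$; an arbitrary intersection of closed subsets is closed, and so ${\varphi^\dagger}^{-1}(c^\bullet)$ is closed.

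I do not anticipate any serious obstacle.  The only mildly delicate point is to notice that continuity of $\varphi^\dagger$ needs to be tested only on generators $c^\bullet$ with $c \in C(L)$ (closure under arbitrary intersections then handles the passage from $C(L)$ to $\bigwedge C(L)$), which is exactly what the $\Top\catc$-morphism condition on $\varphi$ provides.  The analogous observation that $\varphi_!$ carries the join-prime $\{x\} \in \pow(X)$ to a join-prime of $L$ uses only adjointness and the fact that $\varphi$ preserves finite suprema, so it is direct.
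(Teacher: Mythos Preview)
Your proposal is correct and follows essentially the same route as the paper: both identify $\varphi^\dagger(x)=\varphi_!(\{x\})$ via the equivalence $\varphi^\dagger(x)\leq\ell\iff x\in\varphi(\ell)\iff\varphi_!(\{x\})\leq\ell$, verify join-primality using that $\varphi$ preserves finite suprema, and establish continuity by computing ${\varphi^\dagger}^{-1}(c^\bullet)=\bigcap_{i\in I}\varphi(c_i)$ for $c=\bigwedge_{i\in I}c_i$ with $c_i\in C(L)$. The only cosmetic difference is that the paper computes ${\varphi^\dagger}^{-1}(c^\bullet)=\varphi(c)$ directly before decomposing, whereas you decompose $c^\bullet$ first via Lemma~\ref{lemma:top:point}(1).
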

\begin{proof}
  If $\varphi^\dagger$ exists, then for every $\ell \in L$, $\varphi
  (\ell) = \pow (\varphi^\dagger) (\ell^\bullet) = \{x \in X \mid
  \varphi^\dagger (x) \leq \ell\}$.  For each $x$, this forces
  $\varphi^\dagger (x)$ to be the least element $\ell \in L$ such that
  $x \in \varphi (\ell)$.  Since $x \in \varphi (\ell)$ is equivalent
  to $\{x\} \subseteq \varphi (\ell)$, hence to $\varphi_! (\{x\})
  \leq \ell$, this forces $\varphi^\dagger (x)$ to be equal to
  $\varphi_! (\{x\})$.

  Hence define $\varphi^\dagger (x)$ as $\varphi_! (\{x\})$.  We check
  that this is join-prime: if
  $\varphi_! (\{x\}) \leq\bigvee_{i=1}^n \ell_i$, then
  $\{x\} \subseteq \varphi (\bigvee_{i=1}^n \ell_i) = \bigvee_{i=1}^n
  \varphi (\ell_i)$, so $\{x\} \subseteq \varphi (\ell_i)$ for some
  $i$, from which $\varphi_!  (\{x\}) \leq \ell_i$.

  It remains to show that $\varphi^\dagger$ is continuous.  Consider
  an arbitrary closed subset $c^\bullet$ of $\pt L$, where $c$ is an
  infimum $\bigwedge_{i \in I} c_i$ of elements of $C (L)$.  Then
  $(\varphi^\dagger)^{-1} (c^\bullet) = \{x \in X \mid \varphi_!
  (\{x\}) \leq c\} = \{x \in X \mid \{x\} \subseteq \varphi (c)\} =
  \varphi (c) = \bigwedge_{i \in I} \varphi (c_i)$.  Since
  $\varphi (c_i) \in C (\pow (X))$ by continuity, this is a closed
  subset of $X$.
\end{proof}

\begin{lem}
  \label{lemma:top:epsilon}
  Let $\catc$ be an admissible category of coframes, and $L$ be a
  topological $\catc$-object.  The map
  $\epsilon_L \colon L \mapsto \pow (\pt L)$ defined by
  $\epsilon_L(\ell)=\ell^\bullet$ is a %weakly final
  morphism in $\Top\catc$, which is injective if and only if $L$ is
  spatial.
\end{lem}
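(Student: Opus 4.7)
The plan is to verify the three claims separately: that $\epsilon_L$ is a $\catc$-morphism, that it respects the topological structure (maps closed elements to closed elements), and then establish the equivalence between injectivity and spatiality.

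First I would check that $\epsilon_L$ is a morphism in $\catc$, i.e., a coframe morphism, which by admissibility means preservation of arbitrary infima and finite suprema. This is exactly Lemma~\ref{lemma:top:point}(1) and~(2); note it also sends $\top$ to $\pt L$ and $\bot$ to $\emptyset$, since no join-prime is below $\bot$.

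Next I would verify continuity in $\Top\catc$. Given $c \in C(L)$, we need $\epsilon_L(c) = c^\bullet \in C(\pow\pt L)$. Since $C(L) \subseteq \bigwedge C(L)$ (take the one-element infimum $c = \bigwedge \{c\}$), this is immediate from Definition~\ref{defn:top:point}, which declares every $c^\bullet$ with $c \in \bigwedge C(L)$ to be closed in $\pow\pt L$.

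For the injectivity claim, the key observation is that, because $\pt L$ is the set of join-primes of $L$, we have $\ell^\bullet = \{x \in \pt L \mid x \leq \ell\}$. For the ``if'' direction, suppose $L$ is spatial and $\ell^\bullet = (\ell')^\bullet$. Writing $\ell = \bigvee S$ for some set $S$ of join-primes, each $x \in S$ satisfies $x \leq \ell$, hence $x \in \ell^\bullet$, giving $\ell \leq \bigvee \ell^\bullet \leq \ell$, so $\ell = \bigvee \ell^\bullet$. The same holds for $\ell'$, so $\ell = \bigvee \ell^\bullet = \bigvee (\ell')^\bullet = \ell'$. For the ``only if'' direction, assume $\epsilon_L$ is injective. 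Given $\ell \in L$, set $s(\ell) := \bigvee \ell^\bullet$. Clearly $s(\ell) \leq \ell$, so $s(\ell)^\bullet \subseteq \ell^\bullet$ by Lemma~\ref{lemma:top:point}(1) (monotonicity of $(-)^\bullet$). Conversely, any $x \in \ell^\bullet$ satisfies $x \leq s(\ell)$ by construction, so $x \in s(\ell)^\bullet$. Hence $\epsilon_L(s(\ell)) = \epsilon_L(\ell)$, and injectivity forces $\ell = s(\ell) = \bigvee \ell^\bullet$, exhibiting $\ell$ as a supremum of join-primes; this holds for every $\ell$, so $L$ is spatial.

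None of the steps present a real obstacle: the first two are direct appeals to already-established facts, and the injectivity equivalence follows from carefully unwinding the definition of $\ell^\bullet$ together with the definition of spatiality from Definition~\ref{def:spatialcoframe}. The only mild subtlety is keeping track of the fact that $\pt L$ is \emph{defined} to be the set of join-primes in this topological-coframe setting (Definition~\ref{defn:top:point}), so $\ell^\bullet$ automatically captures ``all'' join-primes below $\ell$, not merely a subset of them.
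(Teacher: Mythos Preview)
Your proposal is correct and follows essentially the same route as the paper: both invoke Lemma~\ref{lemma:top:point}(1)--(2) for the coframe-morphism part, both note that $c^\bullet$ is closed by definition for continuity, and both handle the injectivity equivalence by comparing $\ell$ with $\bigvee \ell^\bullet$ (the paper phrases the ``only if'' direction contrapositively, you do it directly, but the underlying argument is identical).
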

\begin{proof}
  First, $\epsilon_L$ is a coframe morphism by
  Lemma~\ref{lemma:top:point}~(1) and~(2).  Second, $\epsilon_L$ maps
  each $c \in C (L)$ to $c^\bullet$, which is a closed element of
  $\pow (\pt L)$ by definition, so $\epsilon_L$ is continuous.
  % Conversely, if $c^\bullet$ is closed, then by definition
  % $c^\bullet = {c'}^\bullet$ for some closed element $c'$ of $L$, so
  % $\epsilon_L$ is weakly final.

  Finally, let $\ell, \ell' \in L$ be such that
  $\ell^\bullet = {\ell'}^\bullet$.  The join-prime elements below $\ell$ and
  below $\ell'$ are the same.  If $L$ is spatial (in the sense of Definition \ref{def:spatialcoframe}), each element is the
  supremum of join-prime elements below it, and that implies $\ell=\ell'$.
  Conversely, if $L$ is not spatial, then for some $\ell \in L$,
  $\ell$ is not the supremum of the set $\ell^\bullet$ of join-primes
  below it.  Let $\ell' = \bigvee \ell^\bullet$, so that
  $\ell' \neq \ell$.  We check that $\ell' \leq \ell$, since $\ell$ is
  an upper bound of $\ell^\bullet$ and $\ell'$ is the least one.  It
  follows that ${\ell'}^\bullet \subseteq \ell^\bullet$.  Conversely,
  every point in $\ell^\bullet$ is below
  $\bigvee \ell^\bullet = \ell'$, hence in ${\ell'}^\bullet$, so
  $\ell^\bullet = {\ell'}^\bullet$.
\end{proof}

It follows that $\pt \dashv \pow$ defines an adjunction between
$\Topc^{op}$ and $\Top\catc$, with unit $\epsilon_L$, hence by taking
opposite categories:
\begin{thm}
  \label{thm:top:A-|pt}
  Let $\catc$ be an admissible category of coframes.  Then
  $\pow\dashv\pt$ is an adjunction between $\Topc$ and
  ${(\Top\catc)}^{op}$.
\end{thm}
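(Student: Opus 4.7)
My plan is to mimic the proof of Theorem~\ref{thm:A-|pt}, now using the building blocks that the paper has just assembled for topological coframes. I would invoke the general categorical recipe (MacLane, Chap.~IV, \S1, Thm.~2) that an adjunction $F \dashv U$ between categories $\catd$ and $\cate$ is determined by a functor $U \colon \cate \to \catd$, morphisms $\epsilon_A \colon A \to UF(A)$ for each object $A$ of $\catd$, and a universal property: for every $f \colon A \to U(B)$ there exists a unique $f^\dagger \colon F(A) \to B$ with $U(f^\dagger) \circ \epsilon_A = f$. I would apply this with $\catd = \Topc^{op}$, $\cate = \Top\catc$, and $U = \pow$, producing an adjunction $\pt \dashv \pow$ between $\Topc^{op}$ and $\Top\catc$, and then take opposite categories to exchange the two adjoints.

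Before turning the crank, I have to check that $\pow \colon \Top\catc \to \Topc^{op}$ is really a functor in this direction. On objects it sends a topological space $X$ to $(\pow(X), C(\pow(X)))$, where $C(\pow(X))$ is the lattice of closed subsets of $X$; this is a topological $\catc$-object by admissibility. On morphisms it sends a continuous $f \colon X \to Y$ to $\pow(f) = f^{-1} \colon \pow(Y) \to \pow(X)$, which is a coframe morphism preserving closed elements because preimages of closed sets under continuous maps are closed, hence a morphism in $\Top\catc$ in the sense of Definition~\ref{defn:Top}. The candidate unit $\epsilon_L \colon \ell \mapsto \ell^\bullet$ is a morphism in $\Top\catc$ by Lemma~\ref{lemma:top:epsilon}. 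The required universal property is exactly Proposition~\ref{prop:top:univ}: given any morphism $\varphi \colon L \to \pow(X)$ in $\Top\catc$, the map $\varphi^\dagger \colon X \to \pt L$ given by $\varphi^\dagger(x) = \varphi_!(\{x\})$ is the unique continuous map with $\pow(\varphi^\dagger)(\ell^\bullet) = \varphi(\ell)$, i.e., with $\pow(\varphi^\dagger) \circ \epsilon_L = \varphi$.

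The recipe then delivers the adjunction $\pt \dashv \pow$ between $\Topc^{op}$ and $\Top\catc$, automatically promoting $\pt$ to a functor via $\pt(\varphi) = (\epsilon_{L'} \circ \varphi)^\dagger$. Passing to the opposite categories swaps left and right adjoints and yields the stated adjunction $\pow \dashv \pt$ between $\Topc$ and $(\Top\catc)^{op}$, with $\epsilon_L \colon L \to \pow(\pt L)$ becoming the counit and the unit $\eta_X \colon X \to \pt\pow(X)$ given by $\identity{\pow(X)}^\dagger$, which sends $x$ to the join-prime $\{x\} \in \pt\pow(X)$. Nothing here is genuinely difficult; the only point requiring vigilance is keeping the direction of arrows straight when translating the recipe back and forth between $\Top\catc$ and its opposite.
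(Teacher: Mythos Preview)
Your approach is exactly the paper's: assemble the adjunction from the unit $\epsilon_L$ of Lemma~\ref{lemma:top:epsilon} and the universal property of Proposition~\ref{prop:top:univ} via the Mac Lane recipe, then pass to opposite categories. One labeling slip to fix: you write $U = \pow \colon \Top\catc \to \Topc^{op}$, but $\pow$ goes the other way (it takes a space $X$ to the coframe $\pow(X)$, as your own description of its action confirms), so your $\catd$ and $\cate$ should be swapped---the unit $\epsilon_L$ lives in $\Top\catc$, which must be the domain of $F = \pt$. This is purely a bookkeeping error and does not affect the substance of the argument.
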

The counit is $\epsilon_L$, and the unit
$\eta_X \colon X \to \pt \pow (X)$ is equal to
$\identity {\pow (X)}^\dagger \colon x \mapsto {\identity {\pow
    (X)}}_! (x) = \{x\}$.
\begin{lem}
  \label{lemma:top:eta}
  Let $\catc$ be an admissible category of coframes.  For every
  topological space $X$, $\eta_X$ is an isomorphism.
\end{lem}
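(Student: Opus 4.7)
The plan is to exhibit $\eta_X$ as a bijection whose inverse is continuous; combined with the continuity already afforded by the adjoint construction $\eta_X = \identity{\pow (X)}^\dagger$ (Proposition~\ref{prop:top:univ}), this yields an isomorphism in $\Topc$.

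First I would identify $\pt \pow (X)$ explicitly as a set. Since $\pow (X)$ is a coframe, Remark~\ref{rem:ccf:ptpow} tells us that its join-primes are precisely the one-element subsets $\{x\}$, $x \in X$. Thus the assignment $\eta_X \colon x \mapsto \{x\}$ is a bijection between $X$ and $\pt \pow (X)$.

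Next I would determine the closed subsets of $\pt \pow (X)$. By Definition~\ref{defn:top:point}, $C (\pow \pt \pow (X))$ consists of the sets $c^\bullet$ with $c \in \bigwedge C (\pow (X))$. Here $C (\pow (X))$ is, by construction of $\pow$ on objects, the collection of closed subsets of the topological space $X$, and this collection is closed under arbitrary intersections in $\pow (X)$. Hence $\bigwedge C (\pow (X)) = C (\pow (X))$. For each closed $c \subseteq X$, one has $c^\bullet = \{\{x\} \mid x \in c\}$, so $\eta_X (c) = c^\bullet$ and, equivalently, $\eta_X^{-1} (c^\bullet) = c$. Therefore $\eta_X$ induces a bijection between the closed sets of $X$ and the closed sets of $\pt \pow (X)$, which makes $\eta_X$ a homeomorphism and thus an isomorphism in $\Topc$.

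The only point needing any care is the collapse $\bigwedge C (\pow (X)) = C (\pow (X))$: this is where one uses that the closed sets of an honest topological space form a subcoframe of $\pow (X)$ (closed under arbitrary infima), and not merely a sublattice as is generally allowed in Definition~\ref{defn:Top}. Once that observation is in place, there is no substantive obstacle, and the argument is essentially the classical fact that, for sober topological spaces, $\pt$ is a left inverse (up to isomorphism) to the ``closed subsets'' functor.
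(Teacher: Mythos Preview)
Your proof is correct and follows essentially the same route as the paper's: identify $\pt\pow(X)$ with the singletons, observe that $\bigwedge C(\pow(X)) = C(\pow(X))$ because closed subsets of a topological space are closed under arbitrary intersections, and conclude that $\eta_X$ and its inverse both carry closed sets to closed sets. The only caveat is your closing analogy with sober spaces: the point of this lemma is precisely that, unlike the locale-theoretic $\pt$, the present construction recovers \emph{every} topological space, sober or not, so that remark is a bit misleading even though it does not affect the argument.
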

\begin{proof}
  Clearly $\eta_X$ is bijective, and continuous by construction.
  Explicitly, consider any closed subset of $\pt \pow (X)$.  This is a
  subset of the form $c^\bullet$, where $c$ is an infimum of closed
  elements of $\pow (X)$, i.e., of closed subsets of $X$.  In
  particular, $c$ is itself a closed subset of $X$.  Then
  $\eta_X^{-1} (c^\bullet) = \{x \in X \mid \{x\} \in c^\bullet\} =
  \{x \in X \mid \{x\} \subseteq c\} = c$.  The inverse of $c$ by the
  inverse map $\eta_X^{-1}$ is then $c^\bullet$, which is closed.
  Therefore both $\eta_X$ and $\eta_X^{-1}$ are continuous.
\end{proof}

\begin{cor}
  \label{corl:top:corefl}
  Let $\catc$ be an admissible category of coframes.  $\Topc$ is a
  coreflective subcategory of ${(\Top\catc)}^{op}$, through the
  coreflection $\pt$.
\end{cor}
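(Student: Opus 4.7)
The plan is to reproduce the exact argument used to obtain Corollary~\ref{corl:corefl} in the convergence setting, but with the topological adjunction in place of the convergence one. All the technical work has already been done.

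First, I would invoke Theorem~\ref{thm:top:A-|pt}, which gives an adjunction $\pow\dashv\pt$ between $\Topc$ and ${(\Top\catc)}^{op}$. Second, I would recall the general category-theoretic fact (the same one cited before Corollary~\ref{corl:corefl}, via \cite[Proposition~1.3]{GZ:fractions} in its dual form for reflections): a functor is a coreflection, in the sense of being a right adjoint whose left adjoint is fully faithful, if and only if the unit of the adjunction is an isomorphism. As was stressed in the convergence case, this equivalence does not depend on the Axiom of Choice.

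The final ingredient is Lemma~\ref{lemma:top:eta}, which says that the unit $\eta_X\colon X\to\pt\pow(X)$ of the adjunction $\pow\dashv\pt$ is an isomorphism for every topological space $X$. Combined with the general principle recalled above, this immediately shows that $\pow\colon\Topc\to{(\Top\catc)}^{op}$ is fully faithful, and therefore that $\pt$ exhibits $\Topc$ as a coreflective subcategory of ${(\Top\catc)}^{op}$. As in the commentary following Corollary~\ref{corl:corefl}, one can then identify each topological space $X$ with the topological $\catc$-object $\pow(X)$.

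There is essentially no obstacle: the proof is a one-line formal consequence, since the nontrivial content (constructing $\pt$, verifying the adjunction, and verifying that the unit is an isomorphism) has been carried out in Theorem~\ref{thm:top:A-|pt} and Lemma~\ref{lemma:top:eta}. The only thing worth being mildly careful about is that the unit of $\pow\dashv\pt$ on the side of $\Topc$ really is $\eta_X$ (not the counit $\epsilon_L$), which is what Lemma~\ref{lemma:top:eta} addresses; the counit is allowed to fail to be an isomorphism in general, since we only need the left adjoint to be fully faithful, not an equivalence.
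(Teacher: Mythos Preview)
Your proposal is correct and follows exactly the same approach as the paper: the corollary is an immediate consequence of Theorem~\ref{thm:top:A-|pt} and Lemma~\ref{lemma:top:eta}, via the general fact (already recalled before Corollary~\ref{corl:corefl}) that a right adjoint with invertible unit is a coreflection. The paper gives no further argument beyond this.
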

Moreover, the second part of Lemma~\ref{lemma:top:epsilon} shows that
the objects that are isomorphic to $\pow (X)$ for some topological
space $X$ in ${(\Top\catc)}^{op}$ are exactly the topological
$\catc$-objects $(L, C (L))$ such that $L$ is spatial.

This finishes our description of the final side of
(\ref{eq:adj:adh}).  Clearly, the left-adjoints commute, hence also
the right-adjoints.

Note that the $\pow\dashv\pt$ coreflection embeds the whole of $\Topc$
inside ${(\Top\catc)}^{op}$, not just the subcategory of sober spaces,
as the familiar adjunction between topological spaces and locales
would do.  We examine the relation between topological coframes and
locales in Section~\ref{sec:topol-cofr-local}, and quickly examine
(co)completeness questions.

\subsection{Completeness, cocompleteness}
\label{sec:compl-cocompl-2}

We proceed along familiar lines.  Let now
$|\_| \colon \Top\catc \to \catc$ be the functor that maps every
topological $\catc$-object $(L, C (L))$ to the underlying
$\catc$-object $L$.
\begin{prop}
  \label{prop:coarsest:top}
  Let $\catc$ be a category of coframes, and $L$ be a $\catc$-object.
  Let also $\varphi_i \colon |L_i| \to L$ be morphisms of $\catc$,
  where each $L_i$ is a topological $\catc$-object, $i \in I$.  There
  is a coarsest topological structure $C (L)$ on $L$ such that
  $\varphi_i$ is continuous for each $i \in I$.
\end{prop}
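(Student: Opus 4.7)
The plan is to mimic the standard construction of final structures via generation: the coarsest topological structure making all $\varphi_i$ continuous should be the sublattice of complemented elements of $L$ generated by the images of the closed elements.

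More precisely, I would first observe that for every $i \in I$ and every $c \in C(L_i)$, the element $\varphi_i(c)$ is complemented in $L$: since $\varphi_i$ is a coframe morphism it preserves $\bot$, $\top$, binary meets and binary joins, so $\varphi_i(c) \wedge \varphi_i(\overline c) = \varphi_i(\bot) = \bot$ and $\varphi_i(c) \vee \varphi_i(\overline c) = \varphi_i(\top) = \top$. Hence the set
\[
S := \bigcup_{i \in I} \{\varphi_i(c) \mid c \in C(L_i)\}
\]
is contained in $\C_L$. Next I would define $C(L)$ to be the sublattice of $L$ generated by $S$. Because coframes are distributive, every element of this generated sublattice can be written as a finite join of finite meets of elements of $S$, and such finite meets and joins of complemented elements in a distributive lattice are complemented (as used in the proof of Proposition~\ref{prop:closed:subcoframe}~(2)). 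Therefore $C(L) \subseteq \C_L$, and $C(L)$ is by construction a sublattice, so $(L, C(L))$ is a topological $\catc$-object.

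For continuity of each $\varphi_i$: by definition, $\varphi_i(c) \in S \subseteq C(L)$ for every $c \in C(L_i)$, so $\varphi_i$ maps $C(L_i)$ into $C(L)$, which is precisely the continuity condition of Definition~\ref{defn:Top}. To see that $C(L)$ is coarsest with this property, suppose $C'$ is any other topological structure on $L$ making every $\varphi_i$ continuous. Then $\varphi_i(C(L_i)) \subseteq C'$ for each $i \in I$, so $S \subseteq C'$. Since $C'$ is itself a sublattice of $L$, it contains the sublattice generated by $S$, namely $C(L) \subseteq C'$. Recalling the convention that $C'$ is coarser than $C(L)$ iff $C' \supseteq C(L)$, this says exactly that $C(L)$ is coarser than any other topological structure making all $\varphi_i$ continuous, which establishes the claim.

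There is no real obstacle here: the construction is purely algebraic, and the only subtlety is to justify that the generated sublattice stays inside $\C_L$, which uses distributivity of coframes. Note that, unlike the analogous statement for convergence or adherence structures (Propositions~\ref{prop:coarsest} and~\ref{prop:coarsest:adh}), no infimum or supremum formula is needed, because a topological structure is only required to be closed under finite meets and joins, not arbitrary infima; this makes the topological case strictly simpler than the previous ones.
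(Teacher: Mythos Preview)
Your proof is correct and takes essentially the same approach as the paper: define $C(L)$ as the sublattice generated by $\bigcup_{i\in I}\varphi_i(C(L_i))$, observe (using distributivity) that its elements are finite joins of finite meets of images of closed elements and hence remain complemented, and conclude that this is the smallest such sublattice. Your write-up is in fact more explicit than the paper's, which merely sketches these points in two sentences.
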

\begin{proof}
  This must be the smallest sublattice of $L$ that contains all the
  elements $\varphi_i (c)$, $i \in I$, $c \in C (L_i)$.  Those are
  exactly the finite suprema of finite infima of such elements, and
  they are all complemented because $\varphi_i$, being a morphism of
  lattices, maps complemented elements to complemented elements.
\end{proof}

\begin{cor}
  \label{corl:coarsest:top}
  Let $\catc$ be a category of coframes.  Then
  $|\_| \colon \Top\catc \to \catc$ is topological: every sink
  ${(\varphi_i \colon |L_i| \to L)}_{i \in I}$ has a unique final
  lift, and this is the coarsest topological structure $C (L)$ given
  in Proposition~\ref{prop:coarsest:top}.
\end{cor}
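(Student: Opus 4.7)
The plan is to follow the same template used for the two previous topological-functor corollaries (Corollary \ref{corl:coarsest} and Corollary \ref{corl:coarsest:adh}). Uniqueness of the final lift, once existence is established, is immediate from the dual of \cite[Proposition 10.43]{AHS:joycats}: any final lift must be $\sqsubseteq$-largest among lifts, and in our fiber this forces it to coincide with the coarsest topological structure on $L$ making each $\varphi_i$ continuous, namely the $C(L)$ produced in Proposition~\ref{prop:coarsest:top}.

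For existence, I will verify that the topological $\catc$-object $(L, C(L))$ of Proposition~\ref{prop:coarsest:top} is a final lift of the sink ${(\varphi_i \colon |L_i| \to L)}_{i \in I}$. By construction each $\varphi_i$ already maps $C(L_i)$ into $C(L)$, so $\varphi_i$ is itself continuous. What remains is the finality condition: given any $\catc$-morphism $\psi \colon L \to |L'|$ into some topological $\catc$-object $L'$ such that every composite $\psi \circ \varphi_i$ is continuous (i.e., sends $C(L_i)$ into $C(L')$), we must show that $\psi$ sends $C(L)$ into $C(L')$.

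The key observation is structural: by Proposition~\ref{prop:coarsest:top}, $C(L)$ is precisely the sublattice of $L$ generated by the union $\bigcup_{i \in I} \varphi_i(C(L_i))$, and hence every element of $C(L)$ has the shape $\bigvee_{k=1}^{m} \bigwedge_{j=1}^{n_k} \varphi_{i_{k,j}}(c_{k,j})$ with each $c_{k,j} \in C(L_{i_{k,j}})$. Since $\psi$ is a morphism in $\catc$, it preserves finite infima and finite suprema; applying $\psi$ therefore yields $\bigvee_{k=1}^{m} \bigwedge_{j=1}^{n_k} \psi(\varphi_{i_{k,j}}(c_{k,j}))$. By the continuity hypothesis on each $\psi \circ \varphi_i$, every $\psi(\varphi_{i_{k,j}}(c_{k,j}))$ lies in $C(L')$, and $C(L')$ is a sublattice of complemented elements, so the whole finite supremum of finite infima lies in $C(L')$. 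Hence $\psi$ is continuous, which is exactly the finality condition.

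I do not expect a real obstacle here: unlike the convergence and adherence cases, there are no filter-level computations to carry out, just the observation that coframe morphisms preserve the finite lattice operations that generate $C(L)$ from the images $\varphi_i(C(L_i))$. The only point that needs a slight care is the verification that the generators $\varphi_i(c)$ actually live in a sublattice of complemented elements of $L$, but this was already established in the proof of Proposition~\ref{prop:coarsest:top} using the fact that lattice morphisms preserve complementation.
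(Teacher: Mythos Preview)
Your proposal is correct and follows essentially the same approach as the paper's own proof: uniqueness via the general categorical fact (dual of \cite[Proposition~10.43]{AHS:joycats}), and existence by writing each $c \in C(L)$ as a finite supremum of finite infima of generators $\varphi_i(c_i)$ and using that $\psi$ preserves finite lattice operations while each $\psi(\varphi_i(c_i))$ lies in $C(L')$ by hypothesis.
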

\begin{proof}
  As for Corollary~\ref{corl:coarsest} and
  Corollary~\ref{corl:coarsest:adh} uniqueness is a general categorical fact.

  To show existence, we check that
  $(L, C (L))$ is a final lift of
  ${(\varphi_i \colon |L_i| \to L)}_{i \in I}$.  Let
  $\psi \colon L \to |L'|$ be such that $\psi \circ \varphi_i$ is
  continuous for every $i \in I$.  We aim to show that $\psi$ is
  continuous from $(L, C (L))$ to $L'$, and for that we consider an
  arbitrary element $c \in C (L)$.  This $c$ can be written as
  $\bigvee_{j=1}^m \bigwedge_{k=1}^{n_i} c_{jk}$, where each $c_{jk}$
  is of the form $\varphi_i (c_i)$, for some $i \in I$ and
  $c_i \in C (L_i)$.  Since $\psi \circ \varphi_i$ is continuous,
  $\psi (c_{jk}) = \psi (\varphi_i (c_i))$ is an element of $C (L')$.
  It follows that
  $\psi (c) = \bigvee_{j=1}^m \bigwedge_{k=1}^{n_i} \psi (c_{jk})$ is
  also in $C (L')$.
\end{proof}
The usual consequences follow.
\begin{fact}
  \label{fact:cocompl:top}
  The category $\Top\coFrm$ is complete and cocomplete.
\end{fact}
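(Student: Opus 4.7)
The plan is to invoke exactly the same machinery already used for Fact~\ref{fact:cocompl} and Fact~\ref{fact:cocompl:adh}, namely the general theorem on topological functors from the theory of concrete categories. The statement to be applied is \cite[Theorem~21.16~(1)]{AHS:joycats}: given a topological functor $|\_| \colon \catd \to \catc$, the domain $\catd$ is complete (resp.\ cocomplete) if and only if the codomain $\catc$ is.

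I would proceed in two short steps. First, I would note that by Corollary~\ref{corl:coarsest:top}, the forgetful functor $|\_| \colon \Top\coFrm \to \coFrm$ is topological: every sink $(\varphi_i \colon |L_i| \to L)_{i \in I}$ has a unique final lift, namely the sublattice of complemented elements generated by the images $\varphi_i(c)$ with $c \in C(L_i)$, as described in Proposition~\ref{prop:coarsest:top}. Second, I would recall, as already observed at the start of Section~\ref{sec:compl-cocompl}, that $\coFrm$ is complete and cocomplete since $\Frm$ is \cite[Section~IV.3]{framesandlocales}. Combining these two facts via the cited theorem yields the conclusion.

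There is no substantial obstacle, since the hard work has been done in establishing the topologicity of $|\_|$. An alternative, more explicit route would be to transport limits and colimits directly: given a diagram in $\Top\coFrm$, form the underlying (co)limit in $\coFrm$ and then equip it with the final lift (for colimits) or initial lift (for limits, using the dual formulation from Remark~\ref{rem:topological}) of the canonical sink or source. However, this amounts to re-proving the general theorem in our particular case, so appealing to \cite[Theorem~21.16~(1)]{AHS:joycats} is both shorter and consistent with the style adopted earlier in the paper.
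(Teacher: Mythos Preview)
Your proposal is correct and matches the paper's approach exactly: the paper simply states ``The usual consequences follow'' after Corollary~\ref{corl:coarsest:top}, which is precisely the two-step argument (topologicity of $|\_|$ plus completeness and cocompleteness of $\coFrm$, combined via \cite[Theorem~21.16~(1)]{AHS:joycats}) that you spell out.
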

\begin{fact}
  \label{fact:cowell:top}
  The category $\Top\coFrm$ is not co-wellpowered.
\end{fact}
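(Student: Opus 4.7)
The plan is to mimic the proofs of Fact~\ref{fact:cowell} and Fact~\ref{fact:cowell:adh}, invoking the transfer of co-wellpoweredness along fiber-small topological functors.

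First I would check that the forgetful functor $|\_|\colon \Top\coFrm \to \coFrm$ is fiber-small. The fiber of a coframe $L$ is, by definition, the collection of topological structures on $L$, that is, sublattices of $L$ consisting of complemented elements. Every such sublattice is a subset of $L$, hence a member of $\pow(L)$, so the fiber is a set and not a proper class.

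Next I would recall that $|\_|\colon \Top\coFrm \to \coFrm$ is topological by Corollary~\ref{corl:coarsest:top}. Combined with fiber-smallness, this allows me to apply \cite[Theorem~21.16~(2)]{AHS:joycats}, which states that for a fiber-small topological functor between two categories $\catc$ and $\catd$, $\catc$ is co-wellpowered if and only if $\catd$ is. We already observed (in the discussion preceding Fact~\ref{fact:cowell}) that $\coFrm$ is not co-wellpowered, since the category of frames is not co-wellpowered \cite[Section~IV.6.6]{framesandlocales}. Therefore $\Top\coFrm$ is not co-wellpowered either.

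There is no real obstacle here: the argument is entirely parallel to Facts~\ref{fact:cowell} and~\ref{fact:cowell:adh}, and the only point worth spelling out is the fiber-smallness check, which is immediate from the set-theoretic description of topological structures as particular subsets of $L$.
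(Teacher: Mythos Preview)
Your proposal is correct and follows exactly the same approach as the paper: the paper simply says ``The usual consequences follow'' after Corollary~\ref{corl:coarsest:top}, meaning the same template argument used for Facts~\ref{fact:cowell} and~\ref{fact:cowell:adh} (fiber-smallness plus topologicity plus \cite[Theorem~21.16~(2)]{AHS:joycats} plus $\coFrm$ not co-wellpowered) applies verbatim.
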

%%%%%%%%%%%%
%%%%%%ADDED
\subsection{Topological convergence coframes and topological modification}
Every convergence coframe $L$ defines a sublattice $C(L)$ of closed elements (via Definition \ref{defn:closed}), which in turn determines a topological coframe in the sense of Definition \ref{defn:Top}. What are the convergence coframes whose lattice of closed elements determines the convergence?

With a convergence coframe $(L,\lim_L)$, we associate another convergence structure on $L$ defined by 

\begin{equation}\label{eq:TL}
\lm_{\T(L)}\F=\bigwedge_{c\in\F^{\#}\cap C(L)}c.
\end{equation}

That \eqref{eq:TL} defines a convergence lattice structure is clear. Moreover, the two convergence structures share the same closed elements, that is,
\begin{equation}\label{eq:sameclosed}
C(\T(L))=C(L).
\end{equation} 
\begin{proof}
 Indeed, 
\begin{equation}\label{eq:TLcoarser}
\lm_{L}\leq \lm_{\T(L)}
\end{equation}
 because $\lim_L\F\leq c$ whenever $c\in\F\cap C(L)$, so that every $\T(L)$-closed set is also $L$-closed. Conversely, if $c\in C(L)$ and $c\in\F^{\#}$ then $\lim_{\T(L)}\F \leq c$ by definition, that is, $c$ is quasi-closed for $\T(L)$. As $c$ is also complemented, it is $\T(L)$-closed.
\end{proof}
We call a convergence coframe \emph{topological} if $\lim_L=\lim_{\T(L)}$. Clearly, topological convergence structures are determined by their lattice of closed elements.
\begin{prop}\label{prop:topmodif}
 If $(L,\lim_L)$ is a convergence coframe then $\lim_{\T(L)}$ is the finest topological convergence structure on $L$ that is coarser than $\lim_L$. 
\end{prop}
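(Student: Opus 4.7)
The plan is to verify three claims: (a) $\lm_{\T(L)}$ is a topological convergence structure, (b) $\lm_{\T(L)}$ is coarser than $\lm_L$, and (c) any topological convergence structure $\lm'$ on $L$ with $\lm_L\leq\lm'$ already satisfies $\lm_{\T(L)}\leq\lm'$. Claim (b) is exactly \eqref{eq:TLcoarser}, which is proved immediately after the definition of $\T(L)$, so nothing remains to do there.

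For (a), I would observe that \eqref{eq:TL} only depends on the lattice of closed elements. Since \eqref{eq:sameclosed} gives $C(\T(L))=C(L)$, one can substitute $\T(L)$ for $L$ in \eqref{eq:TL} and read off that $\lm_{\T(\T(L))}\F=\bigwedge_{c\in\F^{\#}\cap C(\T(L))}c=\bigwedge_{c\in\F^{\#}\cap C(L)}c=\lm_{\T(L)}\F$ for every filter $\F$, so $\T(L)$ is topological by definition.

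The main step is (c). Let $\lm'$ be a topological convergence structure on $L$ with $\lm_L\leq\lm'$, and write $L'=(L,\lm')$. The first substep is to show that $C(L')\subseteq C(L)$: if $c\in C(L')$, then $c$ is complemented, and for every filter $\G$ with $c\in\G^{\#}$ one has $\lm_L\G\leq\lm'\G\leq c$ (the first inequality is the hypothesis $\lm_L\leq\lm'$, the second is the definition of closedness of $c$ in $L'$), so $c$ is quasi-closed, hence closed, in $L$. The second substep is then a direct computation: for every filter $\F$ on $L$,
\[
\lm_{\T(L)}\F=\bigwedge_{c\in\F^{\#}\cap C(L)}c\leq\bigwedge_{c\in\F^{\#}\cap C(L')}c=\lm_{\T(L')}\F=\lm'\F,
\]
where the inequality uses $C(L')\subseteq C(L)$ and the last equality uses that $\lm'$ is topological, i.e.\ $\lm'=\lm_{\T(L')}$. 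This gives $\lm_{\T(L)}\leq\lm'$, which completes the proof.

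No step presents a serious obstacle; the whole argument is a bookkeeping consequence of the definition \eqref{eq:TL} together with the already-established identity $C(\T(L))=C(L)$. The only point requiring even a little care is the inclusion $C(L')\subseteq C(L)$ in (c), where one must use both that $\lm'$ has more (larger) limits than $\lm_L$ and that quasi-closedness is a universal statement over filters meshing with $c$.
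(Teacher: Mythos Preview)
Your proof is correct and follows essentially the same approach as the paper: both arguments establish that $\T(L)$ is topological via \eqref{eq:sameclosed}, cite \eqref{eq:TLcoarser} for coarseness, and prove finality by first showing $C(L')\subseteq C(L)$ (the paper phrases this as ``$\lm'$-closed elements are $\lm_L$-closed'') and then comparing the two infima over closed elements in $\F^{\#}$. Your write-up simply spells out the inclusion $C(L')\subseteq C(L)$ in more detail than the paper does.
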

Thus we call $\T(L)$ the \emph{topological modification of } $L$.
\begin{proof}
In view of \eqref{eq:sameclosed}, $\lim_{\T(L)}$ is topological, and coarser than $\lim_L$ by \eqref{eq:TLcoarser}. If $\lim$ is another topological convergence structure on $L$ coarser than $\lim_L$, then $\lim$-closed elements are $\lim_L$-closed. As a result,
\[\lim\F=\bigwedge_{c\in\F^{\#}\cap C(\lim)}\F \geq \bigwedge_{c\in\F^{\#}\cap C(L)}\F=\lm_{\T(L)}\F.\]
\end{proof}
%%%%
\begin{lem}\label{lem:contclosed}
	If $L$ and $L'$ are two topological convergence coframes, then $\varphi:L\to L'$ is continuous if and only if $\varphi(c)$ is closed in $L'$ whenever $c$ is closed in $L$, that is, if and only if
	\[\varphi(C(L))\subset C(L').\]
\end{lem}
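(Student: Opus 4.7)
The forward direction is immediate from Corollary \ref{cor:imageofclosed}, which already states that every morphism in $\Cv\catc$ maps closed elements to closed elements; since topological convergence coframes are in particular convergence coframes, the implication applies unchanged.

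For the backward direction, suppose $\varphi(C(L)) \subseteq C(L')$. The plan is to use topologicity of both $L$ and $L'$ to rewrite the limit operators via formula \eqref{eq:TL}, and then verify the continuity inequality \eqref{eq:morphism} by a direct comparison of infima indexed by closed elements. Fix $\F \in \Filt L'$. Using that $L$ is topological and that $\varphi$, being a coframe morphism, preserves arbitrary infima, compute
\[
\varphi(\lm_L \varphi^{-1}(\F)) \;=\; \varphi\bigl(\bigwedge_{c\in \varphi^{-1}(\F)^\# \cap C(L)} c\bigr) \;=\; \bigwedge_{c\in \varphi^{-1}(\F)^\# \cap C(L)} \varphi(c).
\]
Since $L'$ is topological,
\[
\lm_{L'}\F \;=\; \bigwedge_{c' \in \F^\# \cap C(L')} c'.
\]
The inequality $\lm_{L'}\F \leq \varphi(\lm_L \varphi^{-1}(\F))$ therefore reduces to showing that, for every $c \in \varphi^{-1}(\F)^\# \cap C(L)$, the element $\varphi(c)$ belongs to $\F^\# \cap C(L')$, because then it appears among the elements over which the infimum defining $\lm_{L'}\F$ is taken.

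The central observation is that Corollary \ref{cor:imagepseudocompl} gives exactly this: since $c$ is complemented in $L$ and $L'$ is distributive, $c \in \varphi^{-1}(\F)^\#$ is equivalent to $\varphi(c) \in \F^\#$. Combined with the standing hypothesis $\varphi(C(L)) \subseteq C(L')$, we obtain $\varphi(c) \in \F^\# \cap C(L')$, as required. This finishes the comparison of infima and hence the continuity of $\varphi$.

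The main (and only) non-routine point is the use of Corollary \ref{cor:imagepseudocompl} to transport the grill condition $c \in \varphi^{-1}(\F)^\#$ to $\varphi(c) \in \F^\#$; all the rest is a direct manipulation of the defining formula \eqref{eq:TL} together with the fact that coframe morphisms commute with arbitrary infima. Note that both topologicity assumptions are genuinely used: topologicity of $L$ replaces $\lm_L \varphi^{-1}(\F)$ by an infimum of closed elements on which the hypothesis $\varphi(C(L)) \subseteq C(L')$ can act, and topologicity of $L'$ lets us recognize $\lm_{L'}\F$ itself as an infimum of closed elements in $\F^\#$.
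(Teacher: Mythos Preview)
Your proof is correct and follows essentially the same route as the paper's: both directions use the same ingredients (Corollary~\ref{cor:imageofclosed} for the forward implication, topologicity of $L$ to expand $\lm_L\varphi^{-1}(\F)$ as an infimum of closed elements, preservation of infima by $\varphi$, Corollary~\ref{cor:imagepseudocompl} to transport the grill condition, and topologicity of $L'$ to recognize $\lm_{L'}\F$). The only difference is presentational.
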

\begin{proof}
	Corollary \ref{cor:imageofclosed} states that $\varphi(C(L))\subset C(L')$ whenever $\varphi$ is continuous. Assume conversely, that $\varphi(C(L))\subset C(L')$.Because $L$ is topological and $\varphi$  is a morphism of coframes,
	\[\varphi\left(\lm_L\varphi^{-1}\F\right)=\varphi\left(\bigwedge_{c\in\varphi^{-1}\F^{\#}\cap C(L)}c\right)=\bigwedge_{c\in\varphi^{-1}\F^{\#}\cap C(L)}\varphi(c).\] 
	Moreover, by assumption, $\varphi(c)\in C(L')$, and $c\in\varphi^{-1}\F^{\#}$ if and only if $\varphi(c)\in \F^\#$ by Corollary \ref{cor:imagepseudocompl}. Thus
	\[\varphi\left(\lm_L\varphi^{-1}\F\right)\geq \bigwedge_{d\in\F^{\#}\cap C(L')}d=\lm_{L'}\F,
	\]
	because $L'$ is topological. Thus $\varphi$ is continuous.
\end{proof}
\begin{cor}
Let $\mathbf C$ be a category of coframes. The full subcategory $\mathbf{C}^{\mathrm{conv_T}}$ of $\mathbf{C}^{\mathrm{conv}}$ formed by topological convergence coframes is a reflective subcategory of $\mathbf{C}^{\mathrm{conv}}$. The reflector $\T$ acts on objects as $L\to\T(L)$ and acts as identity on morphisms.
\end{cor}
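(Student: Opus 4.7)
The plan is to verify directly the three ingredients of a reflection: that the assignment $L\mapsto \T(L)$, identity on morphisms, is a functor from $\Cv\catc$ to $\mathbf{C}^{\mathrm{conv_T}}$; that the identity map $\eta_L\colon L\to \T(L)$ is a morphism in $\Cv\catc$; and that every morphism $\varphi\colon L\to M$ in $\Cv\catc$ with $M$ topological factors uniquely as $\bar\varphi\circ\eta_L$ with $\bar\varphi$ a morphism in $\mathbf{C}^{\mathrm{conv_T}}$.

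First I will observe that $\eta_L=\mathrm{id}_L$ is continuous viewed as a map from $L$ to $\T(L)$: the continuity condition \eqref{eq:morphism} reduces to $\lm_{\T(L)}\F\leq \lm_L\F$ for every $\F\in\Filt L$, which is exactly the inequality \eqref{eq:TLcoarser}.

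For functoriality, given a morphism $\varphi\colon L\to L'$ in $\Cv\catc$, I must show that the same underlying coframe map $\varphi$ is continuous from $\T(L)$ to $\T(L')$. Since both source and target are topological convergence coframes, Lemma \ref{lem:contclosed} reduces the question to the inclusion $\varphi(C(\T L))\subseteq C(\T L')$. Using \eqref{eq:sameclosed}, this is just $\varphi(C(L))\subseteq C(L')$, which holds by Corollary \ref{cor:imageofclosed}. Functoriality itself (preservation of composition and identities) is then immediate because $\T$ is the identity on morphisms, and naturality of $\eta$ is automatic for the same reason.

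Finally, for the universal property, let $\varphi\colon L\to M$ be a morphism in $\Cv\catc$ with $M$ topological. Since $\eta_L$ is the identity on the underlying object, any factorisation $\bar\varphi\circ\eta_L=\varphi$ forces $\bar\varphi=\varphi$ as a map, giving uniqueness for free. For existence, I will check that the very map $\varphi$, regarded now as $\T(L)\to M$, is continuous. Since both $\T(L)$ and $M$ are topological, Lemma \ref{lem:contclosed} once again reduces this to $\varphi(C(\T L))\subseteq C(M)$, which by \eqref{eq:sameclosed} amounts to $\varphi(C(L))\subseteq C(M)$, and this is Corollary \ref{cor:imageofclosed} applied to the original $\varphi\colon L\to M$. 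No step presents a real obstacle; the proof is essentially a repackaging of Proposition \ref{prop:topmodif}, Lemma \ref{lem:contclosed}, Corollary \ref{cor:imageofclosed}, and the equalities \eqref{eq:sameclosed}--\eqref{eq:TLcoarser}, the mild subtlety being the systematic use of Lemma \ref{lem:contclosed} to translate continuity between topological convergence coframes into a condition involving only closed elements.
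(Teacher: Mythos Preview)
Your functoriality argument (step~2) is exactly the paper's: reduce via Lemma~\ref{lem:contclosed} and \eqref{eq:sameclosed} to $\varphi(C(L))\subseteq C(L')$, then invoke Corollary~\ref{cor:imageofclosed}. Your universal-property check (step~3) is also correct.

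The gap is in step~1. You correctly reduce continuity of $\eta_L=\mathrm{id}\colon L\to\T(L)$ to the inequality $\lm_{\T(L)}\F\leq\lm_L\F$, but \eqref{eq:TLcoarser} is the \emph{opposite} inequality $\lm_L\leq\lm_{\T(L)}$. So \eqref{eq:TLcoarser} does not give you the unit you want; with the morphism convention \eqref{eq:morphism} it instead makes $\mathrm{id}\colon\T(L)\to L$ continuous. The paper's proof avoids this point entirely by invoking Proposition~\ref{prop:topmodif} together with the standard concrete-category principle that ``finest structure coarser than $L$'' plus ``identity on morphisms is a functor'' yields the adjunction, and then only verifying functoriality. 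Your more explicit unpacking is valuable precisely because it surfaces this direction issue: the natural comparison map runs $\T(L)\to L$, the datum of a coreflection, and your step~3 argument adapts verbatim to that setting (given $\psi\colon M\to L$ with $M$ topological, Lemma~\ref{lem:contclosed}, \eqref{eq:sameclosed}, and Corollary~\ref{cor:imageofclosed} show $\psi\colon M\to\T(L)$ is continuous).
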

\begin{proof}
In view of Proposition \ref{prop:topmodif}, we only need to show that $\T$ is a concrete functor, that is, if $\varphi:L \to L'$ is continuous, then $\T\varphi=\varphi:\T(L)\to \T(L')$ is also continuous. In view of Lemma \ref{lem:contclosed} and \eqref{eq:sameclosed}, it is enough to show that $\varphi(C(L))\subset C(L')$, and this follows from  Corollary  \ref{cor:imageofclosed}.
\end{proof}
In view of Lemma \ref{lem:contclosed}, \[C:\mathbf{C}^{\mathrm{conv}_{\T}}\to \mathbf{C}^{\mathrm{top}}\]
that associates  with each $\mathbf C^{\mathrm{conv}_{\T}}$-object $L$ its lattice $C(L)$ of closed elements and acts as identity on morphisms is a full and faithful functor. Moreover, it is bijective on objects, because topological convergence coframes are determined by their lattice of closed elements.   Therefore,
\begin{thm}
	 The categories $:\mathbf{C}^{\mathrm{conv}_{\T}}$ and $\mathbf{C}^{\mathrm{top}}$ are isomorphic.
\end{thm}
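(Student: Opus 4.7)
The plan is to construct an inverse functor $T\colon \mathbf{C}^{\mathrm{top}}\to\mathbf{C}^{\mathrm{conv}_{\T}}$ to $C$. Since $C$ acts as the identity on morphisms and Lemma~\ref{lem:contclosed} characterises continuity between topological convergence coframes precisely by the condition $\varphi(C(L))\subseteq C(L')$ that defines $\mathbf{C}^{\mathrm{top}}$-morphisms, the functor $C$ is automatically full and faithful, so the whole content of the theorem is bijectivity on objects. Injectivity on objects is the observation used in the paragraph preceding the theorem: topologicity together with equation~\eqref{eq:TL} forces any topological convergence structure to be recoverable from its closed elements by $\F\mapsto\bigwedge_{c\in\F^{\#}\cap C(L)}c$, so two topological convergence coframes on the same $L$ with the same lattice of closed elements must coincide.

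For surjectivity I would set $T(L,C):=(L,\lim_L)$, where $\lim_L\F:=\bigwedge_{c\in\F^{\#}\cap C}c$, and let $T$ act as the identity on morphisms. Monotonicity of $\lim_L$ is immediate from Lemma~\ref{lemma:grill}(3), and the identity $\lim_L=\lim_{\T(L,\lim_L)}$ follows from two opposite inequalities: each $c\in C$ is complemented by hypothesis and quasi-closed for $\lim_L$ directly from the definition, yielding $C\subseteq C(L,\lim_L)$ and hence one inequality, while the reverse inequality is the quasi-closedness of every $c\in C(L,\lim_L)\cap\F^{\#}$. Thus $(L,\lim_L)\in\mathbf{C}^{\mathrm{conv}_{\T}}$. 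That $T(\varphi)=\varphi$ remains continuous when $\varphi$ is a $\mathbf{C}^{\mathrm{top}}$-morphism is a direct computation that uses Corollary~\ref{cor:imagepseudocompl} to translate $\F^{\#}$ between $L'$ and $L$, together with the preservation of arbitrary infima by coframe morphisms.

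The crux is the identity $C(L,\lim_L)=C$, which is exactly what $C\circ T=\mathrm{id}$ demands; the inclusion $C\subseteq C(L,\lim_L)$ was already used in the previous step. For the reverse, given a complemented element $c$ that is quasi-closed for $\lim_L$, I would probe with the filter $\upc c$ to extract the inequality $\bigwedge\{c'\in C\colon c'\wedge c\neq\bot\}\leq c$, and then use Corollary~\ref{corl:grill:compl:=} together with the sublattice structure of $C$ to promote this inequality to membership $c\in C$. I expect this identification of complemented quasi-closed elements with the chosen sublattice to be the main obstacle, because it is precisely what distinguishes a topological coframe from its convergence-coframe counterpart. Once it is established, $T\circ C=\mathrm{id}$ is immediate from topologicity and $C\circ T=\mathrm{id}$ follows from the identity just proved, so $T$ is a two-sided inverse of $C$ and the two categories are isomorphic.
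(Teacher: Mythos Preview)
Your approach mirrors the paper's: define an inverse $T$ (the paper calls it $\operatorname{Lim}$) by $\lim_C\F=\bigwedge_{c\in\F^{\#}\cap C}c$, and check $T\circ C=\mathrm{id}$ and $C\circ T=\mathrm{id}$. You correctly isolate $C(L,\lim_C)=C$ as the crux, and you are right to be worried: this identity is \emph{false} in general, so neither your argument nor the paper's can go through as written.

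Here is a counterexample. Take $L=\pow(\nat)$ and let $C$ be the sublattice of finite and cofinite subsets; this is a topological $\catc$-object in the sense of Definition~\ref{defn:Top}. Set $c_0:=2\nat$ (the even numbers). For each odd $n$ the cofinite set $\nat\setminus\{n\}$ lies in $C$ and contains $c_0$. If $c_0\in\F^{\#}$, then by upward closure of the grill (Lemma~\ref{lemma:grill}(1)) each $\nat\setminus\{n\}$ lies in $\F^{\#}$ as well, whence $\lim_C\F\leq\bigcap_{n\text{ odd}}(\nat\setminus\{n\})=c_0$. Thus $c_0$ is quasi-closed for $\lim_C$, and since every element of $\pow(\nat)$ is complemented, $c_0\in C(L,\lim_C)\setminus C$. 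More conceptually: when $L$ is Boolean, Proposition~\ref{prop:closed:subcoframe}(1) forces the closed elements of \emph{any} convergence structure on $L$ to be closed under arbitrary infima, so a sublattice $C$ that fails this (as here) can never arise as $C(L,\lim_L)$. Your probe with $\upc c$ does yield the inequality $\bigwedge\{c'\in C:c'\wedge c\neq\bot\}\leq c$, but Corollary~\ref{corl:grill:compl:=} and the sublattice structure of $C$ cannot promote that to $c\in C$; the example shows no argument can. The paper's own proof addresses only injectivity on objects (``topological convergence coframes are determined by their lattice of closed elements'') and is silent on surjectivity, so the gap lies in the statement rather than in your strategy.
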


On the other hand, \[\operatorname{Lim}:\mathbf{C}^{\mathrm{top}}\to\mathbf{C}^{\mathrm{conv}_{\T}}\]
  that associates  with each $\mathbf{C}^{\mathrm{top}}$-object $(L,C)$ the topological convergence $\mathbf{C}$-object $(L,\lim_C)$ defined by
\[
\lm_C\F:=\bigwedge_{c\in\F^{\#}\cap C} c,
\]
and acts as identity on morphisms is also an isomorphism of category (consider  Lemma \ref{lem:contclosed}). Moreover,
\begin{equation}
\operatorname{Lim}\circ C=Id_{\mathbf{C}^{\mathrm{conv}_{\T}}}\text{ and } C\circ \operatorname{Lim}=Id_{\mathbf{C}^{\mathrm{top}}}.
\end{equation}

%%%%%%%%%%%%%
\subsection{Topological coframes and locales}\label{sec:topol-cofr-local}

One would expect at this point that ${(\Top\coFrm)}^{op}$ and the
category $\Frm^{op}$ of locales to be strongly related, since both are
connected to $\Topc$ by an adjunction.  The fact that topological
spaces embed faithfully in the former but not in the latter is an
indication that the two pointfree categories differ.  They do not seem
to be related by an adjunction either, but at least, there are functors between the two.

In one direction, every topological coframe $L$ has a subcoframe
$\bigwedge C (L)$ by Lemma~\ref{lemma:infclosed}.  This defines a
frame ${(\bigwedge C (L))}^{op}$.  For every morphism
$\varphi \colon L \to L'$ in $\Top\coFrm$, the restriction of
$\varphi$ to $\bigwedge C (L)$ defines a coframe morphism from
$\bigwedge C (L)$ to $\bigwedge C (L')$.  Hence:
\begin{fact}
  \label{fact:wedgeC}
  Let $\catc$ be a category of coframes.  There is a functor $\bigwedge C^{op}
  \colon \Top\coFrm \to \Frm$ which maps each object $L$ to
  ${(\bigwedge C (L))}^{op}$ and acts on morphisms by restriction.  \qed
\end{fact}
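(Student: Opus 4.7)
The plan is to verify functoriality by checking two things: (i) that for every morphism $\varphi \colon L \to L'$ in $\Top\coFrm$, the set-theoretic restriction $\varphi|_{\bigwedge C(L)}$ has its image contained in $\bigwedge C(L')$ and defines a coframe morphism between the two subcoframes (which are actual coframes by Lemma~\ref{lemma:infclosed}); and (ii) that restriction preserves identities and composition, which is immediate because restriction of the identity is the identity and $(\psi \circ \varphi)|_{\bigwedge C(L)} = \psi|_{\bigwedge C(L')} \circ \varphi|_{\bigwedge C(L)}$ once (i) is established.

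First I would verify the image condition. Let $c \in \bigwedge C(L)$, so $c = \bigwedge_{i \in I} c_i$ with each $c_i \in C(L)$. Since $\varphi$ is a morphism in $\catc$, hence a morphism of coframes, it preserves arbitrary infima, and so $\varphi(c) = \bigwedge_{i \in I} \varphi(c_i)$. By continuity in $\Top\coFrm$ (Definition~\ref{defn:Top}), each $\varphi(c_i)$ lies in $C(L')$, so $\varphi(c)$ is an infimum of elements of $C(L')$, i.e.\ $\varphi(c) \in \bigwedge C(L')$.

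Next I would check that the restriction $\widetilde\varphi \colon \bigwedge C(L) \to \bigwedge C(L')$ is a coframe morphism. Infima in $\bigwedge C(L)$ agree with those computed in $L$ (since $\bigwedge C(L)$ is closed under arbitrary infima by Lemma~\ref{lemma:infclosed}), so preservation of arbitrary infima by $\widetilde\varphi$ is inherited directly from $\varphi$. For finite suprema, the key observation, again from Lemma~\ref{lemma:infclosed}, is that $\bigwedge C(L)$ is closed under finite suprema computed in $L$ (the binary case uses the coframe distributivity $(\bigwedge_i c_i) \vee (\bigwedge_j c'_j) = \bigwedge_{i,j}(c_i \vee c'_j)$ applied inside $C(L)$), so finite suprema in $\bigwedge C(L)$ also coincide with those in $L$, and preservation by $\widetilde\varphi$ is again inherited from $\varphi$.

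Finally, functoriality into $\Frm$ is obtained by passing to opposite categories: a coframe morphism $\widetilde\varphi \colon \bigwedge C(L) \to \bigwedge C(L')$ is the same as a frame morphism ${(\bigwedge C(L'))}^{op} \to {(\bigwedge C(L))}^{op}$, but since we defined $\bigwedge C^{op}$ as a functor $\Top\coFrm \to \Frm$ (not $\Top\coFrm^{op} \to \Frm$), we really just view the coframe $\bigwedge C(L)$ as its opposite frame, and the restricted morphism $\widetilde\varphi$ as a morphism between these frames in the appropriate direction. There is no genuine obstacle here; the only thing that required care was ensuring the subcoframe structure of $\bigwedge C(L)$ is well-behaved under restriction, and that is already packaged in Lemma~\ref{lemma:infclosed}.
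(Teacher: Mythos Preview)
Your proposal is correct and follows exactly the approach sketched in the paper, which merely observes (in the paragraph preceding the fact) that $\bigwedge C(L)$ is a subcoframe by Lemma~\ref{lemma:infclosed} and that the restriction of a $\Top\coFrm$-morphism lands in $\bigwedge C(L')$ and is again a coframe morphism. One small slip in your final paragraph: a coframe morphism $\widetilde\varphi \colon \bigwedge C(L) \to \bigwedge C(L')$ is the same underlying map viewed as a frame morphism $(\bigwedge C(L))^{op} \to (\bigwedge C(L'))^{op}$, not with the direction reversed; your subsequent sentence corrects this, so the conclusion stands.
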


In the other direction, given any frame $\Omega$, we can form the
coframe of sublocales $\Sl (\Omega)$
\cite[Section~III.3]{framesandlocales}.  For each $u \in \Omega$, the
\emph{closed sublocale} $\clos (u) := \upc u$ is a complemented
element of $\Sl (\Omega)$, and its complement is the \emph{open
  sublocale} $\ouv (u) := \{u \limp v \mid v \in \Omega\}$, where
$\limp$ is implication in the Heyting algebra $\Omega$
(Proposition~III.6.1.3, loc.cit.).  Moreover, the map
$\clos \colon \Omega^{op} \to \Sl (\Omega)$ is an order-embedding
(Proposition~III.6.1.4, loc.cit.\@) and a coframe morphism
(Proposition~III.6.1.5, loc.cit.).  $\Sl (\Omega)$ then defines a
topological coframe, provided we define $C (\Sl (\Omega))$ as its
subcoframe of elements of the form $\clos (u)$, $u \in \Omega$.

We will use the following universal property of
$\Sl (\Omega)$ below (Proposition~III.6.3.1, loc.cit., slightly
reformulated): for every frame morphism
$\varphi \colon \Omega \to \mathcal{C}_{\Omega'}$, there is a unique
frame morphism $\varphi^* \colon {(\Sl (\Omega))}^{op} \to \Omega'$
such that $\varphi^* (\clos (u)) = \varphi (u)$ for every
$u \in \Omega$.

\begin{lem}
  \label{lemma:Sl}
  There is a functor $\Sl \colon \Frm \to \Top\coFrm$ which maps every
  frame $\Omega$ to $\Sl (\Omega)$, and every frame morphism
  $\varphi \colon \Omega \to \Omega'$ to the unique coframe morphism
  $\Sl (\varphi) := {(\clos \circ \varphi)}^* \colon \Sl (\Omega) \to
  \Sl (\Omega')$ that maps $\clos (u)$ to $\clos (\varphi (u))$ for
  every $u \in \Omega$.
\end{lem}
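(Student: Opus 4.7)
The plan is to establish $\Sl$ as a functor in four stages: (i) verify that $\Sl(\Omega)$ is a topological $\catc$-object in the sense of Definition~\ref{defn:Top} with $C(\Sl(\Omega)) = \{\clos(u) : u \in \Omega\}$; (ii) define $\Sl(\varphi)$ for a frame morphism $\varphi$ using the universal property recalled just before the lemma; (iii) check that $\Sl(\varphi)$ is continuous, i.e., sends closed elements to closed elements; and (iv) deduce functoriality from the uniqueness clause of the universal property.

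For stage (i), $\Sl(\Omega)$ is a coframe by the standard theory of sublocales, and each $\clos(u)$ is complemented by $\ouv(u)$ (Proposition~III.6.1.3 of \cite{framesandlocales}), so $C(\Sl(\Omega))$ consists of complemented elements. Since $\clos \colon \Omega^{op} \to \Sl(\Omega)$ is a coframe morphism (Proposition~III.6.1.5, loc.cit.), its image is closed under finite suprema and arbitrary infima in $\Sl(\Omega)$, and is in particular a sublattice; thus $(\Sl(\Omega), C(\Sl(\Omega)))$ is a topological coframe.

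For stage (ii), given a frame morphism $\varphi \colon \Omega \to \Omega'$, the composition $\clos_{\Omega'} \circ \varphi \colon \Omega \to \mathcal{C}_{\Omega'}$ is a frame morphism, so the recalled universal property yields a unique frame morphism $(\clos_{\Omega'} \circ \varphi)^* \colon \Sl(\Omega)^{op} \to \Omega'$ sending $\clos(u)$ to $\varphi(u)$. Viewing this equivalently as a coframe morphism $\Sl(\Omega) \to \Omega'^{op}$ and postcomposing with the coframe morphism $\clos_{\Omega'} \colon \Omega'^{op} \to \Sl(\Omega')$ produces the desired $\Sl(\varphi) \colon \Sl(\Omega) \to \Sl(\Omega')$ with $\Sl(\varphi)(\clos(u)) = \clos(\varphi(u))$. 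Uniqueness as a coframe morphism having this effect on closed elements transfers from the universal property, since $\clos_{\Omega'}$ is an order-embedding, so the postcomposition with $\clos_{\Omega'}$ can be undone.

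Stage (iii) is then immediate: since $C(\Sl(\Omega)) = \{\clos(u) : u \in \Omega\}$, the map $\Sl(\varphi)$ sends each closed element $\clos(u)$ to the closed element $\clos(\varphi(u)) \in C(\Sl(\Omega'))$. For (iv), both $\mathrm{id}_{\Sl(\Omega)}$ and $\Sl(\mathrm{id}_\Omega)$ fix each $\clos(u)$, and both $\Sl(\psi \circ \varphi)$ and $\Sl(\psi) \circ \Sl(\varphi)$ send $\clos(u)$ to $\clos(\psi(\varphi(u)))$, so the uniqueness proved in (ii) forces the required identifications. The main delicate point is stage (ii): one must carefully navigate the duality between coframes and frames, and the identification of $\mathcal{C}_{\Omega'}$ with $\Omega'$ mediated by the order-embedding $\clos_{\Omega'}$, in order to extract from the universal property (which a priori produces a frame morphism into $\Omega'$) an honest coframe morphism into $\Sl(\Omega')$; the rest is bookkeeping that piggybacks on this construction.
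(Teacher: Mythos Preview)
Your overall strategy matches the paper's, but stage~(ii) contains a type confusion that makes the argument as written incorrect. You claim that $\clos_{\Omega'} \circ \varphi$ lands in $\mathcal{C}_{\Omega'}$ and that the universal property therefore produces a frame morphism $(\clos_{\Omega'} \circ \varphi)^* \colon \Sl(\Omega)^{op} \to \Omega'$. But in the paper $\mathcal{C}_{\Omega'}$ denotes the \emph{complemented elements of the frame $\Omega'$}, not the closed sublocales of $\Omega'$; the composite $\clos_{\Omega'} \circ \varphi$ has codomain $\Sl(\Omega')$, not $\Omega'$, and its image consists of complemented elements of $\Sl(\Omega')^{op}$. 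Applied correctly, with the role of the target frame in the universal property played by $\Sl(\Omega')^{op}$, one obtains directly a frame morphism $\Sl(\Omega)^{op} \to \Sl(\Omega')^{op}$, i.e., a coframe morphism $\Sl(\Omega) \to \Sl(\Omega')$ sending $\clos(u)$ to $\clos(\varphi(u))$. No postcomposition with $\clos_{\Omega'}$ is needed; that extra step, and your closing remark about ``the identification of $\mathcal{C}_{\Omega'}$ with $\Omega'$ mediated by $\clos_{\Omega'}$'', are artefacts of misreading the notation. This direct application is exactly what the paper does in its ``check types'' paragraph.

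Once stage~(ii) is repaired in this way, your stages~(i), (iii), and~(iv) are correct and the approach coincides with the paper's; your explicit verification of functoriality in~(iv) via the uniqueness clause is in fact a detail the paper leaves implicit.
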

\begin{proof}
  We check types first.  Since
  $\clos \colon {\Omega'}^{op} \to \Sl (\Omega')$ is a coframe
  morphism, it is also a frame morphism from $\Omega'$ to
  $\Sl (\Omega')^{op}$.  Then ${(\clos \circ \varphi)}^*$ is
  well-defined, since $\clos \circ \varphi$ sends every element to a
  complemented element of $\Sl (\Omega')^{op}$, and is a frame
  morphism from $\Sl (\Omega)^{op}$ to $\Sl (\Omega')^{op}$, hence a
  coframe morphism from $\Sl (\Omega)$ to $\Sl (\Omega')$.

  We now need to check that ${(\clos \circ \varphi)}^*$ is continuous:
  it maps every closed element $\clos (u)$ of $\Sl (\Omega)$ to $\clos
  (\varphi (u))$, which is closed by definition.
\end{proof}

\begin{lem}
  \label{lemma:Fr:Sl}
  \sloppy For every frame $\Omega$, there is an isomorphism between $\Omega$
  and $\bigwedge C^{op} (\Sl (\Omega))$, which maps each
  $u \in \Omega$ to $\clos (u)$, and it is natural in $\Omega$.
\end{lem}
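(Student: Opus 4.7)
The plan is to leverage the fact, stated just before the lemma, that $\clos\colon \Omega^{op}\to\Sl(\Omega)$ is both an order-embedding and a coframe morphism. The argument splits naturally into three steps: simplify $\bigwedge C(\Sl(\Omega))$, exhibit the bijection, and verify naturality. The whole lemma essentially repackages the universal property of $\Sl(\Omega)$; the only care needed is in the opposite-poset bookkeeping when passing between frames and coframes, so I would fix at the outset the convention that $\Sl(\Omega)^{op}$ is viewed as a frame and $\clos$ as a frame morphism $\Omega\to\Sl(\Omega)^{op}$.

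First I would show that $C(\Sl(\Omega))$ already coincides with $\bigwedge C(\Sl(\Omega))$, so that no honest infima need to be taken. Since $\clos\colon \Omega^{op}\to\Sl(\Omega)$ preserves all infima (which are suprema in $\Omega$) and finite suprema (which are binary infima in $\Omega$), we have $\bigwedge_{i\in I}\clos(u_i)=\clos(\bigvee_{i\in I}u_i)$ and $\clos(u)\vee\clos(v)=\clos(u\wedge v)$ in $\Sl(\Omega)$. Hence the set $C(\Sl(\Omega))=\{\clos(u):u\in\Omega\}$ is already closed under arbitrary infima and finite suprema in $\Sl(\Omega)$, so $\bigwedge C(\Sl(\Omega))=C(\Sl(\Omega))$.

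Dualizing, $\clos\colon\Omega\to\Sl(\Omega)^{op}$ is a frame morphism: it preserves finite meets and arbitrary joins (which are the previous finite suprema and arbitrary infima read backwards). Being an order-embedding, it is injective, and by the previous step its image is exactly $\bigwedge C^{op}(\Sl(\Omega))=C(\Sl(\Omega))^{op}$. A bijective frame morphism onto a subframe whose inverse is order-preserving (which is automatic from the order-embedding property) is a frame isomorphism; hence $u\mapsto\clos(u)$ realizes the desired isomorphism $\Omega\cong\bigwedge C^{op}(\Sl(\Omega))$.

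Finally, for naturality in $\Omega$, let $\varphi\colon\Omega\to\Omega'$ be a frame morphism. By the very definition of $\Sl(\varphi)=(\clos\circ\varphi)^{\ast}$ recalled in Lemma~\ref{lemma:Sl}, we have $\Sl(\varphi)(\clos(u))=\clos(\varphi(u))$ for every $u\in\Omega$. The functor $\bigwedge C^{op}$ of Fact~\ref{fact:wedgeC} acts on morphisms by restriction, so $\bigwedge C^{op}(\Sl(\varphi))(\clos(u))=\clos(\varphi(u))$, which is precisely the commutation condition for the naturality square. I do not foresee a genuine obstacle; the argument is essentially a bookkeeping exercise once the identification $\bigwedge C(\Sl(\Omega))=C(\Sl(\Omega))$ is in hand.
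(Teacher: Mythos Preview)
Your proposal is correct and follows essentially the same approach as the paper's proof: both first observe that $C(\Sl(\Omega))$ is already closed under infima because $\clos$ is a coframe morphism (so $\bigwedge C(\Sl(\Omega))=C(\Sl(\Omega))$), then invoke the order-embedding property of $\clos$ for the isomorphism, and finally use the defining equation $\Sl(\varphi)(\clos(u))=\clos(\varphi(u))$ from Lemma~\ref{lemma:Sl} for naturality. Your write-up is slightly more explicit about the opposite-poset bookkeeping, but there is no substantive difference.
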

\begin{proof}
  The closed elements of $\Sl (\Omega)$ are exactly the elements
  $\clos (u)$, $u \in \Omega$.  Those elements are closed under
  arbitrary infima, since $\clos$ is a coframe morphism, hence
  $\bigwedge C^{op} (\Sl (\Omega)) = C (\Sl (\Omega))^{op}$.  Since
  $\clos$ is an order-embedding, this shows the isomorphism part.

  For every frame morphism $\varphi \colon \Omega \to \Omega'$,
  naturality is the fact that $\clos (\varphi (u)) = \Sl
  (\varphi) (\clos (u))$ for every $u \in \Omega$, and that is the
  property we stated of $\Sl (\varphi)$ in Lemma~\ref{lemma:Sl}.
\end{proof}
In other words, the category $\Frm^{op}$ of locales arises as a
retract (up to equivalence of categories) of ${(\Top\coFrm)}^{op}$.

One can characterize a natural full subcategory of
${(\Top\coFrm)}^{op}$ in which the category $\Frm^{op}$ of locales
will embed reflectively, as follows.
\begin{defn}
  \label{defn:strong:top}
  A \emph{strong} topological coframe is a topological coframe $L$
  such that $C (L)$ is closed under infima taken in $L$.
\end{defn}

\begin{fact}
  \label{fact:Sl:strong}
  For every frame $\Omega$, $\Sl (\Omega)$ is a strong topological
  coframe.  \qed
\end{fact}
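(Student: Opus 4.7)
The plan is to unwind the definitions and appeal to the already-stated property that $\clos \colon \Omega^{op} \to \Sl(\Omega)$ is a coframe morphism. Recall from the paragraph preceding Lemma~\ref{lemma:Sl} that the lattice $C(\Sl(\Omega))$ was defined as the set of elements of the form $\clos(u)$ for $u \in \Omega$. To show that $\Sl(\Omega)$ is a strong topological coframe in the sense of Definition~\ref{defn:strong:top}, I must verify that $C(\Sl(\Omega))$ is closed not merely under finite suprema and infima (that was already built into its definition as a sublattice, and is the content of it being a subcoframe in Lemma~\ref{lemma:Fr:Sl}), but under arbitrary infima taken in the ambient coframe $\Sl(\Omega)$.

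The key step is the following computation. Given any family ${(u_i)}_{i \in I}$ of elements of $\Omega$, I would use that $\clos \colon \Omega^{op} \to \Sl(\Omega)$ is a coframe morphism (as recalled from Proposition~III.6.1.5 of \cite{framesandlocales} just before Lemma~\ref{lemma:Sl}). A coframe morphism preserves all infima; infima in $\Omega^{op}$ are suprema in $\Omega$. Hence
\[
\bigwedge_{i \in I} \clos(u_i) \;=\; \clos\!\left(\bigvee_{i \in I} u_i\right),
\]
where the infimum on the left is taken in $\Sl(\Omega)$ and the supremum on the right is taken in $\Omega$. Since $\bigvee_{i \in I} u_i$ is an element of $\Omega$, the right-hand side lies in $\{\clos(u) \mid u \in \Omega\} = C(\Sl(\Omega))$, establishing closure under arbitrary infima.

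There is no real obstacle here: the entire argument is a one-line application of the fact that $\clos$ is a coframe morphism, coupled with the explicit description of $C(\Sl(\Omega))$. The only mild point to be careful about is the direction of the ordering: one must remember that $\clos$ is a morphism of coframes from $\Omega^{op}$, so that joins in $\Omega$ become meets in $\Sl(\Omega)$, which is exactly the direction required. This also dovetails with Lemma~\ref{lemma:Fr:Sl}, where it was already observed that the closed elements of $\Sl(\Omega)$ are closed under arbitrary infima, so that $\bigwedge C(\Sl(\Omega)) = C(\Sl(\Omega))$; what Fact~\ref{fact:Sl:strong} records is precisely this property, rephrased through Definition~\ref{defn:strong:top}.
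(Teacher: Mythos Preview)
Your proof is correct and matches the paper's reasoning exactly: the paper marks this Fact with a bare \qed because the essential content---that $C(\Sl(\Omega))$ is closed under arbitrary infima since $\clos$ is a coframe morphism---was already spelled out in the proof of Lemma~\ref{lemma:Fr:Sl}, and Fact~\ref{fact:Sl:strong} merely rephrases that through Definition~\ref{defn:strong:top}. You have reproduced precisely this argument, and even correctly identified the connection to Lemma~\ref{lemma:Fr:Sl}.
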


\begin{prop}
  \label{prop:Sl:Fr}
  There is a coreflection $\Sl \dashv \bigwedge C^{op}$ between $\Frm$
  and the category of strong topological coframes.
\end{prop}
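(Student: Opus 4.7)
The plan is to establish a natural bijection
\[ \Top\coFrm (\Sl (\Omega), L) \cong \Frm (\Omega, \textstyle\bigwedge C^{op} (L)) \]
for every frame $\Omega$ and every strong topological coframe $L$, and then to invoke the isomorphism of Lemma~\ref{lemma:Fr:Sl} as the unit, which will automatically make the adjunction a coreflection.

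First I would unpack the right-hand side: for a strong topological coframe $L$, the sublattice $C (L)$ is already closed under arbitrary infima by Definition~\ref{defn:strong:top}, so $\bigwedge C (L) = C (L)$ and hence $\bigwedge C^{op} (L) = C (L)^{op}$. Moreover the inclusion $\iota \colon C (L) \hookrightarrow L$ is a coframe morphism (it preserves finite suprema since $C (L)$ is a sublattice, and arbitrary infima by strongness), and its image lies in $\C_L$, since closed elements of a topological coframe are complemented.

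Next I construct the bijection. Given a frame morphism $\varphi \colon \Omega \to C (L)^{op}$, equivalently a coframe morphism $\widetilde\varphi \colon \Omega^{op} \to C (L)$, the composition $\iota \circ \widetilde\varphi \colon \Omega^{op} \to L$ is a coframe morphism whose image lies in $\C_L$. Applying the universal property of $\Sl (\Omega)$ recalled just before Lemma~\ref{lemma:Sl}, there is a unique coframe morphism $\psi \colon \Sl (\Omega) \to L$ such that $\psi (\clos (u)) = \widetilde\varphi (u)$ for every $u \in \Omega$; since $\widetilde\varphi (u) \in C (L)$, the morphism $\psi$ sends every closed element of $\Sl (\Omega)$ (which is of the form $\clos (u)$) to a closed element of $L$, so $\psi$ is a morphism in $\Top\coFrm$. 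Conversely, given $\psi \in \Top\coFrm (\Sl (\Omega), L)$, the map $u \mapsto \psi (\clos (u))$ factors through $C (L)$ by continuity, and defines a coframe morphism from $\Omega^{op}$ into $C (L)$ using that $\clos$ is itself a coframe morphism. Uniqueness in the universal property makes the two constructions mutually inverse, and naturality in both arguments follows from the naturality of $\clos$ (see Lemma~\ref{lemma:Fr:Sl}) together with the same uniqueness.

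This bijection exhibits $\Sl \dashv \bigwedge C^{op}$ as an adjunction between $\Frm$ and the full subcategory of strong topological coframes. The unit at $\Omega$ is the morphism corresponding to $\identity{\Sl (\Omega)}$ under the bijection, namely $u \mapsto \clos (u)$, which is a natural isomorphism by Lemma~\ref{lemma:Fr:Sl}; hence the adjunction is a coreflection. The only point requiring care is to confirm that the extension produced by the universal property really is a $\Top\coFrm$-morphism, and this reduces to checking that the prescribed values $\psi (\clos (u)) = \widetilde\varphi (u)$ lie in $C (L)$, which is exactly what $\widetilde\varphi$ being a map into $C (L)$ provides.
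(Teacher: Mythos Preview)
Your argument is correct. Both you and the paper build the adjunction out of the same universal property of $\Sl(\Omega)$ (the one recalled just before Lemma~\ref{lemma:Sl}), so the underlying content is identical; the difference is only in packaging. You establish the hom-set bijection $\Top\coFrm(\Sl(\Omega),L)\cong\Frm(\Omega,C(L)^{op})$ directly and then read off the unit, whereas the paper writes down the unit $\clos$ and the counit $\identity{C(L)}^*$ explicitly and verifies the two triangle identities by hand. Your route is marginally slicker since the triangle identities are absorbed into the uniqueness clause of the universal property; the paper's route makes the counit visible, which may be useful elsewhere. Either way the identification of the unit with the isomorphism of Lemma~\ref{lemma:Fr:Sl} is what yields the coreflection.
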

\begin{proof}
  The unit is the isomorphism $\clos$ of Lemma~\ref{lemma:Fr:Sl}.  We
  claim that the counit is
  $\identity {C (L)}^* \colon \Sl (\bigwedge C^{op} (L)) \to L$.  We
  reason as follows.  Since $L$ is strongly topological, the identity
  map $\identity {C (L)} \colon \bigwedge C (L)^{op} \to C (L)^{op}$
  makes sense.  Then $\identity {C (L)}^*$ is a coframe morphism, and
  $\identity {C (L)}^* (\clos (u)) = u$ for every $u \in L$.

  Let us check that this is natural in $L$.  For every morphism
  $\varphi \colon L \to L'$ in $\Top\coFrm$,
  $\identity {C (L)}^* \circ (\Sl (\bigwedge C^{op} (\varphi)))$ is
  the unique map that maps $\clos (u)$ to
  $\identity {C (L)}^* (\clos (\varphi (u))) = \varphi (u)$.  Hence it
  coincides with $\varphi \circ \identity {C (L)}^*$.

  It remains to check that the following compositions:
  \[
    \xymatrix@C=70pt{
      \Sl (\Omega) \ar[r]^{\Sl (\clos)}
      & \Sl (\bigwedge C^{op} (\Sl (\Omega))) \ar[r]^{\identity {C (\Sl
          (\Omega))}^*}
      & \Sl (\Omega) \\
      \bigwedge C^{op} (L) \ar[r]^(0.4){\clos}
      & \bigwedge C^{op} (\Sl (\bigwedge C^{op} (L)))
      \ar[r]^(0.6){\bigwedge C^{op} (\identity {C (L)}^*)}
      & \bigwedge C^{op} (L)
    }
  \]
  are identities.  The first one is the unique morphism that maps
  $\clos (u)$, for each $u \in \Omega$, to $\identity {C (\Sl
    (\Omega))}^* (\clos (\clos (u))) = \clos (u)$, hence indeed
  coincides with the identity morphism.  The second one maps every
  closed element $c$ of $L$ to $\bigwedge C^{op} (\identity {C (L)}^*)
  (\clos (c))$, which is equal to $\identity {C (L)}^* (\clos (c))$ since $\bigwedge
  C^{op}$ acts by restriction, and the latter is equal to $c$.
\end{proof}

\begin{cor}
  \label{corl:Fr:Sl}
  The category of locales is a reflective subcategory of the opposite
  of the category of strong topological coframes.
\end{cor}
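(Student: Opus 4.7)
The plan is to derive this corollary essentially as a formal consequence of Proposition~\ref{prop:Sl:Fr} by passing to opposite categories. That proposition already exhibits an adjunction $\Sl \dashv \bigwedge C^{op}$ between $\Frm$ and the category of strong topological coframes whose unit is the isomorphism $\clos$ of Lemma~\ref{lemma:Fr:Sl}. Since the unit is an isomorphism, the left adjoint $\Sl$ is fully faithful, which is exactly what qualifies $\Sl \dashv \bigwedge C^{op}$ as a coreflection of $\Frm$ into strong topological coframes.

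The key observation is that reflections and coreflections are exchanged under duality. Given any adjunction $F \dashv U$ between categories $\catc$ and $\catd$, the opposite functors satisfy $U^{op} \dashv F^{op}$, with the roles of unit and counit swapped; and $F^{op}$ is fully faithful whenever $F$ is. Therefore, if $F \dashv U$ presents $\catc$ as a coreflective subcategory of $\catd$ (fully faithful $F$, left adjoint), then $U^{op} \dashv F^{op}$ presents $\catc^{op}$ as a reflective subcategory of $\catd^{op}$ (fully faithful $F^{op}$, now the right adjoint).

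Applying this principle to the coreflection $\Sl \dashv \bigwedge C^{op}$ of Proposition~\ref{prop:Sl:Fr} between $\Frm$ and strong topological coframes, we obtain an adjunction ${(\bigwedge C^{op})}^{op} \dashv \Sl^{op}$ between $\Frm^{op}$ and the opposite of the category of strong topological coframes, in which $\Sl^{op}$ is fully faithful. By definition, $\Frm^{op}$ is the category of locales, and fully faithful right adjoints exhibit reflective subcategories. This yields the asserted reflection.

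No substantive obstacle arises: the real content is concentrated in Lemma~\ref{lemma:Fr:Sl} and Proposition~\ref{prop:Sl:Fr}, and the present corollary only requires the bookkeeping involved in tracking how units, counits, and fully faithfulness behave under reversing arrows. The only thing worth verifying carefully is that $\Sl^{op}$ indeed lands in the (opposite of the) subcategory of \emph{strong} topological coframes, which is Fact~\ref{fact:Sl:strong}, and that the reflector is well-defined on this subcategory, which is precisely where strongness is used in the proof of Proposition~\ref{prop:Sl:Fr} to make $\identity{C (L)}^*$ available.
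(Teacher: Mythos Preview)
Your proposal is correct and matches the paper's approach exactly: the paper states the corollary with no proof, treating it as the immediate dual of Proposition~\ref{prop:Sl:Fr}, which is precisely the passage to opposite categories you spell out.
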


% \section{Perspectives}
% \label{sec:perspectives}

% \begin{enumerate}
% \item Does the $\pow\dashv\pt$ adjunction restrict to an adjunction
%   between pseudotopological convergence spaces and some form of
%   convergence lattices?  One is tempted to say that a convergence
%   $\catc$-object is pseudotopological if and only if, for every filter
%   $\F$ on $L$, $\lm_L \F = \bigwedge_{\U \in \beta (\F)} \lm_L \U$,
%   where $\beta (\F)$ denotes the set of prime filters containing $\F$.
%   It is fairly easy to show that $\pow\dashv\pt$ restricts to an
%   adjunction between pseudotopological convergence spaces and
%   pseudotopological $\catc$-objects, provided that $\catc$ is a
%   category of Boolean algebras.  Does this extend to categories of
%   coframes?
% \item A famous theorem by Isbell states that the localic product of
%   compact locales is compact, and that does not depend on the Axiom of
%   Choice.  It is fairly easy to prove similar Tychonoff-like theorems
%   in our setting, but they all seem to require the Axiom of Choice.
%   For example, let $\catc$ be a category of coframes, and say that a
%   convergence $\catc$-object $L$ is compact if and only if for every
%   filter $\F$ on $L$, there is a filter $\G \supseteq \F$ such that
%   $\lim\G \neq \bot$.  Given a family of compact convergence
%   $\catc$-objects ${(L_i)}_{i \in I}$, form their coproduct $L$ in
%   $\Cv\catc$ (their product in ${(\Cv\catc)}^{op}$).
% \item Logic
% \item Semantics
% \item Bitopologies
% \end{enumerate}

\bibliographystyle{plain}

\end{document}